\title{Locally compact piecewise full groups of homeomorphisms}
\author{Alejandra Garrido} 
\address{Facultad de Matem\'aticas, Universidad Complutense de Madrid, and ICMAT, Madrid, SPAIN}
\email{alejandra.garrido@ucm.es; alejandra.garrido@icmat.es}
\author{Colin D. Reid}
\address{School of Information and Physical Sciences, University of Newcastle Australia}
\email{colin@reidit.net}
\thanks{The research project behind this article started while both authors were employed at the University of Newcastle, supported by the Australian Research Council grant FL170100032 (CI: George Willis).  We thank our erstwhile colleagues at Newcastle for productive discussions, especially David Robertson with whom we discussed the project quite extensively and gave significant insights on some potential sources of examples.
}
\thanks{The project has also benefited from financial support by Spain’s Ministry of Science and Innovation, grants [PID2020-114032GB-I00] and [CEX2019-000904-S]}
\newtheorem{thm}{Theorem}[section]
\newtheorem{prop}[thm]{Proposition}
\newtheorem{lem}[thm]{Lemma}
\newtheorem{cor}[thm]{Corollary}
\newtheorem{que}{Question}
\newtheorem{prob}[que]{Problem}
\theoremstyle{definition}
\newtheorem{defn}[thm]{Definition}
\newtheorem{rmk}[thm]{Remark}
\newtheorem{ex}[thm]{Example}
\newcommand{\Zb}{\mathbb{Z}}
\newcommand{\Nb}{\mathbb{N}}
\newcommand{\Qb}{\mathbb{Q}}
\newcommand{\bZ}{\mathbb{Z}}
\newcommand{\mc}[1]{\mathcal{#1}}
\newcommand{\mf}[1]{\mathfrak{#1}}
\newcommand{\ms}[1]{\mathscr{#1}} 
\newcommand{\tdlc}{t.d.l.c.\@\xspace}
\newcommand{\Sym}{\mathrm{Sym}}
\newcommand{\Alt}{\mathrm{Alt}}
\newcommand{\Aut}{\mathrm{Aut}}
\newcommand{\AAut}{\mathrm{AAut}}
\newcommand{\Homeo}{\mathrm{Homeo}}
\newcommand{\PHomeo}{\mathrm{PHomeo}}
\newcommand{\PAut}{\mathrm{PAut}}
\newcommand{\Mon}{\mathrm{M}}
\newcommand{\TMon}{\overline{\mathrm{M}}}
\newcommand{\rist}{\mathrm{rist}}
\DeclareMathOperator{\Comm}{Comm}
\DeclareMathOperator{\QZ}{QZ}
\newcommand{\rest}[2]{{#1}\!\!\restriction_{#2}}
\newcommand{\dom}{\mathrm{dom}}
\newcommand{\ran}{\mathrm{ran}}
\newcommand{\N}{\mathrm{N}}
\newcommand{\CC}{\mathrm{C}}
\newcommand{\Full}{\mathrm{F}}
\newcommand{\Sy}{\mathrm{S}}
\newcommand{\Al}{\mathrm{A}}
\newcommand{\Der}{\mathrm{D}}
\newcommand{\ldlat}{\mathcal{LD}}
\newcommand{\inv}{^{-1}}
\newcommand{\id}{\mathrm{id}}
\newcommand{\ad}{\mathrm{ad}}
\newcommand{\defbold}{\textbf}
\newcommand{\triv}{\{1\}}
\newcommand{\grp}[1]{\langle #1 \rangle}
\newcommand{\ol}[1]{\overline{#1}}
\begin{document}

\begin{abstract}
	We study when a piecewise full group (a.k.a. topological full group) of homeomorphisms of the Cantor space $X$ can be given a non-discrete totally disconnected locally compact (\tdlc) topology and give a criterion for the alternating full group (in the sense of Nekrashevych's group $\Al(G)$) to be compactly generated.
	As a result, starting from qualitative criteria, we obtain a large class of \tdlc groups such that the derived group is non-discrete, compactly generated, open and simple, putting previous constructions of Neretin, R\"{o}ver and Lederle in a more systematic context.  We also show some notable properties of Neretin's groups apply to this class in general.
	General consequences are derived for the theory of simple \tdlc groups, prime among them the universal role that alternating full groups play in the class of simple \tdlc groups that are non-discrete, compactly generated and locally decomposable.
	
	Some of the theory is developed in the setting of topological inverse monoids of partial homeomorphisms of $X$.  In particular, we obtain a sufficient condition to extend the topology to a monoid equipped with all restrictions with respect to compact open subsets of $X$ and all joins of compatible pairs of elements.  The compact generation criterion is also naturally expressed in this context.
\end{abstract}

\subjclass[2020]{22D99, 
	22D05, 
	22F50; 
	20E32, 
	22A15, 
	37C85, 
	20E08, 
	37B05
}

\maketitle

\tableofcontents

\addtocontents{toc}{\protect\setcounter{tocdepth}{1}}

\section{Introduction}

\subsection{Background and motivation}

\begin{defn}
Let $X$ be a compact zero-dimensional topological space and let $G \le \Homeo(X)$. 
 The \defbold{piecewise full group} $\Full(G;X)$ of $G$ is the group of all homeomorphisms $h$ of $X$ such that there exists a clopen partition $\{\alpha_i \mid i \in I\}$ of $X$ and corresponding elements $(g_i)_{i \in I}$ of $G$ such that $\rest{h}{\alpha_i} = \rest{g_i}{\alpha_i}$ for all $i$.
 The action of $G$ is \defbold{piecewise full} on $X$ if $G = \Full(G;X)$.
\end{defn}

Originally introduced as a tool in topological dynamics (\cite{Putnam}, \cite{GlasnerWeiss}, \cite{GPS99}), piecewise full groups (also known as topological full groups\footnote{We prefer to avoid the term `topological full groups' in this article to avoid ambiguity in the use of the word `topological', because we will be discussing `topological groups', that is, groups equipped with a topology compatible with the group operations.}) have in recent years become an important source of examples of groups with interesting properties, especially simple groups.  An important example is the theorem of Juschenko--Monod \cite{JuschenkoMonod} that the piecewise full group of the cyclic group generated by a minimal homeomorphism of the Cantor space is amenable; this gave rise to the first examples of finitely generated infinite amenable simple groups.  Subsequent work has produced finitely generated, simple groups of intermediate word growth which are moreover torsion groups (\cite{Nekr_annals}).  On the other hand, Thompson's group $V$, and more generally the Higman--Thompson groups, are important examples of groups of independent interest (this time non-amenable) that can be realised as piecewise full groups over the Cantor space.  See for example \cite{Katzlinger} for a survey of piecewise full groups and their history.

Going beyond countable groups, piecewise full groups have also provided interesting examples of non-discrete totally disconnected locally compact (\tdlc) topological groups; in particular, examples with a normal subgroup that falls into the class $\ms{S}$ of \tdlc groups that are non-discrete, compactly generated and topologically simple.  Interest in this class has increased in the last few years, thanks to reduction theorems for \tdlc groups to this class (\cite{CapraceMonod}) and the development of a structure theory for groups in $\ms{S}$ (\cite{CRW-Part2}).

The most well-known example is Neretin's group $\AAut(T)$ of almost automorphisms of the regular locally finite tree $T$ (of some degree $d \ge 3$).  It was shown by Kapoudjian \cite{Kapoudjian} that this group is abstractly simple and compactly generated: more precisely, $\AAut(T)$ is generated by the group $\Aut(T)$ of automorphisms of the tree, together with an appropriate Higman--Thompson group (which is finitely generated).  Neretin's groups have since been shown to have remarkable properties of their own.  For example, Caprace--Le Boudec--Matte Bon \cite{CLBMB} showed that $\AAut(T)$ acts freely on its Furstenberg boundary; this is a strong non-amenability property.  It has also been shown that $\AAut(T)$ has no lattice (\cite{BCGM}) and even lacks non-trivial invariant random subgroups (\cite{Zheng}).  Generalisations of Neretin's groups were studied more recently by Lederle (\cite{Lederle}, \cite{Lederle-V}), producing more examples of groups in $\ms{S}$.  Other constructions in a similar spirit have been considered starting from a branch group acting on a rooted tree, for example by R\"{o}ver \cite{Roever}.

Despite the results obtained for specific constructions of piecewise full \tdlc groups acting on the Cantor space $X$, at the time of writing there has not been a systematic treatment of the theory of such groups from the perspective of topological group theory, and their general relationship to the class $\ms{S}$.  The goal of this article is to begin the development of such a theory.  In the process, we will not only obtain results on piecewise full groups, but also obtain some structural features of the class $\ms{S}$ modulo local isomorphism, where two \tdlc groups $G$ and $H$ are \defbold{locally isomorphic} if there is an open subgroup of $G$ isomorphic to an open subgroup of $H$.

\subsection{Overall strategy}

Before giving a detailed breakdown of results, it is useful to give a high-level overview of our approach.  We are presented with the following two challenges:
\begin{itemize}
\item The locally compact group topology of the Neretin and Lederle groups is not the compact-open topology.  Indeed, there do not exist any non-discrete locally compact piecewise full groups of homeomorphisms of the Cantor space with the compact-open topology (\cite{GR_uniformdiscrete}).  So for the general theory, we want to consider groups equipped with a given locally compact topology, and what properties of this topology are implied by having piecewise full action on the Cantor space.
\item Much of the existing literature on piecewise full groups has developed not purely at the level of group theory, but via a topological groupoid $\mc{G}$ associated to the action (for example \cite{MatuiHomology}, \cite{Nekra}); one can then define the piecewise full group as the group of global bisections of the action groupoid.  The groupoid is constructed to be \'{e}tale, which effectively means it locally `remembers' the topology of the Cantor space.   However, the \'{e}tale property at the groupoid level translates to a discrete topology on the associated piecewise full group, and it is awkward to introduce a group topology in this setting.
\end{itemize}

In order to tackle the second issue, and with consequences for how we handle the first issue, for part of the article we pass to the setting of inverse monoids of partial homeomorphisms of the Cantor space $X$ whose domain and range are clopen.  In this setting, the analogue of being `piecewise full' is what we call a Boolean inverse monoid $B$.  One then recovers a piecewise full group as the group of units of $B$.  So far, this is equivalent to the \'{e}tale groupoid approach (where $B$ is the set of clopen bisections).

The novelty comes in how we equip $B$ with its own topology.  Our framework is to suppose we already have a topological inverse monoid $M$ of homeomorphisms (which could in fact be a group of homeomorphisms), which we `complete' to form its Boolean inverse monoid, and then the question is whether the topology of $M$ can be extended to $B$.  We find a property, `locally decomposable', that is sufficient to accomplish this (a generalisation of locally decomposable group actions in the sense of \cite{CRW-Part1}).  Analogously, we recall from \cite{CRW-Part2} that one of the basic cases of groups $G$ in $\ms{S}$ is the locally decomposable case; write $\ms{S}_{\ldlat}$ for the groups $G$ in $\ms{S}$ that have a faithful locally decomposable action on the Cantor space.

The treatment of inverse monoids may be of independent interest, but for group theory, the payoff is that we generalise a \emph{finite} generation theorem due to Nekrashevych (originally expressing in terms of \'{e}tale groupoids) to a \emph{compact} generation theorem.  This sets the stage for the second half of the paper, where we return to the setting of topological group theory and specifically \tdlc group theory.  By invoking some prior results (including from the companion article \cite{GR_compressible}), we show that the piecewise full \tdlc groups $F$ we have been focused on are actually $\ms{S}_{\ldlat}$-by-(discrete abelian), with fully compressible action on the Cantor space (that is, given any two non-empty clopen subsets of $X$, one can be mapped inside the other).

For convenience of discussion, we introduce a class $\ms{A}$ of \tdlc groups, which consists of all groups $\Der(F) \le G \le F$ where $F$ is as in the last paragraph.  For the last stage of this article, there are two goals: first to prove some properties of simple groups in $\ms{A}$, and second to characterise the \emph{local} isomorphism classes occurring in $\ms{A}$, in terms of more qualitative properties of \tdlc groups.  In particular, we find that every group in $\ms{S}_{\ldlat}$ is an open subgroup of a group in $\ms{A} \cap \ms{S}_{\ldlat}$.

\subsection{Main results}

For concreteness, throughout this introduction we will take $X$ to be the Cantor space, although for many of the results it is sufficient to take a compact zero-dimensional topological space.  We also assume that groups and monoids are given as acting on $X$ by homeomorphisms, respectively as partial homeomorphisms with clopen domain and range.

Given $G \le \Homeo(X)$, one can always define the piecewise full group $\Full(G)$ as a subgroup of $\Homeo(X)$, but as noted, the compact-open topology is not a suitable choice of topology on $\Full(G)$ for our goals.  The basic example of Neretin's group, where $\Aut(T)$ is embedded as an open subgroup of $\AAut(T)$, suggests an alternative approach.  For general reasons, given a group $A$, a subgroup $B$ and a group topology $\tau_B$ on $B$, there is at most one group topology $\tau_A$ on $A$ such that the natural inclusion of $B$ into $A$ is continuous and open.  If it exists, we refer to $\tau_A$ as the extension of $\tau_B$ to $A$.

The right condition for the topology of $G$ to extend to $\Full(G)$ is that the action should be \defbold{locally decomposable}: for every clopen partition $\mc{P}$ of $X$, the product of the rigid stabilisers $\prod_{Y \in \mc{P}}\rist_G(Y)$ is open in $G$ (see Definition \ref{defn:locally_decomposable_group}).

The main results of Section~\ref{sec:groups} on extending group topologies can be summarised as follows.
\begin{thm}[Proposition \ref{prop:Polishfull_implies_locdec}, Corollary \ref{cor:piecewise_full_extension}]\label{thm:intro_localdec}
	Let $G\leq\Homeo(X)$.
	\begin{enumerate}
		\item If $G$ is \tdlc and locally decomposable  then $\Full(G)$ can be given a unique \tdlc group topology for which $G$ is open. 
		\item If $\Full(G)$ is \tdlc and second-countable then $\Full(G)$ is locally decomposable, so $G$ is also locally decomposable if it is open. 
	\end{enumerate}
\end{thm}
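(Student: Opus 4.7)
For (1), uniqueness is standard: any group topology on $\Full(G)$ making $G$ open is determined by the identity neighborhoods of $G$. For existence, my plan is to declare the open identity neighborhoods of $G$ to be an identity neighborhood basis on $\Full(G)$. Verifying the group-topology axioms reduces to the conjugation axiom: for each $h \in \Full(G)$ and open identity neighborhood $U$ in $G$, produce an open identity neighborhood $V$ in $G$ with $hVh^{-1} \subseteq U$. Given a decomposition $h|_{\alpha_i} = g_i|_{\alpha_i}$ over a finite clopen partition $\{\alpha_i\}$, local decomposability gives that $P := \prod_i \rist_G(\alpha_i)$ is open in $G$. A direct computation identifies $hPh^{-1}$ with $\prod_i \rist_G(h(\alpha_i))$ via $v_1\cdots v_n \mapsto g_1 v_1 g_1^{-1}\cdots g_n v_n g_n^{-1}$, so $hPh^{-1}$ is again open in $G$ and in particular lies in $G$. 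To produce $V$ inside $P$ with $hVh^{-1}\subseteq U$, I use that the conjugation is built from the continuous conjugations $w\mapsto g_i w g_i^{-1}$ on $G$ over a finite partition, combined with openness of $P$; this lets me pull back a neighborhood of the identity from $U$ into $P$. The resulting topology is \tdlc because $G$ is an open \tdlc subgroup.

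For (2), write $F := \Full(G)$ and assume $F$ is Polish. For any clopen partition $\{\alpha_i\}$ of $X$, piecewise fullness of $F$ gives $\prod_i \rist_F(\alpha_i) = \bigcap_i F_{[\alpha_i]}$, where $F_{[\alpha]} := \{h\in F : h(\alpha)=\alpha\}$: the inclusion $\subseteq$ is routine, and for $\supseteq$ every $h$ fixing each $\alpha_i$ setwise decomposes as $h = h_1\cdots h_n$, with $h_i$ equal to $h$ on $\alpha_i$ and the identity elsewhere, where $h_i \in F$ by piecewise fullness. Thus it suffices to show $F_{[\alpha]}$ is open for each clopen $\alpha$. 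The $F$-orbit of $\alpha$ is a subset of the countable Boolean algebra of clopen subsets of $X$, so $F$ is partitioned into countably many cosets of $F_{[\alpha]}$. My plan is to invoke Pettis's theorem: once $F_{[\alpha]}$ has the Baire property, the countable coset decomposition together with Baireness of $F$ forces $F_{[\alpha]}$ to be non-meager; then $F_{[\alpha]} = F_{[\alpha]} F_{[\alpha]}^{-1}$ has non-empty interior and, being a subgroup, is open.

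The main obstacle is verifying the Baire property of $F_{[\alpha]}$. The route I would take is automatic continuity. Fix a compact open subgroup $K \le F$ and write $F = \bigsqcup_n h_n K$ (countable as $F$ is second-countable); this reduces the Baire measurability of the evaluation $h \mapsto h(\alpha)$ into the countable discrete set $\mathfrak{B}(X)$ of clopens to the analogous question on $K$. For the compact Polish group $K$, I would combine the concrete combinatorial description of elements of $F$ afforded by piecewise fullness (a clopen partition together with an element of $F$ per piece) with Pettis-type automatic continuity for homomorphisms from a Polish group into $\Homeo(X)$ with its compact-open Polish topology to deduce that the $K$-action on $X$ is continuous. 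Continuity immediately gives openness of $F_{[\alpha]}$: its complement is the union, over the non-$\alpha$ clopens $\gamma$ in the $F$-orbit, of the open sets $\{h : h(\alpha) = \gamma\}$. The corollary for $G$ follows at once, since $\rist_G(Y) = G \cap \rist_F(Y)$ is open in $G$ for every clopen $Y$ when $G$ is open in $F$.
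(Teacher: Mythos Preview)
Your argument for (1) is correct and essentially the same as the paper's (Proposition~\ref{prop:full_contains_intersection_rists}): reduce the conjugation axiom to the observation that conjugation by $h$ on $\prod_i \rist_G(\alpha_i)$ factors as the product of conjugations by the $g_i \in G$, and use local decomposability to see that both this subgroup and its $h$-conjugate are open.

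For (2), there is a genuine gap. You correctly reduce to showing that each setwise stabiliser $F_{[\alpha]}$ is open, and you note it has countable index. But your route to the Baire property of $F_{[\alpha]}$ is circular: you propose to use ``Pettis-type automatic continuity for homomorphisms from a Polish group into $\Homeo(X)$'' to deduce that the $K$-action on $X$ is continuous, yet Pettis's theorem requires the homomorphism to be Borel or Baire-measurable first, and that is exactly what you are trying to establish. Nothing in your outline explains why the inclusion $F \hookrightarrow \Homeo(X)$ is measurable with respect to the given abstract Polish topology on $F$. The paper's key idea here (Lemma~\ref{lem:microsupported:centralisers}) is different and avoids measurability entirely: under the (implicit) non-degeneracy hypothesis, $\Full(G)$ is micro-supported, and then one has the purely algebraic identity
\[
F_{[\alpha]} = \N_F\bigl(\CC_F(\rist_F(\alpha^{\perp}))\bigr),
\]
which is automatically closed in \emph{any} Hausdorff group topology. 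Closed of countable index then gives open by the countable index property.

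A second omission: local decomposability requires not only that $\prod_i \rist_F(\alpha_i)$ be open, but that it carry the \emph{product} topology. You do not address this. The paper handles it (Proposition~\ref{prop:Polishfull_implies_locdec}) by noting that each $\rist_F(\alpha_i)$ is closed (hence Polish), so the multiplication map $\prod_i \rist_F(\alpha_i) \to F_{\mc{P}}$ is a continuous bijection between Polish groups, hence a homeomorphism by Pettis.
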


In fact, we can take a somewhat more general perspective, going via monoids of partial homeomorphisms, as set out in Section~\ref{sec:inverse_monoids}.  (This more general perspective is only critical in stating and proving Theorem~\ref{thm:intro_expansive_generation} below; for later results, the reader is free to only think in terms of subgroups of $\Homeo(X)$.)   Let $X$ be a compact zero-dimensional space as before.  
The construction of the piecewise full group $\Full(G)$ of a group $G\leq\Homeo(X)$ can be thought of as having two steps.

First, we take the set $M$ of all restrictions of elements of $G$ to clopen subsets of $X$ (equivalently, to compact and open subsets,  since $X$ is compact and zero-dimensional).  These restrictions belong to the set $\PHomeo_c(X)$ of \defbold{clopen partial homeomorphisms} of $X$, that is, homeomorphisms $f$ such that the domain $\dom(f)$ and the range $\ran(f)$ are clopen subsets of $X$.
The set $\PHomeo_c(X)$ is an inverse monoid where the inverse $f^*$ of $f$ is just the inverse of $f$ as a function\footnote{We use $f^*$ rather than $f^{-1}$ to emphasise that this is an inverse in the sense of semigroup theory, not a group-theoretic inverse.}, and composition of $f,g \in  \PHomeo_c(X)$
is given by
\[
fg: g^{-1}(\dom(f) \cap \ran(g)) \rightarrow f(\dom(f) \cap \ran(g)); \quad x \mapsto f(g(x)).
\]
It is then easy to check that $M$ is an inverse monoid of $\PHomeo_c(X)$.  In fact, it is the inverse submonoid generated by $G$ together with the idempotents of $\PHomeo_c(X)$ (which are just the identity maps on clopen subsets of $X$).  For the rest of the construction, we can actually forget the group $G$ and just use the inverse monoid $M$.  In fact, there is no need to assume $M$ arises as the set of restrictions of some subgroup of $\Homeo(X)$; we could just take $M$ to be any inverse submonoid of $\PHomeo_c(X)$.

Second, given $f,g \in \PHomeo_c(X)$ that are compatible with each other (meaning there is no point $x \in X$ such that $f(x)$ and $g(x)$ are defined and different, and similarly for $f^*,g^*$), then there is a natural `join' $f \vee g$, which is the unique element $h \in \PHomeo_c(X)$ with domain $\dom(f) \cup \dom(g)$ such that both $f$ and $g$ occur as restrictions of $h$.  (More generally, we say a subset $M$ of $\PHomeo_c(X)$ is \defbold{finitely joinable} if $M$ contains the join of any compatible pair of elements of $M$.) The set of joins of restrictions of finitely many elements of $M$ themselves form an inverse monoid, which we call the \defbold{Boolean completion} $\mc{BI}(M)$ of $M$.  Finally, $\Full(M)$ ($=\Full(G)$) is just the group of units of $\mc{BI}(M)$.

We do not develop the theory of topological inverse monoids in full generality, however there is a suitable notion of a `locally decomposable' topology on an inverse submonoid $M$ of $\PHomeo_c(X)$ (Definition~\ref{defn:locally_decomposable}); in the case $M \le \Homeo(X)$ this agrees with the group-theoretic definition.  Starting from a locally decomposable inverse submonoid $M$ of $\PHomeo_c(X)$, we show the topology extends in a natural way to a locally decomposable topology on $\mc{BI}(M)$, which then passes to the subgroup $\Full(M)$:

\begin{thm}[Corollary \ref{cor:piecewise_full_monoid_extension}]\label{thm:intro_BIM}
	Let $M$ be an inverse submonoid of $\PHomeo_c(X)$ equipped with a locally decomposable topology. 
	Then there is a unique Boolean inverse monoid topology (see Definition~\ref{defn:Boolean_inverse:topological}) on $\mc{BI}(M)$ that extends that of $M$. 
\end{thm}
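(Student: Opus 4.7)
The plan is to dispose of uniqueness first, then construct the desired topology via a partition-indexed base and invoke local decomposability to verify that the construction is well-defined and that the monoid operations are continuous.

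Uniqueness is essentially formal. Every $h\in\mc{BI}(M)$ admits a decomposition $h=\bigvee_{\alpha\in\mc{P}}h_\alpha\cdot e_\alpha$, where $\mc{P}$ is a clopen partition of $X$, $h_\alpha\in M$, and $e_\alpha$ is the idempotent of $\PHomeo_c(X)$ supported on $\alpha$. In any Boolean inverse monoid topology on $\mc{BI}(M)$ extending that of $M$, the operations $g\mapsto g\cdot e_\alpha$ and finite join must be continuous, so any neighbourhood of $h$ must contain every join $\bigvee_\alpha g_\alpha\cdot e_\alpha$ with $g_\alpha$ in a sufficiently small neighbourhood of $h_\alpha$ in $M$. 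The existence argument will show these sets already form a neighbourhood base at $h$, which pins the topology down.

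For existence, I would take as prospective basic open neighbourhood of $h=\bigvee_{\alpha\in\mc{P}}h_\alpha\cdot e_\alpha$ the set
\[
N(\mc{P};(U_\alpha))=\Bigl\{\bigvee_{\alpha\in\mc{P}}g_\alpha\cdot e_\alpha : g_\alpha\in U_\alpha\Bigr\}
\]
for $U_\alpha$ ranging over open neighbourhoods of $h_\alpha$ in $M$. There are three things to check: (i) the base at $h$ is independent of the chosen lifts $h_\alpha$ of $h|_\alpha$ to $M$; (ii) for two decompositions of $h$ over partitions $\mc{P}$ and $\mc{P}'$, the resulting collections of neighbourhoods agree after passing to the common refinement $\mc{P}\wedge\mc{P}'$; (iii) the usual nesting axiom for a topological base holds. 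Taking $\mc{P}=\{X\}$ gives $N(\mc{P};U)=U$, so this also shows that $M$ embeds as an open submonoid with its original topology. Continuity of multiplication, involution, finite join, and restriction-by-idempotent in the new topology then all reduce, by passing to a common refinement of the partitions supporting the inputs, to the corresponding continuity statements in $M$.

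The main obstacle is item (i): verifying that the partition-indexed base is genuinely well-defined on $\mc{BI}(M)$, rather than depending on the particular lifts. The point is that two lifts $h_\alpha,h'_\alpha\in M$ of the same partial homeomorphism $h|_\alpha$ satisfy $h_\alpha^{*}h'_\alpha\in M$ and act as the identity on a clopen set containing $\alpha$; by the locally decomposable hypothesis (Definition~\ref{defn:locally_decomposable}), the set of such elements is open in $M$, so the ambiguity in the choice of lift can be absorbed into a translation of the neighbourhood $U_\alpha$. This is the essential use of local decomposability, and the same openness property feeds into (ii) and into the continuity reductions above. Once (i) is secured, the remaining verifications amount to systematic bookkeeping with refinements of partitions.
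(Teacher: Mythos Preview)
Your approach via partition-indexed join neighbourhoods is genuinely different from the paper's, which instead uses translation-based neighbourhoods: the paper first shows (Lemma~\ref{lem:extended_monoid_topology_is_unique}) that in any locally decomposable extension the sets $Nf$, for $N$ an identity neighbourhood in $M$, must form a base at each $f$, then builds the topology on the larger monoid $\mathrm{OP}(M)\supseteq\mc{BI}(M)$ of ``open preservers'' (Theorem~\ref{thm:open_preserver_monoid}) by declaring precisely these sets to be basic and verifying the axioms.  The paper's route yields a bonus (the topology extends beyond $\mc{BI}(M)$ to all of $\mathrm{OP}(M)$), while yours stays closer to the join structure and is more explicitly constructive; both should give the same topology in the end.

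There is, however, a real gap in your justification of item~(i).  You assert that the set of $m\in M$ acting as the identity on a clopen set containing $\alpha$ is open in $M$.  Since $\alpha$ is itself clopen, this is the set $\{m\in M: me_\alpha=e_\alpha\}$; when $M$ is a group it equals $\rist_M(\alpha^\perp)$, which is closed but typically \emph{not} open (e.g.\ in $\Aut(T)$ for a rooted tree it is a closed subgroup of infinite index).  Local decomposability does not say that individual rigid stabilisers are open, only that products over a full partition are.  The correct mechanism is that the restriction map $M\to Me_\alpha$, $m\mapsto me_\alpha$, is \emph{open} (Remark~\ref{rmk:loc_dec_implies_restriction_is_open_map}): hence $U_\alpha e_\alpha$ is open in $Me_\alpha$, and since any alternative lift $h'_\alpha$ satisfies $h'_\alpha e_\alpha=h_\alpha e_\alpha\in U_\alpha e_\alpha$, the preimage of $U_\alpha e_\alpha$ under $m\mapsto me_\alpha$ is an open neighbourhood $U'_\alpha$ of $h'_\alpha$ with $U'_\alpha e_\alpha=U_\alpha e_\alpha$.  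So your base is well-defined, but because the images in $Me_\alpha$ agree, not because the translating element $h_\alpha^*h'_\alpha$ sits in an open set.  With this correction in place the rest of your outline goes through.
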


\begin{rmk}
For readers familiar with the \'{e}tale groupoids approach to piecewise full groups, any inverse submonoid $M$ of $\PHomeo_c(X)$ naturally gives rise to an \'{e}tale groupoid $\mc{M}$, and then $\mc{BI}(M)$ is just the set of clopen bisections of $\mc{M}$.  However, we do not need to consider the groupoid directly in this article.  We find the inverse monoid setting more convenient as we want to endow these objects with a topology and \'{e}tale groupoids by definition already come with a topology.

Another approach that has been pursued extensively in the literature (\cite{MatteBon} is a pertinent example) is to use pseudogroups.  The definition of the pseudogroup $\Psi(M)$ associated to $M$ would differ from the that of the Boolean inverse monoid in that $\Psi(M)$ incorporates joins of arbitrary cardinality and restrictions to any open set.  Thus in general, $\mc{BI}(M)$ is a submonoid of $\Psi(M)$.  Again, our motivation for working with $\mc{BI}(M)$ rather than $\Psi(M)$ is that extending the topology to $\mc{BI}(M)$ is more intuitive.  Since we are working over a zero-dimensional space, in practice the Boolean inverse monoid does not carry any less information than the pseudogroup.
\end{rmk}

We now turn to the alternating full group $\Al(M)$ introduced by Nekrashevych (\cite{Nekra}), which is a subgroup of $\Full(M)$.  The precise definition is given in Section~\ref{ssec:alternating_defn}, but intuitively, $\Al(M)$ is the group generated by the `$3$-cycles' of $\Full(M)$, where a $k$-cycle is an element $g$ of order $k$ supported on the union of $k$ disjoint clopen sets $Y_0,\dots,Y_{k-1}$, such that $gY_i = Y_{i+1}$ (subscripts read modulo $k$).  (Analogously, Nekrashevych's symmetric full group $\Sy(M)$ is generated by the `$2$-cycles'.)  By \cite[Theorem~4.1]{Nekra}, if $X$ is perfect and $\Full(M)$ acts minimally on $X$, then $\Al(M)$ is simple.  Moreover, \cite[Theorem~5.10]{Nekra} gives a sufficient condition for $\Al(M)$ to be finitely generated.  We generalise the latter into a sufficient condition for the closure of $\Al(M)$ in $\Full(M)$ to be \emph{compactly} generated:

\begin{thm}[Theorem \ref{thm:expansive_generation}]\label{thm:intro_expansive_generation}
	Let $\mc{BI}(M)$ be as in Theorem~\ref{thm:intro_BIM}.  If $\mc{BI}(M)$ is compactly generated and every orbit on $X$ has at least 5 points, then $\ol{\Al(M)}$ is compactly generated. 
\end{thm}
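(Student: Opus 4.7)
The plan is to lift the finite generation argument of Nekrashevych for $\Al(M)$ (\cite[Theorem~5.10]{Nekra}) to the compactly generated setting. Starting from a symmetric compact generating set $K$ of $\mc{BI}(M)$ in the topology supplied by Theorem~\ref{thm:intro_BIM}, the aim is to construct from $K$ a compact set $L\subseteq\Al(M)$ of $3$-cycles (together with, if needed, compact auxiliary data within $\Al(M)$) whose closed subgroup is all of $\ol{\Al(M)}$.

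The first step is to use local decomposability to tame the compact set $K$. Because the locally decomposable topology has a neighbourhood basis at the identity given by products of rigid stabilisers over fine clopen partitions, and $K$ is compact, one can pass to a fixed clopen partition $\mc{P}$ of $X$ for which every element of $K$ decomposes as a join of pieces whose domains and ranges are unions of blocks of $\mc{P}$. This reduces the ``combinatorial type'' of each element of $K$ to a finite datum (which block of $\mc{P}$ goes where), so that $K$ naturally splits into finitely many compact subfamilies indexed by this combinatorial shape.

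The second step is a uniform $3$-cycle decomposition: for each $k\in K$ and each block $P\in\mc{P}$ contained in $\dom(k)$, I would realise the partial homeomorphism $k|_P$ as a bounded product of $3$-cycles in $\Al(M)$, using the five-orbit hypothesis to supply two auxiliary pairwise-disjoint clopens (in the same $\Full(M)$-orbits as, and disjoint from, $P\cup k(P)$) on which to base the $3$-cycles. Because the partition $\mc{P}$ is fixed and the cycles are built from $k$ by a finite combinatorial recipe, the resulting family $L$ is compact. Combined with a compact family realising the ``local'' alternating action on $\mc{P}$ (which is essentially a product of ordinary alternating groups on blocks), $L$ generates $\Al(M)$ as a group, and the closure in $\Full(M)$ gives $\ol{\Al(M)}$.

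The main obstacle is the uniformity in step two: the naive decomposition of a partial homeomorphism into $3$-cycles depends on non-canonical choices of auxiliary clopens, and these must be made in a finitely parameterised way so that the resulting set $L$ is compact. This is exactly where local decomposability pays off, since step one has already reduced the parameter space of choices to a finite combinatorial index. I also expect that verifying $L$ together with the local data generates $\Al(M)$, rather than a proper subgroup, will require a careful application of the classical fact that $\mathrm{Alt}(n)$ for $n\ge 5$ is generated by its $3$-cycles, applied blockwise along $\mc{P}$ and then patched together via the cycles built from $K$.
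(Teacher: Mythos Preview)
Your proposal has a genuine gap at the generation step, and the compactness of $L$ (which you identify as the main obstacle) is actually the easier half. The real difficulty is showing that the compact set $L$ of $3$-cycles you build from $K$ generates \emph{every} $3$-cycle of $\Al(M)$. An arbitrary $3$-section $S$ is built from elements $f_1,f_2\in\mc{BI}(M)$ with a common domain $e$; each $f_i$ is a word of some length $n$ in the generators $K$, and you need to induct on $n$. At each step you must cover $S$ by smaller multisections whose alternating groups already lie in $\langle L\rangle$ and then recover $\Alt[S]$ from those. This is precisely where working with $3$-sections fails: if $S_1,S_2$ are two $3$-sections whose idempotents overlap, then $\langle\Alt[S_1],\Alt[S_2]\rangle$ need not contain $\Alt[S_1\vee S_2]$, because $\Alt(3)\cong\bZ/3$ is abelian. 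Concretely, viewing the overlap as a middle coordinate, $\langle(\pi,\pi,1),(1,\pi,\pi):\pi\in\Alt(3)\rangle$ does not contain the diagonal $(\pi,\pi,\pi)$. Your ``patched together via the cycles built from $K$'' is exactly the step that cannot be made to work with $3$-cycles.

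The paper's fix is to use the $5$-point hypothesis to pass to $\Al_5(M)=\Al(M)$ (Proposition~\ref{prop:n-orbits_implies_An=A}) and carry out the entire argument with $5$-sections. Since $\Alt(5)$ is perfect, commutators $[(\pi,\pi,1),(1,\sigma,\sigma)]=(1,[\pi,\sigma],1)$ recover the middle coordinate, and the covering lemma (Lemma~\ref{lem:multisection_cover}) goes through. The compact set is then $K=\overline{\bigcup_{F\in\mc{F}}\Alt[F]}$ for a family $\mc{F}$ of $5$-sections built from a carefully normalised generating set (Lemma~\ref{lem:separating}), and the hard work is an inductive length-reduction (Lemma~\ref{lem:alternating_fg_lengthreduction}) showing that every $5$-section is covered by $5$-sections with $\Alt[F]\subseteq\langle K\rangle$. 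Your step one (normalising $K$ against a fixed partition) is in the right spirit and corresponds to Lemma~\ref{lem:separating}, but you will not be able to avoid the move to degree $5$.
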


Compact generation of $\mc{BI}(M)$ means that there is a compact subset $S$ of $\mc{BI}(M)$, such that every element is a finite join of products (under composition) of elements of $S$.  This notion of compact generation is closely related to the action of $M$ on $X$ being \emph{expansive} (there exists a clopen partition $\mc{P}$ of $X$ such that every pair of distinct points of $X$ can be separated by translates of $\mc{P}$).

In the case that $M=G$ is a \tdlc group acting minimally on $X$, we in fact obtain:
\begin{cor}[Corollary \ref{cor:group_expansive_iff_alternating_compactly_gen}]\label{cor:intro_expansive_generation}
	Let $G\leq \Homeo(X)$ be a locally decomposable \tdlc group acting minimally on $X$. 
	The following are equivalent:
	\begin{enumerate}[(i)]
		\item $G$ contains an open compactly generated subgroup $H$ that acts expansively on $X$ and such that $\Full(H)=\Full(G)$,
		\item $\overline{\Al(G)}$ is compactly generated.
	\end{enumerate}		
\end{cor}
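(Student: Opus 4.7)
The plan is to reduce both implications to Theorem~\ref{thm:intro_expansive_generation} applied with $M=H$, combined with the standard bridge (expected to be developed earlier in the paper) between compact generation of $\mc{BI}(H)$, for a locally decomposable \tdlc group $H\leq\Homeo(X)$, and the conjunction of compact generation of $H$ with expansiveness of its action on $X$.

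For (i)$\Rightarrow$(ii), the subgroup $H$ inherits local decomposability from $G$: for any clopen partition $\mc{P}$ of $X$ one has $\prod_{Y\in\mc{P}}\rist_H(Y)=H\cap\prod_{Y\in\mc{P}}\rist_G(Y)$, which is open in $H$. The orbits of $H$ on $X$ coincide with those of $\Full(H)=\Full(G)$, which in turn coincide with the $G$-orbits, since for any $f\in\Full(G)$ and $x\in X$ there exist a clopen $\alpha\ni x$ and $g\in G$ with $\rest{f}{\alpha}=\rest{g}{\alpha}$, so $fx=gx$. Minimality of $G$ on the Cantor space then forces every orbit to be dense, hence infinite, hence of size at least five. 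The bridge yields that $\mc{BI}(H)$ is compactly generated, and Theorem~\ref{thm:intro_expansive_generation} gives compact generation of $\ol{\Al(H)}$ in $\Full(H)$. Since the alternating full group depends only on the ambient piecewise full group, $\Al(H)=\Al(G)$, and the \tdlc topology on $\Full(H)=\Full(G)$ is unique by Theorem~\ref{thm:intro_localdec}.

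For (ii)$\Rightarrow$(i), I construct $H$ from a compact symmetric generating set $S$ of $\ol{\Al(G)}$. Choose a compact open subgroup $V\leq G$; by Theorem~\ref{thm:intro_localdec}, $V$ is also open in $\Full(G)$, so $S\subseteq FV$ for some finite $F\subset\Full(G)$. Each $f\in F$ is piecewise $G$, so I pick for each $f$ a clopen partition $\{\alpha_{f,i}\}_i$ and elements $g_{f,i}\in G$ satisfying $\rest{f}{\alpha_{f,i}}=\rest{g_{f,i}}{\alpha_{f,i}}$. Let $T\subset G$ be the finite collection of all such $g_{f,i}$ and set $H=\la V,T\ra\leq G$. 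Then $H$ is open and compactly generated; each $f\in F$ is a join of restrictions of elements of $H$ and so lies in $\Full(H)$, giving $\ol{\Al(G)}\subseteq\la V,F\ra\subseteq\Full(H)$.

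The main obstacle is verifying the two remaining requirements on $H$: that $\Full(H)=\Full(G)$ and that $H$ acts expansively on $X$. For the equality of piecewise full groups, the inclusion $\subseteq$ is automatic, so the content is $G\subseteq\Full(H)$. My strategy is to combine minimality with $\ol{\Al(G)}\subseteq\Full(H)$: for $g\in G$ and $x\in X$, pick $h_x\in\ol{\Al(G)}$ with $h_xx=gx$; then $h_x^{-1}g\in\Full(G)$ fixes $x$, and by local decomposability of $\Full(G)$ (inherited from $G$ being open) there is a clopen $\alpha_x\ni x$ on which $h_x^{-1}g$ is trivial, so $\rest{g}{\alpha_x}=\rest{h_x}{\alpha_x}$ is a restriction of an element of $\Full(H)$. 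A compactness argument on $X$ then realises $g$ as a join of such restrictions, placing $g$ in $\Full(H)$. For expansiveness, I expect the common refinement $\mc{Q}$ of the partitions $\{\alpha_{f,i}\}$ to serve as witness, using that $\Full(H)\supseteq\ol{\Al(G)}$ already separates points via $\mc{Q}$; if this does not transfer directly from $\Full(H)$ to $H$, the set $T$ can be enriched by finitely many further elements of $G$ without disturbing compact generation or any other property required in (i).
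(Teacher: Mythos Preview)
Your direction (i)$\Rightarrow$(ii) is correct and matches the paper's argument.

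For (ii)$\Rightarrow$(i), there is a genuine gap. The step ``$h_x^{-1}g\in\Full(G)$ fixes $x$, and by local decomposability of $\Full(G)$ \dots\ there is a clopen $\alpha_x\ni x$ on which $h_x^{-1}g$ is trivial'' is false: local decomposability is a statement about the group topology (products of rigid stabilisers being open), not about individual elements being locally the identity near a fixed point. An element of $\Full(G)$ can fix a point $x$ without fixing any neighbourhood of $x$. This gap is easily repaired: once you have $\Al(G)\subseteq\ol{\Al(G)}\subseteq\Full(H)$, apply Lemma~\ref{lem:alternating_to_full} directly to get $\Full(G)=\Full(\Al(G))\subseteq\Full(\Full(H))=\Full(H)$.

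The expansiveness argument is more problematic. You never establish that $H$ itself, rather than $\Full(H)$, acts expansively, and the suggestion to ``enrich $T$ by finitely many further elements of $G$'' has no content without saying which elements. The paper avoids this by a different route: it first invokes Lemma~\ref{lem:A(G)_quotientspace} to see that $\Al(G)$, hence $\ol{\Al(G)}$, already acts expansively on $X$; then the bridge (Proposition~\ref{prop:subshift_expansive}(ii)) applied to $\ol{\Al(G)}$ yields that $\mc{BI}(\ol{\Al(G)})=\mc{BI}(G)$ is compactly generated; finally the reverse direction of the bridge (Proposition~\ref{prop:subshift_expansive}(i)) extracts a compact identity neighbourhood $S$ in $G$ and a partition $P$ with $S\cup P$ generating $\mc{BI}(G)$, and the proposition itself guarantees that $H=\grp{S}$ acts expansively. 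The key input you are missing is that expansiveness of $\Al(G)$ comes for free from minimality of $G$, and that Proposition~\ref{prop:subshift_expansive}(i) produces the subgroup $H\le G$ with expansiveness built in.
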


For the rest of this introduction, we focus on the setting of \tdlc groups.  Thus we have a \tdlc group $G$ acting faithfully and locally decomposably on $X$; the topology then extends uniquely to a \tdlc group topology on $\Full(G)$; and then there is a subgroup $\Al(G)$, which under certain circumstances is simple and/or compactly generated.  At this level of generality, it is not even clear if $\Al(G)$ will be closed in $\Full(G)$.  However, in the context where the equivalent conditions of Corollary~\ref{cor:group_expansive_iff_alternating_compactly_gen} are satisfied, we have a much stronger restriction:
\begin{thm}[{See Section \ref{sec:A_is_open}}]\label{thm:intro_A_is_open}
	Suppose $G\leq \Homeo(X)$ is \tdlc, locally decomposable and acts mimimally, and that $\overline{\Al(G)}$ is compactly generated.  Then the action of $\Al(G)$ on $X$ is fully compressible.  Moreover, $\Al(G)$ is abstractly simple, open and of countable index in $\Full(G)$, and we have $\Al(G) = \Der(\Full(G))$.
\end{thm}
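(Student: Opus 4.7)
I plan to establish the conclusions in the order (a) $\Al(G) = \Der(\Full(G))$; (b) $\Al(G)$ is open in $\Full(G)$; (c) countable index; (d) full compressibility; (e) abstract simplicity.

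For (a), the inclusion $\Al(G) \subseteq \Der(\Full(G))$ is a direct translation of the classical identity $(a\,b\,c) = [(a\,b),(a\,c)]$: given a 3-cycle $c \in \Al(G)$ cycling disjoint clopen $Y_0, Y_1, Y_2$, the transpositions $\sigma$ (swapping $Y_0 \leftrightarrow Y_1$ via $c|_{Y_0}$ and its inverse) and $\tau$ (swapping $Y_0 \leftrightarrow Y_2$ via $c^{-1}|_{Y_0}$ and its inverse) both lie in $\Full(G)$ and satisfy $c = [\sigma, \tau]$. The reverse inclusion $\Der(\Full(G)) \subseteq \Al(G)$ rests on a sign-type abelian quotient $\Full(G)\twoheadrightarrow A$ with kernel $\Al(G)$, a generalisation of $\Sym(n)/\Alt(n) \cong \Zb/2$, which I expect to be provided by the Boolean inverse monoid framework of the earlier sections.

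For (b), since $\overline{\Al(G)}$ is compactly generated and \tdlc, it contains a compact open subgroup $K$ of $\Full(G)$. Local decomposability allows shrinking $K$ inside a rigid-stabiliser product $\prod_{Y \in \mc{P}} \rist_G(Y)$ for a clopen partition $\mc{P}$ with $|\mc{P}| \geq 5$. The essential technical step is to show that every element of such a product is a finite product of 3-cycles in $\Al(G)$: I would use the expansive compact generating set of $\overline{\Al(G)}$ provided by Theorem~\ref{thm:intro_expansive_generation} to move fragments between cells of $\mc{P}$ and assemble the required 3-cycle decomposition (the continuous analogue of the classical generation of $\Alt_n$ by 3-cycles). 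This places $K \subseteq \Al(G)$, so $\Al(G)$ is open (hence closed, giving $\Al(G) = \overline{\Al(G)}$).

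Openness yields (c) at once: $\Full(G)/\Al(G)$ is discrete, and since both $\Al(G)$ and $G$ are $\sigma$-compact open subgroups of $\Full(G)$, the group $\Full(G)$ is $\sigma$-compact, so the discrete quotient is countable. For (d), given non-empty clopen $U, V \subset X$ (with $U, V \neq X$, the only non-trivial case), minimality and compactness yield a piecewise-full $h \in \Full(G)$ with $h(U) \subseteq V$; a correction $h'\in\Full(G)$ supported in the non-empty clopen $X \setminus U$ can then be chosen, using the small range and local realisability of the abelian quotient $\Full(G)/\Al(G)$, so that $hh' \in \Al(G)$ and $hh'(U)=h(U)\subseteq V$ is preserved. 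Finally, (e) follows from \cite{Nekra}, Theorem~4.1: $X$ is the Cantor space (hence perfect) and the $\Full(G)$-action is minimal (inherited from $G$), so $\Al(G)$ is abstractly simple.

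The main obstacle is step (b): proving that the compact open subgroup lies in the \emph{abstract} group $\Al(G)$ requires a uniform 3-cycle decomposition of rigid-stabiliser products, going beyond Nekrashevych's finite-generation argument by leveraging expansiveness in a quantitative way. A close second is the reverse inclusion in (a), which depends on establishing the sign-like abelianisation of $\Full(G)$ in the required generality within the Boolean inverse monoid framework.
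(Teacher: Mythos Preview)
Your order of attack is essentially inverted relative to what actually works, and the gaps you flag as ``obstacles'' are in fact fatal to the plan as stated. The paper's argument proceeds in the opposite direction: first it shows (using the non-discreteness of $G$, which you never invoke) that $\Full(G)$ is robustly monolithic in the sense of \cite{CRW-DenseLC}, whence the structure theory of that class (Lemma~\ref{lem:simple_dynamics}) forces $\overline{\Al(G)}=\TMon(\Full(G))$ to act \emph{compressibly} on $X$. Compressibility is thus the input, not the output; from it, Theorem~\ref{thm:loc_decomp_compressible} (taken from \cite{GR_compressible}) gives openness of the monolith, and Proposition~\ref{prop:Matui} gives $\Mon(\Full(G))=\Der(\Full(G))$ simple. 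Only then does $\Al(G)=\Der(\Full(G))$ follow, by combining this with Nekrashevych's identification $\Al(G)=\Mon(\Full(G))$.

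Concretely, your steps (a), (b), (d) each fail independently. In (a), there is no general ``sign-type'' homomorphism with kernel $\Al(G)$; the quotient $\Full(G)/\Al(G)$ need not be $\Zb/2$, and the paper obtains $\Der(\Full(G))\subseteq\Al(G)$ only after compressibility is established. In (b), an element of $\rist_G(Y)$ has no reason to decompose as a product of $3$-cycles---even in $\Sym(n)$ this requires the element to be even, so you are presupposing exactly the abelianisation you failed to produce in (a); expansivity does not help here. In (d), the assertion ``minimality and compactness yield $h\in\Full(G)$ with $h(U)\subseteq V$'' is simply false: for a minimal $\Zb$-subshift, $\Full(G)$ preserves the unique invariant measure and cannot compress a set of large measure into a small one. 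This is precisely why the non-discrete hypothesis and the $\ms{R}$-machinery are indispensable.
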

An action of a group $G$ on $X$ is called \defbold{fully compressible} (also known as extremely proximal) if for any two proper nonempty clopen subsets $Y, Z \subset X$, there exists $g \in G$ such that $gY \subset Z$.  The fact that we have a fully compressible action, which is an application of results in \cite{CRW-Part2}, is the key to proving the other conclusions of Theorem~\ref{thm:intro_A_is_open}.  Some variations on the theme of fully compressible actions are explored in a companion article, \cite{GR_compressible}; that article also contains some of the ingredients for the proof of Theorem~\ref{thm:intro_A_is_open}.

For the rest of this introduction, let us consider the class $\ms{A}$ of \tdlc groups defined as follows: we say $G \in \ms{A}$ if there is a non-discrete \tdlc group $F$, with faithful, minimal, locally decomposable, piecewise full action on $X$, such that $\Al(F)$ is a compactly generated open subgroup of $F$, and such that $\Al(F) \le G \le F$.  (For all groups $G \in \ms{A}$, the action of $G$ on $X$ is uniquely specified by the group structure and $G$ carries a unique Polish group topology, which is locally compact; see Theorem~\ref{thm:A_properties}.)
We recall also the class $\ms{S}_{\ldlat}$ of \tdlc groups that are non-discrete, compactly generated, simple and admit a faithful locally decomposable action on $X$.
Thus in the conclusion of Theorem~\ref{thm:intro_A_is_open}, $\Full(G)$ belongs to the class $\ms{A}$, while $\Al(G)$ belongs to $\ms{A} \cap \ms{S}_{\ldlat}$.

We can prove some special properties of the groups in $\ms{A}$, beyond what is known (or even true) more generally in $\ms{S}_{\ldlat}$; we highlight two results here.  The first is a straightforward application of \cite[Theorem~1.3]{CLBMB}, showing that we are in a very different dynamical situation from that of the Juschenko--Monod theorem.

\begin{thm}[{Theorem~\ref{thm:CLBMB}}]\label{thm:intro_CLBMB}
	Let $G \in \ms{A}$.  Then $G$ acts freely on its Furstenberg boundary.  Equivalently, every $G$-conjugacy class of closed relatively amenable subgroups of $G$ contains the trivial group in its $\mathrm{Sub}(G)$-closure.
\end{thm}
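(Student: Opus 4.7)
The plan is to derive the conclusion directly from the criterion of Caprace--Le Boudec--Matte Bon \cite[Theorem~1.3]{CLBMB}, which asserts that a \tdlc group admitting a sufficiently rich micro-supported minimal action on a compact space acts freely on its Furstenberg boundary. So the task reduces to verifying that the natural action of $G \in \ms{A}$ on the Cantor space $X$ supplies all the dynamical input required by that criterion.

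First I would unpack the definition of $\ms{A}$: there is a non-discrete \tdlc group $F$ with a faithful, minimal, locally decomposable, piecewise-full action on $X$, with $\Al(F)$ compactly generated, open in $F$, and with $\Al(F) \leq G \leq F$. In particular, $G$ inherits faithful and minimal action on $X$. Applying Theorem~\ref{thm:intro_A_is_open} to $F$ shows that $\Al(F)$ acts fully compressibly on $X$; since $\Al(F) \leq G$, the $G$-action is also fully compressible, which is the extreme proximality condition used by CLBMB.

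Next I would verify that the $G$-action on $X$ is micro-supported, i.e.\ $\rist_G(U) \neq \{1\}$ for every non-empty clopen $U \subseteq X$. This follows because $\Al(F)$ is generated by $3$-cycles of arbitrarily small support: for any non-empty clopen $U$, the minimality of $F$ together with the perfectness of $X$ lets one choose three pairwise disjoint non-empty clopen subsets of $U$ lying in a common $F$-orbit, and the associated $3$-cycle lies in $\Al(F) \cap \rist_G(U)$. With faithful, minimal, micro-supported and extremely proximal action in hand, \cite[Theorem~1.3]{CLBMB} delivers the free action of $G$ on its Furstenberg boundary. The equivalent reformulation in terms of $G$-conjugacy classes of closed relatively amenable subgroups accumulating on the trivial group in $\mathrm{Sub}(G)$ is the corresponding Chabauty-space characterisation proved in the same paper.

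The main obstacle I anticipate is bookkeeping rather than genuine mathematics: one must line up the precise hypothesis list of \cite[Theorem~1.3]{CLBMB} with the dynamical data supplied by $\ms{A}$, in particular checking that the flavour of ``extremely proximal'' or ``boundary-like'' action assumed there is implied by our full compressibility plus non-discreteness, and that the micro-supportedness is stated at the level of $G$ itself rather than of an auxiliary subgroup or quotient. Given how rigid the combination of full compressibility, minimality on the perfect Cantor space $X$, and the explicit supply of $3$-cycles from $\Al(F)$ is, this matching should be straightforward.
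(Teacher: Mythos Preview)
Your approach is essentially the paper's, but there is a genuine gap in the matching you dismiss as bookkeeping. The hypothesis of \cite[Theorem~1.3]{CLBMB} is not ``faithful, minimal, micro-supported and extremely proximal''; it is that the action be \emph{piecewise minimal-strongly-proximal}, meaning that for every non-empty clopen $U \subseteq X$ the rigid stabiliser $\rist_G(U)$ acts on $U$ both minimally and strongly proximally. Showing $\rist_G(U) \neq \{1\}$ falls well short of this: you need each rigid stabiliser to have a rich enough action on its own support, not merely to exist.

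The paper fills this in as follows. Minimality of $\rist_G(Y)$ on $Y$ comes from Lemma~\ref{lem:support_split}(ii), using that the $G$-action is minimal and alternatable. For strong proximality, the paper shows that $\rist_G(Y)$ acts fully compressibly on $Y$: given non-empty clopens $Z,W \subseteq Y$, the full compressibility of $G$ on $X$ gives $g \in G$ with $gZ \subseteq W$, and one then corrects $g$ to an element of $\Al(G)$ supported inside $Y$ (using two extra disjoint translates of $Z$ inside $Y$) so as to land in $\rist_G(Y)$. This is short but it is an actual argument, not a definitional unwinding, and it is exactly the step your proposal is missing.
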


The second is a strong restriction of actions of groups $G \in \ms{A}$ on hyperbolic spaces, which builds on recent work of Balasubramanya, Fournier-Facio and Genevois \cite{BFG}.  This obstacle to actions on hyperbolic spaces is notable given that many other examples of groups in $\ms{S}_{\ldlat}$ (such as the subgroup of index $2$ of $\Aut(T)$) have been constructed as groups acting geometrically densely on a tree.

\begin{thm}[See~\ref{sec:hyperbolic}]\label{thm:intro_hyperbolic}
Let $G \in \ms{A}$.  Let $Y$ be a hyperbolic space, and suppose $G$ acts on $Y$ by isometries.  Then the action of $G$ is not of general type: that is, any two loxodromic elements have a common limit point on the visual boundary.  If moreover $G$ is perfect (for instance, taking $G = \Der(H)$ for any $H \in \ms{A}$), then the action of $G$ has no loxodromic elements; in particular, $G$ is one-ended.
\end{thm}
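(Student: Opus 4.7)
The plan is to reduce to the case $G=\Al(F)$, dispatch it via a commuting-conjugates criterion of Balasubramanya--Fournier-Facio--Genevois, extend to arbitrary $G\in\ms{A}$ using that $\Al(F)\trianglelefteq G$, and rule out loxodromic elements in the perfect case by a cocycle argument on $\partial Y$.

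First I would observe that $\Al(F)$ is generated by $3$-cycles, and that any two $3$-cycles with disjoint supports commute in $\Homeo(X)$. By Theorem~\ref{thm:intro_A_is_open}, the action of $\Al(F)$ on $X$ is fully compressible and minimal, so given any $3$-cycle $\sigma$ with support $U\subsetneq X$ one can find an infinite sequence $(g_n)\subset \Al(F)$ with the translates $g_n U$ pairwise disjoint from each other and from $U$; the conjugates $g_n \sigma g_n\inv$ then pairwise commute and commute with $\sigma$. Thus each element of a canonical generating set has infinitely many commuting conjugates, which is the input required by the \cite{BFG} criterion ruling out general type actions on hyperbolic spaces; hence no isometric action of $\Al(F)$ on any hyperbolic space is of general type. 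To transfer this to $G\in\ms{A}$, I would use the standard dichotomy for normal subgroups: if $N\trianglelefteq G$ acts on a hyperbolic space $Y$, then either $N$ has bounded orbits or the limit sets of $N$ and $G$ on $\partial Y$ agree. In the second alternative a general type $G$-action would force a general type $\Al(F)$-action, contradicting the previous step. In the first alternative, simplicity of $\Al(F)$ (Theorem~\ref{thm:intro_A_is_open}) together with an analysis of the $\Al(F)$-quasi-centre prevents $G$ from admitting two loxodromic elements with disjoint fixed-point sets on $\partial Y$: any $G$-loxodromic element would force $\Al(F)$ to fix an infinite $G$-orbit of boundary points, incompatible with the bounded/elementary behaviour of $\Al(F)$.

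For the loxodromic-free conclusion when $G$ is perfect, suppose for contradiction that there is a loxodromic element $g\in G$. Since the $G$-action is not of general type, all loxodromic elements of $G$ fix a common pair (lineal case) or a single common point (focal or parabolic case) on $\partial Y$; in either case one obtains a continuous homomorphism $\chi\colon G\to\Rb$ given by signed translation length along the common axis or by the Busemann cocycle at the common fixed point, whose value $\chi(g)$ equals the (non-zero) translation length of $g$. Since $G=[G,G]$, every such $\chi$ must vanish, a contradiction. Hence $G$ has no loxodromic element on any isometric action on a hyperbolic space; in particular, $G$ admits no non-trivial splitting over a compact open subgroup (any such splitting would supply a loxodromic element on the associated Bass-Serre tree), so $G$ is one-ended by standard end theory of compactly generated \tdlc groups. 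The main obstacle I anticipate is the transfer step from $\Al(F)$ to $G$: pinning down the precise form of the \cite{BFG} commuting-conjugates criterion applicable here, and handling the bounded-orbit subcase of the normal-subgroup dichotomy cleanly, by combining the rigid local structure of $\Al(F)$'s action on $X$ with the coarse geometry of the $G$-action on $Y$.
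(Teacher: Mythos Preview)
Your reduction to $\Al(F)$ via commuting conjugates, followed by a transfer to $G$ using normality, is broadly the right strategy for (NGT) and matches the spirit of \cite{BFG}.  The paper short-circuits your transfer step by quoting \cite[Corollary~3.5]{GR_compressible}, which already gives (NGT) directly for every $G_0$ with $\Der(F) \le G_0 \le F$.  Your bounded-orbit case can in fact be finished cleanly: since $G/\Al(F)$ is abelian, any two $G$-loxodromics $g_1,g_2$ satisfy $[g_1,g_2]\in\Al(F)$, while in a general type action a ping-pong pair can be chosen so that $[g_1,g_2]$ is itself loxodromic, contradicting ellipticity of $\Al(F)$.  So this half is essentially correct, just underspecified.

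The genuine gap is in your (NL) argument.  For lineal or focal actions on a general hyperbolic space, the Busemann function at the fixed boundary point yields only a \emph{homogeneous quasi-morphism} $\chi\colon G\to\Rb$, not a homomorphism; perfection of $G$ kills homomorphisms to $\Rb$ but not quasi-morphisms (a perfect group can have non-trivial second bounded cohomology, and conversely any unbounded homogeneous quasi-morphism on $G$ gives rise to a lineal action with a loxodromic element).  So ``$G$ perfect $\Rightarrow$ $\chi\equiv 0$'' does not follow, and your contradiction collapses.  The paper closes this gap with an extra ingredient you are missing: the uniform simplicity theorem of Gal--Gismatullin \cite[Theorem~5.1]{GalGis}, which shows every non-trivial element of $\Der(F)$ is a product of at most $9$ elements of $C^{\pm 1}$ for any non-trivial conjugacy class $C$.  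This bounded conjugation width forces all quasi-morphisms to be bounded and is exactly the hypothesis of \cite[Corollary~2.18]{BFG}, whence (NL).  Your one-endedness deduction via Bass--Serre trees is fine once (NL) is established, and the paper derives it similarly (citing \cite[Corollary~3.4]{GR_compressible}).
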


On the other hand, if we ask which \emph{local} isomorphism classes of \tdlc group include groups in $\ms{A}$, we find they can be characterised in ways that do not directly invoke the piecewise full property.  (Recall that two \tdlc groups $G$ and $H$ are \defbold{locally isomorphic} if there exist open subgroups $U \le G$ and $V \le H$ such that $U \cong V$.)

A \tdlc group $G$ is \defbold{expansive} if there is some identity neighbourhood $U$ in $G$ such that $\bigcap_{g \in G}gUg\inv = \triv$; we say $G$ is \defbold{regionally expansive} if some compactly generated open subgroup of $G$ is expansive.  A \defbold{robustly monolithic} group is a \tdlc group $G$ with a closed normal subgroup $M$, such that $M$ is non-discrete, regionally expansive and topologically simple, with trivial centraliser in $G$.  For example, it is clear from the results so far that every $G \in \ms{A}$ is robustly monolithic.  
The class $\ms{R}$ of robustly monolithic groups was introduced in \cite{CRW-DenseLC}, where it was shown that groups $G \in \ms{R}$ have some structural properties generalising those of groups in $\ms{S}$.  It is currently unknown if every group in $\ms{R}$ is locally isomorphic to a group in $\ms{S}$.  However, if we write $\ms{R}_{\ldlat}$ for the groups in $\ms{R}$ with faithful locally decomposable action, we find the following.

\begin{thm}[{See Theorem~\ref{thm:locally_in_A}}]\label{thm:intro_locally_in_A}
Let $G$ be a compactly generated \tdlc group, and suppose that no non-trivial element of $G$ has open centraliser.  Let $Y$ be the Stone space of $\ldlat(G)$ (see Definition~\ref{defn:ldlat}).  Then the following are equivalent:
\begin{enumerate}[(i)]
\item $G$ is an open subgroup of a group in $\ms{A}$;
\item $G$ is locally isomorphic to a group in $\ms{S}_{\ldlat}$;
\item $G$ is locally isomorphic to a group in $\ms{R}_{\ldlat}$;
\item $G$ acts faithfully on $Y$, and there are compactly generated \tdlc groups $A$ and $B$ locally isomorphic to $G$, such that $A$ is expansive and has no non-trivial discrete normal subgroup, while $B$ admits a faithful minimal micro-supported action on a compact zero-dimensional space.
\end{enumerate}
\end{thm}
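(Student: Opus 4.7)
The plan is to prove the cycle $(i) \Rightarrow (ii) \Rightarrow (iii) \Rightarrow (iv) \Rightarrow (i)$, with the last implication being where the real work lies.

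The first implication $(i) \Rightarrow (ii)$ is nearly immediate from Theorem~\ref{thm:intro_A_is_open}: if $G$ is open in some $F \in \ms{A}$, then $\Al(F) = \Der(F)$ is open, abstractly simple, compactly generated and (by definition of $\ms{A}$) admits a faithful locally decomposable action, so $\Al(F) \in \ms{S}_{\ldlat}$; then $G \cap \Al(F)$ is a common open subgroup witnessing the local isomorphism. The implication $(ii) \Rightarrow (iii)$ reduces to the inclusion $\ms{S}_{\ldlat} \subseteq \ms{R}_{\ldlat}$: any $S \in \ms{S}_{\ldlat}$ is its own monolith with trivial centralizer and is regionally expansive, the latter following from local decomposability and compact generation.

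For $(iii) \Rightarrow (iv)$, fix $H \in \ms{R}_{\ldlat}$ locally isomorphic to $G$, with monolith $M$. The crucial observation is that the lattice $\ldlat(G)$ and hence its Stone space $Y$ depend only on the local isomorphism class of $G$; faithfulness of the $G$-action on $Y$ then follows from the hypothesis that no non-trivial element of $G$ has open centralizer, because the kernel of the canonical action on $Y$ is contained in the quasi-centre. To obtain $A$, we exploit regional expansiveness of $M$ to extract a compactly generated expansive open subgroup; absorbing it into a compactly generated \tdlc group locally isomorphic to $G$ gives the desired $A$, with no non-trivial discrete normal subgroup coming from the trivial centralizer of $M$ in $H$. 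For $B$ we may take $H$ (or a suitable extension), whose faithful locally decomposable action restricted to a minimal closed invariant subset supplies the required faithful minimal micro-supported action.

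The main implication $(iv) \Rightarrow (i)$ proceeds by building $F := \Full(G; Y)$ and showing $F \in \ms{A}$ with $G$ open in $F$. Three properties of the $G$-action on $Y$ must be verified: (a) local decomposability, which is essentially tautological from the definition of $\ldlat(G)$ together with Stone duality, since rigid stabilizers corresponding to lattice elements are open by construction; (b) minimality, which is transported from the minimal micro-supported action of $B$ via the local-isomorphism invariance of $Y$; and (c) expansiveness, which is inherited from the expansiveness of $A$ together with compact generation of $G$, after identifying compactly generated open subgroups of $A$ and $G$. Once these are in hand, Theorem~\ref{thm:intro_localdec}(1) endows $F = \Full(G; Y)$ with a \tdlc topology in which $G$ is open; Corollary~\ref{cor:intro_expansive_generation} then gives that $\ol{\Al(F)}$ is compactly generated; and Theorem~\ref{thm:intro_A_is_open} finally promotes $\Al(F)$ to an open, abstractly simple subgroup of $F$, placing $F$ in $\ms{A}$. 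The hardest step will be transferring minimality and expansiveness across the local isomorphism class to the canonical action on the Stone space of $\ldlat(G)$, since locally isomorphic groups need not act on the same Cantor space \emph{a priori}; identifying these actions with the canonical one on $Y$ is where the machinery around $\ldlat$ and its Stone space must be deployed most carefully.
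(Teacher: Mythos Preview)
Your scheme of implications matches the paper's, but the execution of $(iv) \Rightarrow (i)$ has a genuine gap: you propose to take $F = \Full(G;Y)$ with $Y = \mf{S}(\ldlat(G))$, verifying that the $G$-action on $Y$ is minimal and expansive. Neither of those properties is available for $G$. Minimality and expansiveness are supplied by the auxiliary groups $B$ and $A$ respectively, and these are only \emph{locally} isomorphic to $G$; there is no mechanism by which minimality or expansiveness of one group's action transfers to another group that merely shares an open subgroup. Concretely, the $G$-action on $Y$ may well have a proper closed invariant subset, and $G$ need not even be expansive as an abstract \tdlc group.

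The paper resolves this by working inside $L = \ms{L}(G)$, where (after dividing by quasicentres) images of $A$, $B$ and $G$ all sit as open subgroups. One then forms compactly generated open subgroups $H_i \ge \grp{A,B,G}$, which inherit group-expansiveness from $A$ and minimality on $\Omega_L$ (hence on $Y$) from $B$ via Lemma~\ref{lem:centraliser_lattice_minimal}. There is still a second obstacle you overlooked: group-expansiveness of $H_i$ does \emph{not} give expansiveness of the $H_i$-action on $Y$ itself. The paper invokes Lemma~\ref{lem:expansive_quotient} to pass to a quotient $Y_{ij}$ of $Y$ on which $H_i$ acts faithfully and expansively, and only then forms $F_{ij} = \grp{H_i, \Al(H_i; Y_{ij})}$; this $F_{ij}$ lies in $\ms{A}$ and contains $G$ as an open subgroup. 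So the group in $\ms{A}$ you are looking for acts on a quotient of $Y$, not on $Y$ itself; your candidate $\Full(G;Y)$ is the wrong object.
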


\begin{cor}[See Corollary~\ref{cor:direct_powers}]
Let $G$ be a compactly generated group in $\ms{R}$, and write $G^n$ for the direct product of $1 < n < \infty$ copies of $G$.

If $G$ has a faithful locally decomposable action, then $G \wr \Sym(n)$ is an open subgroup of a group in $\ms{A}$, and hence $G^n$ is locally isomorphic to a group in $\ms{S}$.  Otherwise, $G^n$ is not locally isomorphic to any group in $\ms{R}$.
\end{cor}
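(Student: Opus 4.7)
For the first assertion, suppose $G$ acts faithfully and locally decomposably on a Cantor space $X$. Form the disjoint union $Y := X_1 \sqcup \cdots \sqcup X_n$ of $n$ copies of $X$, which is again a Cantor space. The wreath product $H := G \wr \Sym(n) = G^n \rtimes \Sym(n)$ acts naturally on $Y$, with the $i$-th direct factor of $G^n$ acting on $X_i$ by the given $G$-action (and trivially on the other copies) and $\Sym(n)$ permuting the $X_i$ via fixed homeomorphisms. The plan is to verify that this action is faithful and locally decomposable (the latter by refining any clopen partition of $Y$ to one subordinate to $\{X_1, \ldots, X_n\}$, whose rigid stabilizer in $H$ factors as a product of rigid stabilizers of $G$-actions on clopen subsets of the $X_i$ and is therefore open) and that $H$ satisfies the hypotheses of Theorem \ref{thm:intro_locally_in_A} (compact generation is immediate, while the absence of open centralizers is inherited from $G \in \ms{R}$ through direct powers and finite extensions). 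The action on $Y$ then directly witnesses condition (iv), so condition (i) holds: $H$ is an open subgroup of some $F \in \ms{A}$. Since $G^n$ has index $n!$ in $H$, it is open in $F$, and by Theorem \ref{thm:intro_A_is_open} the subgroup $\Al(F)$ is open in $F$ and lies in $\ms{S}_{\ldlat}$; therefore $G^n$ is locally isomorphic to $\Al(F) \in \ms{S}$.

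For the second assertion, I would argue by contrapositive: assume that $G^n$ is locally isomorphic to some $R \in \ms{R}$, and aim to produce a faithful locally decomposable action of $G$. Since $G^n$ is compactly generated and has no non-trivial element with open centralizer, Theorem \ref{thm:intro_locally_in_A} applies. The key step would be to upgrade ``locally isomorphic to a group in $\ms{R}$'' to condition (iii) of the theorem; once this is done, $G^n$ acts faithfully on the Stone space $Y'$ of $\ldlat(G^n)$. The decomposition $\ldlat(G^n) \cong \ldlat(G)^n$, reflecting the direct-product structure, then exhibits $Y'$ as a disjoint union of $n$ copies of the Stone space of $\ldlat(G)$, with each direct factor $G$ acting faithfully and locally decomposably on one copy; this contradicts the assumption on $G$.

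The main obstacle is the upgrade step in the preceding paragraph. The plan would be to leverage the direct-product structure: any open subgroup $U_1 \times \cdots \times U_n$ of $G^n$ has the property that $U_i$ is centralized by the open subgroup $\prod_{j \neq i} U_j$, and this commuting-factor data is transported by the local isomorphism to $R$. Combined with the regional expansiveness and topological simplicity of the monolith of $R$, this rich commuting structure should suffice to force a faithful locally decomposable action of $R$ on some compact zero-dimensional space, placing $R$ in $\ms{R}_{\ldlat}$. Establishing $\ldlat(G^n) \cong \ldlat(G)^n$ is a secondary technical point, to be derived from the interaction of the lattice of local decompositions with direct products under the standing centralizer condition.
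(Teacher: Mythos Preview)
Your approach to the first assertion is essentially the paper's: build the action of $G \wr \Sym(n)$ on the disjoint union of copies of $X$ and feed it into the characterisation theorem. One point to tighten: you should choose $X$ so that the $G$-action is minimal and micro-supported (for instance, take $X$ to be the Stone space coming from the centraliser or decomposition lattice of $G$, using that $G \in \ms{R}$), and you should note that $G$ is expansive (since $G$ is compactly generated and regionally expansive). Then the action on the disjoint union is minimal, micro-supported and expansive, and condition (iv) is genuinely verified. The paper does exactly this, invoking the minimal compressible action supplied by the structure theory of $\ms{R}$.

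For the second assertion, your plan has a real gap. The ``upgrade step'' you identify as the main obstacle is not available: Theorem~\ref{thm:intro_locally_in_A} gives equivalences among conditions that all already sit inside the locally decomposable world (its condition (iii) is $\ms{R}_{\ldlat}$, not $\ms{R}$), so you cannot use it to pass from ``locally isomorphic to something in $\ms{R}$'' to ``locally isomorphic to something in $\ms{R}_{\ldlat}$''. Your sketch for forcing this upgrade from the commuting-factor structure is too vague to be a proof; in effect it is trying to prove the very statement at issue.

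The paper's argument for (ii) is quite different and does not go through the characterisation theorem at all. It works directly with the decomposition lattice: since $G$ is not faithful locally decomposable, the kernel $K$ of $G$ on $\ldlat(G)$ contains $\TMon(G)$, hence is non-discrete; then $K^n$ lies in the kernel of $G^n$ on $\ldlat(G^n)$, so that kernel is non-discrete. As $\ldlat$ is a local invariant, any $H$ locally isomorphic to $G^n$ also fails to act faithfully on $\ldlat(H)$. On the other hand, because $n>1$ the lattice $\ldlat(H) = \ldlat(G^n)$ is non-trivial, and for any $H \in \ms{R}$ the action on the Stone space of a non-trivial quotient of $\Omega_H$ (in particular $\mf{S}(\ldlat(H))$) is faithful, by the structural lemmas for $\ms{R}$. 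This gives the contradiction directly. Your decomposition $\ldlat(G^n) \cong \ldlat(G)^n$ is in the right spirit, but the paper only needs the much softer facts that the kernel is non-discrete and the lattice is non-trivial.
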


The local isomorphism class of a \tdlc group is witnessed by a profinite group.  An accessible source of candidates for such profinite groups, for which there is significant existing literature to draw on, is the class of profinite branch groups: these are compact groups acting continuously on rooted locally finite trees, such that the action on the boundary is transitive and locally decomposable.  For these groups, some of the conditions of Theorem~\ref{thm:intro_locally_in_A}(iv) are automatically satisfied, so we have the following.

\begin{cor}
Let $U$ be a profinite branch group.  Then the following are equivalent:
\begin{enumerate}[(i)]
\item $U$ is an open subgroup of a group in $\ms{A}$;
\item $U$ is locally isomorphic to a group in $\ms{S}_{\ldlat}$;
\item $U$ is locally isomorphic to a group in $\ms{R}_{\ldlat}$;
\item There is a compactly generated \tdlc group $A$ locally isomorphic to $U$, such that $A$ is expansive and has no non-trivial discrete normal subgroup.
\end{enumerate}
\end{cor}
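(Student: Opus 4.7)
The plan is to derive this corollary directly from Theorem~\ref{thm:intro_locally_in_A}, by checking that a profinite branch group $U$ acting on its rooted tree $T$ automatically satisfies both the standing hypotheses of that theorem and the extra clauses in its condition (iv): faithfulness of the action on the Stone space $Y$ of $\ldlat(U)$, and the existence of a ``micro-supported witness'' group $B$.

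Compact generation of $U$ is immediate since $U$ is compact.  For the quasi-central hypothesis I would use that the level stabilisers $\mathrm{St}_U(L_n)$ form a base of identity neighbourhoods, each containing $R_n := \prod_{v \in L_n}\rist_U(v)$ as a finite-index subgroup by the branch property.  Given $g \neq 1$, pick $\xi \in \partial T$ with $g\xi \neq \xi$, and choose a vertex $v$ on the ray to $\xi$ deep enough that $gU_v \cap U_v = \emptyset$; then take a non-trivial $h \in \rist_U(v)$ (which exists by the branch hypothesis) and some $\eta \in U_v$ with $h\eta \neq \eta$.  Since $g\eta \notin U_v$ is fixed by $h$, we obtain $hg(\eta) = g\eta \neq g(h\eta) = gh(\eta)$, so $h \notin \CC_U(g)$.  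Hence $\rist_U(v) \not\subseteq \CC_U(g)$ for all sufficiently deep $v$ on this ray, giving $R_n \not\subseteq \CC_U(g)$ and $\mathrm{St}_U(L_n) \not\subseteq \CC_U(g)$ for all large $n$, so $\CC_U(g)$ is not open.

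The witness $B$ in clause (iv) of Theorem~\ref{thm:intro_locally_in_A} can then be taken to be $U$ itself: its natural branch action on $\partial T$ is faithful, transitive on each level (hence minimal), locally decomposable by the definition of branch group, and micro-supported because $\rist_U(v) \neq \triv$ for every $v$.  For the faithfulness of $U$ on $Y$, the cylinders $U_v$ belong to $\ldlat(U)$ as supports of the rigid stabilisers $\rist_U(v)$, so the inclusion of the Boolean algebra of clopens of $\partial T$ into $\ldlat(U)$ yields a continuous $U$-equivariant surjection $Y \twoheadrightarrow \partial T$; by equivariance, any $g$ acting trivially on $Y$ acts trivially on $\partial T$, so faithfulness on $\partial T$ lifts to faithfulness on $Y$.

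With these observations, the four conditions of the corollary match those of Theorem~\ref{thm:intro_locally_in_A} once the automatic clauses are suppressed: in one direction we forget the extra data, and in the other we supply $B = U$ and the $Y$-faithfulness for free.  The stated equivalences then follow directly.  The main obstacle is the quasi-central verification, which rests on the dynamics of the branch action rather than on general \tdlc structure theory; everything else is a matter of recognising $U$ itself as a witness for the remaining parts of the theorem's condition (iv).
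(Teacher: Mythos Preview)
Your proposal is correct and follows essentially the same route as the paper: apply Theorem~\ref{thm:intro_locally_in_A} to $G=U$ after verifying that a profinite branch group automatically satisfies the standing hypotheses (compact generation, trivial quasicentre) and the ``extra'' clauses of condition~(iv) (faithfulness on $Y$ and existence of the micro-supported witness $B$), so that only the expansive witness $A$ remains as a genuine condition. The paper packages the quasicentre and faithfulness checks into Lemma~\ref{lem:branch_ldlat}, whereas you reprove them directly; your arguments for both are sound and match those in that lemma's proof.
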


We will discuss in Section~\ref{sec:examples} some examples of piecewise full groups arising from group actions in trees, including starting from a profinite branch group.  These examples do not stray far from the prior literature, and are mainly given to illustrate the concepts of the present article in a familiar setting.  The authors plan to construct more interesting new examples in a future article.

\subsection{Structure of the article}

In Section~\ref{sec:groups} we establish the importance of local decomposability for piecewise full groups, as summarised in Theorem~\ref{thm:intro_localdec}.  This section is not formally required for the rest of the article, as analogous results will be proved in greater generality in Section~\ref{sec:inverse_monoids}.  However, we have included Section~\ref{sec:groups} in order to give a purely group-theoretic explanation of why Polish piecewise full groups must be locally decomposable.

In Section~\ref{sec:inverse_monoids}, we construct the topological inverse monoid $\mc{BI}(M;X)$ described in Theorem~\ref{thm:intro_BIM} and establish some basic properties of it; this construction then allows us, in Section~\ref{sec:compact_gen}, to adapt the proof of Nekrashevych's finite generation theorem ( \cite[Theorem~5.10]{Nekra}) to prove our Theorem~\ref{thm:intro_expansive_generation} and Corollary~\ref{cor:intro_expansive_generation}.

In Section~\ref{sec:A_is_open} we collect together some preliminaries on compressible actions (including from \cite{GR_compressible}) and their role in the structure of \tdlc groups, leading to the conclusion summarised in Theorem~\ref{thm:intro_A_is_open}.

Section~\ref{sec:examples} is an examples section to illustrate the theory up to this point, with a focus on groups with a compact open branch subgroup.  We also recall the group $\ms{L}(G)$ of germs of automorphisms of a \tdlc group $G$, which will be useful in Section~\ref{sec:A and S}.

Section~\ref{sec:alternatable} is a short section exploring the structure of automorphisms of $\Full(G;X)$ and $\Al(G;X)$ when $G$ acts minimally on a compact zero-dimensional space $X$.  The group topology plays no role in this section and the novel results are not critical for later sections.

In Section~\ref{sec: alternatable almost simple} we introduce the class $\ms{A}$ of simple-by-abelian \tdlc groups and prove some properties common to all groups in this class, including Theorems~\ref{thm:intro_CLBMB} and~\ref{thm:intro_hyperbolic}.

In Section~\ref{sec:A and S} we consider which local isomorphism classes of \tdlc groups include groups in $\ms{A}$, as summarised above by Theorem~\ref{thm:intro_locally_in_A} and its corollaries.

The article finishes with a list of open questions (Section~\ref{sec:questions}).

%

\section{Topologies on piecewise full groups of group actions}\label{sec:groups}

		In this section, we will show that piecewise full Polish groups are locally decomposable, and that conversely, given a faithful locally decomposable action of a Polish group, the topology extends to the piecewise full group. 
		 These results will not be formally necessary for the rest of the article, as we will come to analogous conclusions in greater generality in Section~\ref{sec:inverse_monoids},  for locally decomposable inverse monoids, in preparation for Section~\ref{sec:compact_gen}. 
		  However, since Section~\ref{sec:inverse_monoids} is more technical and the primary focus of this article is group theory, it may be useful for the reader to first see the direct group-theoretic arguments.

\subsection{Extending the topology from a subgroup}\label{sec:group_extend_topology}
	
	The main goal of this section is to find a suitable group topology for the piecewise full group of a group action.  We begin with some generalities on when it is possible to extend the topology from a subgroup.  Specifically, given a group $G$ and $H \le G$ such that $H$ is equipped with a group topology $\tau_H$, we say a group topology $\tau_G$ on $G$ is an \defbold{extension} of $\tau_H$ if $H \in \tau_G$ and $\tau_H$ is the restriction of $\tau_G$ to $H$.  It is easy to see that the extension is unique if it exists: $\tau_G$ must be generated by translates of open subsets of $H$.  So from now on, we will simply say that $\tau_H$ \defbold{extends to $G$} to mean that the extension exists as a group topology of $G$.
	
	As a guiding example, consider Neretin's group $\AAut(T)$ of almost automorphisms of a regular tree $T$. 
	It is well-known (see e.g. \cite{lc:book_Neretin}) that $\AAut(T)$ admits a non-discrete locally compact group topology, which is finer than that inherited from the compact-open topology on $\Homeo(\partial T)$. 
	This topology is `induced up' from the topology on $\Aut(T_r)$, the group of tree automorphisms that fix a vertex $r$.
	The group $\Aut(T_r)$ is compact; its basic identity neighbourhoods are the stabilisers $\{A_n: n\in \mathbb{N}\}$ of balls of radius $n$ centred at $r$.
	This is in fact the subspace topology from the compact-open topology on $\Homeo(\partial T)$.
	The reason why this topology on $\Aut(T_r)$ can be extended to $\AAut(T)$ is because the latter group commensurates the former: more precisely, for every almost automorphism $h\in\AAut(T)$ the conjugate $h\Aut(T_r)h^{-1}$ contains some $A_n$.
	The following general fact implies that the open subgroups of the commensurated subgroup $\Aut(T_r)$ yield a neighbourhood basis for a group topology on $\AAut(T)$, which extends that of $\Aut(T_r)$.
	
	\begin{prop}{\cite[III, p.3-4, Prop. 1]{Bourbaki_top14_07}}\label{prop:bourbaki_gentop}
		Let $\mc{B}\subseteq$ be a filter base of a group $G$ (a non-empty set of non-empty subsets of $G$ such that the intersection of any two members of $\mc{B}$ contains a member of $\mc{B}$). 
		Suppose that $\mc{B}$ satisfies:
		\begin{enumerate}
			\item for every $U\in\mc{B}$ there is $V\in \mc{B}$ such that $V.V\subseteq U$;
			\item for every $U\in\mc{B}$ there is $V\in \mc{B}$ such that $V\inv\subseteq U$;
			\item for every $g\in G, U\in\mc{B}$ there is $V\in\mc{B}$ such that $V\subseteq aUa\inv$. 
		\end{enumerate}
		Then there is a unique group topology on $G$ for which $\mc{B}$ is a base of neighbourhoods of the identity. 
	\end{prop}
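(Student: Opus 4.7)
The plan is the standard Bourbaki construction: declare a subset $\Omega\subseteq G$ to be open if and only if for every $g\in\Omega$ there exists $U\in\mc{B}$ with $gU\subseteq\Omega$. The empty set and $G$ are trivially open, arbitrary unions of open sets are open by inspection, and binary intersections are handled by the filter base hypothesis: if $gU_1\subseteq\Omega_1$ and $gU_2\subseteq\Omega_2$, then some $U\in\mc{B}$ lies inside $U_1\cap U_2$, so $gU\subseteq\Omega_1\cap\Omega_2$. This defines a topology in which $\{gU : U\in\mc{B}\}$ is a fundamental system of neighbourhoods at $g$, so in particular $\mc{B}$ is a neighbourhood basis at the identity and every left translation is a homeomorphism of the resulting space.

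It remains to check that multiplication and inversion are continuous, which is exactly where the three hypotheses enter. For multiplication at $(g,h)$, given a basic neighbourhood $ghU$ of the product, use (1) to pick $U'\in\mc{B}$ with $U'U'\subseteq U$, then apply (3) to the element $h$ and the set $U'$ to find $V\in\mc{B}$ with $V\subseteq hU'h\inv$. Then
\[
(gV)(hU') \;=\; gh\,(h\inv V h)\,U' \;\subseteq\; gh\,U'U' \;\subseteq\; ghU,
\]
so $gV$ and $hU'$ are the required neighbourhoods of $g$ and $h$. For inversion at $g$, given a basic neighbourhood $g\inv U$ of $g\inv$, apply (3) to the element $g\inv$ and the set $U$ to obtain $V\in\mc{B}$ with $V\subseteq g\inv U g$, and then use (2) to obtain $W\in\mc{B}$ with $W\inv\subseteq V$. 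Then $(gW)\inv = W\inv g\inv \subseteq V g\inv \subseteq g\inv U$, as required.

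Finally, uniqueness is automatic: in any group topology on $G$ in which $\mc{B}$ is a basis of identity neighbourhoods, left multiplication by $g$ must be a homeomorphism, so the neighbourhoods of $g$ are forced to be exactly $\{gU : U\in\mc{B}\}$, which agrees with the topology just constructed. The whole argument is essentially routine bookkeeping, and there is no serious obstacle; the only subtlety is the order in which (1)--(3) are invoked when verifying continuity of multiplication, since one must first halve $U$ via (1) and only then use the conjugation condition (3) to shift a neighbourhood across the element $h$ before the two factors can be combined inside the halved set.
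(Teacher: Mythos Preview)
The paper does not prove this proposition; it is simply quoted from Bourbaki with a citation and no argument. Your proof is the standard Bourbaki construction and is correct, so there is nothing to compare.

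One very small omission: you assert without justification that $\{gU : U\in\mc{B}\}$ is a neighbourhood base at $g$, which implicitly uses both that $e\in U$ for every $U\in\mc{B}$ (this follows from (1) and (2) together: choose $V$ with $VV\subseteq U$, then $W$ with $W^{-1}\subseteq V$, then any $w$ in a common refinement gives $w^{-1}w=e\in VV\subseteq U$) and the ``interior axiom'' that each $gU$ contains an open set about $g$ (which is where (1) is used a second time). Both are routine, but strictly speaking they are what make your declared collection of open sets match the neighbourhood filters you want.
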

	
	Inspired by condition (3) of Proposition~\ref{prop:bourbaki_gentop}, we say that a group $G$ \defbold{preserves the opens} of a topological group $H$ if for every $g\in G$, and every identity neighbourhood $O$ in $H$ there is an identity neighbourhood $U$ in $H$ such that $U \subseteq gOg^{-1} \cap g^{-1}Og$. 
	If $H\leq G$, the \defbold{open preserver} of $H$ in $G$ is the set $\mathrm{OP}_G(H)$ of all elements of $G$ that preserve the opens of $H$.  Given Proposition~\ref{prop:bourbaki_gentop}, the following is then easily verified.
	
	\begin{cor}\label{cor:bourbaki_gentop}
		Let $G$ be a group, let $H$ be a subgroup of $G$ equipped with a group topology $\tau_H$, and let $K = \mathrm{OP}_G(H,\tau_H)$.  Then $K$ is a subgroup of $G$ and $\tau_H$ extends to $K$.
	\end{cor}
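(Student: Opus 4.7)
The plan is to treat both assertions as a routine consequence of Proposition~\ref{prop:bourbaki_gentop}, applied to the identity-neighbourhood filter $\mc{N}$ of $(H,\tau_H)$, viewed as a filter base of subsets of $K$.

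First I would check that $K$ is a subgroup of $G$. The condition defining membership in $K$ is manifestly symmetric under $g \leftrightarrow g^{-1}$, since the target set $gOg^{-1} \cap g^{-1}Og$ is symmetric in $g$; so $K$ contains the identity and is closed under inversion. For closure under products, given $g_1,g_2 \in K$ and $O \in \mc{N}$, I would apply the defining property of $g_2$ at $O$ to get $U_1 \in \mc{N}$ with $U_1 \subseteq g_2Og_2^{-1}$, then the defining property of $g_1$ at $U_1$ to get $V_1 \in \mc{N}$ with $V_1 \subseteq g_1U_1g_1^{-1} \subseteq (g_1g_2)O(g_1g_2)^{-1}$; by the mirror argument, starting from $g_1^{-1}Og_1 \in \mc{N}$, I would obtain $V_2 \in \mc{N}$ contained in $(g_1g_2)^{-1}O(g_1g_2)$. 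Then $V_1 \cap V_2 \in \mc{N}$ witnesses $g_1g_2 \in K$.

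For the extension of the topology, I would take $\mc{B} = \mc{N}$ in Proposition~\ref{prop:bourbaki_gentop}, now regarding it as a filter base on $K$. Axioms (1) and (2) of that proposition are immediate from $\tau_H$ being a group topology on $H$: given $U \in \mc{N}$, we may pick $V \in \mc{N}$ with $V \cdot V \subseteq U$ and $V^{-1} \subseteq U$ inside $H$. Axiom (3), demanding that for every $a \in K$ and $U \in \mc{N}$ some $V \in \mc{N}$ satisfies $V \subseteq aUa^{-1}$, is a direct consequence of $a \in K$. The proposition then yields a unique group topology $\tau_K$ on $K$ for which $\mc{N}$ is a neighbourhood base of the identity; since $\mc{N}$ is cofinal at $1$ both in $\tau_H$ and in $\rest{\tau_K}{H}$, the two topologies agree on $H$, and $H \in \tau_K$.

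I do not anticipate any real obstacle beyond careful bookkeeping. The one point to watch is that sets like $gOg^{-1}$ are a priori subsets of $G$, not of $H$; but since the $K$-condition produces an element of $\mc{N}$ (hence a subset of $H$) sitting inside such a translate, every verification stays inside $H$, so we never leave the reach of $\tau_H$.
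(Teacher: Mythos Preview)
Your proposal is correct and follows exactly the route the paper intends: the paper simply states that the corollary ``is then easily verified'' from Proposition~\ref{prop:bourbaki_gentop}, and you have spelled out that verification. One small slip: in the closure-under-products step you write ``starting from $g_1^{-1}Og_1 \in \mc{N}$'', but of course $g_1^{-1}Og_1$ need not lie in $\mc{N}$ (or even in $H$); what you mean---and what your final paragraph makes clear you understand---is to first use $g_1 \in K$ to obtain some $U_2 \in \mc{N}$ with $U_2 \subseteq g_1^{-1}Og_1$, and then proceed.
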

	
	The open preserver has some familiar alternative interpretations for certain families of groups with sufficiently many open subgroups.  Before making a precise statement, we recall some more topological group theory.
	
	A \defbold{Polish} group is a topological group that is separable and completely metrisable.  For this article we will mainly be interested in totally disconnected locally compact Polish groups;  note that for a locally compact group, being Polish is the same as being second-countable (e.g. \cite[Theorem 5.3]{Kechris}). 

	By the Baire category theorem, Polish groups have the \defbold{countable index property}: every closed subgroup of countable index is open. Conversely, since Polish groups are separable, every open subgroup has countable index.  The groups we are interested in will turn out to have at most one Polish group topology, using the following well-known results from the literature.
	
	\begin{thm}[{\cite[Theorem~3.3]{Mackey}}]\label{thm:Mackey}
		Let $S$ be a standard Borel space and let $\mc{C}$ be a countable collection of Borel subsets of $S$ that separate points.  Then $\mc{C}$ generates the Borel structure of $S$.
	\end{thm}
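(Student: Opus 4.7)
The plan is to construct a Borel-measurable injection $\phi$ from $S$ into the Cantor space $2^{\Nb}$ built from the indicator functions of the sets in $\mc{C}$, and then apply the Lusin--Souslin theorem to promote $\phi$ to a Borel isomorphism onto a Borel image.  This reduces the claim to a bookkeeping check that the canonical generating family on the target pulls back to $\mc{C}$.

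Concretely, enumerate $\mc{C} = \{C_n : n \in \Nb\}$ and define $\phi : S \to 2^{\Nb}$ by setting $\phi(x)(n) = 1$ if $x \in C_n$ and $0$ otherwise.  Since $\mc{C}$ separates points, $\phi$ is injective.  The $\phi$-preimage of the basic clopen cylinder $\{y : y(n) = 1\}$ is exactly $C_n$, which lies in $\sigma(\mc{C})$; since these cylinders generate the Borel $\sigma$-algebra of $2^{\Nb}$, the map $\phi$ is $\sigma(\mc{C})$-to-Borel measurable, and \emph{a fortiori} Borel measurable as a map between the standard Borel spaces $S$ and $2^{\Nb}$.

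Now I invoke the Lusin--Souslin theorem: an injective Borel map between standard Borel spaces sends Borel sets to Borel sets, so its image is Borel and its inverse on the image is again Borel.  Applied to $\phi$, this gives that $\phi(S)$ is Borel in $2^{\Nb}$ and that $\phi$ is a Borel isomorphism onto $\phi(S)$.  The Borel structure of $\phi(S)$ is generated by the traces on $\phi(S)$ of the basic cylinders, so by transport along the Borel isomorphism $\phi$, the Borel structure of $S$ is generated by the $\phi$-preimages of those traces, namely by $\mc{C}$.  The reverse inclusion $\sigma(\mc{C}) \subseteq \mathrm{Borel}(S)$ is immediate because each $C_n$ is assumed Borel.

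The main obstacle is the invocation of Lusin--Souslin, which is the substantive descriptive-set-theoretic input and is typically proved either by an analysis of analytic sets via Souslin schemes or by transfinite induction along the Borel hierarchy; everything else amounts to unwinding the definition of a generating family of a $\sigma$-algebra under a measurable isomorphism.  An alternative route, avoiding Lusin--Souslin per se, would be to fix a Polish topology on $S$ realising its Borel structure, refine it so that each $C_n$ becomes clopen (using the standard fact that a countable family of Borel sets can be simultaneously made clopen in a finer Polish refinement) and observe that the resulting Polish space has $\mc{C}$ as a subbase; but this route ultimately relies on similar machinery.
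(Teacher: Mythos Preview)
Your argument is correct and is in fact the standard proof of this result. However, there is nothing to compare against: the paper does not prove this theorem at all, but simply quotes it from the literature as \cite[Theorem~3.3]{Mackey}. It is used as a black box (together with Pettis' theorem) to deduce uniqueness of Polish group topologies in Corollary~\ref{cor:Polish_determined}.
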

	
	The following is a standard consequence of Pettis' theorem \cite{Pettis}.
	
	\begin{thm}[{See \cite[Proposition~8.22, Theorem~9.10]{Kechris}}]\label{thm:Pettis}
		Let $G$ and $H$ be Polish groups and let $\varphi: G \rightarrow H$ be a homomorphism.  If $\varphi$ is Borel measurable, then it is continuous.
	\end{thm}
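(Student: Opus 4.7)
The plan is to derive the theorem from Pettis' theorem: in a Polish group, any Borel non-meager subset $B$ has the property that $BB^{-1}$ contains an open identity neighborhood. Since $\varphi$ is a homomorphism, it suffices to prove continuity at the identity $e_G$. So fix an open neighborhood $U$ of $e_H$ in $H$, and the goal is to show that $\varphi^{-1}(U)$ is a neighborhood of $e_G$.

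First, using continuity of the group operations in $H$, I would choose symmetric open neighborhoods of $e_H$ satisfying $WW \subseteq V$ and $VV \subseteq U$. Using separability of $H$, fix a countable set $\{h_n\}_{n \in \mathbb{N}}$ with $H = \bigcup_n h_n W$. Then $G = \bigcup_n \varphi^{-1}(h_n W)$ is a countable union of Borel sets (Borel because $\varphi$ is Borel measurable), so since $G$ is Polish and hence a Baire space, some $\varphi^{-1}(h_n W)$ is non-meager. Pick any $g_n$ in this set. A direct check using that $\varphi$ is a homomorphism and $W$ is symmetric gives $g_n^{-1} \varphi^{-1}(h_n W) \subseteq \varphi^{-1}(WW) \subseteq \varphi^{-1}(V)$; since meagerness is translation-invariant, this forces $A := \varphi^{-1}(V)$ to be non-meager. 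Next, apply Pettis to the Borel non-meager set $A$: this yields an open identity neighborhood inside $A A^{-1}$. Since $\varphi$ is a homomorphism and $V$ is symmetric, $A A^{-1} \subseteq \varphi^{-1}(V V^{-1}) = \varphi^{-1}(VV) \subseteq \varphi^{-1}(U)$, so $\varphi^{-1}(U)$ is a neighborhood of $e_G$, completing continuity at the identity and hence continuity globally.

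The argument is a textbook application of Pettis' theorem and presents no real obstacle; the only finicky point is choosing the nested chain of symmetric neighborhoods $W \subseteq V \subseteq U$ so that the $BB^{-1}$ output of Pettis lands back inside the originally prescribed $\varphi^{-1}(U)$, rather than a slightly enlarged preimage. The reduction to continuity at the identity, the Baire-category step, and the Borel-preimage step are all routine for Polish group theory.
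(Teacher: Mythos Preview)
Your argument is correct and is the standard textbook derivation of automatic continuity from Pettis' theorem. The paper does not supply its own proof of this statement; it simply cites Kechris, whose treatment proceeds along exactly the lines you describe (Baire category to find a non-meager Borel preimage, then Pettis to produce an interior point in the difference set).
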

	
	In particular, if two Polish group topologies $\tau_1,\tau_2$ on a group $G$ generate the same Borel structure, then the identity map is continuous with continuous inverse, so $\tau_1=\tau_2$.  Combining with Theorem~\ref{thm:Mackey} yields the following.
	
	\begin{cor}\label{cor:Polish_determined}
		Let $G$ be a group and let $\mc{C}$ be a countable collection of subsets of $S$ that separate points.  Then there is at most one Polish group topology on $G$ such that the elements of $\mc{C}$ are all Borel sets.
	\end{cor}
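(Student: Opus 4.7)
The plan is to show that any two Polish group topologies on $G$ in which every element of $\mc{C}$ is Borel must share the same Borel $\sigma$-algebra, and then deduce equality of the topologies via Pettis-type automatic continuity.

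First I would suppose that $\tau_1$ and $\tau_2$ are two Polish group topologies on $G$ for which every element of $\mc{C}$ is Borel.  Then $(G,\tau_i)$ is a standard Borel space, so $\mc{C}$ is a countable collection of Borel subsets separating the points of $(G,\tau_i)$.  Applying Theorem~\ref{thm:Mackey} to each standard Borel space $(G,\tau_i)$, the family $\mc{C}$ generates the Borel $\sigma$-algebra of $\tau_i$.  Since the generators are the same, we deduce that the Borel $\sigma$-algebras of $\tau_1$ and $\tau_2$ coincide.

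Next, consider the identity homomorphism $\id \colon (G,\tau_1) \to (G,\tau_2)$.  By the previous paragraph, the preimage under $\id$ of any $\tau_2$-Borel set is a $\tau_1$-Borel set (being the same set).  Hence $\id$ is a Borel-measurable group homomorphism between Polish groups.  Theorem~\ref{thm:Pettis} then yields that $\id$ is continuous; by symmetry, its inverse is also continuous, so $\tau_1 = \tau_2$.

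I do not anticipate a serious obstacle here: the argument is essentially assembling Theorems~\ref{thm:Mackey} and~\ref{thm:Pettis} in the correct order.  The only subtlety is to note that ``Borel in $\tau_i$'' is exactly the condition needed to invoke Theorem~\ref{thm:Mackey} inside each topology, so that the common generating family $\mc{C}$ forces the two Borel structures to agree, which is what unlocks the application of Theorem~\ref{thm:Pettis}.
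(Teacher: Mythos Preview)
Your proposal is correct and follows essentially the same argument as the paper: apply Theorem~\ref{thm:Mackey} in each topology to see that $\mc{C}$ generates both Borel structures, deduce they coincide, and then use Theorem~\ref{thm:Pettis} on the identity map (and its inverse) to conclude $\tau_1=\tau_2$.
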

	
	We can now state some alternative interpretations of `preserving the opens' for the groups we will be interested in.
	
	\begin{lem}\label{lem:open_subgroup_extend}
		Let $G$ be a group, let $H$ be a subgroup of $G$ and let $g \in G$.  Suppose that $H$ is equipped with a Polish or locally compact group topology $\tau_H$ in which open subgroups separate points.  Then $g$ preserves the opens of $H$ if and only if, for every open subgroup $K$ of $H$, the intersections $H \cap gKg\inv$ and $H \cap g\inv Kg$ belong to $\tau_H$.
	\end{lem}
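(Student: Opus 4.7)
The plan is to handle the easy forward direction directly and reformulate the reverse direction as a continuity statement for two conjugation homomorphisms, then dispatch the two topological hypotheses by separate routes. For ($\Rightarrow$), I would apply the definition of ``preserving the opens'' with $O = K$ an open subgroup: this produces an identity neighbourhood $U$ of $H$ inside $gKg\inv \cap g\inv Kg$, and since $gKg\inv$ is a subgroup of $G$, the set $H \cap gKg\inv$ is a subgroup of $H$ containing $U$, hence an open subgroup of $H$; similarly for $H \cap g\inv Kg$.

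For ($\Leftarrow$), I would set $L_1 := H \cap gHg\inv$ and $L_2 := H \cap g\inv Hg$, which by hypothesis (taking $K = H$) are open subgroups of $H$, and introduce the group homomorphisms $\phi_1 \colon L_1 \to H$, $\phi_1(h) = g\inv h g$, and $\phi_2 \colon L_2 \to H$, $\phi_2(h) = g h g\inv$. A short unwinding shows that ``$g$ preserves the opens of $H$'' is equivalent to continuity at $1$ of $\phi_1$ and $\phi_2$, and the hypothesis translates to $\phi_i\inv(K)$ being an open subgroup of $L_i$ for every open subgroup $K$ of $H$. The remaining task is to promote this to full continuity.

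In the locally compact case this is immediate: since open subgroups separate points, and each open subgroup is also closed and contains the identity component, $H$ must be totally disconnected, hence \tdlc; van Dantzig's theorem then provides a basis of open subgroups at $1$, so any identity neighbourhood $O$ contains some open subgroup $K$, and $\phi_i\inv(K) \subseteq \phi_i\inv(O)$ is open in $L_i$.

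The hard part will be the Polish case, where open subgroups of $H$ are not a priori a basis at the identity, so the van Dantzig shortcut is unavailable. My plan there is to verify that $\phi_i$ is Borel measurable and then invoke Pettis' theorem (Theorem~\ref{thm:Pettis}). By Mackey's theorem (Theorem~\ref{thm:Mackey}) it suffices to exhibit a countable family $\mathcal{F}$ of subsets of $H$ that separates points and each of whose members has Borel $\phi_i$-preimage. Lindel\"of (Polish is second countable) applied to the open cover $\{H \setminus K : K \text{ open subgroup of } H\}$ of $H \setminus \{1\}$ produces a countable family $\{K_n\}_{n \in \Nb}$ of open subgroups with $\bigcap_n K_n = \{1\}$; together with a countable dense $D \subseteq H$, the family $\mathcal{F} = \{dK_n : d \in D,\ n \in \Nb\}$ separates points (given $x \neq y$, choose $n$ with $x\inv y \notin K_n$, then some $d \in D \cap xK_n$ by density, so $x \in dK_n = xK_n \not\ni y$). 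Since $\phi_i$ is a homomorphism and $\phi_i\inv(K_n)$ is an open subgroup of $L_i$, each $\phi_i\inv(dK_n)$ is either empty or a coset of $\phi_i\inv(K_n)$, hence open and in particular Borel; the $\sigma$-algebra of $B \subseteq H$ with $\phi_i\inv(B)$ Borel then contains $\mathcal{F}$ and, by Mackey, all Borel sets. Pettis then concludes, and the non-mechanical step of constructing $\mathcal{F}$ via Lindel\"of plus density is the one place where real work is needed.
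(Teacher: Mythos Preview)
Your proposal is correct and follows essentially the same route as the paper: the forward direction via the subgroup-neighbourhood observation, van Dantzig in the locally compact case, and Mackey plus Pettis in the Polish case. Your treatment is in fact more explicit than the paper's (which compresses the Mackey step into a single sentence about the conjugation map being a Borel isomorphism), but the underlying argument is the same.
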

	
	\begin{proof}
		Let $K$ be an open subgroup of $H$.  Note that $H \cap gKg\inv$ is a subgroup of $H$, so it is an identity neighbourhood if and only if it is open.  Thus if $g \in \mathrm{OP}_G(H)$ then $H \cap gKg\inv, H \cap g\inv Kg \in \tau_H$.  For the rest of the proof we may suppose $H \cap gKg\inv, H \cap g\inv Kg \in \tau_H$ for all $K \le H$ open.
		
		If $\tau_H$ is locally compact, then since open subgroups separate points, $H$ is a \tdlc group.  Thus in fact open subgroups form a base of neighbourhoods of the identity, by Van Dantzig's theorem, and we conclude that $g$ preserves the opens of $H$.
		
		Suppose instead that $\tau_H$ is Polish.  Since $H$ is a Lindel\"{o}f space (e.g. \cite[Theorem~16.9]{Willard}), in fact countably many cosets of open subgroups suffice to separate the points of $H$.  By our hypothesis, the subgroups $g^{-1}Hg \cap H$ and $H \cap gHg^{-1}$ are both open in $H$.  By Theorem~\ref{thm:Mackey}, conjugation by $g$ induces a Borel isomorphism $c_g$ from $g^{-1}Hg \cap H$ to $H \cap gHg^{-1}$; in fact by Theorem~\ref{thm:Pettis}, $c_g$ is a homeomorphism.  Again we conclude that $g$ preserves the opens of $H$.
	\end{proof}
	
	\begin{defn}\label{def:relative_commensurator}
	Given a group $G$ and a subgroup $H$, the \defbold{commensurator} $\Comm_G(H)$ is defined as the set of $g \in G$ such that $H \cap gHg\inv$ has finite index in both $H$ and $gHg\inv$.
	\end{defn}	

	It is easily verified that $\Comm_G(H)$ is always a subgroup of $G$ containing $H$.

	\begin{lem}\label{lem:compact_open_subgroup_extend}
		Let $G$ be a topological group and let $H$ be a compact subgroup of $G$.  Then $\mathrm{OP}_G(H) = \Comm_G(H)$.
	\end{lem}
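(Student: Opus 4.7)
The plan is to prove both inclusions by exploiting two standard facts about topological groups: first, in any topological group, a closed subgroup of finite index is automatically open (its complement being a finite union of closed cosets); second, in a compact topological group, every open subgroup has finite index. Throughout, both $H$ and $gHg\inv$ carry their subspace topologies from $G$, and conjugation by $g$ provides a topological group isomorphism between them.

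For the inclusion $\Comm_G(H) \subseteq \mathrm{OP}_G(H)$, given $g \in \Comm_G(H)$, I would first observe that $H \cap gHg\inv$ is closed of finite index in both $H$ and $gHg\inv$ (both of which are compact, and hence closed in $G$), and therefore open in both. Given any identity neighbourhood $O$ in $H$, the conjugate $gOg\inv$ is an identity neighbourhood in $gHg\inv$. Its intersection with the open subgroup $H \cap gHg\inv$ is then a neighbourhood of $1$ in $gHg\inv$ that is contained in $H$, so in particular a neighbourhood of $1$ in $H$ contained in $gOg\inv$. A symmetric argument using finite index of $H \cap g\inv Hg$ handles $g\inv Og$, and intersecting the two neighbourhoods produces the $U$ required by the definition of open preserver.

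For the reverse inclusion $\mathrm{OP}_G(H) \subseteq \Comm_G(H)$, given $g \in \mathrm{OP}_G(H)$, the key step is to apply the preservation condition to the (trivial) identity neighbourhood $O = H$ in $H$. This yields an identity neighbourhood $U$ in $H$ with $U \subseteq gHg\inv \cap g\inv Hg$, forcing each of the subgroups $H \cap gHg\inv$ and $H \cap g\inv Hg$ to contain an identity neighbourhood of $H$, and hence to be open in $H$. Compactness of $H$ then ensures $H \cap gHg\inv$ has finite index in $H$; conjugating this relation by $g$ shows it also has finite index in $gHg\inv$, so $g \in \Comm_G(H)$.

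I do not expect any serious obstacle here; the whole argument is essentially bookkeeping between the three groups $H$, $gHg\inv$, and $H \cap gHg\inv$, with the only non-trivial input being the equivalence of closed finite-index subgroups and open subgroups in a compact topological group.
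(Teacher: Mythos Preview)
Your proof is correct and follows essentially the same approach as the paper: both directions rest on the equivalence, in a compact group, between a subgroup being open and having finite index. The only minor difference is that for the inclusion $\Comm_G(H) \subseteq \mathrm{OP}_G(H)$, the paper observes that conjugation by $g$ is a topological isomorphism between the open subgroups $H \cap g\inv Hg$ and $H \cap gHg\inv$ and then appeals to Lemma~\ref{lem:open_subgroup_extend}, whereas you unpack this directly by intersecting $gOg\inv$ with $H \cap gHg\inv$; your version is slightly more self-contained but otherwise the same argument.
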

	
	\begin{proof}
		Let $g \in G$.  Since $H$ is a compact group, any subgroup of nonempty interior has finite index.  In particular, if $g$ preserves the opens of $H$, then the intersection $H \cap gHg\inv$, being an identity neighbourhood in $H$, must have finite index, and similarly $H \cap gHg\inv$ has finite index in $gHg\inv$, so $g \in \Comm_G(H)$.
		
		Conversely, suppose $g \in \Comm_G(H)$.  Then conjugation by $g$ yields a topological group isomorphism from $H_1 = g\inv H g \cap H$ to $H_2 = H \cap gHg\inv$.  Since $H$ is compact, it is closed in $G$, so $H_1$ and $H_2$ are also closed; by assumption, both $H_1$ and $H_2$ have finite index in $H$, so in fact $H_1$ and $H_2$ are open subgroups of $H$.  Thus $g$ preserves the opens of $H$ by Lemma~\ref{lem:open_subgroup_extend}.
	\end{proof}

	\subsection{Local decomposability and piecewise full groups}
	
	We now focus on the context of actions on zero-dimensional compact spaces and introduce some notation.

\textbf{Notation.}
Let $G$ be a group acting by homeomorphisms on a topological space $X$ and let $Y$ be a subspace of $X$. 
The \defbold{setwise stabiliser} in $G$ of $Y$ is denoted by $G_Y=\{g\in G \mid g(Y)=Y\}$.
The \defbold{rigid stabiliser} $\rist_G(Y)$ is the group of all elements of $G$ that fix $X \smallsetminus Y$ pointwise. 
We say $G$ is \defbold{micro-supported} if $\rist_G(Y)$ is non-trivial for every nonempty open set $Y$ in $X$.
Note that if $Y,Z$ are disjoint subsets of $X$ then $\rist_G(Y)\cap \rist_G(Z)=\{1\}$. 
A set $\mc{S}$ of subsets of $X$ is \defbold{pairwise disjoint} if $Y \cap Z = \emptyset$ for every distinct pair of elements $Y,Z \in \mc{S}$. 
Write $Y^{\perp}$ for the complement of $Y$ in $X$ (the ambient space will be clear in context).
Given a group $G$ and $g,h\in G$, we define $[g,h] = ghg\inv h\inv$ and write $\Der(G)$ for the commutator subgroup, that is, $\Der(G) = \grp{[g,h] \mid g,h \in G}$.

\

The reason why $\AAut(T)$ commensurates $\Aut(T_r)$ is that the open subgroups $A_n$ of $\Aut(T_r)$ \emph{decompose} as  $A_n=\displaystyle{\prod_{d(r,v)=n} \rist_{\Aut(T_r)}(\partial T_{(r,v)})}$ where $T_{(r,v)}$ is the subtree spanned by vertices $w$ such that the path from $r$ to $w$ passes through $v$. 
This means that conjugating one of these stabilisers by an element of $\AAut T$, produces another direct product of the same form:
For every $A_n$ and $g\in\AAut(T)$, we can suppose without loss of generality that $g$ is represented by a forest isomorphism $T\setminus B\rightarrow T\setminus C$, where $B, C\subset T$ are finite subtrees, $B$ contains the ball of radius $n$ around $r$ and $C$ is contained in some ball of radius $m$ around $r$.
Then 
\begin{multline*}
	g^{-1}A_mg=g^{-1}\left(\prod_{d(r,v)=m}\rist_{\Aut(T_r)}(\partial T_{(r,v)})\right)g \\
	=\prod_{d(r,v)=m}\rist_{\Aut(T_r)}(g^{-1}(\partial T_{(r,v)})) 
	\leq \prod_{u\in B}\rist_{\Aut(T_r)}(\partial T_{(r,u)}) \leq A_n.
\end{multline*}

The boundary of the tree is the Cantor space, which is the unique (up to homeomorphism) second countable, compact, zero-dimensional space without isolated points. 
We say that a topological space is \defbold{zero-dimensional} if it separates points\footnote{Some authors allow a zero-dimensional space to fail to separate points, but it is convenient for our purposes to exclude this possibility.} (is $T_0$) and has a base of its topology consisting of clopen sets. 
This implies that it is Hausdorff.
Generalising from $\partial T$ to a compact zero-dimensional space, 
the structure of the open subgroups $A_n$ of $\AAut(T)$ leads to the following:

\begin{defn}\label{defn:locally_decomposable_group}
	Let $X$ be a compact zero-dimensional space and $G$ a topological group of homeomorphisms of $X$. 
	The action of $G$ on $X$ is \defbold{locally decomposable} if for every partition $\mc{P}$ of $X$ into clopen subsets, the subgroup 
		\[
		\grp{\rist_G(U) \mid U \in \mc{P}} = \prod_{U\in\mc{P}}\rist_G(U)
		\]
		is open in $G$ and carries the product topology.
\end{defn}

Given any clopen subset $U$ of $X$, we have $\rist_G(U)\times \rist_G(X\setminus U)\leq G_U$. 
So if $G$ is locally decomposable, then every $G_U$ is open and so the action  $G\times X\rightarrow X$ is continuous with respect to the product topology on $G\times X$.
In particular, the topology on $G$ refines the compact-open topology on  $\Homeo(X)$, the coarsest one that makes the action of $\Homeo(X)$ on the compact Hausdorff space $X$ jointly continuous.

We note that the property of being locally decomposable is indeed local, that is, it passes to and from open identity neighbourhoods.
	
	\begin{lem}\label{lem:locally_locally_decomposable}
		Let $X$ be a compact zero-dimensional space and $G$ a topological group of homeomorphisms of $X$.  Let $O$ be an open identity neighbourhood in $G$.  Then the action of $G$ is locally decomposable if and only if for every clopen partition $\mc{P} = \{U_1,\dots,U_n\}$ of $X$, the set
		\[
		\prod^n_{i=1}(\rist_G(U_i) \cap O) := \{g_1g_2 \dots g_n \mid g_i \in \rist_G(U_i) \cap O\}
		\]
		is open and carries the product topology.
	\end{lem}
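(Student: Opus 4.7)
The plan is to reformulate local decomposability in terms of a single multiplication map and then pass between its local and global behaviour. Fix a clopen partition $\mc{P} = \{U_1, \dots, U_n\}$. Because the supports of the $\rist_G(U_i)$ are pairwise disjoint, these subgroups commute pairwise and intersect trivially (using that $\rist_G(U_i) \cap \rist_G(U_j) = \{1\}$ for disjoint $U_i, U_j$), so the map
\[
\mu : \prod_{i=1}^n \rist_G(U_i) \to G, \quad (g_1, \dots, g_n) \mapsto g_1 g_2 \cdots g_n
\]
is an injective group homomorphism onto the subgroup $\grp{\rist_G(U_i) : 1 \le i \le n}$; injectivity follows from the support argument that $(g_1 \cdots g_n)|_{U_i} = g_i|_{U_i}$. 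The map $\mu$ is continuous when the domain carries the product topology induced from the subspace topologies on each $\rist_G(U_i)$, by continuity of multiplication in $G$. Crucially, the set $\prod_{i=1}^n (\rist_G(U_i) \cap O)$ appearing in the statement is precisely the image under $\mu$ of the open product set $\prod_{i=1}^n (\rist_G(U_i) \cap O) \subseteq \prod \rist_G(U_i)$, and the assertion that this image ``carries the product topology'' says that $\mu$ restricts to a homeomorphism on that open set. Local decomposability, in these terms, is the statement that $\mu$ itself is a homeomorphism onto an open subgroup of $G$.

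With this dictionary in hand, the forward direction is immediate: if $\mu$ is globally a homeomorphism onto an open subgroup, then restricting to the open subset $\prod (\rist_G(U_i) \cap O)$ of the domain produces a homeomorphism onto an open subset of $G$ of exactly the prescribed form.

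For the reverse direction, the hypothesis supplies a homeomorphism from $\prod (\rist_G(U_i) \cap O)$ onto an open subset of $G$. This immediately forces the subgroup $\grp{\rist_G(U_i) : i}$ to be open in $G$, since it contains an open identity neighbourhood. To upgrade the local homeomorphism to a global one, I will use the standard fact that a continuous group homomorphism which is open at the identity is open everywhere, by translation. Openness at the identity follows directly: any open identity neighbourhood $V$ in $\prod \rist_G(U_i)$ meets $\prod (\rist_G(U_i) \cap O)$ in an open set (since each factor $\rist_G(U_i) \cap O$ is open in the subspace topology on $\rist_G(U_i)$), and applying the hypothesis shows that $\mu(V)$ contains an identity neighbourhood in $G$. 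The main obstacle, such as it is, is just keeping track of the several equivalent ways to unpack ``product topology'' (on a product of subspaces versus on a subspace of a product); there are no genuinely delicate steps, only this bookkeeping and the passage from the identity to arbitrary points via the homomorphism property.
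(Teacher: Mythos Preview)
Your proof is correct and follows essentially the same approach as the paper: both introduce the multiplication homomorphism from the external direct product $\prod_i \rist_G(U_i)$ into $G$, observe that the hypothesis makes it a homeomorphism onto an open set near the identity, and then use the group homomorphism property together with continuity of multiplication to upgrade this to an open embedding globally. Your version spells out the translation argument (``open at the identity implies open everywhere'') more explicitly than the paper, which simply remarks that the map ``restricts to a homeomorphism between neighbourhoods of the identity'' and then invokes continuity of multiplication.
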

	
	\begin{proof}
		Fix a clopen partition $\mc{P} = \{U_1,\dots,U_n\}$ of $X$.
		
		If the action of $G$ is locally decomposable, then certainly the product of the sets $(\rist_G(U_i) \cap O)$ is open and carries the product topology.
		
		Conversely, suppose $\prod^n_{i=1}(\rist_G(U_i) \cap O)$ is open and carries the product topology, and let $H = \prod^n_{i=1}\rist_G(U_i)$ be the direct product of the groups $\rist_G(U_i)$, equipped with the product topology.  Then the natural group homomorphism $\phi: H \rightarrow G$ induced by multiplication in $G$ is injective, and has the property that it restricts to a homeomorphism between neighbourhoods of the identity.  By continuity of multiplication, it follows that $\phi$ is an open embedding, in other words $\phi(H)$ is open in $G$ and carries the product topology.
	\end{proof}

The discussion of $\AAut(T)$ makes it plausible that the locally decomposable condition is sufficient to extend the group topology to the piecewise full group.

We start by verifying that rigid stabilisers of full groups are indeed big enough subgroups.

\begin{lem}\label{lem:full_implies_stabs_are_rsts}
	If $G\leq \Homeo(X)$ is piecewise full then, for any clopen partition $\mc{P}$ of $X$, the part-wise stabiliser $K:=\bigcap_{Y\in\mc{P}} G_{Y}$ of the partition  decomposes (as a group) as the direct product $K=\prod_{Y \in \mc{P}}\rist_G(Y)$. 
\end{lem}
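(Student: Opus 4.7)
The plan is to decompose any $g \in K$ coordinate-wise using piecewise fullness, then check that the natural map from the external direct product into $K$ is a group isomorphism.

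First I would observe that any clopen partition $\mc{P}$ of $X$ is finite: $X$ is compact and $\mc{P}$ is a cover by open sets, hence admits a finite subcover, which must be all of $\mc{P}$ because the members are pairwise disjoint. Write $\mc{P} = \{Y_1,\dots,Y_n\}$.

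Next, fix $g \in K$. For each $i$, since $g(Y_i) = Y_i$, the map
\[
g_i: X \to X, \qquad g_i(x) = \begin{cases} g(x) & x \in Y_i, \\ x & x \in X \smallsetminus Y_i \end{cases}
\]
is a well-defined bijection that restricts to a homeomorphism on each part of the clopen partition $\{Y_i, X \smallsetminus Y_i\}$, and is therefore a homeomorphism of $X$. Since $\rest{g_i}{Y_i} = \rest{g}{Y_i}$ and $\rest{g_i}{X \smallsetminus Y_i} = \rest{\id_X}{X \smallsetminus Y_i}$, piecewise fullness of $G$ gives $g_i \in G$. By construction $g_i \in \rist_G(Y_i)$.

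Then I would verify the direct product structure. Elements of $\rist_G(Y_i)$ and $\rist_G(Y_j)$ for $i \neq j$ commute, since they have disjoint supports: for $h \in \rist_G(Y_i)$ and $h' \in \rist_G(Y_j)$ one checks on each piece $Y_i$, $Y_j$, and $X \smallsetminus (Y_i \cup Y_j)$ that $hh'$ and $h'h$ agree. Consequently the product $g_1 g_2 \cdots g_n$ is independent of the order, and evaluating at $x \in Y_i$ shows $g_1 g_2 \cdots g_n(x) = g_i(x) = g(x)$; hence $g = g_1 g_2 \cdots g_n$. This shows $K = \langle \rist_G(Y) \mid Y \in \mc{P} \rangle$.

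Finally, for the internal direct product, I would argue that the natural homomorphism
\[
\varphi: \prod_{i=1}^n \rist_G(Y_i) \longrightarrow K, \qquad (h_1,\dots,h_n) \mapsto h_1 h_2 \cdots h_n
\]
(well-defined by commutativity) is both surjective, by the previous paragraph, and injective: if $h_1 \cdots h_n = 1$, then evaluating at $x \in Y_i$ gives $h_i(x) = x$, so every $h_i$ is trivial. I do not expect a genuine obstacle here; the only substantive step is the use of piecewise fullness to produce $g_i \in G$, and everything else is bookkeeping.
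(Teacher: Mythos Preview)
Your proof is correct and follows essentially the same approach as the paper: decompose $g \in K$ into pieces $g_i$ using piecewise fullness, then verify that the resulting map from the direct product of rigid stabilisers is an isomorphism. The paper's version is terser (it omits the finiteness of $\mc{P}$, the commutativity check, and the injectivity argument, calling one inclusion ``clear''), but the substance is identical.
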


\begin{proof}
	One inclusion is clear. 
	To see the other one, let $g\in \bigcap_{Y\in\mc{P}} G_{Y}$. 
	For each $Y \in \mc{P}$ the map $g_Y:X\rightarrow X$ defined by $\rest{g_Y}{Y}=\rest{g}{Y}, \rest{g_Y}{X\setminus Y}=\rest{\mathrm{id}}{X\setminus Y}$ is a well-defined homeomorphism of $X$ that is moreover in $\Full(G)=G$.  Indeed, $g_Y$ is in $\rist_G(Y)$.
	The product of the $g_Y$ as $Y$ ranges through $\mc{P}$ is $g$, showing that $g \in \prod_{Y \in \mc{P}}\rist_G(Y)$.
\end{proof}

We now check that being locally decomposable is indeed a sufficient condition to extend the topology to the piecewise full group.

\begin{prop}\label{prop:full_contains_intersection_rists}
	Let $X$ be a zero-dimensional compact space and $H,G\leq\Homeo(X)$, where $H$ is equipped with a group topology $\tau_H$.  Let $g \in \Full(G)$ and $1 \in O \in \tau_H$.  Then there is a clopen partition $\mc{P}=\{U_1,\dots,U_n\}$ of $X$, an identity neighbourhood $O'$ in $H$ and elements $g_1,\dots, g_n\in G$ such that $\bigcap^n_{i=1}g_iO^*g\inv_i \subseteq gOg\inv$, where
		\[
		O^* := (\rist_H(U_1) \cap O')(\rist_H(U_2) \cap O') \dots (\rist_H(U_n) \cap O').
		\]

	In particular:
	\begin{enumerate}[(i)]
		\item If $G$ preserves the opens of $H$ and $H$ is locally decomposable, then $\Full(G)$ preserves the opens of $H$.
		\item If $G$ admits a group topology such that $H$ is open and locally decomposable, then it can be uniquely extended to a group topology on $\Full(G)$, which is locally decomposable.
		\item If $H$ is locally decomposable, the open preserver $\mathrm{OP}_{\Homeo(X)}(H)$ of $H$ in $\Homeo(X)$ is piecewise full and locally decomposable.
	\end{enumerate}
\end{prop}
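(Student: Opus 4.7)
The plan is to establish the displayed containment $\bigcap_i g_i O^* g_i\inv \subseteq g O g\inv$ by an explicit computation, from which (i)--(iii) follow quickly. To set things up, use $g\in \Full(G)$ to choose a clopen partition $\mc{P}=\{U_1,\dots,U_n\}$ and elements $g_i\in G$ with $\rest{g}{U_i}=\rest{g_i}{U_i}$; equivalently, $u_i := g_i\inv g$ fixes $U_i$ pointwise. Then use continuity of multiplication in $H$ to pick an open identity neighbourhood $O'$ in $H$ for which $O^*:=\prod_i(\rist_H(U_i)\cap O')$ lies in $O$; note that $O^*$ is itself an open identity neighbourhood in $H$ by Lemma~\ref{lem:locally_locally_decomposable}.

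For the central computation, take $k \in \bigcap_i g_i O^* g_i\inv$ and set $k_i := g_i\inv k g_i \in O^*$, so $k_i = k_i^{(1)} k_i^{(2)} \cdots k_i^{(n)}$ with $k_i^{(j)} \in \rist_H(U_j) \cap O'$. The key claim is that
\[
g\inv k g = h, \quad \text{where } h := k_1^{(1)} k_2^{(2)} \cdots k_n^{(n)} \in O^*.
\]
Since $g\inv k g = u_i\inv k_i u_i$ and $u_i$ fixes $U_i$ pointwise, while for $j\ne i$ both $k_i^{(j)}$ and $k_j^{(j)}$ lie in $\rist_H(U_j)$ and hence fix $U_i$ pointwise, both $u_i\inv k_i u_i$ and $h$ restrict on $U_i$ to $\rest{k_i^{(i)}}{U_i}$ and both send $U_i$ into itself. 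Since $\mc{P}$ partitions $X$, this forces $g\inv k g = h$, and $h \in O^* \subseteq O$ yields $k \in g O g\inv$.

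Consequences (i)--(iii) then follow by short arguments. For (i), since $O^*$ is open in $H$ and $G$ preserves the opens of $H$, each $g_i O^* g_i\inv$ contains an identity neighbourhood $V_i$ in $H$, and the finite intersection $\bigcap_i V_i \subseteq g O g\inv$ is also such a neighbourhood; the same reasoning for $g\inv \in \Full(G)$ yields the symmetric containment required by the definition of open preserver. Statement (ii) follows from (i) combined with Corollary~\ref{cor:bourbaki_gentop} (and the observation that every $g\in G$ automatically preserves the opens of the open subgroup $H$); local decomposability of the resulting topology on $\Full(G)$ follows from Lemma~\ref{lem:locally_locally_decomposable}, using that for any clopen $U\subseteq X$ and identity neighbourhood $O\subseteq H$ one has $\rist_{\Full(G)}(U) \cap O = \rist_H(U) \cap O$. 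For (iii), apply (i) with $G := \mathrm{OP}_{\Homeo(X)}(H)$ to obtain $\Full(G) \subseteq \mathrm{OP}_{\Homeo(X)}(H) = G$, so $G$ is piecewise full; local decomposability of $G$ then follows as in (ii).

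The main obstacle is the central computation. One must verify not only the pointwise matching on each $U_i$ but also that both sides of $g\inv k g = h$ map $U_i$ to itself, so that the pointwise equalities on the pieces assemble into an equality as homeomorphisms of $X$. Beyond that, the argument is routine bookkeeping once the right partition and $O'$ have been selected.
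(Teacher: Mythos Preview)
Your argument is correct and follows essentially the same route as the paper: choose the partition and $g_i$'s, pick $O'$ with $(O')^n\subseteq O$, and verify that $g\inv k g$ acts on each $U_i$ as $k_i^{(i)}$, hence equals an element of $O^*\subseteq O$; the deductions (i)--(iii) are likewise the same. The only cosmetic point is that your remark ``$O^*$ is itself an open identity neighbourhood in $H$ by Lemma~\ref{lem:locally_locally_decomposable}'' uses local decomposability of $H$, which is a hypothesis only in (i)--(iii) and not in the main displayed containment, so it would be cleaner to move that observation into the paragraph for (i).
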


\begin{proof}
	Let $g\in \Full(G)$. By definition, there is a clopen partition $X=\sqcup_{i=1}^n U_i$ and $g_1,\dots,g_n\in G$ such that 
	$\rest{g}{U_i} =\rest{g_i}{U_i}$ for $i=1,\dots,n$.
	
	By continuity of multiplication we can take $1 \in O' \in \tau_H$ such that $(O')^n \subseteq O$.  To see that $\bigcap^n_{i=1}g_iO^*g\inv_i \subseteq gOg\inv$, take $k$ in the intersection. Then $g\inv_i kg_i \in O^*$ for $i=1,\dots, n$; say $g\inv_i kg_i=k_i=k_{i1} \dots k_{in}$, where $k_{ij} \in \rist_H(U_i) \cap O'$.
		
		For $1 \le i \le n$ and $x \in U_i$, we have
		\[
		g\inv kg(x) = g\inv (g_ik_ig_i^{-1})g(x)=g\inv g_ik_ig_i\inv g_i(x)=g\inv g_ik_{ii}(x)=g_i\inv g_ik_{ii}(x)=k_{ii}(x).
		\]
		Thus $g\inv k g$ acts on $U_i$ as an element of $\rist_H(U_i) \cap O'$ and hence $g\inv k g \in O^*$; the choice of $O'$ ensures that $O^* \subseteq O$.  Hence $k \in gOg\inv$ as required.	
		
		If $H$ is locally decomposable and $G$ preserves its opens, then $O^*$ is an identity neighbourhood in $H$ and therefore so is the intersection $\bigcap^n_{i=1}g_i O^* g_i\inv$, showing that $gO^*g^{-1}$ contains an open subgroup of $H$.  Thus $\Full(G)$ preserves the opens of $H$.

	The second item follows from the first one and Corollary~\ref{cor:bourbaki_gentop}, because $G$ preserves its own opens. 
	
	The last item follows from the first, taking $G=\mathrm{OP}_{\Homeo(X)}(H)$, and the same argument as the second item. 
\end{proof}

\subsection{Uniqueness of the topology}
	
	As we have seen, local decomposability is sufficient to extend the topology from $G$ to $\Full(G;X)$.
	It turns out that it is also necessary, at least when assuming that the action is on the Cantor space and that the group topology is also Polish (as is the case for $\AAut(T)$; the countably many cosets of the $\{A_n: n\in\mathbb{N} \}$ form a base for the topology).

\begin{lem}\label{lem:microsupported:centralisers}
	Let $X$ be a  zero-dimensional compact space
	and $G\leq \Homeo(X)$ have a micro-supported action. 
	\begin{enumerate}[(i)]
		\item If $Y$ is a closed subset of $X$ then $\rist_G(Y) = \CC_G(\rist_G(Y^{\perp}))$. 
		\item If $Y$ is regular closed, then $G_Y = \N_G(\rist_G(Y))$, so $G_Y = \N_G(\CC_G(\rist_G(Y^{\perp})))$.
		\item If $G$ carries a Hausdorff group topology and $Y$ is regular closed, then $G_Y$ is a closed subgroup.
	\end{enumerate}
\end{lem}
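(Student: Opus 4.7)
The plan is to prove the three parts in order, with each feeding into the next.

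For (i), the inclusion $\rist_G(Y) \subseteq \CC_G(\rist_G(Y^{\perp}))$ is immediate, since any $h_1 \in \rist_G(Y)$ and $h_2 \in \rist_G(Y^{\perp})$ have disjoint supports and therefore commute. For the reverse inclusion, I argue by contrapositive: if $g \in G$ does not fix $Y^{\perp}$ pointwise, pick $x \in Y^{\perp}$ with $g(x) \neq x$ and use that $X$ is Hausdorff and zero-dimensional together with continuity of $g$ to find a clopen neighbourhood $U$ of $x$ with $U \subseteq Y^{\perp}$ and $U \cap g(U) = \emptyset$. The micro-supported hypothesis yields a nontrivial $h \in \rist_G(U) \subseteq \rist_G(Y^{\perp})$. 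Then $ghg^{-1}$ is supported in $g(U)$, which is disjoint from $U$, while $h$ acts nontrivially on $U$; hence $ghg^{-1} \neq h$ and $g \notin \CC_G(\rist_G(Y^{\perp}))$.

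For (ii), the inclusion $G_Y \subseteq \N_G(\rist_G(Y))$ follows from the general identity $g\rist_G(Y)g^{-1} = \rist_G(g(Y))$. For the reverse, I take $g \in \N_G(\rist_G(Y))$ and aim to show $g(Y) = Y$. Since both $Y$ and $g(Y)$ are regular closed, if $g(Y) \neq Y$ then at least one of $Y \not\subseteq g(Y)$ and $g(Y) \not\subseteq Y$ holds; after possibly replacing $g$ by $g^{-1}$ (which still normalises $\rist_G(Y)$), I assume the former. Density of $\mathrm{int}(Y)$ in $Y$ forbids $\mathrm{int}(Y) \subseteq g(Y)$, since otherwise $Y = \overline{\mathrm{int}(Y)} \subseteq g(Y)$; hence $V := \mathrm{int}(Y) \cap g(Y)^{\perp}$ is a nonempty open subset of $Y$. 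Pick a nontrivial $h \in \rist_G(V) \subseteq \rist_G(Y)$. Then $g^{-1}hg$ has support in $g^{-1}(V) \subseteq Y^{\perp}$, so $g^{-1}hg \in \rist_G(Y^{\perp})$; but the normaliser assumption forces $g^{-1}hg \in \rist_G(Y)$, and $\rist_G(Y) \cap \rist_G(Y^{\perp}) = \{1\}$ then gives $h = 1$, contradicting the choice of $h$. Substituting the expression for $\rist_G(Y)$ from (i) yields $G_Y = \N_G(\CC_G(\rist_G(Y^{\perp})))$.

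For (iii), I apply the previous two parts in sequence. By (i), $\rist_G(Y) = \CC_G(\rist_G(Y^{\perp}))$ is a centraliser in the Hausdorff topological group $G$, hence closed. By (ii), $G_Y = \N_G(\rist_G(Y))$ is the normaliser of this closed subgroup. The normaliser of any closed subgroup $H$ of a topological group is itself closed, since $\N_G(H) = \{g : gHg^{-1} \subseteq H\} \cap \{g : g^{-1}Hg \subseteq H\}$ and each factor is closed as an intersection of preimages of $H$ under continuous conjugation maps.

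I expect the main point requiring care to be the topological argument in (ii). The regular closed hypothesis is essential: it is what allows one to upgrade the mere fact that $g(Y) \neq Y$ to the existence of a \emph{nonempty open} subset of $Y$ missing $g(Y)$, on which the micro-supported hypothesis can be deployed to produce a witness to non-normalisation. Without regularity, $Y \smallsetminus g(Y)$ could have empty interior even when it is nonempty, and the argument would fail.
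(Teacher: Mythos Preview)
Your proof is correct and follows essentially the same approach as the paper. The only cosmetic difference is in part (ii): the paper argues directly that every normaliser $g$ sends each clopen $U \subseteq \mathrm{int}(Y)$ into $Y$ (since $\rist_G(gU) \le \rist_G(Y)$ forces $gU \subseteq Y$ via micro-supportedness), whereas you argue by contradiction by locating a clopen $V \subseteq \mathrm{int}(Y)$ with $g^{-1}(V) \subseteq Y^{\perp}$; but the underlying idea is identical.
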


\begin{proof}
	Clearly, $\rist_G(Y) \le \CC_G(\rist_G(Y^{\perp}))$.
	Conversely, suppose $g \in G$ is such that $gx \neq x$ for some $x \in Y^{\perp}$.  
	Then there is a compact open neighbourhood $U$ of $x$ such that $U$ and $g(U)$ are disjoint open subsets of $Y^{\perp}$. 
	As $\rist_G(U)$ is non-trivial,  $g\rist_G(U)g\inv \neq \rist_G(U)$; since $\rist_G(U) \le \rist_G(Y^{\perp})$, it follows that $g$ does not centralise $\rist_G(Y^{\perp})$. 
	Thus $\rist_G(Y) = \CC_G(\rist_G(Y^{\perp}))$.
	
	Suppose that $Y$ is regular closed, that is, $Y$ is closed and has dense interior.  Certainly $G_Y$ normalises $\rist_G(Y)$.  Conversely, suppose $g \in G$ normalises $\rist_G(Y)$, let $x$ be an interior point of $Y$, and let $U$ be a compact open neighbourhood of $x$ contained in $Y$.  Then $\rist_G(U) \le \rist_G(Y)$, so also $g\rist_G(U)g\inv \le \rist_G(Y)$.
	But then $g\rist_G(U)g\inv = \rist_G(gU)$, so $\rist_G(gU) \le \rist_G(Y)$, in other words, the support of $\rist_G(gU)$ is contained in $Y$. 
	Since $G$ is micro-supported on $X$, it follows that $Y$ contains a dense subset of $gU$; since $Y$ and $gU$ are closed, in fact $gU \subseteq Y$, and then since $gU$ is open, in fact $gU$ lies in the interior of $Y$. 
	Thus $\N_G(\rist_G(Y))$ preserves the interior of $Y$; since $Y = \ol{\mathrm{int}(Y)}$, it follows that in fact $\N_G(\rist_G(Y))$ preserves $Y$. 
	Thus
	\[
	G_Y = \N_G(\rist_G(Y)) = \N_G(\CC_G(\rist_G(Y^{\perp}))). 
	\]
	
	In any Hausdorff group topology, centralisers are closed, and normalisers of closed subgroups are also closed. 
	It follows that $G_Y=\N_G(\CC_G(\rist_G(Y^{\perp})))$ is a closed subgroup of $G$.
\end{proof}

The above lemma does not account for all piecewise full actions, since a piecewise full action can fail to be micro-supported.
For instance, if the action of $G$ on $X$ fixes $Y$ pointwise then $\rist_G(Y)=\rist_{\Full(G)}(Y)=1$. 
To avoid this situation, we impose an extra condition. 
The action of a group $G$ by homeomorphisms on a space $X$ is \defbold{non-degenerate} if for every non-empty open subset $Y$ of $X$ there are $y\in Y$ and $g\in G$ such that $g(y)\in Y\setminus\{y\}$.

If $X$ has isolated points then it cannot support a non-degenerate action. 
So we must consider perfect spaces. 

\begin{lem}\label{lem:microsupported}
	Let $X$ be a perfect, zero-dimensional compact space and $G\leq \Homeo(X)$. 
	The following are equivalent:
	\begin{enumerate}[(i)]
		\item $G$ is non-degenerate;
		\item $\Full(G)$ is non-degenerate;
		\item $\Full(G)$ is micro-supported.
	\end{enumerate}
\end{lem}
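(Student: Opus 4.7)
The plan is to prove the cyclic chain $\text{(iii)} \Rightarrow \text{(ii)} \Rightarrow \text{(i)} \Rightarrow \text{(iii)}$. The first two implications are essentially formal, while the third requires a small construction producing a non-trivial element of $\rist_{\Full(G)}(Y)$ from the non-degeneracy data at a given non-empty open $Y$.

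For (iii) $\Rightarrow$ (ii), micro-supportedness supplies a non-trivial $h \in \rist_{\Full(G)}(Y)$ for any non-empty open $Y$; since $h$ fixes $X \setminus Y$ pointwise and is a bijection, it restricts to a bijection of $Y$, so picking any $y \in Y$ with $h(y) \ne y$ gives $h(y) \in Y \setminus \{y\}$. For (ii) $\Rightarrow$ (i), given $y \in Y$ and $h \in \Full(G)$ with $h(y) \in Y \setminus \{y\}$, I pick a clopen partition and elements $g_1,\dots,g_n \in G$ witnessing $h \in \Full(G)$; the element $g_i$ which agrees with $h$ on the part containing $y$ already lies in $G$ and satisfies $g_i(y) = h(y) \in Y \setminus \{y\}$.

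The substantive step is (i) $\Rightarrow$ (iii). Given a non-empty open $Y$, use non-degeneracy to produce $y \in Y$ and $g \in G$ with $g(y) \in Y \setminus \{y\}$. Since $X$ is zero-dimensional Hausdorff, I can choose disjoint clopen neighbourhoods $U_0 \ni y$ and $V_0 \ni g(y)$ both contained in $Y$; then $U := U_0 \cap g^{-1}(V_0)$ is clopen with $y \in U \subseteq Y$, $g(U) \subseteq V_0 \subseteq Y$, and $U \cap g(U) = \emptyset$. I define a homeomorphism $h$ of $X$ by setting $h|_U = g|_U$, $h|_{g(U)} = g^{-1}|_{g(U)}$, and $h|_{X \setminus (U \cup g(U))} = \id$; this is a well-defined ``$2$-cycle'' because $U$ and $g(U)$ are disjoint clopens both contained in $Y$. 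Reading off the clopen partition $\{U, g(U), X \setminus (U \cup g(U))\}$ together with the corresponding elements $g, g^{-1}, 1 \in G$ exhibits $h$ as a member of $\Full(G)$, while $\supp(h) \subseteq U \cup g(U) \subseteq Y$ and $h(y) = g(y) \ne y$ certify that $h$ is a non-trivial element of $\rist_{\Full(G)}(Y)$.

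The main obstacle is essentially this explicit construction of the $2$-cycle: everything hinges on being able to separate $y$ from $g(y)$ by a clopen whose image under $g$ also lies in $Y$, which is exactly what zero-dimensionality and continuity of $g$ provide. The perfectness hypothesis on $X$ is not used directly in the arguments above, but it is implicit in the equivalence being non-vacuous: an isolated point $x \in X$ forces $\rist_{\Full(G)}(\{x\}) = \triv$ and makes the non-degeneracy condition fail at $Y = \{x\}$, so each of (i), (ii), (iii) fails in the presence of isolated points.
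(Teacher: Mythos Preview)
Your proof is correct and takes essentially the same approach as the paper. The only cosmetic differences are the direction of the cycle and that the paper handles (i)$\Leftrightarrow$(ii) in one line by observing that $G$ and $\Full(G)$ have the same orbits, whereas you spell out (ii)$\Rightarrow$(i) via the clopen partition; the key $2$-cycle construction for the substantive implication is identical.
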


\begin{proof}
	Since $F:=\Full(G)$ has the same orbits as $G$, we see that (i) and (ii) are equivalent.
	
	Suppose that $F$ is non-degenerate and let $Y$ be an open subset of $X$. 
	Then there exists $y\in Y$ and $f\in F$ such that $f(y)\in Y$ and $f(y)\neq y$. 
	Since $X$ is Hausdorff and $Y$ is open, there is an open neighbourhood $Z$ of $y$ such that $Z$ and $f(Z)$ are disjoint open subsets of $Y$. 

	Since $F$ is piecewise full, it contains the non-trivial homeomorphism $$g(x)=\begin{cases} f(x), & x\in Z,\\
		f^{-1}(x), & x\in f(Z),\\
		x, &\text{ else}
	\end{cases}$$
	which lies  in $\rist_F(Y)$.

	Finally, if $\rist_F(Y)$ is non-trivial for every open subset $Y$ of $X$, then the action of $F$ is non-degenerate. 
\end{proof}

We note next that if $G$ is a non-degenerate piecewise full group, then any Polish group topology of $G$ must make the action locally decomposable.

\begin{prop}\label{prop:Polishfull_implies_locdec}
	Let $X$ be a perfect, zero-dimensional compact space and $G\leq\Homeo(X)$ a non-degenerate piecewise full group equipped with a Polish group topology $\tau$. 
	Then the action of $G$ is jointly continuous and locally decomposable with respect to $\tau$.
\end{prop}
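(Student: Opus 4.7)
My plan is to exploit the micro-supported structure provided by non-degeneracy and piecewise fullness, and then turn closed subgroups of $(G,\tau)$ into open ones via Pettis-type arguments for Polish groups.

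First, I would fix an arbitrary clopen partition $\mc{P} = \{U_1, \ldots, U_n\}$ of $X$ and set $K := \bigcap_{i} G_{U_i}$. Since $X$ is perfect and $G$ is non-degenerate and piecewise full, Lemma~\ref{lem:microsupported} gives that $G$ is micro-supported. Lemma~\ref{lem:microsupported:centralisers}(i)--(iii) then identifies each $\rist_G(U_i) = \CC_G(\rist_G(U_i^\perp))$ and each $G_{U_i} = \N_G(\rist_G(U_i))$ as (intersections of) centralisers and normalisers of closed subgroups in the Hausdorff group $(G,\tau)$, hence $\tau$-closed. Lemma~\ref{lem:full_implies_stabs_are_rsts} then identifies $K$ with the internal direct product $\prod_i \rist_G(U_i)$ as an abstract group.

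Next I would show $\tau|_K$ is the product of the topologies $\tau|_{\rist_G(U_i)}$. The multiplication map $m \colon \prod_i \rist_G(U_i) \to K$ (product topology on the domain) is a continuous bijective group homomorphism between Polish groups. Since $m$ is a continuous bijection between standard Borel spaces, $m^{-1}$ is a Borel homomorphism of Polish groups, hence continuous by Theorem~\ref{thm:Pettis}. So $m$ is a homeomorphism.

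The main obstacle is then showing that $K$ is $\tau$-open. By Pettis' theorem, a $\tau$-closed subgroup of a Polish group is open exactly when it is non-meagre, equivalently when it has countable index. Since $[G:K] \leq \prod_i [G:G_{U_i}]$, it suffices to prove each $G$-orbit $\{gU_i : g \in G\}$ (which is in natural bijection with $G/G_{U_i}$) is countable. This is delicate because we do not yet have continuity of any orbit map to exploit; my plan is to combine the separability of $(G,\tau)$, the closedness of the subgroups $G_{U_i}$, and an appeal to Corollary~\ref{cor:Polish_determined} (uniqueness of the Polish topology once a countable separating family of Borel sets is fixed) to rule out uncountable orbits of clopens. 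An alternative route I would also try is to transfer the question to $K$ via the homeomorphism $m$, using the product decomposition to locate $\tau$-open identity neighbourhoods inside $K$ directly.

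Once $K$ is shown to be $\tau$-open and to carry the product topology for every clopen partition, the action is locally decomposable by Definition~\ref{defn:locally_decomposable_group}, and then joint continuity of $G \times X \to X$ follows automatically from the remark immediately after that definition. Both conclusions of the proposition are thus obtained.
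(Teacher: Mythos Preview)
Your overall architecture is exactly the paper's: show $G$ is micro-supported via Lemma~\ref{lem:microsupported}, use Lemma~\ref{lem:microsupported:centralisers} to see that each $G_{U_i}$ and $\rist_G(U_i)$ is $\tau$-closed, identify $K$ with $\prod_i\rist_G(U_i)$ via Lemma~\ref{lem:full_implies_stabs_are_rsts}, and apply Pettis (Theorem~\ref{thm:Pettis}) to the continuous bijective multiplication map between Polish groups to conclude it is a homeomorphism.

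The place where you hesitate is the only real difference. You flag the openness of $K$ as ``the main obstacle'' and propose to attack it via Corollary~\ref{cor:Polish_determined} or via some internal argument using the product decomposition. Neither of these is what the paper does, and neither is needed. The paper's argument is the elementary one you wrote down yourself but then talked yourself out of: the index $[G:G_{U_i}]$ equals the cardinality of the $G$-orbit of $U_i$, and that orbit is a subset of $\mc{CO}(X)$. No continuity of any orbit map is required for this purely set-theoretic bound. Once $\mc{CO}(X)$ is countable, each $G_{U_i}$ is a closed subgroup of countable index in a Polish group, hence open by the countable index property, and therefore so is $K=\bigcap_i G_{U_i}$. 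Your plan to invoke Corollary~\ref{cor:Polish_determined} would not help here: that corollary concerns uniqueness of a Polish topology, not cardinality of orbits, and there is no obvious way to extract an index bound from it.

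One caveat worth noting: the bound $[G:G_{U_i}]\le|\mc{CO}(X)|$ only gives countable index when $X$ is second-countable (equivalently metrizable). The proposition as stated does not literally assume this, and the paper's proof cites Lemma~\ref{lem:microsupported:centralisers} for openness while that lemma only yields closedness; the countability of $\mc{CO}(X)$ is being used tacitly (compare the explicit use in Corollary~\ref{cor:unique_Polish}(i) and the second-countability hypothesis in Theorem~\ref{thm:intro_localdec}(2)). So your instinct that something extra is needed at this step is sound; it is just that the missing ingredient is the countability of $\mc{CO}(X)$, not any of the tools you propose.
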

\begin{proof}
	Fix a clopen partition $\mc{P}$ of $X$.  
	We suppose $G$ is equipped with a Polish group topology $\tau$. 
	By Lemma~\ref{lem:microsupported:centralisers} the subgroup $\displaystyle{G_{\mc{P}}=\bigcap_{U\in\mc{P}} G_{U}}$ is open in $G$. 
	Ignoring the group topology,  Lemma~\ref{lem:full_implies_stabs_are_rsts} says that the natural homomorphism 
	$$\varphi: \displaystyle{\prod_{U\in\mc{P}}\rist_G(U)} \rightarrow G_{\mc{P}}$$ 
	is bijective.
	By Lemmas~\ref{lem:microsupported} and \ref{lem:microsupported:centralisers} the action is micro-supported, so $\rist_{G}(U)$ is closed in $G$ and therefore Polish, for every clopen subset $U$ of $X$. 
	This implies that $ \displaystyle{\prod_{U\in\mc{P}}\rist_G(U)}$  is a Polish group when given the product topology. 
	Now, multiplication in $G$ is continuous, so $\varphi$ is continuous, hence a homeomorphism by Theorem~\ref{thm:Pettis}.
	So  $\displaystyle{\prod_{U\in\mc{P}}\rist_G(U)}$ (with the product topology) is an open subgroup of $G$. 
\end{proof}

We now observe the uniqueness of Polish group topology under some mild assumptions.

\begin{cor}\label{cor:unique_Polish}
	Let $G$ be a group and let $X$ be a perfect, zero-dimensional compact space.  Suppose one of the following:
	\begin{enumerate}[(i)]
		\item $G$ admits a faithful micro-supported action on $X$ and $X$ is the Cantor space;
		\item $G$ admits a faithful non-degenerate piecewise full action on $X$.
	\end{enumerate}
	Then $G$ has at most one Polish group topology, and if this topology exists then the action of $G$ on $X$ in either case is continuous.
\end{cor}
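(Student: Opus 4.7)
The plan is to treat the two cases separately, in each producing a countable family $\mc{C}$ of subsets of $G$ that separates points and consists of sets that are Borel in every Polish group topology on $G$; Corollary~\ref{cor:Polish_determined} will then deliver uniqueness. The common ingredient is Lemma~\ref{lem:microsupported:centralisers}(iii): since every clopen $U\subseteq X$ is regular closed and $G$ is micro-supported on $X$ (in case (ii) via Lemma~\ref{lem:microsupported}), the setwise stabiliser $G_U$ is closed in every Hausdorff group topology on $G$.

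\textbf{Case (i).} Let $\mc{B}$ be a countable base of clopens of the Cantor space $X$. For each $U\in\mc{B}$ the orbit $G\cdot U$ is contained in the countable set of clopens of $X$, so $G/G_U$ is countable; let $\mc{C}$ be the countable collection of all left cosets $gG_U$ with $U\in\mc{B}$. To see that $\mc{C}$ separates points, given $g\ne h$ in $G$ I would choose $x$ with $gh^{-1}(x)\ne x$ and $U\in\mc{B}$ with $x\in U$ but $gh^{-1}(x)\notin U$; then $gh^{-1}\notin G_U$, so $gG_U\ne hG_U$. Each element of $\mc{C}$ is a coset of a closed subgroup, hence Borel in any Polish topology, and Corollary~\ref{cor:Polish_determined} delivers uniqueness. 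For continuity, I would consider the action homomorphism $\alpha:G\to\Homeo(X)$ into the Polish group $\Homeo(X)$; the preimage of a subbasic open $\{h:h(A)\subseteq B\}$, for $A,B$ clopen, decomposes as a countable union of cosets of $G_A$ (since $g(A)$ ranges over countably many clopen subsets of $X$), hence is Borel. Theorem~\ref{thm:Pettis} then gives continuity of $\alpha$, from which joint continuity of $G\times X\to X$ follows.

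\textbf{Case (ii).} Continuity of the action in any Polish topology is Proposition~\ref{prop:Polishfull_implies_locdec}. For uniqueness, fix any Polish group topology $\tau$ on $G$; by Proposition~\ref{prop:Polishfull_implies_locdec} the topology $\tau$ is locally decomposable, so for every clopen $U$ the subgroup $G_U=G_{\{U,X\setminus U\}}$ is $\tau$-open. Since $(G,\tau)$ is Polish and hence Lindel\"of, and since the open cover $\{G\setminus G_U:U\text{ clopen}\}$ covers $G\setminus\{1\}$ by faithfulness of the action, one can extract a countable subcover $\{G\setminus G_{U_n}\}_n$, yielding $\bigcap_n G_{U_n}=\{1\}$. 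Each $G_{U_n}$ is $\tau$-open in the Polish group $(G,\tau)$ and therefore of countable index, so the family $\mc{C}$ of all left cosets $gG_{U_n}$ is countable. Exactly as in case (i), $\mc{C}$ separates points: if $g\ne h$, then $gh^{-1}\ne 1$, so $gh^{-1}\notin G_{U_n}$ for some $n$, and $gG_{U_n}\ne hG_{U_n}$. By Lemma~\ref{lem:microsupported:centralisers}(iii) each $G_{U_n}$ is closed in every Hausdorff group topology on $G$, so every member of $\mc{C}$ is Borel in every Polish group topology. Corollary~\ref{cor:Polish_determined} now yields uniqueness.

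\textbf{Main obstacle.} The delicate point is producing a countable separating family whose members are Borel in \emph{every} Polish group topology. In case (i), countability is immediate from the Cantor space having countably many clopens, so Lemma~\ref{lem:microsupported:centralisers}(iii) directly supplies the required closed stabilisers. In case (ii), where $X$ need not be second countable, one must instead invoke local decomposability (Proposition~\ref{prop:Polishfull_implies_locdec}) to ensure that the $G_U$ are open, and combine it with the Lindel\"of property of Polish groups to extract a suitable countable subfamily of clopens.
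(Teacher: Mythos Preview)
Your proof is correct and follows essentially the same strategy as the paper: build a countable family of cosets of setwise stabilisers $G_U$ that separates points and is Borel in every Polish topology, then invoke Corollary~\ref{cor:Polish_determined}. The only notable difference is that the paper, in both cases, first upgrades the stabilisers $G_Y$ to being \emph{open} in any Polish topology (in case (i) via closed $+$ countable index $\Rightarrow$ open), which yields continuity of the action immediately and lets the Lindel\"of argument handle both cases uniformly; your case (i) instead uses closedness for Borelness and a separate Pettis argument for continuity, which is slightly more roundabout but entirely valid.
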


\begin{proof}
	We may suppose $G$ admits some Polish group topology $\tau$. 
	 Let $Y$ be a clopen subset of $X$.  
	 In case (i), $G_Y$ is closed by Lemma~\ref{lem:microsupported:centralisers} and has countable index since there are only countably many clopen subsets of $X$, so $G_Y \in \tau$ by the countable index property. 
	  In case (ii), the action is locally decomposable by Proposition~\ref{prop:Polishfull_implies_locdec}, so again $G_Y \in \tau$.
		
		Since the action is faithful, the cosets of the subgroups $G_Y$ separate points in $G$; since $G$ is Lindel\"{o}f, the cosets of countably many open subgroups suffice.  Thus there is a countable set $\mc{S}$ of clopens of $X$ such that $\mc{C} = \{gG_Y \mid Y \in \mc{S}, g \in G\}$ separates points of $G$.  The set $\mc{C}$ is again countable, and by the previous paragraph, the elements of $\mc{C}$ are open in any Polish group topology on $G$, ensuring the action is continuous.  By Corollary~\ref{cor:Polish_determined}, it follows that the Polish topology of $G$ is unique.
\end{proof}

Consider now the special case that $H\leq \Homeo(X)$ is compact and acts continuously on $X$.
This means that its topology is finer than the subspace topology in $\Homeo(X)$. 
Since  no compact Hausdorff topologies can properly refine each other, 
the topology of $H$ is actually the subspace topology of $\Homeo(X)$. 
In this case, $H$ is totally disconnected and compact, that is, profinite.
So every open subgroup of $H$ has finite index and every closed subgroup of finite index is open. 
In this case, we can apply Lemma~\ref{lem:compact_open_subgroup_extend} to obtain:

\begin{cor}\label{cor:piecewise_full_extension}
	Let $X$ be a compact zero-dimensional space and $H\leq \Homeo(X)$ be a compact group. 
	If $H$ is locally decomposable, then: 
	\begin{enumerate}[(i)]
		\item $C:= \Comm_{\Homeo(X)}(H)$ is piecewise full, and admits a locally compact group topology extended from $H$;
		\item for any $H\leq G\leq C$, the full group $\Full(G)$ is an open subgroup of $C$, so $\Full(G)$ in turn admits a locally compact and locally decomposable group topology.
	\end{enumerate}
\end{cor}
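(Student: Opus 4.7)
The plan is to assemble part (i) from Lemma~\ref{lem:compact_open_subgroup_extend}, Proposition~\ref{prop:full_contains_intersection_rists}(iii) and Corollary~\ref{cor:bourbaki_gentop}. Since $H$ is compact, Lemma~\ref{lem:compact_open_subgroup_extend} identifies $C = \Comm_{\Homeo(X)}(H)$ with $\mathrm{OP}_{\Homeo(X)}(H)$. Once we know $H$ is locally decomposable, Proposition~\ref{prop:full_contains_intersection_rists}(iii) immediately gives that $\mathrm{OP}_{\Homeo(X)}(H) = C$ is piecewise full (and in fact locally decomposable when regarded as acting on $X$). Finally, Corollary~\ref{cor:bourbaki_gentop} gives the unique group topology on $C$ extending that of $H$ in which $H$ is open; because $H$ is compact this extended topology is locally compact.

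For part (ii), I would fix $H \leq G \leq C$ and first observe that, since $C = \Full(C)$ is piecewise full and $G \leq C$, we have $\Full(G) \leq \Full(C) = C$. Thus $\Full(G)$ sits between $H$ and $C$, and in particular it contains the open subgroup $H$ of $C$, so it is itself a union of cosets of $H$ and hence an open subgroup of $C$. The locally compact group topology on $C$ from part (i) therefore restricts to a locally compact group topology on $\Full(G)$ in which $H$ is still open.

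It remains to verify that $\Full(G)$ is locally decomposable. The plan is to apply Lemma~\ref{lem:locally_locally_decomposable} with the open identity neighbourhood $O = H$: for any clopen partition $\mc{P} = \{U_1,\dots,U_n\}$ of $X$ one has
\[
\rist_{\Full(G)}(U_i) \cap H = \rist_H(U_i),
\]
so the product $\prod_{i=1}^n(\rist_{\Full(G)}(U_i) \cap H) = \prod_{i=1}^n \rist_H(U_i)$ is open in $H$ and carries the product topology by local decomposability of $H$; since $H$ is open in $\Full(G)$, the same set is open in $\Full(G)$ with the same subspace (product) topology, and Lemma~\ref{lem:locally_locally_decomposable} yields local decomposability of $\Full(G)$.

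The only point that requires a moment's care, rather than being immediate, is the last step confirming that local decomposability transfers from the compact open subgroup $H$ to $\Full(G)$; but Lemma~\ref{lem:locally_locally_decomposable} is designed precisely for this, so no new argument is needed. The rest is simply a matter of invoking the earlier results in the correct order.
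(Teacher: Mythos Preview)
Your proposal is correct and follows essentially the same route as the paper: the paper states the corollary as an immediate consequence of Lemma~\ref{lem:compact_open_subgroup_extend} (identifying $\Comm_{\Homeo(X)}(H)$ with $\mathrm{OP}_{\Homeo(X)}(H)$) combined with Proposition~\ref{prop:full_contains_intersection_rists} and Corollary~\ref{cor:bourbaki_gentop}, which is exactly what you do. Your explicit verification of local decomposability of $\Full(G)$ via Lemma~\ref{lem:locally_locally_decomposable} is a minor variation (one could also simply note that $C$ itself is locally decomposable by Proposition~\ref{prop:full_contains_intersection_rists}(iii) and that this passes to the open subgroup $\Full(G)$), but the substance is the same.
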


\section{Locally decomposable topological inverse monoids}\label{sec:inverse_monoids}

Let $X$ be a compact, zero-dimensional space. 
As explained in the introduction, the construction of the piecewise full group $\Full(G)$ of a group $G\leq \Homeo(X)$ naturally leads to considering inverse submonoids of 
$$\PHomeo_c(X):=\{\text{homeomorphisms } f:U\rightarrow V \mid U,V \subseteq X \text{ clopen}\}$$
with the extra \emph{join} operation between partial homeomorphisms that have disjoint domains and ranges. 
These three operations yield a \emph{Boolean inverse monoid} of partial homeomorphisms of $X$ (Definition \ref{defn:Boolean_inverse}). 
The piecewise full group $\Full(G)$ is then simply the \defbold{group of units} of this Boolean inverse monoid: those elements $m$ that are fully, not just partially, invertible, meaning that composition with the partial inverse yields the identity function on all of $X$.

We will study this larger algebraic object in the next sections. 
First we need to find a suitable topology to put on it and conditions that make its group of units locally compact. 

In Section \ref{sec:compact_gen} we will examine the conditions on generation of these inverse monoids (and even the right notion of `generation') that ensure that a certain subgroup of the group of units is compactly generated. 

\begin{rmk}
	Compact zero-dimensional spaces are sometimes called Stone spaces because of \defbold{Stone duality}, the categorical correspondence between these spaces and  Boolean algebras (\cite{Stone_duality}, \cite[IV.4]{univ_alg_book}).
	Given a Boolean algebra $\mc{A}$, there is a corresponding compact zero-dimensional topological space $X = \mf{S}(\mc{A})$, called the \defbold{Stone space} of $\mc{A}$. 
	The points of $X$ are the ultrafilters of $\mc{A}$; the basic open sets, which are in fact clopen, are the sets of the form $X_a = \{x \in X \mid a \in x\}$. 
	There is then a natural Boolean algebra isomorphism from $\mc{A}$ to $\mc{CO}(X)$, the algebra of compact open subsets of $X$, given by sending $a$ to $X_a$. 
	Conversely, any compact zero-dimensional space $X$ is naturally isomorphic to the Stone space of $\mc{CO}(X)$, by sending $x \in X$ to the set of $a \in \mc{CO}(X)$ containing $x$. 
	
	It is convenient to identify a Boolean algebra or a compact zero-dimensional space with its double dual, so for instance if $\mc{A}$ is a Boolean algebra and $X = \mf{S}(\mc{A})$ its Stone space, we will regard ``$a \in x$'' and ``$x \in a$'' as synonymous for pairs $(a,x) \in \mc{A} \times X$, and similarly in the case that $X$ is a compact zero-dimensional space and $\mc{A} = \mc{CO}(X)$.
	Given an element $a$ of a Boolean algebra $\mc{A}$, write $a^{\perp}$ for the complement of $a$ in $\mc{A}$.
	
	Stone duality yields an isomorphism between $\PHomeo_c(X)$ and $\PAut(\mc{CO}(X))$ the group of isomorphisms between principal ideals of the Boolean algebra $\mc{CO}(X)$. 
	This restricts to the group isomorphism $\Homeo(X)\cong \Aut(\mc{CO}(X))$. 
	We may at points, and without warning, use Stone duality to switch between partial automorphisms of Boolean algebras and clopen partial homeomorphisms of compact zero-dimensional spaces. 
\end{rmk}

\subsection{Crash course on inverse monoids}\label{ssec:crash_course_inv_monoids}
We review the necessary material on inverse monoids; details can be found in\cite{Lawson_book}. 

If groups are the algebraic objects that encode bijections, inverse monoids are those that encode partial bijections. 
The analogue of the symmetric group $\Sym(\Omega)$ on a set $\Omega$ is the symmetric inverse monoid $I(\Omega)$ consisting of all partial bijections (bijections between subsets of $\Omega$). 
$\Sym(\Omega)$ is the group of units of $I(\Omega)$.

\begin{defn}
	In a monoid $M$, an \defbold{inverse} of $x \in M$ is an element $y \in M$ such that $xyx = x$ and $yxy = y$. 
	The monoid is 
	called an \defbold{inverse monoid} if every element has a unique inverse. 
	In that  case we write $x^*$ for the inverse of $x$. 
	(We use $x^*$ rather than $x^{-1}$ to avoid confusion with inverse functions or group inverses.)
	Note that uniqueness of inverses ensures that $x^{**}=x$ for every $x\in M$;  in particular, $*$ is bijective and all formulae valid in $M$ have a $*$-dual form.
	We write $1$ (or $1_M$ if the ambient monoid is not clear) for the identity element of $M$. 
	The \defbold{group of units} $M^\times$ consists of all elements $x$ such that $xx^* = x^*x=1$; such elements form a group under multiplication.

	An \defbold{inverse submonoid} of $M$ is a subset of $M$ that contains the identity and is closed under multiplication and $*$. 
	
	A \defbold{morphism} of inverse monoids is a map $\theta: M \rightarrow N$ satisfying the formulae $\theta(xy) = \theta(x)\theta(y)$, $\theta(1_M) = 1_N$ and $\theta(x)^* = \theta(x^*)$.
	
\end{defn}

\begin{defn}    
	An \defbold{idempotent} of $M$ is an element $m\in M$ such that $mm=m$. 
	Given a monoid $M$, write $\mc{E}(M)$ for the set of idempotents of $M$. 
	Note that this is an inverse submonoid of $M$, since $m^*=m$ for every $m\in \mc{E}(M)$. 
	Moreover, in an inverse monoid, all idempotents commute.

	For every $x\in M$ the elements $x^*x$ and $xx^*$ are idempotents, called, respectively, the \defbold{domain} and \defbold{range} of $x$.
	They may also be notated $\dom(x)$ and $\ran(x)$, respectively.  
	Idempotents can be thought of as the identity map on their domain.
\end{defn}

Notice that, for every $m\in M$ we have $Mm=Mm^*m$. 
This is important for the analogue of Cayley's theorem, the Wagner--Preston theorem:
\begin{thm}[Wagner--Preston theorem; see {\cite[Section 1.5]{Lawson_book}}]
	Let $M$ be an inverse monoid and $m\in M$. 
	Then the map $r_m:Mm^*\rightarrow Mm, xm^*\mapsto xm^*m$ is a bijection between subsets of $M$. 
	Moreover, $m\mapsto r_m$ is a faithful inverse monoid morphism $M\rightarrow I(M)$.
\end{thm}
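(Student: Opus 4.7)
The plan is to verify, in sequence: (a) $r_m$ is a well-defined bijection from $Mm^*$ onto $Mm$; (b) the inverse of $r_m$ in $I(M)$ is $r_{m^*}$; (c) the composition of $r_m$ with $r_n$ in $I(M)$ equals $r_{mn}$; and (d) the assignment $m \mapsto r_m$ is injective. The basic algebraic ingredients throughout are the defining identities $mm^*m = m$ and $m^*mm^* = m^*$ of the inverse, together with the fact that idempotents of an inverse monoid commute.

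Well-definedness of $r_m$ is immediate: if $xm^* = x'm^*$ then right-multiplying by $m$ gives $xm^*m = x'm^*m$, so the prescription depends only on the element of $Mm^*$ and not on its representative. The formula $r_m(xm^*) = (xm^*)m$ lands in $Mm$ by inspection. Injectivity follows by right-multiplying $xm^*m = x'm^*m$ by $m^*$ and invoking $m^*mm^* = m^*$. For surjectivity, an arbitrary $ym \in Mm$ equals $r_m(ymm^*)$, since $(ymm^*)m = y(mm^*m) = ym$, and $ymm^* \in Mm^*$ tautologically.

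For (b), the candidate inverse $r_{m^*}: Mm \to Mm^*$ is obtained by swapping $m$ and $m^*$ in the definition, so $r_{m^*}(ym) = ymm^*$. The two round-trip computations $r_{m^*}(r_m(xm^*)) = xm^*mm^* = xm^*$ and $r_m(r_{m^*}(ym)) = ymm^*m = ym$ identify $r_{m^*}$ as the functional inverse of $r_m$, and hence as $(r_m)^*$ in $I(M)$. For (c), the map $r_{mn}$ has domain $M(mn)^* = Mn^*m^*$ and sends $y$ there to $ymn$. The plan is to compute the composite $r_m \cdot r_n$ in $I(M)$ under the convention in force, identify its domain by intersecting $\ran(r_n)$ with $\dom(r_m)$ and pulling back through $r_n$, and check that both the domain and the value match those of $r_{mn}$. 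The essential step is to use the commutativity of the idempotents $mm^*$ and $n^*n$ to show that the domain of the composite collapses exactly to $Mn^*m^*$; once the domain is correctly identified, the equality of values reduces to $mm^*m = m$ and is routine.

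Finally, for (d), suppose $r_m = r_n$. Equality of domains gives $Mm^* = Mn^*$, so $Mmm^* = Mnn^*$; since distinct idempotents of an inverse monoid generate distinct principal right ideals (if $e \in Mf$ with $e,f$ idempotent then $e = xf$ forces $ef = e$, and symmetrically $fe = f$, whence commutativity of idempotents yields $e = f$), this forces $mm^* = nn^*$. Evaluating both functions at this common idempotent gives $m = mm^*m = r_m(mm^*) = r_n(nn^*) = nn^*n = n$. The main obstacle is step (c): the set-theoretic description of composition in $I(M)$ produces a domain defined by an intersection, and one must check that this intersection is exactly $M(mn)^*$. The commutativity of idempotents is precisely the ingredient making this work, which is why the representation theorem belongs naturally to inverse monoids rather than a wider class of semigroups.
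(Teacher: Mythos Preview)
The paper does not supply its own proof of the Wagner--Preston theorem; it simply cites Lawson's book. So there is nothing to compare against, and the question is whether your argument stands on its own.

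Parts (a), (b), and (d) are correct and standard. The problem is (c). Under the composition convention you describe---``intersecting $\ran(r_n)$ with $\dom(r_m)$ and pulling back through $r_n$''---the composite $r_m \cdot r_n$ applies $r_n$ first and then $r_m$, so for $y$ in its domain one has $(r_m \cdot r_n)(y) = r_m(r_n(y)) = (yn)m = ynm$. This is $r_{nm}(y)$, not $r_{mn}(y)$; with this convention $m \mapsto r_m$ is an \emph{anti}-homomorphism, and its domain works out to $Mm^*n^* = M(nm)^*$, not the $Mn^*m^*$ you assert. (Note that the paper's own convention for $\PHomeo_c(X)$ is $(fg)(x)=f(g(x))$, so for the statement as written to hold, $I(M)$ must tacitly carry the opposite convention---functions written on the right, $(\alpha\beta)(x)=\beta(\alpha(x))$---which is indeed how Lawson presents it.) Under the correct convention one should intersect $\ran(r_m)$ with $\dom(r_n)$ and pull back through $r_m$; the idempotents whose commutativity is actually used are $m^*m$ and $nn^*$, not $mm^*$ and $n^*n$. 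You also only outline this step (``the plan is\ldots'', ``the essential step is\ldots'') rather than executing it; since it is precisely the place where the argument can go wrong, it deserves the explicit computation.
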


\begin{lem}\label{lem:adjoin_idempotents}
	Let $M$ be an inverse monoid  with set of idempotents $E$ and let $L$ be an inverse submonoid of $M$. 
	Then the product set $LE$ is an inverse submonoid of $M$ with $E$ as its set of idempotents. 
	Moreover, $LE=EL=ELE$.
\end{lem}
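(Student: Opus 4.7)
The plan is to reduce everything to the single, standard fact that idempotents can be moved through any element of an inverse monoid. Explicitly, for each $m \in M$ and $e \in E$, the element $mem^*$ is idempotent and satisfies $me = (mem^*)m$, while dually $em = m(m^*em)$. Hence $mE \subseteq Em$ and $Em \subseteq mE$, so $mE = Em$ for every $m \in M$. The equality $LE = EL$ will then be immediate, and $ELE = E(LE) = E(EL) = (EE)L = EL = LE$, using that $EE \subseteq E$ because the idempotents of $M$ form a commutative subsemigroup.

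Having established this commutation, I would next verify that $LE$ is an inverse submonoid. The identity $1 = 1 \cdot 1$ lies in $LE$. For closure under $*$, $(le)^* = e^* l^* = e l^*$ since $e \in E$ is self-inverse, and $e l^* \in EL = LE$. For closure under products, given $l_1 e_1, l_2 e_2 \in LE$, I would use $e_1 l_2 \in EL = LE$ to rewrite $e_1 l_2 = l_3 e_3$, whence $(l_1 e_1)(l_2 e_2) = (l_1 l_3)(e_3 e_2) \in LE$.

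Finally, to identify the idempotents of $LE$, one inclusion is free: $E \subseteq LE$ because $e = 1 \cdot e$, and any element of $E$ is idempotent in $M$ and hence in any submonoid. For the converse, if $m = le \in LE$ is idempotent then $m = m^* m = (le)^*(le) = e(l^*l)e$, a product of three elements of $E$ (the middle factor because $l^*l$ is the domain idempotent of $l \in M$, regardless of whether $l \in E$); since $E$ is closed under multiplication, $m \in E$. I do not anticipate a serious obstacle here: once the commutation $mE = Em$ is in hand, each claim collapses into short bookkeeping, and the only point needing a moment's care is remembering that $l^*l$ is automatically idempotent and so lies in $E$ independently of $l$.
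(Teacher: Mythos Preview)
Your proof is correct and follows essentially the same approach as the paper: both arguments hinge on the identity $em = m(m^*em)$ (equivalently $me = (mem^*)m$) to push idempotents past arbitrary elements, from which $LE = EL$ and closure under the monoid operations follow by the same bookkeeping. One minor point: your final computation showing that an idempotent $le$ lies in $E$ is unnecessary, since by hypothesis $E = \mc{E}(M)$ is the set of \emph{all} idempotents of $M$, so any idempotent of the subset $LE \subseteq M$ automatically belongs to $E$.
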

\begin{proof}
	The set $LE$ is closed under composition because for any $k,l\in L, e,f\in E$ we have $kelf=kell^*lf=kll^*elf$ and $l^*el\in E$. 
	In particular, $EL \subseteq LE$ and dually $LE \subseteq EL$, so $LE = EL$ and $ELE = (LE)E = LE$.
	Clearly also $1 \in LE$, and given $e \in E$ and $l \in L$, we have $(le)^* = el^* \in EL = LE$.
	Thus $LE$ is an inverse monoid. 
	As $E = E1 \subseteq LE$, the set of idempotents of $LE$ contains $E$; conversely, every idempotent of $LE$ belongs to $E$.
\end{proof}

Thinking of an inverse monoid as partial bijections of some set, and idempotents as identity maps on a subset, it makes sense to think of an idempotent $e$ as being smaller than another one $f$ if the domain of $e$ is contained in that of $f$. 
Because of the way partial functions are composed, this implies that, when the idempotents are composed, we obtain the smaller one: $ef=e$.  
Extrapolating this to non-idempotents, we can think of a partial bijection $x$ as being smaller than another $y$ if $x$ is the restriction of $y$ to the domain of $x$; that is, $x=yx^*x$. 

\begin{defn}
	Given an inverse monoid $M$, we define a \defbold{partial order} $\le$ by setting $x \le y$ if $x = yx^*x$ (or, equivalently, $x=xx^*y$). 
	There are other equivalent conditions (see \cite[1.4 Lemma 6]{Lawson_book}).
\end{defn}

In particular, one sees (\cite[1.4 Proposition 8]{Lawson_book}) that within $M$, the idempotents $\mc{E}(M)$ form an order ideal (if $e\le f$ and $f\in \mc{E}(M)$ then $e\in \mc{E}(M)$) in which every pair of elements $\{e,f\}$ has a meet (greatest lower bound) $ef$.

In the case of $I(\Omega)$ and $\PHomeo_c(X)$, it is possible for two idempotents to be disjoint.
In that case their meet is the identity function on the empty set, the empty function,  which we will denote by 0. 
Arbitrary pairs of elements of $\PHomeo_c(X)$ need not have a meet (consider $\{1,f\}$ where the set of fixed points of $f \in \Homeo(X)$ is regular closed but not open), but if two elements have disjoint domains and ranges, their meet is 0.

More generally, if $M$ is an inverse monoid with 0, we will say that two elements $x,y\in M$ are \defbold{disjoint}, written $x\perp y$, if $x^*y = xy^* = 0$, or equivalently, $xx^*yy^*=x^*xy^*y=0$.  Note that disjointness is preserved by translation: if $x \perp y$ and $z \in M$ then
	\begin{align*}
		(xz)^*(yz) &= z^*x^*yz = z^*(0)z = 0;  \\
		(xz)(yz)^* &= xzz^*y^* = x(x^*x)(zz^*)y^* =  xzz^*x^*(xy^*) = 0
	\end{align*}
	so $xz \perp yz$, and similarly $zx \perp zy$.

Given any two elements $f,g$ of $I(\Omega)$ (or of $\PHomeo_c(X)$), we may wish to glue or join them together to form an element $f\vee g$ whose domain and range is the union of those of $f$ and $g$. 
This is of course only possible if $f$ and $g$ agree on the intersection of their domains and ranges. 
This is equivalent to  $f^*g$ and $fg^*$ being idempotents -- the identity function on, respectively, the intersection of domains and the intersection of ranges of $f$ and $g$. 

\begin{defn}
	Let $M$ be an inverse monoid. 
	We say that $x,y\in M$ are \defbold{compatible} if $x^*y$ and $xy^*$ are idempotents of $M$.
	A subset $S$ of $M$ is \defbold{compatible} if every pair of elements of $S$ is compatible.
	Say that $x, y\in M$ are \defbold{joinable} if they are compatible and have a unique least upper bound, called their \defbold{join} and denoted $x\vee y$. 
	If every pair of compatible elements of $M$ is joinable, and left and right multiplication distributes over joins, then $M$ is called a \defbold{finitely joinable inverse monoid}. 
\end{defn}

When an inverse monoid has a distributive lattice of idempotents, multiplication distributes over joins.  In fact, the following can be deduced from the proof of \cite[1.4 Proposition 20]{Lawson_book} and the fact that idempotents form an order ideal.
	
	\begin{lem}\label{lem:multiplication_distributes_over_joins}
		Let $M$ be an inverse monoid such that $\mc{E}(M)$ is a distributive lattice.  Let $\{s_1,\dots,s_n\}$ and $\{t_1,\dots,t_m\}$ be compatible sets whose joins $s := \bigvee^n_{i=1} s_i$ and $t := \bigvee^n_{j=1} t_j$ exist. 
		Then $st = \bigvee_{i,j}s_it_j$; in particular, the set $\{s_it_j \mid 1 \le i \le n, 1 \le j \le m\}$ is compatible. 
		Moreover, $s \in \mc{E}(M)$  if and only if $s_i \in \mc{E}(M)$ for all $i$.
	\end{lem}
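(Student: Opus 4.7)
The plan is to verify $st = \bigvee_{i,j} s_i t_j$ in three stages --- compatibility of the family $\{s_i t_j\}$, then that $st$ is an upper bound, then that it is the \emph{least} upper bound --- before handling the moreover clause as an easy consequence of the formula together with the order-ideal property of $\mc{E}(M)$.

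For compatibility, I would compute $(s_i t_j)^*(s_k t_l) = t_j^* s_i^* s_k t_l$ directly. Since $\{s_r\}$ is compatible, $e := s_i^* s_k$ is idempotent; using the identity $e t_l = t_l (t_l^* e t_l)$, available because all idempotents commute, one rewrites the product as $(t_j^* t_l)(t_l^* e t_l)$, a product of two idempotents (the first from compatibility of $\{t_r\}$), hence itself idempotent. The $*$-dual handles $(s_i t_j)(s_k t_l)^*$. For $st$ being an upper bound, I would verify $(s_i t_j)(s_i t_j)^* st = s_i t_j$ using $s_i = s s_i^* s_i$ and $t_j = t_j t_j^* t$ (from $s_i \leq s$, $t_j \leq t$), together with the simplification $s_i^* s = s_i^* s_i$ that follows from $s_i^* s_i \leq s^* s$ in the order ideal $\mc{E}(M)$; commutativity of idempotents collapses the expression step by step.

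The leastness step is where distributivity of $\mc{E}(M)$ does the real work, and is the step I expect to be the main obstacle. My plan is to first prove the auxiliary identity $(\bigvee x_k)(\bigvee x_k)^* = \bigvee x_k x_k^*$ in $\mc{E}(M)$ for any compatible family with a join: given an idempotent $f \geq x_k x_k^*$ for all $k$, one shows $f x_k = x_k$, so $f(\bigvee x_k)$ is an upper bound of $\{x_k\}$, forcing $\bigvee x_k \leq f(\bigvee x_k)$ and hence $(\bigvee x_k)(\bigvee x_k)^* \leq f$. Applying this twice to the nested joins and invoking distributivity of $\mc{E}(M)$ to rearrange joins of idempotents inside expressions of the form $s(\cdots)s^*$, one arrives at $(st)(st)^* = \bigvee_{i,j}(s_i t_j)(s_i t_j)^*$. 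Then for any upper bound $u$ of $\{s_i t_j\}$ in $M$, joining the identities $s_i t_j = (s_i t_j)(s_i t_j)^* u$ over $i,j$ yields $st = (st)(st)^* u$, i.e.\ $st \leq u$.

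For the moreover clause: if every $s_i$ is idempotent, then each $s_i s_j$ is too (commuting idempotents), and the formula $ss = \bigvee_{i,j} s_i s_j$ combined with $s_i s_j = s_i \wedge s_j$ in the lattice $\mc{E}(M)$ gives $ss = \bigvee_i s_i = s$, so $s \in \mc{E}(M)$. Conversely, if $s \in \mc{E}(M)$ then each $s_i \leq s$ is itself idempotent, since $\mc{E}(M)$ is an order ideal.
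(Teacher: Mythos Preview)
The paper does not supply its own proof of this lemma: it defers the distributivity formula to the proof of \cite[1.4, Proposition~20]{Lawson_book}, and invokes the order-ideal property of $\mc{E}(M)$ for the moreover clause. So there is no detailed argument in the paper to compare against; I can only evaluate your sketch on its own merits.

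Your treatment of compatibility and of $st$ being an upper bound is correct and standard, and the moreover clause is exactly what the paper has in mind (the ``only if'' is the order-ideal property; your ``if'' via $ss = \bigvee_{i,j} s_i s_j$ is fine once the main formula is established).

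The leastness step, however, has a genuine circularity. The phrase ``joining the identities $s_i t_j = (s_i t_j)(s_i t_j)^* u$ over $i,j$ yields $st = (st)(st)^* u$'' presupposes that right-multiplication by $u$ distributes over the join $\bigvee_{i,j}(s_i t_j)(s_i t_j)^*$. But your auxiliary only establishes that join in $\mc{E}(M)$ (you quantified over \emph{idempotent} $f$), and a join in $\mc{E}(M)$ need not be a join in $M$: take $M = G \sqcup \{0,e_1,e_2\}$ for a nontrivial group $G$, with $g e_k = e_k g = e_k$ and $e_1 e_2 = 0$; then $\mc{E}(M) = \{0,e_1,e_2,1\}$ is Boolean with $e_1 \vee e_2 = 1$, yet every $g \in G$ satisfies $g \ge e_1, e_2$, so $\{e_1,e_2\}$ has no join in $M$. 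So even granting $(st)(st)^* = \bigvee_{i,j} f_{ij}$ in $\mc{E}(M)$, the passage to $(\bigvee f_{ij})u = \bigvee(f_{ij}u)$ in $M$ is precisely an instance of the distributivity you are proving. The same issue arises one step earlier: getting $(st)(st)^* = \bigvee_{i,j} f_{ij}$ by ``applying the auxiliary twice'' needs $s(\bigvee_j t_j t_j^*)s^* = \bigvee_j s t_j t_j^* s^*$, which is again conjugation distributing over a join --- another special case of the claim. Lawson's argument avoids this loop by a direct proof of the one-sided law $sa = \bigvee_i s_i a$; you would need to supply that step.
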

	
	Using the way products and inverses distribute over joins, we immediately obtain the following.
	
	\begin{cor}\label{cor:joinable_submonoid}
		Let $M$ be an inverse monoid such that $\mc{E}(M)$ is a distributive lattice, and such that $M$ is finitely joinable. 
		Let $S$ be an inverse submonoid of $M$ and let $K$ be the set of joins of finite compatible sets of elements of $S$.  Then $K$ is a finitely joinable inverse submonoid of $M$, with $\mc{E}(K)$ being the set of finite joins of elements of $\mc{E}(S)$.
	\end{cor}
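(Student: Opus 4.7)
The plan is to verify in turn that $K$ contains $1$, is closed under $*$, is closed under multiplication, and is closed under joins of compatible pairs, with multiplication distributing over joins; then identify $\mc{E}(K)$. The engine throughout is Lemma~\ref{lem:multiplication_distributes_over_joins}: joins interact well with products in $M$ because $\mc{E}(M)$ is distributive, and in particular a join of a finite compatible family is idempotent if and only if every member of the family is.

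First I would dispatch the easy parts. Clearly $1 = \bigvee\{1\} \in K$. For closure under $*$, if $k = \bigvee_{i=1}^n s_i$ with $\{s_i\} \subseteq S$ compatible, then $\{s_i^*\}$ is compatible (direct check: $(s_i^*)^*s_j^* = s_i s_j^* \in \mc{E}(M)$ and dually), and the general identity $(x \vee y)^* = x^* \vee y^*$ shows $k^* = \bigvee_i s_i^* \in K$. For closure under multiplication, given $k = \bigvee_{i=1}^n s_i$ and $l = \bigvee_{j=1}^m t_j$ with $s_i, t_j \in S$, Lemma~\ref{lem:multiplication_distributes_over_joins} gives at once that $\{s_i t_j\}_{i,j}$ is compatible and $kl = \bigvee_{i,j} s_i t_j$; since each $s_i t_j \in S$, this lies in $K$.

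For finite joinability, let $k_1,\ldots,k_r \in K$ be pairwise compatible, write each $k_p = \bigvee_{i} s_{p,i}$ with $\{s_{p,i}\}_i \subseteq S$ compatible, and set $T = \{s_{p,i}\}_{p,i}$. The crux is to show $T$ is compatible; I expect this to be the only non-routine step. For $p \neq q$, Lemma~\ref{lem:multiplication_distributes_over_joins} yields
\[
k_p^* k_q = \bigvee_{i,j} s_{p,i}^* s_{q,j},
\]
and by hypothesis the left side is idempotent, so by the last clause of the Lemma every $s_{p,i}^* s_{q,j}$ is idempotent; symmetrically each $s_{p,i} s_{q,j}^*$ is idempotent. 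Together with compatibility within each $\{s_{p,i}\}_i$, this shows $T$ is compatible. Then $\bigvee T$ exists in $M$ by finite joinability and equals $\bigvee_p k_p$ (using $k_p \leq \bigvee T$ and minimality), so $\bigvee_p k_p \in K$. Distributivity of multiplication over joins in $K$ is inherited from $M$.

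Finally, to identify $\mc{E}(K)$, if $e = \bigvee_i s_i \in K$ is idempotent with $\{s_i\} \subseteq S$ compatible, the last clause of Lemma~\ref{lem:multiplication_distributes_over_joins} forces each $s_i \in \mc{E}(S)$; conversely, the same clause shows that any finite compatible join of elements of $\mc{E}(S)$ is idempotent and thus lies in $\mc{E}(K)$.
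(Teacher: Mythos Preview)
Your proof is correct and follows exactly the approach the paper intends: the paper simply remarks that the corollary follows ``using the way products and inverses distribute over joins,'' i.e., by repeated application of Lemma~\ref{lem:multiplication_distributes_over_joins}, and your argument is precisely the unpacked version of that remark.
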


Every inverse monoid acts on its idempotents by conjugation. The following lemma is easy to verify. 

\begin{lem}\label{lem:conjugation}
	Let $M$ be an inverse monoid and let $E = \mc{E}(M)$.  
	The map $\hat{\alpha}: M\rightarrow \PAut(E)$, $m\mapsto \alpha_m$ is an inverse monoid morphism, where $\alpha_m: m^*mE\rightarrow mm^*E, e\mapsto mem^*$ is the partial automorphism induced by $m$ between all idempotents $ \leq\dom(m)$ and all idempotents $\leq \ran(m)$.
	This morphism is injective if and only if for every $m\neq n\in M$ there is $e\in E$ such that $mem^*\neq nen^*$. 
\end{lem}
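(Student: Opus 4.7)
The plan is to verify three claims in sequence. First, that $\alpha_m$ is a well-defined partial automorphism of the semilattice $E$: for $e \in E$ with $e \le m^*m$, a short calculation exploiting $e = em^*m$ together with the commutativity of idempotents shows that $mem^*$ is idempotent and bounded above by $mm^*$, so $\alpha_m$ maps $m^*mE$ into $mm^*E$. The symmetric formula applied to $m^*$ produces a candidate inverse $\alpha_{m^*}$, and the inverse monoid identities $mm^*m = m$ and $m^*mm^* = m^*$ show that $\alpha_{m^*}$ is genuinely a two-sided inverse on these principal ideals.

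Second, $\hat\alpha$ is an inverse monoid morphism. The equations $\alpha_1 = \id_E$ and $\alpha_{m^*} = (\alpha_m)^*$ (where $^*$ on the right denotes the inverse in $\PAut(E)$) are immediate from the previous step. For multiplicativity, the computation $\alpha_{mn}(e) = (mn)e(mn)^* = m(nen^*)m^* = \alpha_m(\alpha_n(e))$ gives pointwise agreement where both sides are defined, so the only nontrivial task is matching domains. This reduces to the equivalence
\[
e \le n^*m^*mn \iff e \le n^*n \text{ and } nen^* \le m^*m,
\]
a routine manipulation using commutativity of idempotents and $nn^*n = n$.

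Third, for the injectivity characterisation, I would argue via the equivalence $\hat\alpha(m) = \hat\alpha(n) \iff mem^* = nen^*$ for every $e \in E$. One direction requires showing that pointwise agreement across all of $E$ forces $m^*m = n^*n$: specialising to $e = m^*m$ yields $mm^* = nm^*mn^* \le nn^*$, then by symmetry $mm^* = nn^*$, and conjugating the resulting identity $nm^*mn^* = nn^*$ by $n^*$ on the left and $n$ on the right extracts $n^*n \le m^*m$, again symmetric. In the reverse direction, once the domains $m^*mE$ and $n^*nE$ coincide, the extension identity $mem^* = m(m^*m\cdot e\cdot m^*m)m^*$ converts agreement on the principal ideal $m^*mE$ into agreement on all of $E$. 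The condition in the statement is then an immediate paraphrase of failure of injectivity.

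No step presents a real obstacle; the argument is essentially a ledger of the inverse monoid axioms, with the commutativity of idempotents doing most of the work. The only minor care needed is in the last step: ensuring that the condition, which quantifies over all $e \in E$, is equivalent to equality of the partial bijections $\alpha_m$ and $\alpha_n$, whose graphs a priori live only over the (possibly distinct) principal ideals $m^*mE$ and $n^*nE$.
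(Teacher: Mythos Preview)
Your proof is correct and complete. The paper itself omits the proof entirely, merely prefacing the lemma with ``The following lemma is easy to verify''; your verification is the natural one, and the care you take with the injectivity clause (reducing equality of the partial bijections $\alpha_m$ and $\alpha_n$ to the global condition $mem^* = nen^*$ for all $e\in E$, via the domain-matching argument and the extension identity $mem^* = m(m^*m\cdot e)m^*$) is exactly the subtlety that needs to be checked.
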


If the idempotents of $M$ form a Boolean algebra then, by Stone duality,  the conjugation action just described induces an inverse monoid morphism $M\rightarrow \PHomeo_c(X)$ where $X$ is the Stone space of the idempotents of $M$.

\subsection{Boolean inverse monoids}\label{ssec:BIM_prelims}
\begin{rmk}
	From now on, we will only consider inverse monoids $M$ that act faithfully on their idempotents $E$, so that we can instead speak of submonoids of $\PAut(E)$ or of $\PHomeo_c(X)$ where $X$ is the Stone space of $E$, if $E$ is a Boolean algebra.
\end{rmk}

In order to construct piecewise full groups of a space $X$, we must consider inverse monoids that contain all restrictions of, say, a group, to clopens of $X$.
The algebraic way of obtaining restrictions of elements is by composing with idempotents. 
Therefore, if we wish to obtain restrictions to all clopens of $X$, the idempotents of our inverse monoid had better be exactly that set of clopens.
This already holds for $\PHomeo_c(X)$.
Ensuring that finite joins exist in our inverse monoids is the remaining condition that we need.
 As far as we know, the following definition first appears in \cite{Lawson_bim_def}\footnote{In \cite{Lawson_bim_def}, $M$ is also required to be a meet-semilattice, however this condition is inconvenient for our purposes, since for example $\PHomeo_c(X)$ is not a meet-semilattice.}.

\begin{defn}\label{defn:Boolean_inverse}
	Let $X$ be a compact zero-dimensional space. 
	An inverse submonoid $M$ of $\PHomeo_c(X)$ is a \defbold{Boolean inverse monoid} if
	\begin{enumerate}[(i)]
		\item $M$ is finitely joinable; and
		\item $\mc{E}(M)=\mc{CO}(X)$.
	\end{enumerate}
\end{defn}

In Section \ref{sec:groups} we started with a group acting on $X$ and formed its piecewise full group. Another way to obtain the same group is to first take all restrictions to clopens of $X$, then take the Boolean inverse monoid that they generate (that is, take products, inverses and finite joins of all these restrictions) and finally take the group of units of this Boolean inverse monoid.
More generally, we may start with a monoid of clopen partial homeomorphisms of $X$ instead of a group and proceed in the same way, first taking all restrictions and then forming a Boolean inverse monoid, whose group of units will be a piecewise full group. 
The next lemma follows immediately from Lemma~\ref{lem:adjoin_idempotents} and Corollary~\ref{cor:joinable_submonoid}.

\begin{lem}\label{lem:Boolean_completion}
	If $L\leq \PHomeo_c(X)$ is an inverse monoid with set of idempotents $E:=\mc{E}(L)=\mc{CO}(X)$,  and $M$ is an inverse submonoid of $L$, then the set $K\subset \PHomeo_c(X)$ of finite compatible joins of elements of $ME\leq L$ is a Boolean inverse monoid. 
\end{lem}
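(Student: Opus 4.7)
The plan is to follow the hint and chain the two cited results. First I would apply Lemma~\ref{lem:adjoin_idempotents}, where the ambient inverse monoid is our $L$ (with idempotents $E = \mc{CO}(X)$) and the inverse submonoid is our $M$. The conclusion of that lemma, with the roles re-labelled, says precisely that $ME$ is an inverse submonoid of $L$ whose set of idempotents is $E = \mc{CO}(X)$. In particular, $ME$ is an inverse submonoid of $\PHomeo_c(X)$ with the same idempotents as $\PHomeo_c(X)$ itself.

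Next I would apply Corollary~\ref{cor:joinable_submonoid} to the inverse submonoid $ME$ of the ambient inverse monoid $\PHomeo_c(X)$. To do this I must check the hypotheses on the ambient. The idempotents of $\PHomeo_c(X)$ are the identity maps on clopen subsets of $X$, which form the Boolean algebra $\mc{CO}(X)$ under intersection and union; in particular $\mc{E}(\PHomeo_c(X))$ is a distributive lattice. Moreover $\PHomeo_c(X)$ is finitely joinable: two compatible clopen partial homeomorphisms agree on the intersection of their (clopen) domains and ranges, hence glue to a unique clopen partial homeomorphism, and composing on either side distributes over this gluing because it is done pointwise. The corollary then gives that the set $K$ of finite compatible joins of elements of $ME$ is a finitely joinable inverse submonoid of $\PHomeo_c(X)$, and that $\mc{E}(K)$ is the set of finite joins of elements of $\mc{E}(ME) = \mc{CO}(X)$.

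Finally, since $\mc{CO}(X)$ is already closed under finite unions, the set of finite joins of its elements is $\mc{CO}(X)$ itself, so $\mc{E}(K) = \mc{CO}(X)$. Combined with finite joinability, this verifies both clauses of Definition~\ref{defn:Boolean_inverse}, so $K$ is a Boolean inverse monoid, as required. There is no real obstacle here: the argument is bookkeeping, and the only point to be careful about is correctly matching the hypotheses of Lemma~\ref{lem:adjoin_idempotents} and Corollary~\ref{cor:joinable_submonoid} to our setup, in particular observing that we may take the ambient in the second step to be $\PHomeo_c(X)$ rather than $L$, which avoids having to separately check that $L$ itself is finitely joinable.
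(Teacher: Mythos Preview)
Your proposal is correct and is precisely the argument the paper has in mind: the paper's proof consists of the single sentence that the lemma ``follows immediately from Lemma~\ref{lem:adjoin_idempotents} and Corollary~\ref{cor:joinable_submonoid},'' and you have simply unpacked how those two results chain together. Your observation that one should take $\PHomeo_c(X)$ rather than $L$ as the ambient monoid when invoking Corollary~\ref{cor:joinable_submonoid} is exactly right, since the statement places $K$ inside $\PHomeo_c(X)$ and $L$ is not assumed finitely joinable.
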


\begin{defn}
	Let $X$ be a zero-dimensional compact space and $M\leq\PHomeo_c(X)$ be an inverse monoid. 
	The \defbold{Boolean completion} of $M$, denoted $\mc{BI}(M)$, is the inverse monoid $K$ constructed from $M$ as in Lemma~\ref{lem:Boolean_completion}.
	The \defbold{piecewise full group} $\Full(M)$ is the group of units of $\mc{BI}(M)$, or in other words,
	\[
	\Full(M) := \mc{BI}(M) \cap \Homeo(X).
	\]
	We say that $M$ is \defbold{piecewise full} if $M$ is a group and $M = \Full(M)$.
\end{defn}

\subsection{Topological inverse monoids}\label{ssec:top_invmonoids}
Since we wish to produce topological groups using inverse monoids, we need to introduce a suitable notion of topological inverse monoids.

\begin{defn}
	A \defbold{topological inverse monoid} is an inverse monoid $M$ equipped with a topology $\tau$, such that multiplication and $*$ are continuous (the former with respect to the product topology on $M \times M$).  Given $x,y \in M$, write $x(-)y$ for the map
	\[
	x(-)y: M \rightarrow xMy, \; m \mapsto xmy.
	\]
\end{defn}

Notice that, since multiplication is continuous, so are the maps $x(-):M\rightarrow xM$ and $(-)y:M\rightarrow My$ and therefore so is their composition $x(-)y$. 
In particular, given any idempotents $e,f\in\mc{E}(M)$, the map $e(-)f:M\rightarrow eMf$ is  continuous, onto, and fixes every point of $eMf\subseteq M$, so the subspace topology on $eMf$ coincides with the quotient topology of $e(-)f$.

\begin{lem}\label{lem:subspace_quotient}
	Let $M$ be a topological inverse monoid and let $e,f \in \mc{E}(M)$.  Then the subspace topology of $eMf$ is the same as the quotient topology of $e(-)f$. \qed
\end{lem}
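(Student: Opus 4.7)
The plan is to exploit the fact that the map $r := e(-)f$ is a continuous retraction of $M$ onto $eMf$, i.e. $r \colon M \to M$ is continuous, has image $eMf$, and restricts to the identity on $eMf$. Continuity of $r$ as a map $M \to M$ is immediate from continuity of multiplication (it is the composition of left multiplication by $e$ with right multiplication by $f$). For the retraction property: given $x = eyf \in eMf$, using $ee = e$ and $ff = f$ we get $r(x) = e(eyf)f = eyf = x$. In particular, for any subset $U \subseteq eMf$, the identity
\[
r^{-1}(U) \cap eMf = U
\]
holds, since $x \in r^{-1}(U) \cap eMf$ iff $x \in eMf$ and $r(x) = x \in U$.

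With this in hand, both topological comparisons are one-liners. Write $\tau_s$ for the subspace topology on $eMf$ inherited from $M$, and $\tau_q$ for the quotient topology induced by $r \colon M \to eMf$.

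For $\tau_s \subseteq \tau_q$: if $U \in \tau_s$, write $U = V \cap eMf$ with $V$ open in $M$. Then
\[
r^{-1}(U) = r^{-1}(V \cap eMf) = r^{-1}(V),
\]
because the image of $r$ lies in $eMf$; this is open in $M$ by continuity of $r \colon M \to M$, so $U \in \tau_q$.

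For $\tau_q \subseteq \tau_s$: if $U \in \tau_q$, then by definition $W := r^{-1}(U)$ is open in $M$. By the retraction identity above, $U = W \cap eMf$, so $U \in \tau_s$.

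There is no genuine obstacle here; the only conceptual point worth highlighting is that $e(-)f$ is not merely continuous and surjective onto $eMf$, but it is a retraction, which forces the two topologies to coincide (in general, continuous surjections only give an inequality between subspace and quotient topologies). The idempotence of $e$ and $f$ is what makes $r$ a retraction, and this is exactly the input needed.
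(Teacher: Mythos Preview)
Your proof is correct and takes essentially the same approach as the paper: the paper observes (in the paragraph immediately preceding the lemma) that $e(-)f$ is continuous, surjective onto $eMf$, and fixes every point of $eMf$, i.e.\ is a continuous retraction, from which the equality of the two topologies follows. You have simply spelled out the standard retraction argument in more detail than the paper does.
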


In order to decide what would be a reasonable topology to put on a Boolean inverse monoid, we first consider what topology to put on $\PHomeo_c(X)\cong \PAut(\mc{CO}(X))$. 
One reasonable requirement, and one that motivates the compact-open topology for $\Homeo(X)$, is that the natural action on $X$ be continuous. 
In other words, the evaluation map $\mathrm{ev}: \Homeo(X)\times X\rightarrow X$, $(h,x)\mapsto h(x)$ is continuous with respect to the product topology on $\Homeo(X)\times X$. 

The exponential law for topological spaces states that,  given any topological spaces $A,B,C$ with $B$ locally compact, there is a homeomorphism between spaces of continuous functions $\eta:\mc{C}(A\times B, C)\rightarrow\mc{C}(A,\mc{C}(B,C))$ 
where $\mc{C}(-,-)$ is given the compact-open topology (this is in fact motivation for the definition of the compact-open topology on continuous maps). 
In particular,  $\mathrm{ev}\in \mc{C}(\Homeo(X)\times X, X)$ if and only if $\eta(\mathrm{ev})\in \mc{C}(\Homeo(X), \mc{C}(X, X))$. 
In other words, the evaluation map is continuous if and only if the inclusion map $\eta(\mathrm{ev}): \Homeo(X)\rightarrow \mc{C}(X, X)$ is continuous with respect to the compact-open topology on $\mc{C}(X,X)$. 
This is what yields the usual compact-open topology of $\Homeo(X)$.

Extending the exponential law to continuous maps with clopen domain, and allowing for continuous inversion, yields a topology on $\PHomeo_c(X)$ with the following sub-basic opens, ranging through $d,r\in \mc{CO}(X)$:
\begin{align*}
	V_{d,r} &:= \{h\in \PHomeo_c(X): d\subseteq\dom(h),  h(d) \subseteq r\} ,\\
	V_{d,r}^{*} & = \{h\in \PHomeo_c(X): d\subseteq \ran(h), h^*(d)\subseteq r\},\\
	V_{d,\#}  &:=  \{h\in\PHomeo_c(X):  \dom(h)\cap d=\emptyset\},\\
	V_{\#, d} &:= \{h\in\PHomeo_c(X):  \ran(h)\cap d=\emptyset \} = V_{d,\#}^{*}.
\end{align*}

This is the coarsest inverse monoid topology on $\PHomeo_c(X)$ that makes evaluation  $\mathrm{ev}:\PHomeo_c(X)\times X\rightarrow X$ a partial continuous map with clopen domain.  
It is also a Hausdorff topology. 
The group of units of $\PHomeo_c(X)$ is of course $\Homeo(X)$ and the subspace topology is the usual compact-open topology.

\begin{defn}
	Given an inverse monoid $M$, we define the \defbold{compact-open topology} on $M$ to be the topology generated by the sets
	\begin{align*}
		V_{e,f} & := \{m \in M \mid e\leq m^*m, mem^* \leq f\},  \\
		V_{e,f}^{*} &:= \{m\in M \mid e\leq mm^*, m^*em\leq f\},  \\
		V_{e,\#} & := \{m\in M \mid e m^*m=0\}=\{m\in M\mid m^*m\leq e^{\perp}\},  \\
		V_{\#,e} &  := \{m\in M\mid e mm^*=0\}=\{m\in M\mid mm^*\leq e^{\perp}\}. 
	\end{align*}
	where $e,f \in \mc{E}(M)$ and $e^{\perp}$ denotes the complement of $e$, if it exists.
\end{defn}

\begin{lem}\label{lem:compactopen_top_0dim_units_clopen}
	Let $M$ be an inverse monoid whose set of idempotents $\mc{E}(M)$ forms a Boolean algebra, with top and bottom elements denoted by 1 and 0. 
	Give $M$ the compact-open topology. Then:
	\begin{enumerate}[(i)]
		\item $M$ is a zero-dimensional inverse monoid; 
		\item the idempotents $\mc{E}(M)$ form a discrete submonoid of $M$;
		\item $\{0\}$ is open and the group of units of $M$ is clopen.
	\end{enumerate}
\end{lem}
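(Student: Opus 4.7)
My plan is to deduce each item directly from the definition of the sub-basic opens $V_{e,f}, V_{e,f}^*, V_{e,\#}, V_{\#,e}$, together with the faithfulness of the conjugation action of $M$ on $\mc{E}(M)$ (Lemma~\ref{lem:conjugation}) that is in force from the preceding remark. I begin with the easier items (iii) and (ii). Since $m = mm^*m$, the equation $m^*m = 0$ forces $m = 0$, so $\{0\} = V_{1,\#}$ is open. An element $m$ satisfies $1 \le m^*m$ iff $m^*m = 1$ (the condition $m\cdot 1\cdot m^* \le 1$ being automatic), so $M^\times = V_{1,1} \cap V_{1,1}^*$ is open; its complement equals $\bigcup_{0\neq e \in \mc{E}(M)}(V_{e,\#} \cup V_{\#,e})$, since $m \notin M^\times$ forces either $(m^*m)^\perp$ or $(mm^*)^\perp$ to be a nonzero idempotent witnessing membership. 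For (ii), I will check $\{e\} = V_{e,e} \cap V_{e^\perp,\#} \cap \mc{E}(M)$: for $f \in \mc{E}(M)$, membership in $V_{e,e}$ forces $e \le f$, while membership in $V_{e^\perp,\#}$ forces $f \le e$, so $f = e$.

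For (i), the main task is to show each sub-basic open is closed; finite intersections then supply a base of clopens. Suppose $m \notin V_{e,f}$. If $e \not\le m^*m$, then $e' := e \wedge (m^*m)^\perp \ne 0$ places $m \in V_{e',\#}$, and any $k \in V_{e',\#} \cap V_{e,f}$ would satisfy $e' \le e \le k^*k$ and $e' \wedge k^*k = 0$, forcing $e' = 0$. Otherwise $e \le m^*m$ but $h := mem^* \wedge f^\perp \ne 0$; setting $e' := m^*hm$, one verifies $e' \le e$ and $me'm^* = h$, so $m \in V_{e', f^\perp}$, and any $k$ in $V_{e',f^\perp} \cap V_{e,f}$ would have $ke'k^* \le f \wedge f^\perp = 0$ with $e' \le k^*k$ nonzero, contradicting the injectivity of the partial conjugation $\alpha_k$ on idempotents below $k^*k$ (Lemma~\ref{lem:conjugation}). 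The sub-basic open $V_{e,\#}$ is simpler: if $m \notin V_{e,\#}$ then $g := em^*m \ne 0$ and $V_{g,1}$ is a neighborhood of $m$ disjoint from $V_{e,\#}$, since any common $k$ would give $g \le e \wedge k^*k = 0$. The $*$-dual sub-basic opens are handled by symmetric arguments.

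It remains to verify that $M$ is Hausdorff, which together with the clopen base gives zero-dimensionality in the sense of this paper. Given $m \ne n$: if $m^*m \not\le n^*n$, then $e := m^*m \wedge (n^*n)^\perp \ne 0$ yields disjoint opens $V_{e,1} \ni m$ and $V_{e,\#} \ni n$; the cases $n^*n \not\le m^*m$ and $mm^* \ne nn^*$ are analogous. In the remaining case $m^*m = n^*n$, $mm^* = nn^*$, faithfulness of the conjugation action supplies $e \le m^*m$ with $mem^* \ne nen^*$; after swapping $m,n$ if necessary, $h := mem^* \wedge (nen^*)^\perp \ne 0$, and $e' := m^*hm$ satisfies $\alpha_m(e') = h$ and $\alpha_n(e') \le nen^* \le h^\perp$, so $V_{e',h}$ and $V_{e',h^\perp}$ are disjoint open neighborhoods of $m$ and $n$ (the same injectivity argument as above showing their intersection is empty). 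The main obstacle throughout is the consistent use of Lemma~\ref{lem:conjugation} to upgrade a failure of an idempotent inclusion into a nonzero witness $e'$ lying in an appropriate sub-basic open; once this technique is isolated, the closedness of the sub-basic sets and the third case of Hausdorffness both fall out of essentially the same construction.
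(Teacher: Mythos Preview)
Your argument is correct and, for items (ii) and (iii) and for the closedness of the sub-basic opens in (i), essentially the same as the paper's: where the paper writes $M \setminus V_{e,f} = (M \setminus V_{e,1}) \cup \bigcup_{0<g\le e} V_{g,f^\perp}$ and $M \setminus V_{e,\#} = \bigcup_{d\le e} V_{d,1}$ as unions of sub-basic opens, you do the equivalent thing pointwise, locating each $m$ outside $V_{e,f}$ in a specific member of that union and checking disjointness explicitly. The injectivity of $\alpha_k$ you invoke is exactly what makes those unions equal to the stated complements.

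The one place you go further than the paper is in verifying Hausdorffness. The paper's proof only establishes a base of clopens (small inductive dimension zero) and does not address separation of points, whereas you use the standing faithfulness hypothesis on the conjugation action to separate $m$ from $n$ when $m^*m=n^*n$ and $mm^*=nn^*$. This is a genuine addition: the paper's earlier convention that ``zero-dimensional'' includes $T_0$ is not actually checked in its proof, so your argument fills that gap. Note that faithfulness is essential here---without it the compact-open topology can fail to be $T_0$---so your reliance on the remark preceding the lemma is necessary, not incidental.
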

\begin{proof}
	(i) The definition of compact-open topology already ensures that it is an inverse monoid topology.
	Recall that a topological space is zero-dimensional (with respect to small inductive dimension) if it has a base consisting of clopen sets.
	It suffices to show that the sub-basic opens $V_{e,f}$ and $V_{e,\#}$ are closed, for every $e,f\in\mc{E}(M)\setminus \{0\}$.
	First observe that 
	$V_{e,1}=\{m\in M \mid e\leq m^*m\}$. 
	Then 	$M\setminus V_{e,\#}=\bigcup_{d\leq e}V_{d,1}$. 
	Moreover, 
	$$M\setminus V_{e,1}=\{m\in M \mid e\nleq m^*m\}=\{m\in M \mid \exists d\leq e: dm^*m=0\}=\bigcup_{d\leq e}V_{d,\#}$$
	and so 
	$$M\setminus V_{e,f}=\{m\in M\mid e\nleq m^*m\}\cup \{m\in M \mid e\leq m^*m, mem^*\nleq f)\}=(M\setminus V_{e,1})\cup\bigcup_{0 < g\leq e}V_{g,f^{\perp}}$$
	where $e,f\in\mc{E}(M)\setminus\{0\}$ and $e^{\perp}$ denotes the complement of $e$ in $\mc{E}(M)$.
	
	(ii) The idempotents $\mc{E}(M)$ are a discrete subspace of $M$ with respect to this topology because $e$ is the only idempotent in $V_{e,e}\cap V_{e^{\perp},\#}$.
	
	(iii)Notice that  $\{0\}=V_{1,\#}=V_{\#,1}$ and that $V_{1,1}\cap V_{1,1}^*$ is the group of units of $M$.
\end{proof}

The inverse monoids we consider in this paper act faithfully on their idempotents $\mc{E}(M)$, so they can be considered submonoids of $\PAut(\mc{E}(M))$. 
The compact-open topology on $M$ in this case is just the subspace topology for the compact-open topology on $\PAut(\mc{E}(M))$.

	Part (ii) of Lemma~\ref{lem:compactopen_top_0dim_units_clopen} has a kind of converse.
	\begin{lem}\label{lem:discrete_idempotents_implies_compact-open}
		Let $M$ be a topological inverse monoid whose set of idempotents $\mc{E}(M)$ forms a Boolean algebra, with top and bottom elements denoted by 1 and 0.  Suppose that $\mc{E}(M)$ is discrete in $M$.  Then the topology of $M$ is a refinement of the compact-open topology.
	\end{lem}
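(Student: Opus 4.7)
The plan is to show that each sub-basic open set of the compact-open topology, namely $V_{e,f}$, $V_{e,f}^*$, $V_{e,\#}$, and $V_{\#,e}$, is already open in $M$. The key leverage is that discreteness of $\mc{E}(M)$ in $M$ means every subset $S\subseteq\mc{E}(M)$ is trace-open: there is an open $U\subseteq M$ with $U\cap\mc{E}(M)=S$. So if I can realise each sub-basic set as the $M$-preimage of some $S\subseteq\mc{E}(M)$ under a continuous map $M\to M$ whose image already lies in $\mc{E}(M)$, I am done.

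For fixed $e\in\mc{E}(M)$ the four relevant maps are
$$\phi_1(m)=em^*m,\quad \phi_2(m)=mem^*,\quad \phi_3(m)=emm^*,\quad \phi_4(m)=m^*em.$$
Each is a finite product involving multiplication and $*$, hence continuous. For $\phi_1$ and $\phi_3$ the image lies in $\mc{E}(M)$ because a product of commuting idempotents is idempotent. For $\phi_2$, a short calculation using commutativity of idempotents together with $mm^*m=m$ gives
$$(mem^*)(mem^*)=m(em^*m)em^*=m(m^*me)em^*=(mm^*m)e^2m^*=mem^*,$$
and the dual argument covers $\phi_4$. This sanity check is the only part of the proof needing any care: one has to notice that $mem^*$ is \emph{unconditionally} idempotent, so that the clause ``$mem^*\le f$'' in the definition of $V_{e,f}$ is meaningful and can be read as a condition on $\phi_2(m)$ alone, independently of whether $e\le m^*m$.

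With the $\phi_i$ available, the identifications are direct: $V_{e,\#}=\phi_1^{-1}(\{0\})$ and $V_{\#,e}=\phi_3^{-1}(\{0\})$; writing $[0,f]=\{g\in\mc{E}(M)\mid g\le f\}$, we have $V_{e,f}=\phi_1^{-1}(\{e\})\cap\phi_2^{-1}([0,f])$ and symmetrically $V_{e,f}^*=\phi_3^{-1}(\{e\})\cap\phi_4^{-1}([0,f])$. Each of $\{0\}$, $\{e\}$ and $[0,f]$ is an arbitrary subset of the discrete space $\mc{E}(M)$, hence trace-open in $M$, so each displayed preimage is open in $(M,\tau)$ and each sub-basic set is too. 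Thus $\tau$ refines the compact-open topology.
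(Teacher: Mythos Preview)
Your proof is correct and takes essentially the same approach as the paper: both observe that the sub-basic sets of the compact-open topology are preimages of subsets of $\mc{E}(M)$ under continuous idempotent-valued maps built from multiplication, $*$, and constants, and then use discreteness of $\mc{E}(M)$ to conclude. The paper phrases this abstractly via ``idempotent-valued word functions'' without writing down the specific maps, whereas you explicitly identify $\phi_1,\dots,\phi_4$ and verify, in particular, that $mem^*$ is always idempotent; your version is more detailed but the underlying argument is the same.
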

	
	\begin{proof}
		Say that the function $w: M \rightarrow M$ is a word function of $M$ if $w$ can be expressed as a composition of functions $M^n \rightarrow M^n$ that permute the inputs and apply $*$ to a subset of them; functions $M^{n+1} \rightarrow M^n$ that multiply the first two entries together; and functions $M^n \rightarrow M^{n+1}$ that either duplicate the first entry, or append a fixed element of $M$ to the start of an $n$-tuple.  Say $w$ is an idempotent-valued word function if in addition, $w(M) \subseteq \mc{E}(M)$.  Since $M$ is a topological inverse monoid, we see that all word functions are continuous; in particular, if $w$ is idempotent-valued, then the set of $m \in M$ such that $w(l) \in F$ is open for every $F \subseteq \mc{E}(M)$.  In particular, we see that all the basic open sets can be expressed as conditions on idempotent-valued word functions, and therefore they must all be open sets in the given topology on $M$.
	\end{proof}

For our purposes, we shall need to find refinements of the compact-open topology on $M$ (just as for subgroups of $\Homeo(X)$ we found refinements of the compact-open topology in order to induce a group topology to the piecewise full group). 
In particular, this implies that for all pairs $e,f$ of idempotents of $M$, 
\begin{align*}
	eM &= \{m\in M \mid \ran(m)\leq e\} & = V_{\#, e^{\perp}},\\
	Mf &= \{m\in M\mid \dom(m)\leq f\} & = V_{f^{\perp},\#},\\
	eMf &= eM\cap Mf & 
\end{align*}
are open in $M$. 
Thus the quotient topology on each of $eM$, $Mf$ and $eMf$, which coincides with the subspace topology, is actually contained in the topology of $M$.

The following straightforward lemma will be useful later on.
\begin{lem}\label{lem:other_opens_in_idempotent_top}
	Let $M$ be an inverse monoid whose idempotents $E=\mc{E}(M)$ form a Boolean algebra and $d,r\in E$.  Then the following equations hold:
	\begin{enumerate}[(i)]
		\item $W_{d,r} :=\{m\in M\mid mdm^*\leq r\} = \displaystyle{\bigcup_{c\in E, c\leq d} V_{c,r}\cap V_{rc^{\perp},\#}}$
		\item $W_{d,r}\cap W_{r,d}^*=\{m\in M\mid md=rm\}$
		\item $U_{d,r}:=\{m\in M\mid mdm^*=r\} = \displaystyle{\bigcup_{c\in E, c\leq d} V_{c,r}\cap V_{r,c}^*\cap V_{dc^{\perp},\#}}$
		\item $D_d:=\{m\in M\mid m^*m=d\}=V_{d,1}\cap V_{d^{\perp},\# }.$
	\end{enumerate}
	In particular, the above and their inverses are open in the compact-open topology of $M$. 
\end{lem}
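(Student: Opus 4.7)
The plan is to verify each of (i)--(iv) by unpacking the definitions and using two basic facts about inverse monoids whose idempotents form a Boolean algebra: idempotents commute, and $m(m^*m) = m$. The key algebraic reduction underpinning (i)--(iii) is the identity
\[
md \;=\; m(m^*m)d \;=\; m(d \wedge m^*m) \;=\; mc, \quad \text{where } c := d \wedge m^*m,
\]
so that $mdm^* = mcm^*$, and in particular $mdm^*$ depends only on the idempotent $c = d \wedge m^*m \le d$. Item (iv) is the simplest and I would dispose of it first: $V_{d,1}$ is exactly the condition $d \le m^*m$ (since $mdm^* \le 1$ is vacuous), $V_{d^\perp,\#}$ is exactly $m^*m \le d$, and their intersection forces $m^*m = d$.

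For (i), given $m$ with $mdm^* \le r$, the natural choice is $c = d \wedge m^*m$, which lies in the index set of the union; the three conditions that characterise $c$, namely $c \le m^*m$, $c \le d$, and $d \wedge m^*m \le c$, are built respectively into $V_{c,r}$, into the range of the union, and into the disjointness condition with $d c^\perp$, while the remaining requirement $mcm^* \le r$ is the content of $V_{c,r}$. The reverse containment is immediate from the same reduction. Item (iii) follows by combining (i) with its $*$-dual: $r \le mdm^*$ is equivalent to $r \le mm^*$ together with $m^*rm \le m^*m \wedge d = c$, which is exactly the content of $V_{r,c}^*$.

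For (ii), the same reduction yields $md = mdm^*m$, so that $mdm^* \le r$ immediately implies $md \le rm$ by right-multiplying by $m$ (and using that $mm^*$ commutes with $r$). Dually, $m^*rm \le d$ gives $rm \le md$ by left-multiplying by $m$ and using $mm^*r = rmm^*$. Combining the two inequalities yields $md = rm$. The converse is a one-line computation: if $md = rm$ then $mdm^* = rmm^* \le r$ and $m^*rm = m^*md \le d$.

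The bulk of the difficulty is the bookkeeping in (i) and (iii), in particular verifying that the ``disjointness'' factor $V_{dc^\perp,\#}$ is precisely what pins down $c$ as $d \wedge m^*m$, so that the putatively large union over $c \le d$ in fact decomposes $W_{d,r}$ or $U_{d,r}$ into disjoint pieces indexed by the idempotent $d \wedge m^*m$. Once the equalities are established, the final sentence of the lemma is formal: each set is exhibited as a (generally infinite) union of finite intersections of subbasic opens of the compact-open topology, hence open, and the assertion about inverses follows by applying the same analysis to $m^*$ using the $*$-duality of the subbasic sets $V_{e,f}\leftrightarrow V_{e,f}^*$ and $V_{e,\#}\leftrightarrow V_{\#,e}$.
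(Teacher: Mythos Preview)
The paper does not supply a proof; it labels this lemma ``straightforward'' and states it without argument. Your unpacking is correct and is exactly the kind of direct verification one would expect.

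One point is worth making explicit. In part (i) as printed, the disjointness factor reads $V_{rc^{\perp},\#}$, but your argument (correctly) uses $V_{dc^{\perp},\#}$, and this is what is actually needed. With $rc^{\perp}$ the identity fails: take $X=\{1,2,3\}$, $d=\{1\}$, $r=\{2\}$, and let $m$ be the transposition of $1$ and $2$ with domain $\{1,2\}$. Then $mdm^*=\{2\}=r$, so $m\in W_{d,r}$; but for both admissible values $c=\emptyset$ and $c=\{1\}$ one computes $rc^{\perp}=\{2\}$, and since $m^*m=\{1,2\}$ meets $\{2\}$, the element $m$ lies in neither term of the union. With $dc^{\perp}$ the condition $V_{dc^{\perp},\#}$ becomes $d\wedge m^*m\le c$, which together with $c\le d$ (from the index set) and $c\le m^*m$ (from $V_{c,r}$) forces $c=d\wedge m^*m$, exactly as you argue; then $mdm^*=mcm^*\le r$ is immediate. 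Note that part (iii) already carries $dc^{\perp}$, consistent with your reading, so (i) is a typo rather than a different intended statement. You might flag this explicitly rather than silently correcting it.
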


\begin{rmk}\label{rmk:cpt-open-top-centralisers-idempotents-open}
	We will make frequent use of the following observation: if the topology of $M$ refines the compact-open topology, then the centraliser $\{m \in M \mid em = me\}$ of each idempotent is open.
\end{rmk}

\subsection{Topological Boolean inverse monoids}\label{ssec:topBIMs}
We now turn to defining topological Boolean inverse monoids. 
Recall that if $M$ is a Boolean inverse monoid, we identify its Boolean algebra $E$ of idempotents with the clopens $\mc{CO}(X)$ of the Stone space $X$ of $E$. 

In a Boolean inverse monoid we have a third operation, join, which we can always take to be between elements with disjoint domain and range:
$$\bigvee:\{(s,t)\in M\times M\mid s^*st^*t=ss^*tt^*=0 \}\rightarrow M, \quad (s,t)\mapsto s\vee t.$$ 
Notice that every finite disjoint join can be expressed as a suitable combination of instances of $\bigvee$. 

\begin{defn}\label{defn:Boolean_inverse:topological}
	A \defbold{topological Boolean inverse monoid} $M$ is a Boolean inverse monoid and a topological inverse monoid, whose topology is a refinement of the compact-open topology (so that $M$ acts continuously on its idempotents) and such that the join $\bigvee$ of disjoint pairs of elements is continuous. 
\end{defn}

Any Boolean inverse monoid becomes a topological Boolean inverse monoid when equipped with the discrete topology; but there are also non-discrete topological  Boolean inverse monoids, the most basic example being $\PHomeo_c(X)$ where $X$ is a Stone space:

\begin{lem}\label{lem:phomeo_cts_joins}
	Disjoint joins are  continuous on $\PHomeo_c(X)$ with respect  to the compact-open  topology.
\end{lem}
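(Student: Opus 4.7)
The plan is to check continuity of the join $\bigvee$ on its domain of definition
$D = \{(s,t)\in \PHomeo_c(X)^2 \mid s^*st^*t = ss^*tt^* = 0\}$ (with the subspace topology from the product) by verifying that the preimage of each sub-basic open is open. Fix $(s_0,t_0)\in D$ and let $h_0 = s_0\vee t_0$, so that $\dom(h_0)=\dom(s_0)\sqcup\dom(t_0)$ and $\ran(h_0)=\ran(s_0)\sqcup\ran(t_0)$, with $h_0$ equal to $s_0$ on $\dom(s_0)$ and to $t_0$ on $\dom(t_0)$. I will treat the four families $V_{d,r}$, $V_{d,r}^*$, $V_{d,\#}$, $V_{\#,d}$ in turn; the two unstarred cases suffice by the symmetry $(s,t)\mapsto (s^*,t^*)$, which interchanges domain/range.

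The straightforward case is $V_{d,\#}$: if $h_0\in V_{d,\#}$, then $d\cap \dom(s_0)=d\cap\dom(t_0)=\emptyset$, so $(s_0,t_0)\in V_{d,\#}\times V_{d,\#}$, and for any $(s,t)$ in this product lying in $D$, we have $\dom(s\vee t)=\dom(s)\cup\dom(t)$ disjoint from $d$, giving $s\vee t\in V_{d,\#}$.

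The main case is $V_{d,r}$, and the only step requiring thought is decomposing $d$ along the known disjoint decomposition $\dom(s_0)\sqcup \dom(t_0)$. Suppose $h_0\in V_{d,r}$, so that $d\subseteq\dom(h_0)$ and $h_0(d)\subseteq r$. Set
\[
d_1 := d\cap \dom(s_0),\qquad d_2 := d\cap \dom(t_0);
\]
these are clopen (since $\dom(s_0),\dom(t_0)$ are clopen) and satisfy $d_1\sqcup d_2 = d$, $s_0(d_1)\subseteq r$, $t_0(d_2)\subseteq r$. Consider the product open set
\[
U \;:=\; \bigl(V_{d_1,r}\cap V_{d_2,\#}\bigr)\times\bigl(V_{d_2,r}\cap V_{d_1,\#}\bigr),
\]
which contains $(s_0,t_0)$ because, additionally, $d_2 \cap \dom(s_0) = \emptyset = d_1\cap \dom(t_0)$. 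For any $(s,t)\in U\cap D$, the conditions force $d_1\subseteq \dom(s)$, $d_2\subseteq \dom(t)$, so $d\subseteq \dom(s\vee t)$, and $(s\vee t)(d)=s(d_1)\cup t(d_2)\subseteq r$; hence $s\vee t\in V_{d,r}$. The starred cases follow by applying the same argument after swapping the roles of domain and range, using continuity of $*$ on the product.

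I do not anticipate a real obstacle: the only subtlety is the \emph{partition step}, i.e.\ using disjointness of $\dom(s_0)$ and $\dom(t_0)$ to decompose $d$ into a clopen partition adapted to $(s_0,t_0)$. Everything else is bookkeeping in the Boolean algebra $\mc{CO}(X)$ and the definition of the sub-basis.
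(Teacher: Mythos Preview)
Your proposal is correct and follows essentially the same approach as the paper's proof: both verify continuity by showing that the preimage of each sub-basic open $V_{d,r}$, $V_{d,\#}$ (and their starred versions, handled via $*$) is open, using the decomposition of $d$ along the disjoint union $\dom(s_0)\sqcup\dom(t_0)$. The only cosmetic differences are that the paper expresses $\bigvee^{-1}(V_{d,r})$ globally as a union $\bigcup\{V_{d_1,r_1}\times V_{d_2,r_2}: d_1\perp d_2,\ d_1\vee d_2=d,\ r_1\perp r_2,\ r_1\vee r_2\le r\}$ rather than exhibiting a neighbourhood of a fixed $(s_0,t_0)$, and that your extra conditions $V_{d_2,\#}$, $V_{d_1,\#}$ are redundant on $D$ (since domains are already disjoint there) though harmless.
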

\begin{proof}
	Define $J:=\{(f, g)\in \PHomeo_c(X)\mid ff^*\perp gg^*, f^*f\perp g^*g \}$ and $\bigvee: J\rightarrow  \PHomeo_c(X)$  by $(f,g) \mapsto f\vee g$.
	It is enough to show that the preimages of sub-basic opens under $\bigvee$ are open.
	For $d,r\in \mc{CO}(X)$, we     have
	\begin{align*}
		\bigvee{}^{ -1}(V_{d,r})&=\{(f,g)\in J\mid d\leq  f^*f\vee g^*g,   (f\vee g)d(f\vee g)^*\leq r\}\\
		& = \bigcup \left(V_{d_1, r_1}\times V_{d_2,r_2} : d_1\perp d_2, d=d_1\vee d_2, r_1\perp r_2, r_1\vee r_2\leq r \right)\\
		\bigvee{}^{ -1}(V_{d,\#}) &= \{(f,g)\in J \mid f^*f\vee g^*g\perp d\} = J \cap (V_{d,\#} \times V_{d,\#}).
	\end{align*}
	As taking disjoint joins commutes with taking inverses, we see that $V_{d,r}^*$ and $V_{d,\#}^*=V_{\#,d}$ are also open by the above calculations. 
\end{proof}

Requiring disjoint joins to be continuous leads naturally to the definition of local decomposability, analogous to that seen in Section \ref{sec:groups}. 

Every element of a Boolean inverse monoid $M$ can be expressed as a disjoint join of other elements:
if $m\in M$ with $d=m^*m$ and $P\subset E=\mc{CO}(X)$ is any finite partition of $d$, then $m=\bigvee_{e\in P}me$. 
This leads to the map
$$\rho_P: Md\rightarrow \prod_{e\in P} Me, \; m\mapsto (me)_{e\in P}.$$
The image of $\rho_P$ consists of all tuples in $\prod_{e\in P}Me$ whose entries have disjoint ranges (some of them may be 0). 
This is exactly the set $\displaystyle{\bigcup_{q\in \mc{Q}_P}\prod_{e\in P}q(e)Me}$
where $\mc{Q}_P$ denotes the set of maps $q:P\rightarrow P_1$ with $P_1\subset E$ a finite set of pairwise disjoint idempotents and $q$ is injective on $P\setminus q^{-1}(0)$. 
In fact, this is exactly the set of all tuples in $\prod_{e\in P}Me$ that can be disjointly joined to form an element of $M$. 
In other words, $\rho_P$ is the inverse of the disjoint join operation $\bigvee_P$ on $\bigcup_{q\in \mc{Q}_P}\prod_{e\in P}q(e)Me$; so that this set is in bijection with $Md$.

The map $\rho_P$ is always continuous if $M$ is assumed to have continuous multiplication, as it is the product of the multiplication maps by $e\in P$. 
Therefore, requiring that $\rho_P$ be an open map turns it into a homeomorphism, as well as making the join map $\bigvee_P$ continuous. 
If all disjoint joins are continuous then $\rho_P$ must be open for every finite partition $P$ of $d$ and every $d\in \mc{E}\setminus\{0\}$.

Repeating the above with left instead of right multiplication by idempotents yields that the map
$$\lambda_P: dM\rightarrow \bigcup_{q\in \mc{Q}_P} \prod_{e\in P}eMq(e) \subset \prod_{e\in P}eM, \;\;  m\mapsto (em)_{e\in P}$$
is a homeomorphism, where $P\subset E$ is a partition of the idempotent $d$.

Since $\{0\}, fMe, Me, fM$ are open in the compact-open topology, the maps $\rho_P$ and $\lambda_P$ have open images in $\prod_{e\in P}Me$, and $\prod_{e\in M}eM$.
This leads to the following definition of locally decomposable inverse monoid.

\begin{defn}\label{defn:locally_decomposable}
	Let $X$ be a zero-dimensional compact space and $M\leq \PHomeo_c(X)$ a topological inverse monoid whose topology refines the compact-open topology of $\PHomeo_c(X)$. 
	
	For every $e \in E := \mc{CO}(X)$, equip the inverse monoids $Me$ and $eM$ with the quotient topology induced by the maps $m\mapsto me$ and $m\mapsto em$, respectively. 
	
	Let $P$ be a finite set of pairwise disjoint elements of $E$, define the maps
	$$\rho_P: M\rightarrow \prod_{e\in P}Me, \; m\mapsto (me)_{e\in P} \quad \text{and} \quad \lambda_P:M\rightarrow \prod_{e\in P}eM, \; m\mapsto(em)_{e\in P}$$
	and note that they are continuous. 
	We say that $M$ is \defbold{locally decomposable} if $\rho_P$ and $\lambda_P$ are open maps for all $P$.

\end{defn}

\begin{lem}
	Let $M$ be a topological Boolean inverse monoid.  Then $M$ is locally decomposable. \hfill \qedsymbol
\end{lem}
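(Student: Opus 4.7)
The plan is to verify openness of $\rho_P$; the case of $\lambda_P$ will follow by a symmetric left-sided argument, or by applying the self-homeomorphism $*$ (which interchanges $Me$ and $eM$) together with continuity of $*$. Fix a finite set $P=\{e_1,\dots,e_n\}$ of pairwise disjoint idempotents and let $d=e_1\vee\cdots\vee e_n$. The plan assembles three ingredients: (i) the subset $J_P\subseteq\prod_{e\in P}Me$ of tuples with pairwise disjoint ranges is open; (ii) the restriction $\rho_P|_{Md}\colon Md\to J_P$ is a homeomorphism; and (iii) the map $(-)d\colon M\to Md$ is open. Granting all three, the factorisation $\rho_P=(J_P\hookrightarrow\prod_{e\in P}Me)\circ\rho_P|_{Md}\circ(-)d$ immediately gives openness of $\rho_P$.

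For (i), since the topology of $M$ refines the compact-open topology, $\mc{E}(M)$ is discrete in $M$ by Lemma~\ref{lem:compactopen_top_0dim_units_clopen}, so the range map $m\mapsto mm^*$ is continuous and locally constant on each $Me$. Hence in a small product neighbourhood of any point of $J_P$ all tuples still have pairwise disjoint ranges. For (ii), the candidate inverse of $\rho_P|_{Md}$ is the $n$-fold disjoint join $\bigvee_P\colon J_P\to Md$, obtained by iterating the continuous binary $\bigvee$; this is shown to be a two-sided inverse in the discussion preceding the definition of locally decomposable, and both maps are continuous (multiplication is continuous in $M$, and Lemma~\ref{lem:subspace_quotient} identifies the subspace topology on $Md$ with its quotient topology).

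The crux is (iii). Given $U$ open in $M$ and $u_0\in U$, I would show that $u_0d$ lies in the interior of $Ud$ inside $Md$. Set $v_0:=u_0d^\perp$; then $(u_0d,v_0)\in J$, since the domains are dominated by the disjoint $d,d^\perp$ and the ranges $u_0du_0^*$, $u_0d^\perp u_0^*$ multiply to zero (using $du_0^*u_0d^\perp\leq dd^\perp=0$), and moreover $u_0d\vee v_0=u_0\in U$. By continuity of $\bigvee\colon J\to M$, pick open $V_1\ni u_0d$ and $V_2\ni v_0$ in $M$ with $\bigvee((V_1\times V_2)\cap J)\subseteq U$. Using that ranges are locally constant, shrink $V_1$ to ensure $nn^*=u_0du_0^*$ for every $n\in V_1$. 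Then for each $n\in V_1\cap Md$ the pair $(n,v_0)$ lies in $J$ (disjoint domains from the $Md$/$Md^\perp$ split, disjoint ranges by the constancy), so $n\vee v_0\in U$, and distributivity of multiplication over joins (Lemma~\ref{lem:multiplication_distributes_over_joins}) yields
\[
(n\vee v_0)d=nd\vee v_0d=n\vee 0=n,
\]
so $n\in Ud$. Thus $V_1\cap Md$ is an open neighbourhood of $u_0d$ in $Md$ contained in $Ud$, proving $Ud$ open.

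The main obstacle is precisely this shrinking step: realising an arbitrary nearby $n\in Md$ as the $d$-restriction of something in $U$ requires re-attaching the fixed complement $v_0$ via a disjoint join, which only works so long as the range of $n$ remains disjoint from that of $v_0$ under perturbation. The refinement-of-compact-open hypothesis, through discreteness of idempotents, is exactly what keeps ranges locally constant and bridges continuity of $\bigvee$ with openness of $(-)d$.
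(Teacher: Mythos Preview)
Your proof is correct. Each of your three ingredients is sound, and the factorisation $\rho_P = \iota \circ \rho_P|_{Md} \circ (-)d$ indeed reduces openness of $\rho_P$ to the three claims. The argument for (iii) is the heart of your approach and is carried out carefully: the re-attachment of the fixed complementary piece $v_0 = u_0 d^\perp$ via the continuous disjoint join, with the local constancy of ranges ensuring the joinability condition persists, is exactly what is needed.

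However, your route is more laborious than what the paper intends. The lemma carries only a \qedsymbol because it is meant to follow immediately from the discussion preceding Definition~\ref{defn:locally_decomposable} together with Lemma~\ref{lem:locally_decomposable:partition}. The point is that for the two-element partition $\{e,e^\perp\}$ one has $d = e \vee e^\perp = 1$, so $Md = M$ and your step (iii) is vacuous. The discussion before the definition already observes that $\rho_{\{e,e^\perp\}}$ is the set-theoretic inverse of the binary disjoint join on its image $J_{\{e,e^\perp\}}$, which is open in $Me \times Me^\perp$; continuity of $\bigvee$ then makes $\rho_e$ a homeomorphism onto an open set, hence open. Lemma~\ref{lem:locally_decomposable:partition} then bootstraps from $\rho_e$ to arbitrary $\rho_P$. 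So the paper sidesteps your step (iii) entirely by reducing to the case where it is trivial. Your direct argument for (iii) is nonetheless a pleasant and self-contained alternative that avoids the inductive reduction.
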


Note that for local decomposability, it is enough to check that one of the maps $\rho_P$ and $\lambda_P$ is open; the other one will then be open because $*$ is a homeomorphism.
In fact, it suffices to check that $\rho_e:=\rho_{\{e, e^{\perp}\}}$ is open for all $e\in \mc{CO}(X)$:

\begin{lem}\label{lem:locally_decomposable:partition}
	Let $M\leq \PHomeo_c(X)$ be a topological inverse monoid acting	 continuously on $X$. 
	Suppose that $\rho_e: M \rightarrow Me \times Me^{\perp}, \; m \mapsto (me,me^{\perp})$ is an open map for every $e\in \mc{CO}(X)$, with respect to the quotient topologies on $Me$ and $Me^{\perp}$. 
	Then $M$ is locally decomposable.
\end{lem}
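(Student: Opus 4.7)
The plan is to establish openness of $\rho_P$ for every finite pairwise disjoint $P \subseteq E := \mc{CO}(X)$; the statement for $\lambda_P$ then follows by $*$-duality, since the involution $*:M\to M$ is a homeomorphism that restricts to a homeomorphism $Me \to eM$ for each $e \in E$ (with inverse itself, via $(me)^* = em^*$), and $\lambda_P(m^*) = (em^*)_{e\in P}$ is the coordinatewise $*$-image of $\rho_P(m)$, so $\lambda_P$ is open iff $\rho_P$ is. Next, for $P$ with $d := \bigvee P$, I would factor $\rho_P$ as $M \xrightarrow{m\mapsto md} Md \xrightarrow{\rho_P^d} \prod_{e\in P} Me$; the first map is open, being the composition of $\rho_d$ (open by hypothesis) with the projection onto $Md$. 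Hence it suffices to prove that $\rho_P^d : Md \to \prod_{e\in P} Me$ is open for every partition $P$ of every $d \in E$, which I would do by induction on $|P|$.

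The main work is the base case $|P|=2$: for a partition $\{g,h\}$ of $d$, I claim $\pi : Md \to Mg \times Mh$, $m \mapsto (mg, mh)$, is open. The key observation is that if $m \in Md$ then $mg^\perp = m(dg^\perp) = mh$, since $dg^\perp = (g\vee h)g^\perp = h$. Given an open set $V \subseteq Md$, which is equally open in $M$ (as $Md = V_{d^\perp,\#}$ is open and carries the subspace = quotient topology by Lemma~\ref{lem:subspace_quotient}), the hypothesis applied to $g$ gives that $\rho_g(V)$ is open in $Mg \times Mg^\perp$. By the observation, $\rho_g(V) = \pi(V) \subseteq Mg \times Mh$; and $Mh = V_{h^\perp,\#}$ is open in $M$, hence open in $Mg^\perp$, so $Mg \times Mh$ is open in $Mg \times Mg^\perp$. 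This makes $\pi(V)$ open in $Mg \times Mh$ equipped with the product of its quotient topologies (again using Lemma~\ref{lem:subspace_quotient} and the transitivity of subspace topologies).

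The inductive step for $|P| \geq 3$ is then a routine reassembly: writing $P = \{e_1\} \sqcup P'$ with $P'$ a partition of $d' := d \smallsetminus e_1$, one factors $\rho_P^d = (\mathrm{id}_{Me_1} \times \rho_{P'}^{d'}) \circ \pi$, a composition of open maps by the $|P|=2$ case and the inductive hypothesis (noting that a product of open maps is open on basic open rectangles). I expect the main obstacle to be the careful matching of subspace, quotient and product topologies in the two-part case; beyond that, the argument is essentially bookkeeping, since the bulk of the content of local decomposability is already encoded in the two-part hypothesis of the lemma.
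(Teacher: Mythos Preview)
Your proof is correct and follows essentially the same inductive strategy as the paper's: reduce $\lambda_P$ to $\rho_P$ by $*$-duality, then peel off one part of $P$ at a time using the two-part hypothesis together with the agreement of subspace and quotient topologies on each $Me$. The only cosmetic difference is that you first restrict to $Md$ (with $d=\bigvee P$) and treat $|P|=2$ as the base case, whereas the paper inducts directly on $\rho_P:M\to\prod_{e\in P}Me$ starting from $|P|=1$ and descends the inductive hypothesis through the quotient $Me_1^{\perp}$.
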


\begin{proof}
	It suffices to check that $\rho_P$ is an open map for every finite set $P$ of pairwise disjoint elements of $\mc{CO}(X)$. 
	We proceed by induction on $|P|$. 
	In the base case, $P$ is a singleton $\{e\}$. 
	We then see that $\rho_{\{e\}} = \pi_e \rho_e$, where $\pi_e$ is the projection map onto the first factor of $Me \times Me^{\perp}$.  Clearly $\pi_e$ is open, and $\rho_e$ is open by hypothesis, so $\rho_{\{e\}}$ is open.
	
	Suppose now that $|P| \ge 2$; write $P = \{e_1,\dots,e_n\}$.  Then we can write $\rho_P$ as $\rho' \rho_{e_1}$, where
	\[
	\rho': \pi(M)e_1 \times \pi(M)e_1^{\perp} \rightarrow \prod^n_{i=1}\pi(M)e_i, \;\; (a,b) \mapsto (a,be_2,be_3,\dots,be_n).
	\]
	By the inductive hypothesis, 
	\[
	M \rightarrow \prod^n_{i=2}Me_i; \; b \mapsto (be_2,\dots,be_n)
	\]
	is an open map. 
	Since $Me_1^{\perp}$ carries the quotient topology of $m \mapsto me_1^{\perp}$, it follows that
	\[
	Me_1^{\perp} \rightarrow \prod^n_{i=2}Me_i; \; b \mapsto (be_2,\dots,be_n)
	\]
	is also open. 
	Thus $\rho'$ is open, and hence the composition $\rho_P = \rho' \rho_{e_1}$ is open, finishing the proof.
\end{proof}

\begin{rmk}\label{rmk:loc_dec_implies_restriction_is_open_map}
	If $M\leq\PHomeo_c(X)$ is locally decomposable, then, as established in the base of the induction above, the map $M\rightarrow Me$ (resp., $eM$) is open, for all $e\in \mc{CO}(X)$.
	
	Now, $M\cap Me=\{m\in M\mid m^*m\leq e\}=M\cap V_{e^{\perp},\#}$ is open in $M$, and similarly for $M\cap eM=\{m\in M\mid mm^*\leq e\}$.
	So, if $e\in \mc{CO}(X)\cap M$, then right (resp., left) multiplication by $e$ is a continuous and open map $M\rightarrow M$. 
	
	In particular, 	any open inverse submonoid $N\leq M$ satisfies that $Ue$ is open in $M$ for any open $U\subseteq N$ and $e\in \mc{CO}(X)\cap M$.
\end{rmk}

The following easy but important consequence of the Wagner--Preston theorem shows that multiplication by any fixed element is an open and continuous map in a locally decomposable inverse monoid. 
\begin{lem}\label{lem:loc_dec_monoid_implies_multiplication_by_m_is_open}
	Let $M$ be a topological inverse monoid such that for every idempotent $e\in\mc{E}(M)$, right-multiplication by $e$ is an open map. 
	Then right-multiplication by $m$ is an open map, for every $m\in M$.
\end{lem}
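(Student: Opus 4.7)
The plan is to factor right multiplication by $m$ through right multiplication by a suitable idempotent, using the Wagner--Preston bijection to handle the non-idempotent part. Set $f = mm^* \in \mc{E}(M)$ and $e = m^*m \in \mc{E}(M)$, and denote right multiplication by $n \in M$ by $\mu_n : M \to M$. Because $fm = mm^*m = m$, we have $\mu_m(x) = xm = (xf)m$ for every $x \in M$, so $\mu_m = \phi \circ \mu_f$, where $\phi$ is right multiplication by $m$ restricted to $Mf$. By Wagner--Preston, this restriction is a bijection $\phi : Mm^* \to Mm$; and since $Mm^* = Mmm^* = Mf$ and $Mm = Mm^*m = Me$ (both easy to verify using $mm^*m = m$), we may view $\phi$ as a bijection $Mf \to Me$.

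Next I would observe that $\phi$ is in fact a homeomorphism between $Mf$ and $Me$ in their respective subspace topologies. Indeed, $\phi$ is the restriction of a right multiplication, hence continuous by the assumption that $M$ is a topological inverse monoid; its inverse $\phi^{-1}$ is the restriction of $\mu_{m^*}$, and is continuous for exactly the same reason. No appeal to the openness hypothesis is needed at this step.

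Finally I would combine the pieces using the openness hypothesis, applied to the two idempotents $e$ and $f$. Because $\mu_f$ is open on $M$, the image $Mf = \mu_f(M)$ is open in $M$, and for every open $U \subseteq M$ the set $\mu_f(U) = Uf$ is open in $M$ and therefore open in $Mf$. Likewise $Me = \mu_e(M)$ is open in $M$, so every set that is open in $Me$ is open in $M$. Applying the homeomorphism $\phi$, the set $\phi(Uf) = (Uf)m = Um$ is open in $Me$, hence open in $M$. Thus $\mu_m(U) = Um$ is open in $M$ for every open $U \subseteq M$.

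The proof is not technically difficult; the only conceptual obstacle is recognising the correct factorisation and identifying that it is the principal right ideals generated by the two idempotents $mm^*$ and $m^*m$, rather than $m^*m$ alone, that one needs to control. Once the Wagner--Preston bijection is interpreted topologically as a homeomorphism between these two open subsets of $M$, everything glues together by the hypothesis.
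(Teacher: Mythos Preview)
Your proof is correct and follows essentially the same approach as the paper: factor $\mu_m$ as right multiplication by the idempotent $mm^*$ followed by the Wagner--Preston bijection $Mm^* \to Mm$, observe that the latter is a homeomorphism by continuity of multiplication, and conclude. You are slightly more explicit than the paper in spelling out that the target $Me = Mm$ is itself open in $M$ (using the hypothesis for the second idempotent $m^*m$), which is exactly what is needed to upgrade ``open in $Mm$'' to ``open in $M$''.
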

\begin{proof}
	For every $m\in M$, right-multiplication   $x\mapsto xm$ is the same as $x\mapsto (xmm^*)m$ the composition of right-multiplication by $e=mm^*$ and by $m$. 
	Now, the image of right-multiplication by $e$ is $Me=Mmm^*=Mm^*$, which, by the Wagner--Preston theorem and continuity of multiplication, is homeomorphic to $Mem=Mm^*m=Mm$ via right-multiplication by $m$ restricted to $Me$. 
	So $Uem=Um$ is open in $Mm$ for every open subset  $U$ of $M$. 
\end{proof}

Of course, this new definition of locally decomposable agrees with the previous one for subgroups of $\Homeo(X)$:

\begin{prop}\label{prop:loc_dec_monoid_matches_group}
	Let $X$ be a zero-dimensional compact space and $G\leq \PHomeo_c(X)$ a  subgroup. 
	Then $G\leq \Homeo(X)$ and the following are equivalent:
	\begin{enumerate}[(i)]
		\item $G$ is locally decomposable in the inverse monoid definition;
		\item for every clopen partition $P$ of $X$ the subgroup $\grp{\rist_G(e) \mid e \in P}$ is open and carries the product topology.
\end{enumerate}	
\end{prop}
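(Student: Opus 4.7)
First observe that since $G$ is a group in $\PHomeo_c(X)$, every $g \in G$ satisfies $gg^* = g^*g = 1$, so $\dom(g) = \ran(g) = X$ and hence $G \leq \Homeo(X)$. By Lemma~\ref{lem:locally_decomposable:partition} (together with the fact that $*$ is a homeomorphism, which handles $\lambda_P$), condition (i) reduces to showing that $\rho_e : G \to Ge \times Ge^\perp$ is open for every $e \in \mc{CO}(X)$. Identifying $Ge \cong G/\rist_G(e^\perp)$ and $Ge^\perp \cong G/\rist_G(e)$ as left coset spaces with their quotient topologies, $\rho_e$ becomes $g \mapsto (g\rist_G(e^\perp),\, g\rist_G(e))$. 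Set $N_1 := \rist_G(e^\perp)$, $N_2 := \rist_G(e)$ and $H := N_2 N_1$, which is a subgroup because $N_1, N_2$ have disjoint supports and hence commute.

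For direction (ii) $\Rightarrow$ (i): given $V \subseteq G$ open and $g \in V$, the product topology on $H$ provided by (ii) produces opens $W_2 \ni 1$ in $N_2$ and $W_1 \ni 1$ in $N_1$ (subspace topologies) with $gW_2W_1 \subseteq V$. Set $U_2 := \pi_e(gW_2) \subseteq Ge$ and $U_1 := \pi_{e^\perp}(gW_1) \subseteq Ge^\perp$. These are open (since $W_2 N_1$ and $W_1 N_2$ are open in $H$ by the product topology, hence open in $G$ as $H$ is open by (ii) applied to $\{e, e^\perp\}$), $\rho_e(g) \in U_2 \times U_1$, and $U_2 \times U_1 \subseteq \rho_e(V)$: for $(gh_2 e, gh_1 e^\perp)$ in the rectangle with $h_i \in W_i$, the element $l := gh_2 h_1$ lies in $V$ and satisfies $\rho_e(l) = (gh_2 e, gh_1 e^\perp)$.

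The main obstacle is direction (i) $\Rightarrow$ (ii), whose heart is showing that $H$ is open in $G$. Since $\rho_e$ is an open map by (i) and $G$ is open in itself, $\rho_e(G)$ is open in $Ge \times Ge^\perp$. A direct computation gives $\rho_e(G) = \{(xN_1, yN_2) : y^{-1}x \in H\}$. The preimage of $\rho_e(G)$ under the open quotient $q : G \times G \to Ge \times Ge^\perp$, $q(x,y) := (xN_1, yN_2)$, equals $\mu^{-1}(H)$, where $\mu : G \times G \to G$, $\mu(x,y) := y^{-1}x$. Since $\mu$ is a continuous open surjection, hence a quotient map, openness of $\mu^{-1}(H)$ forces $H$ to be open in $G$.

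For a general partition $P = \{e_1, \ldots, e_n\}$, verify $H_P = \bigcap_i H_{\{e_i, e_i^\perp\}}$: the inclusion $\subseteq$ uses $\prod_{j \ne i}\rist_G(e_j) \leq \rist_G(e_i^\perp)$; for the reverse, any $g$ in the intersection preserves each $e_i$ setwise and equals $\prod_i \tilde n_i$, where $\tilde n_i \in \rist_G(e_i)$ is the unique element agreeing with $g$ on $e_i$. So $H_P$ is open as a finite intersection of opens. For the product topology: $\rho_P : G \to \prod_i Ge_i$ is an open continuous injection (injective because a homeomorphism of $X$ is determined by its restrictions to a clopen partition), hence a homeomorphism onto $\rho_P(G)$; its restriction to $H_P$ is a homeomorphism onto $\rho_P(H_P) = \prod_i A_i$ where $A_i := \pi_i(\rist_G(e_i))$. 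By the two-element analysis applied to each $\{e_i, e_i^\perp\}$, the subgroup $\rist_G(e_i) \subseteq H_{\{e_i, e_i^\perp\}}$ is sent by $\rho_{e_i}$ homeomorphically onto $A_i \times \{\pi_{e_i^\perp}(1)\}$, giving a homeomorphism $\rist_G(e_i) \cong A_i$ (with subspace topologies from $G$ and $Ge_i$ respectively). Combining with the identity (subspace of product) $=$ (product of subspaces) yields $H_P \cong \prod_i \rist_G(e_i)$ topologically, completing (ii).
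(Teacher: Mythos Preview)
Your proof is correct, with one small omission: in the direction (ii) $\Rightarrow$ (i), Definition~\ref{defn:locally_decomposable} also requires the topology of $G$ to refine the compact-open topology, and Lemma~\ref{lem:locally_decomposable:partition} assumes this as a hypothesis. You should note (as the paper does in one line) that $G_e$ contains the open subgroup $\rist_G(e)\times\rist_G(e^\perp)$, so each setwise stabiliser is open and the compact-open refinement follows.

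Your argument for (i) $\Rightarrow$ (ii) is genuinely different from the paper's. The paper restricts $\rho_P$ to the open subgroup $G_P=\bigcap_{e\in P}G_e$, observes that the image is open and hence contains a box neighbourhood of the identity, and then argues directly that the maximal such box is $\prod_e\rist_G(e)e$, so its preimage $H_P$ is open and carries the product topology. Your route instead computes $\rho_e(G)=\{(xN_1,yN_2):y^{-1}x\in H\}$, pulls this back along the open product-of-quotients map $q$, and recognises the result as $\mu^{-1}(H)$ for the open surjection $\mu(x,y)=y^{-1}x$; this forces $H$ open. This is a pleasant group-theoretic trick that avoids ever looking inside the image, and your reduction of the general partition to two-part partitions via $H_P=\bigcap_i H_{\{e_i,e_i^\perp\}}$ is a clean observation not in the paper. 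The paper's approach is shorter and handles arbitrary $P$ in one pass; yours isolates the two-element case as the essential one and is arguably more conceptual.
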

\begin{proof}
Since $G$ is a submonoid, its identity must be the identity of $\PHomeo_c(X)$ and so for every $g\in G$ we have $gg^{-1}=g^{-1}g=\id_X$, 
so $G\leq \Homeo(X)$. 

(i) $\implies$ (ii). Suppose that $G$ is locally decomposable.

	Let $P$ be a clopen partition of $X$.  Since the topology of $G$ refines the compact-open topology, the group $G_P=\bigcap_{e \in P}G_e$ is open and we see that $\rist_G(e) \le G_P$ for all $e \in P$.  We then have a continuous group monomorphism $\rho_P:G_P\rightarrow \prod_{e \in P} G_Pe$ that is moreover open by assumption.  In particular, there are identity neighbourhoods $O_e$ in $G_Pe$ such that $\rho_P(G_P)$ contains $\prod_{e \in P} O_e$.  Since $G_P$ is a group, we see that the largest possible choice for $O_e$ here is exactly $\rist_G(e)e$, so $\prod_{e \in P} \rist_G(e)e$ is open in $\prod_{e \in P} G_Pe$.  The preimage of $\prod_{e \in P} \rist_G(e)e$ is exactly $H : = \grp{\rist_G(e) \mid e \in P}$, so $H$ is open in $G$, and the map $\rho_P$ now restricts to a homeomorphism from $H$ to $\prod_{e \in P} \rist_G(e)e$, so $H$ carries the product topology.

	(ii) $\implies$ (i). 
	For every $e\in \mc{CO}(X)$, the subgroup $G_e$ is open because it contains the open subgroup $H := \rist_G(e)\times \rist_G(e^{\perp})$,
	so the topology of $G$ refines the compact-open topology of $\PHomeo_c(X)$.
	
	It now suffices to show that for every $e\in\mc{CO}(X)$ the map $\rho_e:G\rightarrow Ge\times Ge^{\perp}$ is open.  Given an identity neighbourhood $U$ in $G$, since $H$ carries the product topology we see that $U$ contains a set of the form $W := \rist_V(e) \times \rist_V(e^{\perp})$, where $V$ is an identity neighbourhood.
	Since $H$ is open, $W$ is itself an identity neighbourhood.  Given $g \in G$, we deduce that $\rho_e(gU)$ contains the neighbourhood $gWe \times gWe^\perp$ of $\rho_e(g)$, showing that $\rho_e$ is open as required.
\end{proof}

\subsection{Constructing topological Boolean inverse monoids}\label{ssec:induce_topBIM_up}

In this subsection, $X$ denotes a zero-dimensional compact space and we have a locally decomposable inverse submonoid $M \le \PHomeo_c(X)$.  Our aim is to use $M$ to build a topological Boolean inverse monoid in which $M$ is embedded as an open submonoid.
				
We first note a general condition ensuring uniqueness of topology, which also indicates how we will extend the topology from $M$ to a larger monoid.
				
\begin{lem}\label{lem:extended_monoid_topology_is_unique}
Let $M\leq \PHomeo_c(X)$ be a locally decomposable inverse submonoid with topology $\sigma$ and suppose $M \le L$, where $L$ is also an inverse monoid.  
Then there is at most one topology $\tau$ on $L$ for which $L$ is a locally decomposable inverse monoid, $M \in \tau$ and $\sigma$ is the subspace topology of $M$ in $L$. 
Specifically, if it exists, $\tau$ must be such that each $f \in L$ has a base of open neighbourhoods of the form $Nf \cap fN$, where $N$ ranges over all open neighbourhoods of $1$ in $M$; the sets $Nf$ also form a base of neighbourhoods of $f$, as do the sets $fN$.
\end{lem}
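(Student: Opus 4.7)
The plan is to show that any topology $\tau$ on $L$ satisfying the hypotheses is pinned down by its neighbourhood filter at each point $f \in L$, and that this filter admits a base described purely in terms of $\sigma$ and the algebraic structure. Uniqueness is then immediate.

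First I would unpack what local decomposability of $(L,\tau)$ buys us. By Remark~\ref{rmk:loc_dec_implies_restriction_is_open_map}, right- and left-multiplication by any idempotent of $\mc{E}(L)$ are open self-maps of $L$; applying Lemma~\ref{lem:loc_dec_monoid_implies_multiplication_by_m_is_open} (and its $*$-dual, obtained via continuity of inversion) upgrades this to: for each $f \in L$, both $x \mapsto xf$ and $x \mapsto fx$ are open maps $L \to L$. Combined with the hypothesis $M \in \tau$ (so that an open neighbourhood $N$ of $1$ in $(M,\sigma)$ is also open in $(L,\tau)$), we conclude that for every such $N$, the sets $Nf$ and $fN$ are open in $L$, and each contains $f$ since $1 \in N$; hence $Nf \cap fN$ is an open neighbourhood of $f$.

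Next I would prove cofinality: given any open $U \subseteq L$ with $f \in U$, continuity of multiplication at $(1,f)$ yields an open $V \ni 1$ in $L$ with $Vf \subseteq U$, and $N_1 := V \cap M$ is then an open $\sigma$-neighbourhood of $1$ with $N_1 f \subseteq U$. The symmetric argument applied at $(f,1)$ produces an open $N_2 \ni 1$ in $M$ with $f N_2 \subseteq U$. Setting $N := N_1 \cap N_2$ gives $Nf \cap fN \subseteq U$, while the partial arguments also show that $\{Nf\}$ and $\{fN\}$ are individually neighbourhood bases at $f$.

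Finally, since the neighbourhood filter at each $f \in L$ is determined entirely by $\sigma$ together with the algebraic operations, $\tau$ is determined. I do not expect any serious obstacle here; the only point requiring care is to apply local decomposability on both sides, so as to have $Nf$ and $fN$ simultaneously open, which is essential for the description of the base to make sense.
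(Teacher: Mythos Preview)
Your proof is correct and follows essentially the same approach as the paper: establish that $Nf$ and $fN$ are $\tau$-open via local decomposability and Lemma~\ref{lem:loc_dec_monoid_implies_multiplication_by_m_is_open}, then prove cofinality. Your cofinality step is slightly more direct than the paper's---you use joint continuity of multiplication at $(1,f)$ to obtain $Vf \subseteq U$, whereas the paper first observes that $Mf$ is $\tau$-open (using openness of right-multiplication by $f$) and then pulls back via continuity---but both arguments are equivalent in spirit and effort.
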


\begin{proof}
To specify a topology, it is enough to declare a base of neighbourhoods at each point. 
Suppose that $\tau$ is a locally decomposable inverse monoid topology on $L$ extending $\sigma$.
Given $f \in M$, we see that by local decomposability and Lemma~\ref{lem:loc_dec_monoid_implies_multiplication_by_m_is_open}, there is a base of open neighbourhoods of $f$ in $\sigma$ consisting of sets of the form $Nf$, where $N$ is an open neighbourhood of $1$ in $M$; one can also take sets of the form $fN$. 
Similarly, given $f \in L$, then for all open neighbourhoods $N$ of $1$ in $M$, the sets $Nf$ and $fN$ must be $\tau$-neighbourhoods of $f$.  Let $\tau_0$ be the topology generated by this system of neighbourhoods; clearly $\tau_0 \subseteq \tau$.

Consider now $U \in \tau$ and $f \in U$. 
As $(L,\tau)$ is locally decomposable, Lemma \ref{lem:loc_dec_monoid_implies_multiplication_by_m_is_open} implies that right-multiplication by $f$ is a continuous and open map $L\rightarrow L$. 
Since $M\in\tau$, this means that $f\in U\cap Mf\in \tau$.
As right-multiplication is continuous and $\sigma $ is the subspace topology on $M\in \tau$, we have $1\in V_f:=\{m\in M\mid mf\in U\cap Mf\} \in \sigma$. 
Thus $f\in V_ff=U\cap Mf \in \tau_0$. 
As $f\in U$ was arbitrary, $U\in \tau_0$ and so $\tau_0=\tau$. 

Note that the sets $Nf$ for $N$ an open neighbourhood of $1$ in $M$ form a base of $\tau_0$-neighbourhoods of $f\in L$. 
\end{proof}

Next, we incorporate the idempotents of $\PHomeo_c(X)$ into $M$.

\begin{lem}\label{lem:locally_decomposable_extension}
Let $M\leq \PHomeo_c(X)$ be a locally decomposable inverse monoid. 
Put $E=\mc{CO}(X)$ and $K=ME$. 
Then $K$ is an inverse monoid and there is a unique locally decomposable inverse monoid topology on it for which $M$ is an open submonoid.
\end{lem}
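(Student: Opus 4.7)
The assertion that $K=ME$ is an inverse monoid follows from Lemma~\ref{lem:adjoin_idempotents}, while uniqueness of the sought topology on $K$ is Lemma~\ref{lem:extended_monoid_topology_is_unique}. The real content lies in proving existence.

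The plan is to prescribe, for each $f\in K$, the sets $Nf\cap fN$ as a neighbourhood base at $f$, where $N$ ranges over open identity neighbourhoods in $M$. Verifying the neighbourhood base axioms reduces to standard continuity arguments in $M$: for $g=n_1 f=f n_2\in Nf\cap fN$, continuity of right-multiplication by $n_1$ and left-multiplication by $n_2$ in $M$ yields $N'\ni 1$ open with $N'n_1\subseteq N$ and $n_2 N'\subseteq N$, so $N'g\cap gN'\subseteq Nf\cap fN$. From this, $M$ is open in $K$ (since $Nm\cap mN\subseteq M$), and the subspace topology on $M$ coincides with the given one via Lemma~\ref{lem:loc_dec_monoid_implies_multiplication_by_m_is_open} and its dual, which make $Nm$ and $mN$ already open in $M$.

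The next task is to verify that $K$ is a topological inverse monoid. Continuity of $*$ is immediate, since $(Nf\cap fN)^*=N^*f^*\cap f^*N^*$ is a basic neighbourhood of $f^*$. For continuity of multiplication, the crux is the local constancy of the conjugation map $m\mapsto m^* e m\colon M\to E$ for each fixed $e\in E$: by Lemma~\ref{lem:other_opens_in_idempotent_top}(iii) the sets $U_{e,f}=\{m\in M\mid mem^*=f\}$ are open in the compact-open topology and partition $M$ as $f$ varies over $E$, and this refines to the topology of $M$. Using the identity $m_1 e_1 m_2 e_2=(m_1 m_2)\cdot(m_2^* e_1 m_2\cdot e_2)$, one shrinks the neighbourhoods of $f_i=m_i e_i$ so that the trailing idempotent remains stable, reducing continuity to that of multiplication in $M$. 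I would additionally check that the topology on $K$ refines the compact-open topology of $K$ so that Definition~\ref{defn:locally_decomposable} is applicable: this follows from shrinking basic neighbourhoods $Nk$ into $\{m\in M\mid m^*m=mm^*=1\}$ (open, since equal to $V_{1,1}\cap V_{1,1}^*$) and into centralisers of the finitely many idempotents at issue (open by Remark~\ref{rmk:cpt-open-top-centralisers-idempotents-open}), which makes $\dom$, $\ran$ and conjugation of those idempotents constant on $Nk$.

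Local decomposability is then verified through Lemma~\ref{lem:locally_decomposable:partition}: show that $\rho_e\colon K\to Ke\times Ke^{\perp}$ is open for each $e\in E$. Given an open $V\subseteq K$ and a point $(g_1,g_2)=\rho_e(k)\in\rho_e(V)$, write $k=m_0 e_0'$ with $e_0'\le m_0^*m_0$, set $e_1'=e_0'e$ and $e_2'=e_0'e^{\perp}$, and choose an open $U_0\ni m_0$ in $M$ with $U_0 e_0'\subseteq V$. Applying local decomposability of $M$ to the clopen partition $P=\{e_1',e_2',(e_0')^{\perp}\}$ of $1$ produces an open rectangle $R_1\times R_2\times R_3\subseteq\rho_P(U_0)$ around $\rho_P(m_0)$. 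For any $(r_1,r_2)\in R_1\times R_2$, fixing the third coordinate as $m_0(e_0')^{\perp}\in R_3$ yields some $m\in U_0$ with $\rho_P(m)=(r_1,r_2,m_0(e_0')^{\perp})$, so that $k':=me_0'=r_1\vee r_2\in V$ satisfies $\rho_e(k')=(r_1,r_2)$. Noting that each $Me_i'$ is open in $Ke$ (respectively $Ke^{\perp}$), a fact which follows from the compact-open refinement already established, the rectangle $R_1\times R_2$ is an open neighbourhood of $(g_1,g_2)$ in $Ke\times Ke^{\perp}$ contained in $\rho_e(V)$. This last step, coordinating the quotient topologies on $Ke$ and $Ke^{\perp}$ with the product topology arising from the partition $P$, will be the main technical obstacle.
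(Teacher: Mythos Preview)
Your proposal is correct and follows essentially the same architecture as the paper: define $\tau$ via neighbourhoods $Nf$ (citing Lemma~\ref{lem:extended_monoid_topology_is_unique}), verify $M$ is open with the correct subspace topology, check continuity of $*$ and of multiplication via the identity $m_1e_1m_2e_2=(m_1m_2)(m_2^*e_1m_2\cdot e_2)$ together with local constancy of $m\mapsto m^*em$ (this is precisely the paper's computation $afbg=abg\cdot b^*fb$), then the compact-open refinement, then local decomposability via $\rho_e$.

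Two remarks. First, your justification that ``$Me_i'$ is open in $Ke$ by the compact-open refinement'' is not quite the right reason: what you need is that right multiplication by any $e\in E$ is an \emph{open} map $K\to K$, which follows because basic opens have the form $Nf$ and $(Nf)e=N(fe)$ is again basic; the compact-open refinement alone does not give this. Second, the paper's local decomposability argument is shorter than your three-part partition approach: it simply notes that $\rho_e$ commutes with right multiplication by $f\in E$, so $\rho_e(Uf)=(\cdot f\times\cdot f)\bigl(\rho_e(U)\bigr)$; since $\rho_e(U)$ is open in $Me\times Me^{\perp}$ by local decomposability of $M$ and $\cdot f$ is open on $K$, the image is open in $Ke\times Ke^{\perp}$. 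Your route works, but this commutation trick avoids the bookkeeping you flag as the ``main technical obstacle''.
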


\begin{proof}
$K=ME$  is an inverse monoid by Lemma~\ref{lem:adjoin_idempotents}.

Denote by $\sigma$ the topology on $M$.
We equip $K$ with the topology $\tau$ from Lemma~\ref{lem:extended_monoid_topology_is_unique}, which is the only topology that can be compatible with the requirements of the present lemma.  Let us check that $\tau$ satisfies all the requirements. 

\emph{We have $M \in \tau$ and the original topology of $M$ coincides with the subspace topology in $K$.}
Let $f \in K$ and $1 \in N \in \sigma$.  For each $n \in N$ there is $1 \in N_n \in \sigma$ such that $N_nn \subseteq N$.  We can write $Nf\cap M$ as the union $S$ of sets in $\{N_n(nf) \mid n \in N, nf \in M\}$; note that $S \subseteq \sigma$ by Lemma \ref{lem:loc_dec_monoid_implies_multiplication_by_m_is_open} applied to $M$.

\emph{Inversion is a homeomorphism of $K$.}
Clear from the symmetry in the definition.

\emph{Multiplication in $K$ is continuous.}
Let $O\in \sigma, e\in E$ and suppose that $(af,bg)$ is in the preimage of $Oe$ under multiplication, where $a,b\in M$ and $f,g\in E$.
We will find a neighbourhood of $(af,bg)$ with respect to $\tau\times \tau$ whose image under multiplication is contained in $Oe$. 

We have $afbg=abb^*fbg=abgb^*fb=ue=ueb^*fb$ for some $u\in O$, so 
$$u\in O_1:=\{c\in O\mid ce=ceb^*fb \}=O\cap V_{e(b^*fbg)^{\perp},\#}\in \sigma.$$
As $M$ is locally decomposable, $O_1b^*fbg$ is open in $Mb^*fbg$ where the latter carries the quotient topology; so the preimage $V$ of $O_1$ under the quotient map $M\rightarrow Mb^*fbg$ is in $\sigma$ and contains $ab$.
Now, $(a,b)\in W:=\{(s,t)\in M\times M\mid st\in V, t^*ft=b^*fb\}$. 
This is the intersection of the preimage of $V$ under multiplication and $M\times (M\cap U_{f,b^*fb}^*) \in \sigma\times \sigma$ (by Lemma \ref{lem:other_opens_in_idempotent_top}).
So $\{(sf,tg)\mid (s,t)\in W\}\in \tau\times \tau$ is the required neighbourhood of $(af,bg)$. 

\emph{$\tau$ refines the compact-open topology on $K$.}
If $e,f\in E$, then 
\begin{align*}
V_{e,\#}\cap K &=\{k\in K\mid k^*k\leq e^{\perp} \}=Ke^{\perp}=\bigcup_{d\leq e^{\perp}}Md \in \tau \\
V_{e,f}\cap K &= \{k\in K\mid e\leq k^*k, kek^*\leq f\}=\bigcup_{d\geq e}(M\cap V_{e,f})d \in \tau
\end{align*}
and $(V_{\#,e}=V_{e,\#}^* )\cap K, \, V_{e,f}^*\cap K\in \tau$ by the above.	

\emph{$K$ is locally decomposable. }
By Lemma \ref{lem:locally_decomposable:partition}, it suffices to show that $\rho_e:K\rightarrow Ke\times Ke^{\perp}$ is an open map for each $e\in E$.
Let $U\in\sigma$ and $e,f\in E$.
It is enough to prove that $\rho_e(Uf)\subset Ke\times Ke^{\perp}$ is in $\tau\times\tau$. 
Note that right multiplication by $f$ is an open map $K\rightarrow K$, as $Vdf\in \tau$ for every $ V\in\sigma $ and $ d\in E$. 
So $\cdot f\times \cdot f: (k_1,k_2)\mapsto (k_1f,k_2f)$  is also an open map.
Since $M$ is locally decomposable, $\rho_e(U)$ is open in $Me\times Me^{\perp}\in \tau\times \tau$
and, as $\cdot f\times \cdot f$ is an open map, $\rho_e(Uf)=(\cdot f\times \cdot f)\circ\rho_e(U) \in \tau\times \tau$. 
\end{proof}

Analogous to the situation for groups, we can now define the open preserver of a locally decomposable inverse monoid, and show that the topology extends.
This will allow us to effectively generalise Proposition~\ref{prop:full_contains_intersection_rists} from the group setting to the setting of Boolean topological inverse monoids.
				
\begin{defn}
Let $M \leq \PHomeo_c(X)$ be a locally decomposable inverse submonoid, let $f \in \PHomeo_c(X)$ and let $E = \mc{CO}(X)$.  We say $f$ \defbold{preserves the opens on the left} of $M$ if for every open neighbourhood $O$ of $1$ in $M$, there is a neighbourhood $U$ of $1$ in $M$ such that
\[
fUf^* \subseteq EOE.
\]
We say $f$ \defbold{preserves the opens} of $M$ if $f$ and $f^*$ preserve the opens on the left of $M$.  The \defbold{open preserver} $\mathrm{OP}(M)$ is then the set of elements of $\PHomeo_c(X)$ that preserve the opens of $M$.
\end{defn}

\begin{thm}\label{thm:open_preserver_monoid}
Let $M\leq \PHomeo_c(X)$ be a locally decomposable inverse submonoid.  Then the open preserver $\mathrm{OP}(M)$ of $M$ is a Boolean inverse monoid containing $M$ and $\mc{CO}(X)$.  Moreover, there is a unique choice of topology $\tau$ on $\mathrm{OP}(M)$ making $\mathrm{OP}(M)$ a topological Boolean inverse monoid such that $M \in \tau$ and the restriction of $\tau$ to $M$ is the topology of $M$.
\end{thm}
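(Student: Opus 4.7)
The plan splits into an algebraic phase and a topological phase. In the algebraic phase I verify that $\mathrm{OP}(M)$ is a Boolean inverse submonoid of $\PHomeo_c(X)$ containing $M$ and $E := \mc{CO}(X)$; in the topological phase, I use Lemma~\ref{lem:extended_monoid_topology_is_unique} to pin down $\tau$ uniquely and then verify it makes $\mathrm{OP}(M)$ into a topological Boolean inverse monoid in the sense of Definition~\ref{defn:Boolean_inverse:topological}.

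For the algebraic phase, $E \subseteq \mathrm{OP}(M)$ is immediate: take $U=O$, so $eOe \subseteq EOE$. For $M \subseteq \mathrm{OP}(M)$, given $m \in M$ the element $e := mm^*$ lies in $M \cap E$, and local decomposability of $M$ together with Lemma~\ref{lem:loc_dec_monoid_implies_multiplication_by_m_is_open} guarantee that $Oe \subseteq EOE$ is an open neighbourhood of $e$ in $M$; continuity of $u \mapsto mum^*$ then supplies the required $U$. Closure under $*$ is symmetric from the definition. Closure under composition hinges on the identity $fE = Ef$ inside $\PHomeo_c(X)$, which comes from $fe = (fef^*)f$ with $fef^* \in E$; given $f, g \in \mathrm{OP}(M)$ and $O$, choose $U$ via $f$'s open preserver and $V$ via $g$'s, and compute
\begin{equation*}
	(fg)V(fg)^* = f(gVg^*)f^* \subseteq f(EUE)f^* = E(fUf^*)E \subseteq E \cdot EOE \cdot E = EOE.
\end{equation*}

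Closure under compatible joins is the principal obstacle. For compatible $f_1, f_2 \in \mathrm{OP}(M)$ with $f := f_1 \vee f_2$, set $A_i := f_i^*f_i$, $q_0 := A_2 \wedge A_1^{\perp}$ and $q_0' := A_1 \wedge A_2^{\perp}$. Distributivity (Lemma~\ref{lem:multiplication_distributes_over_joins}) expands $fuf^* = \bigvee_{i,j} f_i u f_j^*$: the diagonal terms are controlled by the individual preserver conditions, while the cross terms are managed by further imposing on $U$ the sub-basic conditions $u \in V_{q_0, A_1^{\perp}} \cap V_{q_0', A_2^{\perp}}$, which lie in the topology of $M$ since it refines the compact-open topology. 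Compatibility forces $f_1^*$ and $f_2^*$ to agree on the range-overlap $p := f_1 f_2^* = A_1' \wedge A_2'$, and the imposed conditions pin the cross terms to $p$, where they coincide with the diagonals; hence $fuf^* = (f_1 u f_1^*) \vee (f_2 u f_2^*)$. The final step is to realise this compatible join as a single element of $EOE$ by exploiting local decomposability of $M$ one more time, to refine $U$ into a partition-preserving neighbourhood of $1$ for $\{A_1 \wedge A_2, q_0, q_0', (A_1 \vee A_2)^{\perp}\}$, so that $u$ itself decomposes into commuting pieces whose $f_i$-conjugates can be recombined via continuity of multiplication after a preliminary shrinking $O \rightsquigarrow O'$. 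An induction extends this to arbitrary finite compatible joins, and the equality $\mc{E}(\mathrm{OP}(M)) = E$ is then automatic.

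For the topological phase, Lemma~\ref{lem:extended_monoid_topology_is_unique} immediately implies uniqueness: any admissible $\tau$ on $L := \mathrm{OP}(M)$ must take as a neighbourhood base at $f \in L$ the sets $Nf \cap fN$ with $N$ open around $1$ in $M$. Declaring $\tau$ by this neighbourhood system, verification that $\tau$ satisfies the topological Boolean inverse monoid axioms follows the template of Lemma~\ref{lem:locally_decomposable_extension}: continuity of $*$ is symmetric; continuity of multiplication at $(f_1, f_2)$ uses the open preserver property of $f_1$ to commute $M$-neighbourhoods past $f_1$, yielding $(Uf_1)(Vf_2) \subseteq N f_1 f_2$ for suitable $U, V$; that $\tau$ refines the compact-open topology and $L$ is locally decomposable both follow as in the proof of that lemma; and continuity of the join operation on compatible pairs is a consequence of the decomposition argument in the previous paragraph, transferred to variable inputs. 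Uniqueness of $\tau$ is then just the conclusion of Lemma~\ref{lem:extended_monoid_topology_is_unique}.
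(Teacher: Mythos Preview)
Your plan matches the paper's architecture: show $L = \mathrm{OP}(M)$ is a Boolean inverse submonoid, then impose the topology from Lemma~\ref{lem:extended_monoid_topology_is_unique} and verify the axioms.  Your argument for closure under composition via $fE = Ef$ is cleaner than the paper's.  But two load-bearing preliminary moves are missing, and without them the hardest steps do not close as written.

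First, the paper immediately replaces $M$ by $ME$ using Lemma~\ref{lem:locally_decomposable_extension}, so that $E \subseteq M$ and hence $EOE$ is genuinely \emph{open} in $M$.  This is what makes the join argument work: for disjoint $f \perp g$ with $r = ff^*$, one applies the open map $\rho_r$ to the open set $EOE$ and extracts a product neighbourhood $O'r \times O'r^{\perp} \subseteq \rho_r(EOE)$; then any join of something in $EO'E \cap Mr$ with something in $EO'E \cap Mr^{\perp}$ automatically lands in $EOE$.  Your ``final step'' instead proposes to split $u$ into commuting pieces supported on parts of a partition, but local decomposability only asserts $\rho_P$ is open --- it does not guarantee that a unit of $M$ commuting with the partition factors through rigid stabilisers inside $M$.  (A related simplification you pass over: once $E \subseteq L$ is known, any compatible join rewrites as a disjoint one via $f_2 \mapsto f_2 (A_2 \wedge A_1^{\perp})$, making the cross-term analysis unnecessary.)

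Second, the paper proves a sharpening (its Claim~1): for $f \in L$ and $O$ centralising $ff^*$, one can arrange $fUf^* \subseteq ff^*O$, not merely $fUf^* \subseteq EOE$.  This is precisely what drives continuity of multiplication: after imposing that $o_g$ commute with $f^*f$, one has $o_f f o_g g = o_f(f o_g f^*) fg$, and $f o_g f^* \in O_1 ff^*$ yields $o_f f o_g g \in O_1 O_1 fg \subseteq Ofg$.  With only $f o_g f^* \in EO_1E$ you obtain $o_f \cdot EO_1E \cdot fg$, and the idempotent on the left of $O_1$ cannot be absorbed into a set of the form $Nfg$.  The template of Lemma~\ref{lem:locally_decomposable_extension} does not transfer, since that argument exploits the special shape $af$ with $a \in M$, $f \in E$, which elements of $\mathrm{OP}(M)$ need not have.
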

			
\begin{proof}
Write $E = \mc{CO}(X)$ and $L = \mathrm{OP}(M)$.  Equip $ME$ with the topology extending that of $M$, as in Lemma~\ref{lem:locally_decomposable_extension}.  Since $M$ is an open neighbourhood of $1$ in $ME$ and has the subspace topology, it is clear that $\mathrm{OP}(M) = \mathrm{OP}(ME)$.  Thus from now on, without loss of generality we may assume $E \subseteq M$.  By continuity of multiplication it is also clear that $M \subseteq L$.

We now proceed by series of claims.

\emph{Claim 1: Given $f \in L$ and a neighbourhood $O$ of $1$ in $M$, there is a neighbourhood $U$ of $1$ in $M$ such that
\[
fUf^* \subseteq ff^*O \cap Off^*.
\]}

Using Remark \ref{rmk:cpt-open-top-centralisers-idempotents-open}, by replacing $O$ with a smaller neighbourhood we may assume that  $ff^*o = off^*$ for all $o\in O$.
Let $U$ be a neighbourhood of $1$ in $M$ such that
\[
fUf^* \subseteq EOE.
\]
After passing to a smaller neighbourhood we may assume that $U$ commutes with $ff^*$ and $f^*f$.  Since the topology of $M$ refines the compact-open topology, by Lemma~\ref{lem:compactopen_top_0dim_units_clopen} we may assume that $U$ consists of units.  Given $u \in U$, then $(fuf^*)fu^*f^* = ff^*fuu^*f^* = ff^*$, so $ff^* \in fuf^*M$, and similarly $ff^* \in Mfuf^*$.  Suppose $fuf^* = eoe'$ for $e,e' \in E$.  Then
\[
ff^* \in eoe'M \cap Meoe' \subseteq eM \cap Me',
\]
so $eff^* = ff^* = ff^*e'$, in other words, $e,e' \ge ff^*$.  We can then rewrite $fuf^*$ as
\[
fuf^* = ff^*fuf^*ff^* = ff^*eoe'ff^* = ff^*off^* = ff^*o,
\]
showing that in fact $fUf^* \subseteq ff^*O = Off^*$, satisfying the claim.

\emph{Claim 2: $L$ is an inverse submonoid of $\PHomeo_c(X)$.}

It is clear from the definition that $E \subseteq L$, and that $f^* \in L$ whenever $f \in L$.

Suppose $f_1,f_2 \in L$ and let $O$ be an open neighbourhood of $1$ in $M$.  Then by Claim 1 there is a neighbourhood $U_1$ of $1$ in $M$ such that
\[
f_1U_1f^*_1 \subseteq f_1f^*_1O,
\]
and in turn there is a neighbourhood $U_2$ of $1$ in $M$ such that
\[
f_2U_2f^*_2 \subseteq f_2f^*_2U_1.
\]
Write $e = f_1f_2f^*_2f^*_1$.  Combining the two inclusions, we see that 
\[
f_1f_2U_2f^*_2f^*_1 \subseteq f_1f_2f^*_2U_1f^*_1 = ef_1U_1f^*_1 \subseteq ef_1f^*_1O \subseteq EOE.
\]
Thus $f_1f_2$ preserves the opens on the left of $M$.  Similarly, $f^*_2f^*_1$ preserves the opens on the left of $M$, and hence $f_1f_2 \in L$.  Thus $L$ is an inverse submonoid of $\PHomeo_c(X)$ that contains the idempotents, as claimed.

\emph{Claim 3: $L$ is closed under taking compatible joins.}

Since $L$ is a submonoid that contains the idempotents, it is enough to show that $L$ is closed under taking joins of disjoint pairs.  So we consider $f,g \in L$ such that $f \perp g$, and write $h = f \vee g \in \PHomeo_c(X)$.  Let $O$ be an open neighbourhood of $1$ in $M$; we can then take an open neighbourhood $U$ of $1$ in $M$ such that
\[
fUf^*, gUg^* \subseteq EOE.
\]
Write $d = f^*f$ and $r = ff^*$. 
Since the topology of $M$ refines the subspace topology in $\PHomeo_c(X)$, by  Remark \ref{rmk:cpt-open-top-centralisers-idempotents-open}, there is an open neighbourhood of 1 that centralises the idempotents $d,r,d^\perp,r^\perp$.
Thus we may assume that $O$ and $U$ centralises them too.
Note also that for all $u \in U$ we have $fuf^* \in Mr$ and $gug^* \in Mgg^* \subseteq Mr^\perp$.

Given $u \in U$, we see that
\[
hudh^* = huddh^* = hdudh^* = hf^*fuf^*fh^* = fuf^*,
\]
since $f \le h$.  Similarly,
\[
hud^\perp h^* = h(h^*h)d^\perp ud^\perp (h^*h) h^* = hg^*g u g^*gh = gug^*.
\]
We can thus write
\[
huh^* = h(ud \vee ud^\perp)h^* = (fuf^*) \vee (gug^*).
\]
We claim there is a neighbourhood $V$ of $1$ in $U$ such that $(fuf^*) \vee (gug^*)$ is an element of $EOE$ for all $u \in V$.  Since $EOE$ and $\rho_r$ are open, the set $\rho_r(EOE)$ is open in the product $Mr \times Mr^\perp$, and hence there is an open neighbourhood of $\rho_r(1) = (r,r^\perp)$ of the form $O'r \times O'r^\perp$, for $O'$ an open neighbourhood of $1$ in $O$, such that $O'r \times O'r^\perp \subseteq \rho_r(EOE)$; in other words, $EOE$ contains the join of any element of $O'r$ with any element of $O'r^\perp$.  Now take an open neighbourhood $V$ of $1$ in $U$ such that
\[
fVf^*, gVg^* \subseteq EO'E.
\]
Given $u \in V$, then we can write $fuf^* = e_fo_fe'_f = e_fo_fe'_fr$ and $gug^* = e_go_ge'_g = e_go_ge'_gr^\perp$, where $e_f,e_g,e'_f,e'_g \in E$ and $o_f,o_g \in O'$.  The join then belongs to $E(o_fr \vee o_gr^\perp)E$, specifically
\[
e_fo_fe'_fr \vee e_go_ge'_gr^\perp = e_fo_fre'_f \vee e_go_gr^\perp e'_g = (e_fr \vee e_gr^\perp)(o_fr \vee o_gr^\perp)(re'_f \vee r^\perp e'_g).
\]
Thus $(fuf^*) \vee (gug^*) \in EOE$ as desired, in other words, $hVh^* \subseteq EOE$.  This completes the proof that $h$ preserves the opens on the left; similarly, $h^*$ preserves the opens on the left, and hence $h \in L$.  This completes the proof that $L$ is closed under compatible joins.

\

By Claims 2 and 3, we see that $L$ is a Boolean inverse monoid; it remains to show that the topology $\sigma$ extends to $L$, to produce a topological Boolean inverse monoid.  We equip $L$ with the topology $\tau$ from Lemma~\ref{lem:extended_monoid_topology_is_unique}, which is the only possibility for a suitable topology for $L$.  The proof of the lemma also shows that $M = M1$ belongs to $\tau$, and that $\sigma$ is exactly the $\tau$-subspace topology on $M$: every $\tau$-neighbourhood of a point contains a $\sigma$-neighbourhood, and vice versa.

Claims 4, 5 and 6 will show that $\tau$ is indeed a topology compatible with the Boolean inverse monoid structure.

\emph{Claim 4: The join operation is continuous on disjoint pairs in $L$.}

As in Claim 3 we consider $f,g \in L$ such that $f \perp g$, and write $h = f \vee g$.  It is enough to consider neighbourhoods of $h$ of the form $Oh$ for $O$ a neighbourhood of $1$ in $M$; we can take $O = O^*$ and take $O$ small enough that it centralises $\{ff^*,f^*f,gg^*,g^*g\}$.  Given $o_f, o_g \in O$ we claim that $o_ff \perp o_gg$:
\begin{align*}
(o_ff)^* (o_gg) = f^*(ff^*)(o^*_fo_g)(gg^*)g = f^*(o^*_fo_g)(ff^*)(gg^*)g = 0; \\
(o_ff)(o_gg)^* = o_ffg^*o_g = 0.
\end{align*}
Thus the join $o_ff \vee o_gg$ is a well-defined element of $L$.
By local decomposability of $M$, the image of $O$ under the map $o \mapsto (off^*,ogg^*)$ is open in $Mff^* \times Mgg^*$; in particular, there exists a neighbourhood $O_1$ of $1$ in $O$ such that for every pair $(o_fff^*,o_ggg^*)$ with $o_f,o_g \in O_1$, there exists $o \in O$ such that $off^* = o_fff^*$ and $ogg^* = o_ggg^*$.  We now claim, given $o_f,o_g \in O_1$, that the join $o_ff \vee o_gg$ is an element of $Oh$.  Specifically, we can write $o_ff = (o_fff^*)f$ and $o_gg = (o_ggg^*)g$, and then find $o \in O$ such that $off^* = o_fff^*$ and $ogg^* = o_ggg^*$, so that
\[
o_ff \vee o_gg = off^*f \vee ogg^*g = of \vee og = o(f \vee g) = oh,
\]
recalling that multiplication distributes over joins.  Thus the preimage of $Oh$ under the join operation contains the neighbourhood $O_1f \times O_1g$ of $(f,g)$, showing the required continuity property.

\emph{Claim 5: $L$ equipped with $\tau$ is a topological inverse monoid.}

We have already shown that $L$ is an inverse monoid; the fact that $*$ is a homeomorphism is evident from the symmetry in the definition of $\tau$.  All that remains is to check continuity of multiplication.  Let $f,g \in L$, and consider a neighbourhood of $fg$ of the form $Ofg$, where $O$ is a neighbourhood of $1$ in $M$.  By continuity of multiplication in $M$, there is $1 \in O_1 \in \sigma$ such that $O_1O_1 \subseteq O$.  Since $f$ preserves the opens of $M$, by Claim 1 there is $1 \in O_2 \in \sigma$ such that $fO_2f^* \subseteq O_1ff^*$.  Now given $o_f,o_g \in O_2$, we see that
\[
o_ffo_gg = o_fff^*fo_gg = o_f(fo_gf^*)fg \subseteq O_1(O_1ff^*)fg \subseteq Off^*fg = Ofg,
\]
showing that the preimage of $Ofg$ under multiplication contains $O_2f \times O_2g$.  Thus multiplication is continuous, proving the claim.

\emph{Claim 6: $\tau$ refines the subspace topology of $L$ inside $\PHomeo_c(X)$.}

The subspace topology in this case is the compact-open topology; since $E \subseteq M$ and $\sigma$ refines the subspace topology inside $\PHomeo_c(X)$, we see that $E$ is a discrete subspace of $L$.  The claim then follows by Claim 5 and Lemma~\ref{lem:discrete_idempotents_implies_compact-open}.
\end{proof}
			
In particular, in the situation of Theorem~\ref{thm:open_preserver_monoid} we see that $\mathrm{OP}(M)$ contains the Boolean completion $\mc{BI}(M)$, which is just the submonoid generated by $M \cup E$ together with the disjoint join operation.
				
\begin{cor}\label{cor:piecewise_full_monoid_extension}
	Let $M\leq\PHomeo_c(X)$ be a  locally decomposable topological inverse monoid and let $L=\mc{BI}(M)$ be its Boolean completion. 
	Then there is a unique topology $\tau$ on $L$ that turns it into a topological Boolean inverse monoid with $M$ as an open submonoid.
\end{cor}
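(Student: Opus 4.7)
The plan is to derive this corollary almost immediately from Theorem~\ref{thm:open_preserver_monoid} by realising $\mc{BI}(M)$ as a submonoid of $\mathrm{OP}(M)$, together with Lemma~\ref{lem:extended_monoid_topology_is_unique} for uniqueness.

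For existence, I would first invoke Theorem~\ref{thm:open_preserver_monoid} to equip $\mathrm{OP}(M)$ with a topology $\tau$ making it a topological Boolean inverse monoid in which $M$ is an open submonoid with its original topology. Since $\mathrm{OP}(M)$ is a Boolean inverse monoid containing both $M$ and $E = \mc{CO}(X)$ and is closed under composition, involution and compatible joins, it contains the Boolean completion $\mc{BI}(M)$, which is by construction the smallest such submonoid of $\PHomeo_c(X)$. I would then equip $L = \mc{BI}(M)$ with the subspace topology from $\mathrm{OP}(M)$. Continuity of multiplication, involution and the disjoint join on $L$ follows by restriction from $\mathrm{OP}(M)$, and the topology refines the compact-open topology on $L$ for the same reason; hence $L$ is a topological Boolean inverse monoid. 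Since $M$ is open in $\mathrm{OP}(M)$ and contained in $L$, it is open in $L$; and the subspace topology on $M$ inherited from $L$ coincides with that inherited from $\mathrm{OP}(M)$, which is the original topology of $M$.

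For uniqueness, I would appeal to the observation following Definition~\ref{defn:locally_decomposable} that every topological Boolean inverse monoid is locally decomposable. Thus any candidate topology on $L$ satisfying the hypotheses of the corollary is a locally decomposable inverse monoid topology for which $M$ is an open submonoid with its prescribed topology. Lemma~\ref{lem:extended_monoid_topology_is_unique} then applies and pins down the topology: each $f \in L$ must have the sets $Nf$ as a base of neighbourhoods, where $N$ ranges over open neighbourhoods of $1$ in $M$. This forces any two such topologies on $L$ to coincide.

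There is no real obstacle here: essentially all the technical work has been done in Theorem~\ref{thm:open_preserver_monoid} (closure of $\mathrm{OP}(M)$ under products and joins, continuity of the Boolean operations, and the matching of subspace topologies) and in Lemma~\ref{lem:extended_monoid_topology_is_unique} (uniqueness of the extension). The only point to verify carefully is that $\mc{BI}(M)$, defined as a submonoid of $\PHomeo_c(X)$ generated by $M$, $E$ and disjoint joins, indeed sits inside $\mathrm{OP}(M)$; but this is immediate from the fact that $\mathrm{OP}(M)$ is itself a Boolean inverse monoid containing $M$ and $E$, as established in Theorem~\ref{thm:open_preserver_monoid}.
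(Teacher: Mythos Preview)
Your proposal is correct and follows exactly the same approach as the paper: existence via the subspace topology inherited from $\mathrm{OP}(M)$ as constructed in Theorem~\ref{thm:open_preserver_monoid}, and uniqueness via Lemma~\ref{lem:extended_monoid_topology_is_unique}. The paper's own proof is the two-sentence version of what you wrote; your additional remarks (that a topological Boolean inverse monoid is locally decomposable, and why restriction of the operations to $L$ preserves continuity) are the routine verifications the paper leaves implicit.
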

			
\begin{proof}
				The existence of the topology $\tau$ follows from Theorem~\ref{thm:open_preserver_monoid}, since $\mc{BI}(M) \subseteq \mathrm{OP}(M)$.  Uniqueness follows from Lemma~\ref{lem:extended_monoid_topology_is_unique}.
			\end{proof}
				
\begin{rmk}
	If $M$ is a group, then the group of units of $\mathrm{OP}(M)$ is just the open preserver of $M$ in $\Homeo(X)$ in the group-theoretic sense, as defined in Section~\ref{sec:group_extend_topology}.
\end{rmk}

The topology on $\mc{BI}(M)$ inherits some useful properties from $M$.

\begin{prop}\label{prop:piecewise_full_extension:lcsc}
	Let $M\leq \PHomeo_c(X)$ be a locally decomposable topological inverse monoid and equip $\mc{BI}(M)$ and $\mathrm{OP}(M)$ with the respective unique Boolean inverse monoid topologies that make $M$ open.
		\begin{enumerate}[(i)]
			\item If $M$ is locally compact or first-countable, then so are $\mc{BI}(M)$ and $\mathrm{OP}(M)$.
			\item If $M$ and $X$ are second-countable then so is $\mc{BI}(M)$.
		\end{enumerate}
\end{prop}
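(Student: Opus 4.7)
The plan is to exploit Lemma~\ref{lem:extended_monoid_topology_is_unique}: at any $f$ in $\mathrm{OP}(M)$ or $\mc{BI}(M)$, a base of open neighbourhoods is $\{Nf : N \text{ an open neighbourhood of } 1 \text{ in } M\}$, and right-multiplication by $f$ is continuous by continuity of multiplication. So local properties at the identity of $M$ transfer to the corresponding property at every other point.

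For (i), first-countability is immediate: a countable neighbourhood base $\{N_n\}$ at $1$ yields the countable base $\{N_n f\}$ at $f$. For local compactness, I would take a compact neighbourhood $K$ of $1$ in $M$ containing some open $N\ni 1$; then $Kf$ is the continuous image of $K$, hence compact, and contains the open neighbourhood $Nf$ of $f$. The same argument applies verbatim in $\mathrm{OP}(M)$ and $\mc{BI}(M)$.

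For (ii), mere translation does not suffice since $\mc{BI}(M)$ may be uncountable, so I would build a countable \emph{global} base using the countable base $\mathcal{B}$ of $M$ together with the countable idempotent set $E=\mc{CO}(X)$ (countable because $X$ is second-countable). To each finite tuple $(U_1,e_1,\dots,U_n,e_n)$ with $U_i\in\mathcal{B}$ and $e_i\in E$ pairwise disjoint, I would associate
\[
W(U_1,e_1,\dots,U_n,e_n) := \{u_1 e_1\vee\cdots\vee u_n e_n : u_i\in U_i,\ (u_i e_i)_{i=1}^n \text{ pairwise compatible}\}.
\]
There are only countably many such tuples, and I would argue that $\mathcal{C}:=\{W(\cdots)\}\cup\{\{0\}\}$ is a base of $\mc{BI}(M)$.

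The main obstacle is verifying both the openness of each $W$ and the refinement property, using the local decomposability of $\mc{BI}(M)$ from Corollary~\ref{cor:piecewise_full_monoid_extension}. For openness, for the clopen partition $\mathcal{P}=\{e_1,\dots,e_n,(e_1\vee\cdots\vee e_n)^\perp\}$ the map $\rho_\mathcal{P}$ is a continuous injective open map onto its image, and $W$ is the $\rho_\mathcal{P}$-preimage of the product $(U_1 e_1)\times\cdots\times (U_n e_n)\times\{0\}$; each factor $U_i e_i$ is open in $\mc{BI}(M)e_i$ because $U_i$ is open in $M$, $M$ is open in $\mc{BI}(M)$, and right-multiplication by $e_i$ is an open map in $\mc{BI}(M)$ (by local decomposability together with Remark~\ref{rmk:loc_dec_implies_restriction_is_open_map}), while $\{0\}$ is open in $\mc{BI}(M)(e_1\vee\cdots\vee e_n)^\perp$ because its preimage $\{g:\dom(g)\le e_1\vee\cdots\vee e_n\}$ is open in the compact-open topology on $\mc{BI}(M)$. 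For the refinement property, given an open $O\ni g$ and a decomposition $g=m_1 e_1\vee\cdots\vee m_k e_k$ with $m_i\in M$ and $e_i\in E$ pairwise disjoint, I would push $O$ through the homeomorphism $\rho_\mathcal{P}$ to obtain a product neighbourhood of $(m_1 e_1,\dots,m_k e_k,0)$ contained in $\rho_\mathcal{P}(O)$, then shrink the first $k$ factors to basic $U_i\in\mathcal{B}$ around $m_i$, giving $g\in W(U_1,e_1,\dots,U_k,e_k)\subseteq O$.
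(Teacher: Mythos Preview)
Your argument is correct. For (i) you proceed exactly as the paper does, via the neighbourhood bases $\{Nf\}$ coming from Lemma~\ref{lem:extended_monoid_topology_is_unique}.

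For (ii) your route is correct but more laborious than the paper's. The paper simply observes that $\mc{BI}(M)$ is the countable union of the open sets $B_D=\{\bigvee_{e\in D}m_e : m_e\in Me\}$ over finite pairwise-disjoint $D\subseteq \mc{CO}(X)$, and that each $B_D$ is homeomorphic (via $\rho_D$ and its inverse, the join) to the finite product $\prod_{e\in D}Me$; since each $Me$ is the image of $M$ under the continuous open map $m\mapsto me$, it is second-countable, hence so is $B_D$, and a countable union of second-countable open sets is second-countable. Your construction of the explicit base $\mc{C}=\{W(U_1,e_1,\dots,U_n,e_n)\}$ amounts to writing down the product base on each $B_D$ and pulling it back; it works, and indeed gives more concrete information (an explicit base), but the openness and refinement verifications you carry out are precisely what the homeomorphism $B_D\cong\prod_e Me$ encodes in one stroke. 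One incidental remark: since the $e_i$ are pairwise disjoint, the elements $u_ie_i$ have pairwise disjoint domains, and a short computation shows $u_ie_i\,(u_je_j)^*=0$ and $(u_ie_i)^*\,u_je_j=0$ automatically; so your ``pairwise compatible'' proviso in the definition of $W$ is in fact vacuous.
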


\begin{proof}
		We recall from Lemma~\ref{lem:extended_monoid_topology_is_unique} how the topology of $\mathrm{OP}(M)$ is constructed, and see that every point has a neighbourhood that is the image under a continuous open map of a neighbourhood of $1$ in $M$.  Thus if $M$ is locally compact or first-countable, so is $\mathrm{OP}(M)$.  The open submonoid $\mc{BI}(M)$ then inherits these properties from $\mathrm{OP}(M)$, proving (i).
		
		For (ii), we assume $M$ and $X$ are second-countable.  Let $E$ be the set of idempotents of $\PHomeo_c(X)$ and let $\mc{D}$ be the set of finite subsets $D$ of $E$ such that the elements of $D$ are pairwise disjoint.  Since $X$ is second-countable and compact, we see that $E$ is countable, hence also $\mc{D}$ is countable.  We can now write 
		\[
		\mc{BI}(M) = \bigcup B_D, \text{ where } B_D = \{ \bigvee_{e \in D}m_e \mid m_e \in Me\}.
		\]
		Given $D \in \mc{D}$, the set $B_D$ is the continuous image of the finite product $\prod_{e \in D}Me$, and in turn, each set $Me$ is a continuous image of $M$.  Since $M$ is second-countable, so is $B_D$; each of the sets $B_D$ is open in $\mc{BI}(M)$ and $\mc{BI}(M)$ is a countable union of such sets, so $\mc{BI}(M)$ is second-countable.
\end{proof}

\section{Generation properties and expansivity}\label{sec:compact_gen}
In this section we deal with the alternating full group of a Boolean inverse monoid.
Following Nekrashevych (\cite{Nekra}), who does everything in the setting of \'{e}tale groupoids, we prove in Theorem \ref{thm:expansive_generation} that the alternating full group is compactly generated if the Boolean inverse monoid whence it comes is compactly generated. 
In turn, the compact generation of a Boolean inverse monoid is related to \emph{expansive} actions and subshifts.

Throughout this section, $X$ denotes a compact zero-dimensional space, unless stated otherwise.

\subsection{Expansive actions, subshifts, and compact generation}

In the theory of dynamical systems, expansive homeomorphisms of compact metric spaces are usually of interest. 
These are the homeomorphisms which eventually separate points some fixed minimum amount. 
This can be generalised to uniform spaces (sets equipped with a uniformity, see e.g., \cite[Ch. II]{Bourbaki_top14_07}), of which metric spaces and topological groups are prime examples.

\begin{defn}\label{def:expansive_uniformity}
	Let $X$ be a uniform space. 
	A group action of $G$ on $X$ is \defbold{expansive} if there is an entourage $E\subset X\times X$ such that if $x\neq y\in X$ there is some $g\in G$ such that $(gx,gy)\notin E$. 
\end{defn}

Every uniformity gives rise to a topology. 
If the resulting space is compact, there is exactly one uniformity compatible with this topology, which consists of all neighbourhoods of the diagonal (\cite[II, p.27, Th\'{e}or\`{e}me 1]{Bourbaki_top14_07}).

In the setting of actions on compact zero-dimensional spaces, there is a close connection between expansivity and subshifts.

\begin{defn}
	A \defbold{subshift} consists of a group action $(G,X)$ on a compact zero-dimensional space $X$, such that there is a $G$-equivariant closed embedding $\phi: X \rightarrow Y^G$, where $Y$ is a finite discrete set,  $Y^G$ is given the product topology and $G$ acts on $Y^G$ by shifts:
	\[
	h\cdot f(g) = f(gh) \text{ for } g,h\in G, \, f\in Y^G.
	\]
\end{defn}

\begin{lem}\label{lem:subshift}
	Let $G$ be a group acting by homeomorphisms on a  space $X$.
	\begin{enumerate}[(i)]
		\item Suppose that $(G,X)$ is a subshift. 
		Then $X$ is compact and zero-dimensional and the action is expansive.
		\item Suppose that $X$ is  compact and zero-dimensional.
		Given a clopen partition $\mc{P}$ of $X$, consider $\mc{P}$ as a finite discrete set  and define $\phi_{\mc{P}}: X \rightarrow (\mc{P})^G$ by 
		\[
		\phi_{\mc{P}}(x)(g) = [g(x)]_{\mc{P}} \text{ the part of } \mc{P} \text{ containing } g(x).
		\]
		Then $\phi_{\mc{P}}$ is a $G$-equivariant continuous map, and as $\mc{P}$ ranges over the clopen partitions of $X$, the spaces $(G,\phi_{\mc{P}}(X))$ are subshifts that form an inverse system (via the maps $\phi_{\mc{P}}$) with limit $X$. 
	 	\end{enumerate}
\end{lem}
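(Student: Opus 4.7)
My plan is to dispatch part (i) by transporting structure along the closed embedding $\phi$, and then handle part (ii) in three stages: equivariance/continuity of $\phi_{\mc{P}}$, closure of its image, and finally the inverse limit identification.

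For part (i), I would first observe that $Y^G$ is a product of copies of a finite discrete space, hence compact Hausdorff and zero-dimensional, with basic clopen cylinders giving a clopen base. A closed embedding preserves both properties, so $X$ inherits them. For expansivity, since $X$ is compact its unique compatible uniformity consists of all neighbourhoods of the diagonal (cited from Bourbaki in the text), so I may take
\[
E = \{(x,y) \in X \times X \mid \phi(x)(1_G) = \phi(y)(1_G)\},
\]
which is clopen and contains the diagonal. If $x \ne y$ then $\phi(x) \ne \phi(y)$, so some $g \in G$ satisfies $\phi(x)(g) \ne \phi(y)(g)$. Using equivariance together with the shift convention $h \cdot f(g) = f(gh)$, this rewrites as $\phi(gx)(1_G) \ne \phi(gy)(1_G)$, giving $(gx,gy) \notin E$.

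For part (ii), $G$-equivariance of $\phi_{\mc{P}}$ is a direct unwinding of definitions: both sides of $\phi_{\mc{P}}(hx)(g) = \phi_{\mc{P}}(x)(gh)$ evaluate to $[ghx]_{\mc{P}}$. Continuity is immediate because each coordinate map $x \mapsto [gx]_{\mc{P}}$ is locally constant, the preimage of $U \in \mc{P}$ being the clopen $g^{-1}(U)$. Thus $\phi_{\mc{P}}(X)$ is compact, hence closed in the Hausdorff space $\mc{P}^G$, and the inclusion exhibits $(G,\phi_{\mc{P}}(X))$ as a subshift. Next, clopen partitions of $X$ form a directed set under refinement (two partitions admit the common refinement generated by intersections), and when $\mc{P}'$ refines $\mc{P}$ there is a canonical map of finite sets $\mc{P}' \to \mc{P}$ sending each part to the unique element of $\mc{P}$ containing it; this induces a continuous $G$-equivariant projection $\pi_{\mc{P}',\mc{P}}: \mc{P}'^G \to \mc{P}^G$ satisfying $\phi_{\mc{P}} = \pi_{\mc{P}',\mc{P}} \circ \phi_{\mc{P}'}$, which makes the $(\phi_{\mc{P}}(X), \pi_{\mc{P}',\mc{P}})$ into an inverse system.

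Finally, by the universal property of inverse limits the $\phi_{\mc{P}}$ assemble into a continuous $G$-equivariant map $\Phi: X \to \varprojlim \phi_{\mc{P}}(X)$. Injectivity uses zero-dimensionality: given $x \ne y$, a clopen $U$ separating them yields the partition $\mc{P} = \{U, X \setminus U\}$, which distinguishes $\phi_{\mc{P}}(x)$ from $\phi_{\mc{P}}(y)$ at coordinate $1_G$. For surjectivity, given a coherent thread $(x_{\mc{P}})$ the non-empty closed sets $F_{\mc{P}} = \phi_{\mc{P}}^{-1}(x_{\mc{P}})$ have the finite intersection property, since any finite subfamily $\{F_{\mc{P}_i}\}$ admits a common refinement $\mc{P}^*$ with $F_{\mc{P}^*} \subseteq \bigcap_i F_{\mc{P}_i}$; compactness of $X$ then supplies a preimage. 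A continuous bijection from a compact space to a Hausdorff space is a homeomorphism, so $\Phi$ is the desired identification. The main obstacle is largely bookkeeping: setting up the projections $\pi_{\mc{P}',\mc{P}}$ and running the finite intersection argument for surjectivity of $\Phi$. Neither step is genuinely deep, and the key inputs (compactness, zero-dimensionality, and Tychonoff) make the argument routine once organised.
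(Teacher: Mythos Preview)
Your proof is correct and follows essentially the same approach as the paper's: the treatment of part (i) is identical, and for part (ii) both arguments establish equivariance and continuity directly, build the inverse system via common refinements, and identify the limit with $X$ by separating points and invoking compactness. You simply spell out the inverse limit identification in more detail (explicit transition maps $\pi_{\mc{P}',\mc{P}}$ and the finite intersection argument for surjectivity of $\Phi$), whereas the paper compresses this into the remark that a point-separating family of continuous maps from a compact space to Hausdorff spaces yields a homeomorphism onto the inverse limit.
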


\begin{proof}
	(i)
	Let $\phi: X \rightarrow Y^G$ be as in the definition of a subshift.
	As $Y$ is finite, $Y^G$ is compact by Tychonoff's theorem and zero-dimensional. 
	As $\phi$ is a closed embedding, $X$ is also compact and zero-dimensional.
	For each $y\in Y$ define $U_y:=\{x\in X\mid \phi(x)(1)=y\}$. 
	These sets form a clopen partition of $X$, so $E:=\bigcup_{y\in Y}U_y\times U_y$ is an entourage for the unique uniformity on $X$. 
	If $x\neq z\in X$ then, as $\phi$ is injective, there is some $g\in G$ such that $\phi(x)(g)\neq \phi(z)(g)$.
	But $\phi(x)(g)=g\cdot \phi(x)(1)=\phi(g(x))(1)$ and $\phi(z)(g)=g\cdot\phi(z)(1)=\phi(g(z))(1)$, 
		
	(ii)
	Given $g,h \in G$ and $x \in X$, we have
	\[
	h.\phi_{\mc{P}}(x)(g) = \phi_{\mc{P}}(x)(gh) = [gh(x)]_{\mc{P}} = \phi_{\mc{P}}(h(x))(g),
	\]
	so $\phi_{\mc{P}}$ is $G$-equivariant.
	To see that $\phi_{\mc{P}}$ is continuous, it is enough to observe that the cylinder sets $\{f \in \mc{P}^G \mid f(g) = \alpha\}$ generate the topology of $\mc{P}^G$ as $g$ ranges over $G$ and $\alpha$ ranges over $\mc{P}$, and the preimage of such a set is the open set $\{x \in X \mid g(x) \in \alpha\}$.
	
	Let $\mathbf{P}$ be the set of clopen partitions of $X$. 
	For any finite set $\mc{P}_1,\dots,\mc{P}_n$ of elements of $\mathbf{P}$, we can factor all of the maps $\phi_{\mc{P}_i}$ through $\phi_{\mc{P}}$, where $\mc{P}$ is the partition generated by $\bigcup^n_{i=1}\mc{P}_i$.  Thus the maps $(\phi_{\mc{P}})_{\mc{P} \in \mathbf{P}}$ form an inverse system. 
	 Moreover, given $x,y \in X$ such that $x \neq y$,  there is a clopen partition $\mc{P}$ such that $[x]_{\mc{P}} \neq [y]_{\mc{P}}$, so that $\phi_{\mc{P}}(x)(1) \neq \phi_{\mc{P}}(y)(1)$. 
	  Hence $(\phi_{\mc{P}})_{\mc{P} \in \mathbf{P}}$ separates points in $X$; since $X$ is compact, this ensures that the inverse limit is isomorphic to $X$ as a $G$-space.
	
	Since $\mc{P}^G$ is zero-dimensional, we see that $\phi_{\mc{P}}(X)$ is zero-dimensional; since $X$ is compact, so is $\phi_{\mc{P}}(X)$. 
	Thus $\phi_{\mc{P}}(X)$ is a subshift. 
	\end{proof}

\begin{cor}\label{cor:subshift}
	Let $G$ be a group.
	\begin{enumerate}[(i)]
		\item Every compact zero-dimensional $G$-space is an inverse limit of expansive $G$-spaces.
		\item A $G$-space $X$ is a subshift if and only if it is compact, totally disconnected and expansive. 	
		This is equivalent to there being a clopen partition of $X$ whose $G$-translates separate the points of $X$. 
	\end{enumerate}
\end{cor}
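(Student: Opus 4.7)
The plan is to derive both parts directly from Lemma~\ref{lem:subshift} by choosing clopen partitions that are fine enough to witness whatever separation or expansivity is required.

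For (i), given a compact zero-dimensional $G$-space $X$, I would invoke Lemma~\ref{lem:subshift}(ii) to produce the inverse system $(G,\phi_{\mc{P}}(X))$ indexed by clopen partitions $\mc{P}$ of $X$, with inverse limit $X$. By Lemma~\ref{lem:subshift}(i), each factor $(G,\phi_{\mc{P}}(X))$ is a subshift and hence an expansive $G$-space. This immediately gives (i).

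For (ii), one direction is immediate from Lemma~\ref{lem:subshift}(i): every subshift is compact, zero-dimensional and expansive. For the converse, suppose $X$ is compact, zero-dimensional and expansive with entourage $E \subseteq X \times X$. Since $X$ is compact, the unique compatible uniformity consists of all neighbourhoods of the diagonal, so after shrinking $E$ we may assume $E$ is open. Using zero-dimensionality and compactness of $X$, cover the diagonal by finitely many clopen `squares' $U \times U$ contained in $E$, and refine them into a clopen partition $\mc{P}$ of $X$ with the property that $\bigcup_{U \in \mc{P}} U \times U \subseteq E$. Then for any two distinct points $x,y \in X$ expansivity yields $g \in G$ with $(g(x),g(y)) \notin E$, which forces $[g(x)]_{\mc{P}} \neq [g(y)]_{\mc{P}}$, i.e.\ $\phi_{\mc{P}}(x)(g) \neq \phi_{\mc{P}}(y)(g)$. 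Thus $\phi_{\mc{P}}$ is injective; since it is continuous, $G$-equivariant, and maps from a compact space to a Hausdorff space, it is a closed embedding into $\mc{P}^G$, exhibiting $X$ as a subshift.

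The same argument shows the final equivalence: a clopen partition $\mc{P}$ whose $G$-translates separate points makes $\phi_{\mc{P}}$ injective, and the converse follows by extracting the partition $\{U_y : y \in Y\}$ from any subshift embedding $\phi: X \to Y^G$ as in the proof of Lemma~\ref{lem:subshift}(i). The only mildly delicate step is the construction of the clopen partition refining $E$ in the expansive case; everything else is immediate from the lemma, so I do not anticipate a genuine obstacle.
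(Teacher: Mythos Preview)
Your proof is correct and follows essentially the same route as the paper's own proof: both parts are derived directly from Lemma~\ref{lem:subshift}, and for (ii) you reduce the expansivity entourage to one of the form $\bigcup_{U\in\mc{P}} U\times U$ for a clopen partition $\mc{P}$, exactly as the paper does (it simply asserts this ``without loss of generality'' while you spell out the covering-and-refining argument). The remaining steps---injectivity of $\phi_{\mc{P}}$ and the compact/Hausdorff argument that it is a closed embedding---are identical to the paper's.
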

\begin{proof}
	Let $X$ be a compact zero-dimensional $G$-space.  By Lemma~\ref{lem:subshift}(ii), $X$ is an inverse limit of $G$-spaces $\phi_{\mc{P}}(X)$; each $G$-space $\phi_{\mc{P}}(X)$ is expansive by Lemma~\ref{lem:subshift}(i).
	Now suppose that $(G,X)$ is expansive where  $E\subset X\times X$ is the entourage witnessing this.
	Then we may assume without loss of generality that $E=\bigcup_{\alpha \in \mc{P}}\alpha\times\alpha$ where $\mc{P}$ is a clopen partition of $X$. 
	The assumption on $E$ guarantees that $G$-translates of $\mc{P}$ separate points of $X$; equivalently, the corresponding $\phi_{\mc{P}}$ from Lemma \ref{lem:subshift} is injective. 
	Because $\phi_{\mc{P}}$ is continuous, $X$ is compact and $\mc{P}^G$ is Hausdorff, we see that $\phi_{\mc{P}}$ is a closed embedding, so $(G,X)$ is a subshift.  
\end{proof}

\begin{defn}
	Let $M\leq \PHomeo_c(X)$ be a Boolean inverse monoid. 
	For a subset $S \subseteq M$, denote by $S^{\infty}$ the smallest inverse submonoid of $M$ containing $S$ and write $J(S)$ for the set of elements obtainable as joins of finitely many compatible elements of $S$.
	We say $S$ \defbold{generates $M$} if $S$ is not contained in any proper finitely joinable inverse submonoid of $M$.
	A topological Boolean inverse monoid $M$ is \defbold{compactly generated} if there is a compact generating set $S$ for $M$.
\end{defn}

\begin{lem}\label{lem:bim_generating_set}
	Let $M$ be a Boolean inverse monoid generated by $S\subseteq M$. 
	 Then $M=J(S^{\infty})$ and $\mc{E}(M) = J(\mc{E}(S^\infty))$.
	If $\mc{E}(S^\infty)$ contains a base of topology of $X$ that is a union of partitions, then every element of $M$ is a finite join of disjoint elements of $S^\infty$.
\end{lem}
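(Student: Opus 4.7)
The plan is to establish the first claim by checking that $J(S^\infty)$ is itself a finitely joinable inverse submonoid of $M$, then invoking the generating hypothesis. For the second claim, the key is to use the ``union of partitions'' structure of the given base to build a common refinement that still lies entirely inside $\mc{E}(S^\infty)$.

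For the first claim, we verify that $J(S^\infty)$ contains $1$ and is closed under $*$, products, and finite compatible joins. Closure under $*$ and under finite compatible joins is immediate, since $*$ distributes over joins and compatible joins associate. Closure under products follows from Lemma~\ref{lem:multiplication_distributes_over_joins}: if $s = \bigvee s_i$ and $t = \bigvee t_j$ are compatible decompositions with $s_i, t_j \in S^\infty$, then $st = \bigvee_{i,j} s_i t_j$, the set $\{s_i t_j\}$ is compatible, and each $s_i t_j \in S^\infty$. Since $S \subseteq J(S^\infty) \subseteq M$ and $S$ generates $M$, we conclude $J(S^\infty) = M$. For the idempotent statement, one inclusion again follows from Lemma~\ref{lem:multiplication_distributes_over_joins} (a compatible join is idempotent iff each summand is). Conversely, any $e \in \mc{E}(M) = J(S^\infty)$ can be written as $\bigvee s_i$ for compatible $s_i \in S^\infty$, and the same lemma forces each $s_i \in \mc{E}(S^\infty)$, so $e \in J(\mc{E}(S^\infty))$.

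For the second claim, let $\mc{B} \subseteq \mc{E}(S^\infty)$ be a base of the topology expressed as a union $\bigcup_\alpha \mc{P}_\alpha$ of clopen partitions of $X$. Fix $m \in M$ and write $m = \bigvee_{i=1}^n m_i$ with $m_i \in S^\infty$ compatible; set $d_i := m_i^* m_i \in \mc{E}(S^\infty)$, so $\dom(m) = d := \bigvee_i d_i$. For each $x \in d$ choose $b_x \in \mc{B}$ and an index $i(x)$ with $x \in b_x \subseteq d_{i(x)}$; by compactness of $d$, finitely many $b_{x_1},\dots,b_{x_N}$ cover $d$, each lying in some $\mc{P}_{\alpha_k}$. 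Let $\mc{P}$ be the common refinement of $\mc{P}_{\alpha_1},\dots,\mc{P}_{\alpha_N}$. The parts of $\mc{P}$ are finite meets of elements of $\mc{B}$, and hence lie in $\mc{E}(S^\infty)$ because idempotents in any inverse monoid commute, so their product equals their meet and remains inside $S^\infty$.

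A straightforward point-set check then shows that each part $p \in \mc{P}$ with $p \cap d \ne \emptyset$ is contained in some $b_{x_k}$, hence in $d_{i(x_k)}$; choose an index $i(p)$ witnessing this. Then on $p$ the elements $m$ and $m_{i(p)}$ agree (since $m_{i(p)} \le m$ and $p \le d_{i(p)} = \dom(m_{i(p)})$), so $mp = m_{i(p)}p \in S^\infty$. Expanding $m = m \cdot d$ and using that $d$ is the disjoint union of those parts of $\mc{P}$ lying inside $d$ yields $m = \bigvee_{p} m_{i(p)}p$, expressing $m$ as a finite join of elements of $S^\infty$. These summands are pairwise disjoint in the Boolean inverse monoid sense, since the domains $p$ are pairwise disjoint in $\mc{E}(M)$ and the ranges $m(p)$ are pairwise disjoint as $m$ is a partial homeomorphism. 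The delicate step is ensuring the common refinement sits inside $\mc{E}(S^\infty)$; this is exactly where the hypothesis that the base is a \emph{union of partitions} is indispensable, since arbitrary finite meets of basic opens would only be guaranteed to lie in $\mc{E}(S^\infty)$, not to organize themselves into a partition.
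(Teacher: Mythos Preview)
Your proof is correct. The first claim is handled the same way as in the paper (both essentially invoke Corollary~\ref{cor:joinable_submonoid}). For the second claim your route differs: the paper first shows that every idempotent $e \in \mc{E}(M)$ is a \emph{disjoint} join of elements of $\mc{E}(S^\infty)$ by writing $e^\perp = \bigvee e_i$ with $e_i \in B$, replacing each $e_i^\perp$ by the disjoint join $\bigvee(P_i \setminus \{e_i\})$ (where $P_i \subseteq B$ is a partition containing $e_i$), and distributing; it then handles an arbitrary $m = \bigvee s_i$ by the ``staircase'' trick $f_i = s_i^*s_i \prod_{j<i}(s_j^*s_j)^\perp$, applying the idempotent case to each $f_i$. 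Your argument instead covers $\dom(m)$ by finitely many basic opens via compactness and passes to the common refinement of the ambient partitions, observing that each refined cell is a finite product of basic idempotents (hence in $\mc{E}(S^\infty)$) and is automatically contained in one of the $d_i$. The paper's argument is more algebraic and isolates a reusable statement about idempotents; yours is more topological and reaches the disjoint decomposition of $m$ in one step. Both exploit the partition hypothesis at the same essential point: to guarantee that the pieces produced remain in $\mc{E}(S^\infty)$ while still tiling the domain disjointly.
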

\begin{proof}
		By Corollary~\ref{cor:joinable_submonoid}, $J(S^\infty)$ is a finitely joinable inverse submonoid of $M$ and $\mc{E}(J(S^\infty))$ is the set of finite joins over $\mc{E}(S^\infty)$.  Since $S$ generates $M$ we deduce that $J(S^\infty) = M$, and in particular every element of $\mc{E}(M)$ is a finite join of elements of $\mc{E}(S^\infty)$. 
		
		Suppose now that $B \subseteq \mc{E}(S^\infty)$ is a base of topology of $X$ that is a union of partitions.  Given $e \in \mc{E}(M)$ we can write $e^\perp = \bigvee^n_{i=1}e_i$ where $e_i \in B$.  Then $e_i$ belongs to some partition $P_i$ of $\mc{E}(M)$ contained in $B$, so we can write $e^\perp_i$ as the join of the disjoint set $P'_i := P_i \setminus \{e_i\}$.  Now
		\[
		e = \prod^n_{i=1} e^\perp_i = \prod^n_{i=1} \bigvee P'_i = \bigvee \mc{S},
		\]
		where $\mc{S}$ consists of products formed by taking one element from each of the sets $P'_i$, and the last equality is by distributivity.
		Any two distinct elements of $\mc{S}$ are disjoint, so $e$ is expressed as a disjoint join of elements of $\mc{E}(S^\infty)$.
		Given an arbitrary $m \in M$, we can write $m = \bigvee^n_{i=1}s_i$ where $s_i \in S^\infty$; we can then rewrite $m$ as the disjoint join $m = \bigvee^n_{i=1}(s_if_i)$, where $f_i = s^*_is_i\prod_{j < i}(s^*_js_j)^\perp$, and we can express $m$ as a disjoint join of elements of $S^\infty$ as follows:
		\[
		f_i = \bigvee^{m_i}_{j=1}e_j, \; \{e_1,\dots,e_{m_i}\} \subseteq \mc{E}(S^\infty) \text{ disjoint}; \; m = \bigvee^n_{i=1}\bigvee^{m_i}_{j=1}s_ie_j. \qedhere
		\]
	
\end{proof}

Expansive group actions are  intimately related to generation properties of the corresponding Boolean inverse monoid. 
Recall that a group $G$ acts expansively on $X$ if and only if there is a clopen partition of $X$ whose $G$-translates separate points of $X$.

	\begin{prop}\label{prop:subshift_expansive}
		Let $M$ be a topological Boolean inverse monoid and let $G \le M^\times$ be an open subgroup such that $M = \mc{BI}(G)$.
		\begin{enumerate}[(i)]
			\item If $M$ is compactly generated, then there is a compact identity neighbourhood $S$ in $G$ and a partition $P \subseteq \mc{E}(M)$ such that $S \cup P$ generates $M$.  For any such choice of $(S,P)$, the action of the group $H = \grp{S}$ on $X$ is such that the $H$-translates of $P$ separate points in $X$, so $H$ acts expansively on $X$; moreover, $\mc{E}((SP)^\infty)$ contains a base of topology that is a union of partitions.
			\item If $G$ is compactly generated and acts expansively on $X$, then $M$ is compactly generated.
		\end{enumerate}
	\end{prop}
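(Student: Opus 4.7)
For part~(i), the plan is to start with an arbitrary compact generating set $T$ of $M$ and refine it into the required form. Given $t \in T = \mc{BI}(G)$, choose a finite clopen partition $P_t$ and elements $g_{t,e} \in G$ for $e \in P_t$ so that $t = \bigvee_{e \in P_t} g_{t,e}e$. Using Lemma~\ref{lem:extended_monoid_topology_is_unique} together with local decomposability of the topological Boolean inverse monoid (Corollary~\ref{cor:piecewise_full_monoid_extension}), and a compact identity neighbourhood $V$ in $G$, one obtains an open neighbourhood $N_t$ of $t$ consisting of elements of the form $\bigvee_{e \in P_t}(g_{t,e}v_e)e$ with $v_e \in V$. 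By compactness of $T$, finitely many $N_{t_1},\dots,N_{t_k}$ cover $T$. I would then set $P$ to be the common refinement of $P_{t_1},\dots,P_{t_k}$ and $S := V \cup \{g_{t_j,e} : 1\le j \le k,\, e \in P_{t_j}\}$, which is a compact identity neighbourhood in $G$. Each $t \in T$ lies in some $N_{t_j}$, and after refining to $P$ its expression $t = \bigvee_{e\in P_{t_j}}(g_{t_j,e}v_e)e$ becomes a finite disjoint join of products of elements of $S \cup P$, so $S \cup P$ generates $M$.

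For the remaining assertions in (i), which must hold for \emph{any} valid $(S,P)$, let $H := \grp{S}$. I would use Lemma~\ref{lem:bim_generating_set} applied to the generating set $S \cup P$: since $\mc{E}(M) = J(\mc{E}((S \cup P)^\infty))$ and the idempotents of $(S\cup P)^\infty$ are built from $H$-conjugates of $P$-pieces under intersection, every clopen of $X$ lies in the Boolean algebra generated by $\{hph^{-1}: h \in H,\, p\in P\}$. Since $\mc{CO}(X)$ separates points, the family of $H$-translates of $P$ must separate points too; by Corollary~\ref{cor:subshift} this is exactly expansivity of $H$. For the base-of-topology assertion about $\mc{E}((SP)^\infty)$, I would compute directly that any word in $SP$ and its $*$-image can be rewritten by sliding idempotents past units as $h\cdot e$ with $h \in H$ and $e$ an intersection of $H$-conjugates of $P$-pieces; hence the idempotent $ww^*$ of such a word is itself an intersection of $H$-conjugates of $P$-pieces. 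For each finite $F \subseteq H$, the common refinement $\mc{Q}_F := \bigvee_{h\in F} hPh^{-1}$ is a clopen partition whose pieces all lie in $\mc{E}((SP)^\infty)$; expansivity plus compactness of $X$ ensure that $\bigcup_F \mc{Q}_F$ forms a base of topology of $X$, giving the desired union of partitions.

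For part~(ii), take $S$ to be a symmetric compact generating set for $G$ containing $1$, and $P$ a clopen partition of $X$ whose $G$-translates separate points (which exists by expansivity and Corollary~\ref{cor:subshift}). Then $S \cup P$ is compact, and the analysis above with $H = G$ shows $\mc{E}((S\cup P)^\infty)$ contains the base $\bigcup_F \mc{Q}_F$ of topology. Lemma~\ref{lem:bim_generating_set} then ensures every element of $M$ is a finite disjoint join of elements of $(S\cup P)^\infty$, so $S \cup P$ generates $M$ and $M$ is compactly generated.

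The main obstacle is executing part~(i): carefully verifying that the neighbourhoods $N_t$ are genuinely open (which uses both the description of the topology on $\mc{BI}(G)$ from Lemma~\ref{lem:extended_monoid_topology_is_unique} and the openness of the decomposition map from local decomposability) and that the resulting $S \cup P$ really generates $M$ by rewriting joins through the common refinement $P$. A secondary subtlety is that choosing $S$ as a \emph{compact} identity neighbourhood requires $G$ to be locally compact; this is implicit in the statement and should be justified either by assumption or from local compactness of $M$ inherited via the open embedding of $G$.
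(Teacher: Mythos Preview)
Your overall strategy for both parts is sound and close to the paper's, but there is one genuine slip in the base-of-topology claim in part~(i), and the paper's argument for producing $(S,P)$ is organised a bit differently.

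\textbf{On producing $(S,P)$.} Rather than building explicit neighbourhoods $N_t$ of each $t\in T$, the paper writes $M$ as a directed union of open Boolean inverse submonoids
\[
M=\bigcup_{i,j} J\bigl((G_iE_j)^\infty\bigr),
\]
where $G_i$ runs over compact identity neighbourhoods in $G$ and $E_j$ over finite subsets of $\mc{E}(M)$. Each $(G_iE_j)^\infty$ is open because it is a union of cosets of the open subgroup $\grp{G_i}$, and joins are open, so $J((G_iE_j)^\infty)$ is open; compactness of $T$ then gives a single $(G_i,E_j)$ with $T\subseteq J((G_iE_j)^\infty)$, and one takes $S=G_i$ and $P$ the atoms of the algebra generated by $E_j$. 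Your approach via neighbourhoods $N_t$ does work, but you should be careful that $\dom(t)$ need not be all of $X$, so $P_t$ should refine $\{\dom(t),\dom(t)^\perp\}$ and the join should only run over the pieces inside $\dom(t)$; also, the disjoint-range condition on the perturbed pieces must be imposed explicitly (it is an open condition). The directed-union argument avoids these bookkeeping issues.

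\textbf{The gap.} Your claim that for \emph{every} finite $F\subseteq H$ the atoms of $\mc{Q}_F=\bigvee_{h\in F}hPh^{-1}$ lie in $\mc{E}((SP)^\infty)$ is not justified, and appears to be false in general. For instance, with $F=\{s_1s_2\}$ for $s_1,s_2\in S$, an atom is $(s_1s_2)p(s_1s_2)^{-1}$; but $s_1s_2\notin (SP)^\infty$ unless $|P|=1$, and any idempotent you can form from words in $SP$ of length two has the shape $s_1(q)\cap s_1s_2(p)$ for some $q\in P$, never the bare conjugate. The paper avoids this by using only partitions of the form $P_{\mathbf{s}}$, for finite sequences $\mathbf{s}=(s_1,\dots,s_n)$ in $S$: these are the sets of domains of $s_ne_n\cdots s_1e_1$ (with $e_i\in P$), hence manifestly in $\mc{E}((SP)^\infty)$, and one checks they are directed under refinement and form a base. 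In your language this corresponds to restricting to $F$'s of the special form $\{1,\,s_1^{-1},\,s_1^{-1}s_2^{-1},\dots\}$. With that restriction your argument goes through; without it, it does not.

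Your part~(ii) and your treatment of expansivity of $H$ match the paper. Your observation about the implicit local compactness hypothesis is well taken; the paper also tacitly uses this (in assuming compact identity neighbourhoods in $G$ exist), and in the applications $G$ is always \tdlc.
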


	\begin{proof}
		Denote $E:=\mc{E}(M)=\mc{CO}(X)$.
		
		For (i) we suppose that $M$ is compactly generated, say by a compact subset $T$. 
		Given Lemma~\ref{lem:bim_generating_set}, we can write $M$ as a directed union $M = \bigcup_{i,j} J((G_iE_j)^\infty)$, where $G_i$ ranges over compact identity neighbourhoods in $G$ and $E_j$ ranges over finite subsets of $E$.  
		Moreover, for each choice of $G_i$ and $E_j$, the submonoid $(G_iE_j)^\infty$ is open as it is a union of cosets of the open subgroup $\grp{G_i}$ (use Remark \ref{rmk:loc_dec_implies_restriction_is_open_map} and Lemma \ref{lem:loc_dec_monoid_implies_multiplication_by_m_is_open});
		hence $J((G_iE_j)^\infty)$ is open, because joins are open and continuous in Boolean inverse monoids.
		Thus by compactness of $T$, there exist $G_i$ and $E_j$ such that $T \subseteq J((G_iE_j)^\infty)$  and hence $M = J((G_iE_j)^\infty)$.  We see that in fact $J((G_iE_j)^\infty) \subseteq J((G_iP_1)^\infty)$ where $P$ is the set of atoms of the finite Boolean algebra generated by $E_j$ and $P_1 = P \cup \{1\}$.  Now take $S = G_i$; it is clear that $S \cup P$ generates $M$ and that $(S \cup P)^\infty = (SP_1)^\infty$.
		
		Write $H = \grp{S}$.  Since $M = J((SP_1)^\infty)$, we see that $\mc{E}((SP_1)^\infty)$ forms a base of topology.  On the other hand, given $e \in \mc{E}((SP_1)^\infty)$, then $e$ can be written as $\prod^n_{i=1} h_ie_ih^*_i$ for some $h_i \in H$ and $e_i \in P$.  We deduce that $P$ is a partition whose $H$-translates separate points in $X$, and so the action of $H$ on $X$ is expansive.
		
		It remains to see that $\mc{E}((SP_1)^\infty)$ contains a base of topology that is a union of partitions.
		Given a finite sequence $\mathbf{s} = (s_1,\dots,s_n)$ of elements of $S$, let $A_{\mathbf{s}}$ be the set of all elements of the form $s_ne_ns_{n-1}e_{n-1} \dots s_1e_1$, with $e_i\in P$ and let $P_{\mathbf{s}}$ be the set of domains of $A_{\mathbf{s}}$.  
		Then $P_{\mathbf{s}}$ is a partition contained in $\mc{E}((SP_1)^\infty)$.
		 Given finitely many such sequences $\mathbf{s}_1,\dots,\mathbf{s}_k$, we can take the concatenation $\mathbf{s}$, and see that $P_{\mathbf{s}}$ refines $P_{\mathbf{s}_i}$ for all $i$; thus we obtain a collection of partitions in $\mc{E}((SP_1)^\infty)$ that is directed by inclusion.  We see that every element of $\mc{E}((SP_1)^\infty)$ belongs to $J(B)$, where $B \subseteq \mc{E}((SP_1)^\infty)$ is the union of the partitions $P_{\mathbf{s}}$ over all finite sequences in $S$; since $\mc{E}((SP_1)^\infty)$ forms a base of topology, so does $B$.
		
		For (ii), we fix a compact symmetric generating set $S$ of $G$ such that $1 \in S$.  Suppose that the action of $G$ on $X$ is expansive.  Then there is a partition $P \subseteq E$ whose $G$-translates separate points in $X$.  Let $T = S \cup P$; note that $T$ is compact and that $T^\infty = (SP_1)^\infty$ where $P_1 = P \cup \{1\}$.  By Corollary~\ref{cor:joinable_submonoid}, $J(T^\infty)$ is a finitely joinable submonoid of $M$, so in particular $\mc{E}(J(T^\infty))$ is a sublattice of $E$, consisting of all joins of finite subsets of $\mc{E}(T^\infty)$.  Construct the subset $B$ of $\mc{E}(T^\infty)$ in the same way as in the proof of (i).  Since the $G$-translates of $P$ separate points in $X$, we see that $B$ generates the topology, and is consequently a base of topology.  In particular, it follows that $\mc{E}(J(T^\infty)) = E$ and we deduce that $J(T^\infty) = J(GE) = M$, showing that $M$ is compactly generated.
	\end{proof}
	
	If each $M$-orbit has at least three points, then we will see (Lemma~\ref{lem:alternating_to_full}) that 
	$$M = \mc{BI}(A) = \mc{BI}(M^\times)$$
	 where $A = \Al(M)$ is a specific normal subgroup of $M^\times$, based on a construction of Nekrashevych (\cite{Nekra}); thus the requirement that there exists $G \le M^\times$ open such that $M = \mc{BI}(G)$ is not a significant restriction for our purposes.  Moreover, we will see that the compactly generated open subgroup $H$ appearing in Proposition~\ref{prop:subshift_expansive}(i) can always be chosen to be $A$. 
	 The main work of the rest of this section will be to show (Theorem~\ref{thm:expansive_generation}) that the normal subgroup $A$ constructed is indeed compactly generated; we will deduce later (Corollary~\ref{cor:A_is_open}) that $A$ is open.

\subsection{Alternating full groups}\label{ssec:alternating_defn}

A useful class of subgroups of a piecewise full group was introduced by Nekrashevych \cite{Nekra}, which we recall here.  
Where appropriate we restate definitions in terms of inverse monoids instead of groupoids.

\begin{defn}
	Let $M$ be an inverse monoid with a 0 element and let $N$ be an inverse submonoid of $M$. 
	A \defbold{multisection of $N$ in $M$} is
	a finite inverse subsemigroup $S$ of $N\mc{E}(M)$ satisfying:
	\begin{enumerate}[(i)]
		\item all idempotents of $S$ are pairwise disjoint; and
		\item given any two idempotents $e_1,e_2 \in S\setminus\{0\}$, there is exactly one element $f_{12} \in S$ such that $f^*_{12}f_{12} = e_1$ and $f_{12}f^*_{12} = e_2$.
	\end{enumerate}
	Write $\mc{E}^*(S) : = \mc{E}(S) \setminus \{0\}$.
	The \defbold{degree} of a multisection $S$ is $\deg(S) := |\mc{E}^*(S)|$.
	A multisection of degree $d$ is also called a \defbold{$d$-section}.
\end{defn}

In essence, a multisection $S$ of degree $d$ consists of partial transpositions between $d$ disjoint idempotents of $\mc{E}(M)$. 
In other words, it is a copy of the subsemigroup of all maps of rank 1 in the symmetric inverse monoid of degree $d$ (bijections $f_{ij}:i\mapsto j$ for $i,j\in\{1,\dots, d\}$ where $f_{ii}=e_i$).
By allowing joins as well as multiplication and inversion, a multisection of degree $d$ generates a copy of the symmetric inverse monoid  of degree $d$. 
Its group of units is of course the symmetric group of degree $d$.
Notice that this  copy of the symmetric group is not in general in the group of units of $M$. 
For instance, $f_{12}\vee f_{21}\vee e_3\vee \dots \vee e_d$ is a full permutation of the $d$ idempotents of $S$, but is not invertible in $M$ unless $1$ is the join of all idempotents of $S$. 
To obtain an invertible element of $M$, we simply join the missing idempotent $1\setminus \bigvee_{i=1}^de_i$ to this: $h_{12}:=f_{12}\vee f_{21} \vee e_{3}\dots \vee e_d \vee (1\setminus \bigvee_{i=1}^de_i)$ is an invertible element of $M$.

\begin{defn}
	Let $S\subset M$ be a multisection of $N$ in $M$ of degree $d$. 
	For a permutation $\pi\in \Sym(d)$ denote by $h_{\pi}\in M$
	the element defined analogously to $h_{12}$.
	The \defbold{symmetric}, respectively, \defbold{alternating group associated to $S$} is the group 
	$$\Sym[S]:=\{h_{\pi} : \pi \in \Sym(d) \},  \text{ respectively, } \Alt[S]:=\{h_{\pi} : \pi \in \Alt(d) \}.$$

	In practice, we will consider $M=\PHomeo_c(X)$ and $N\leq M$.
	Given $x \in X$ we define the \defbold{$M$-orbit} of $x$ to be the set $\{mx \mid x \in m^*m\}$.
	The $M$-orbits form a partition of $X$. 
	Say $M$ is \defbold{minimal} if every $M$-orbit is dense.
	For $d \ge 2$, define the \defbold{$\Sy_d$-group} and \defbold{$\Al_d$-group} of $N$ as follows:
	\[
	\Sy_d(N;X) := \grp{ \Sym[S] \mid \text{$S$ is an $N$-multisection of degree $d$} };
	\]
	\[
	\Al_d(N;X) := \grp{ \Alt[S] \mid \text{$S$ is an $N$-multisection of degree $d$}}.
	\]
	
	We may write $\Sy_d(N)$ and $\Al_d(N)$ if the $X$ is clear from context. 
\end{defn}

Note that $\Sy_1(N;X) = \Al_2(N;X) = \triv$, but thereafter, given the well-known generation properties of the symmetric and alternating groups, there are descending chains:
\[
\Sy_2(N) \ge \Sy_3(N) \ge \dots; \quad \Al_3(N) \ge \Al_4(N) \ge \dots.
\]
\begin{defn}
	For an inverse submonoid $N\leq \PHomeo_c(X)$, define its \defbold{symmetric full group} by $\Sy(N)=\Sy_2(N)$ and its \defbold{alternating full group }by $\Al(N)=\Al_3(N)$.
	
	We say that  $N$ is \defbold{$d$-alternatable} if $\Al_d(N) \le N$, abbreviating  $3$-alternatable to   \defbold{alternatable}.
\end{defn}

If $N\leq\Homeo(X)$ is a piecewise full group, it is of course alternatable. 
The converse is not true in general: if $g$ is a minimal homeomorphism of the Cantor space $X$ and $G=\Full(\grp{g})$ then, by the theorem we are about to cite, the derived subgroup $\Der(G)$ of $G$ is alternatable, but it is known that $\Der(G)\neq G=\Full(\Der(G))$ (see e.g., \cite[Proposition 5.5]{GPS99}), so $\Der(G)$ is not piecewise full. 

The definition of alternatable action is motivated by the following result of Nekrashevych.

\begin{thm}[{\cite[Theorem~4.1]{Nekra}}]\label{thm:Nekrashevych_simple}
	Let $X$ be a nonempty perfect compact zero-dimensional space and let $G$ be a group with a faithful minimal action on $X$.  Let $\triv\neq H \le \Full(G;X)$ be such that $\Al(G;X)$ normalises $H$.  Then $\Al(G;X) \le H$.  In particular, $\Al(G;X)$ is simple and $H$ is alternatable.
\end{thm}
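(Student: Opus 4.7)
The plan is to show every generator of $\Al(G;X)$---a $3$-cycle $a = h_\pi$ with $\pi \in \Alt(3)$ arising from a $G$-multisection of degree $3$---lies in $H$. The key tool is the identity $[a,h] = (aha\inv)h\inv \in H$, valid for all $a \in \Al(G;X)$ and $h \in H$ by the normalisation hypothesis. The strategy is to exhibit each such $a$ as a product of commutators of this form, exploiting a single nontrivial $h \in H$ together with the dynamical flexibility provided by minimality and perfectness of the action on $X$.

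First I would fix $h \in H \setminus \triv$ and produce a nonempty clopen $U \subseteq X$ with $U \cap hU = \emptyset$: faithfulness yields $x \in X$ with $hx \neq x$, and Hausdorff zero-dimensionality separates $x$ from $hx$. Perfectness of $X$ permits refining $U$ into arbitrarily many nonempty clopen pieces and, after shrinking $U$, arranging that $U$, $hU$, $h\inv U$ are pairwise disjoint. Because $G$ acts faithfully and minimally, the same is true of $\Al(G;X)$ (noting that $\Al(G;X)$ is nontrivial, since perfectness lets us construct at least one nontrivial $3$-section of $G$). Using minimality and the piecewise-full property, every $G$-$3$-section can therefore be conjugated by an element of $\Al(G;X)$ so that all its idempotents lie inside $U$, reducing the question to $3$-cycles $a$ supported in $U$.

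Next I would perform the core commutator computation. For such an $a$, disjointness of supports gives $hah\inv$ supported in $hU$, so $a$ and $hah\inv$ commute and $[a,h] = a \cdot (hah\inv)\inv$ splits into a pure-$U$ part $a$ and a pure-$hU$ part $(hah\inv)\inv$. The heart of the argument is to combine several such commutators, together with $\Al(G;X)$-conjugates of $h$, so as to cancel the $hU$-parts while preserving $a$ on the $U$-side. Concretely, I would work inside a finite copy of $\Alt(n)$ (with $n\ge 5$) acting on a partition of $U\sqcup hU$ into at least six clopen pieces; perfectness of $\Alt(n)$ allows $a$ to be expressed as a product of iterated commutators whose $hU$-components telescope to the identity, so each intermediate factor already lies in $H$. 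This would give $a\in H$, hence $\Al(G;X)\le H$. The hard part will be the cancellation bookkeeping: the assignment $a \mapsto [a,h]$ is injective on $3$-cycles, so the subgroup generated by these commutators alone contains no nonidentity element purely supported in $U$, and one must invoke additional $\Al(G;X)$-translates of $h$ and precise $\Alt(n)$-identities to telescope away the $hU$-contribution.

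The ``in particular'' statements then follow quickly. For simplicity, any nontrivial normal subgroup $N$ of $\Al(G;X)$ is in particular normalised by $\Al(G;X)$, so the main conclusion yields $\Al(G;X)\le N$ and $N=\Al(G;X)$. For alternatability of $H$: since $H\le\Full(G;X)$, every element occurring in an $H$-$3$-section is a finite join of elements of $G\cdot\mc{E}(\Full(G;X))$, so refining the idempotent partition expresses any generator of $\Al(H;X)$ as a finite join of generators of $\Al(G;X)$, all of which lie in $H$ by what has been established. Hence $\Al(H;X)\le H$.
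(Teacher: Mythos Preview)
The paper does not give its own proof of this theorem; it is quoted as \cite[Theorem~4.1]{Nekra} and used as a black box. So there is no ``paper's proof'' to compare against. Your sketch follows the broad strategy of Nekrashevych's original argument (commutator tricks exploiting that $\Al(G;X)$ normalises $H$), and the ``in particular'' deductions at the end are fine.

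That said, one step in your reduction is not correct as stated. You write that ``every $G$-$3$-section can be conjugated by an element of $\Al(G;X)$ so that all its idempotents lie inside $U$.'' Minimality does not give this: a $3$-section can have idempotents whose union is, say, all of $X$, and no homeomorphism compresses $X$ into a proper clopen $U$. What minimality (plus perfectness and compactness) does give is a \emph{covering} of any $3$-section by arbitrarily small sub-$3$-sections, each of which can then be moved into $U$; you must then argue that the original $3$-cycle is a product of the small ones (this is the easy direction of the covering machinery the paper recalls around Lemma~\ref{lem:multisection_cover}). Your sketch also correctly flags the cancellation of the $hU$-contribution as the hard point but does not carry it out; the standard route is a double-commutator argument using an auxiliary element of $\Al(G;X)$ that fixes $U$ pointwise while displacing $hU$, rather than the $\Alt(n)$-perfectness idea you propose, though both can be made to work with care.
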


The above theorem implies that $\Al(G;X)$ is the intersection of all normal subgroups of $\Full(G;X)$. 
In other words, it is the \defbold{monolith} of $\Full(G;X)$.

The alternating full group recovers, under mild assumptions, all the information of the Boolean inverse monoid. 

\begin{lem}\label{lem:alternating_to_full}
Let $M\leq\PHomeo_c(X)$ be an inverse monoid.
	If every $M$-orbit has at least 2 points then $\mc{BI}(\Sy(M))=\mc{BI}(M)$. 
	If every orbit has at least 3 points then $\mc{BI}(\Al(M))=\mc{BI}(\Sy(M))=\mc{BI}(M)$. 
	In particular, $\Full(\Al(M))=\Full(\Sy(M))=\Full(M)$. 
\end{lem}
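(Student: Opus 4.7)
The inclusions $\mc{BI}(\Al(M))\subseteq\mc{BI}(\Sy(M))\subseteq\mc{BI}(M)$ are immediate, since every generator $h_\pi\in\Sym[S]$ of $\Sy(M)$ (and of $\Al(M)$) is by construction a finite join of elements of $M\mc{E}(\PHomeo_c(X))$. My plan is therefore to establish $M\subseteq\mc{BI}(\Sy(M))$ under the orbit-${\geq}2$ hypothesis and $M\subseteq\mc{BI}(\Al(M))$ under the orbit-${\geq}3$ hypothesis; the chain of equalities in the second statement then follows, and the ``In particular'' clause is immediate from $\Full(N)=\mc{BI}(N)\cap\Homeo(X)$. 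To show an individual $m\in M$ lies in the target Boolean completion, it suffices to exhibit, for each $x\in\dom(m)$, a clopen neighbourhood $U_x\subseteq\dom(m)$ with $m|_{U_x}=m\cdot\id_{U_x}$ in the target: compactness of $\dom(m)$ supplies a finite subcover, disjointification by clopen set-differences refines it to a clopen partition $\{V_1,\dots,V_k\}$ of $\dom(m)$, each $m|_{V_i}=(m|_{U_{x_j}})\,\id_{V_i}$ still lies in the target by closure under multiplication with idempotents, and $m=\bigvee_i m|_{V_i}$ belongs there by closure under disjoint joins.

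For the local step with $\Sy(M)$, fix $x\in\dom(m)$. If $m(x)\neq x$, Hausdorffness and zero-dimensionality of $X$ yield a clopen $U\ni x$ with $U\subseteq\dom(m)$ and $U\cap m(U)=\emptyset$; then $\{\id_U,\id_{m(U)},m|_U,(m|_U)^{*}\}$ is a $2$-section of $M$, the associated transposition $\sigma=m|_U\vee(m|_U)^{*}\vee\id_{(U\cup m(U))^{\perp}}$ lies in $\Sy(M)$, and $m|_U=\sigma\,\id_U\in\mc{BI}(\Sy(M))$. If $m(x)=x$, I use the orbit hypothesis to pick $n\in M$ with $n(x)\neq x$ and shrink $U$ so that $U\subseteq\dom(nm)$, $U\cap nm(U)=\emptyset$, and $m(U)\cap n(m(U))=\emptyset$; these hold simultaneously for sufficiently small clopen $U$ by continuity, since $nm(x)=n(x)\neq x$ and $x\in m(U)$. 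Applying the non-fixed case to $nm$ at $x$ gives $nm|_U\in\mc{BI}(\Sy(M))$, and applying it to $n$ on the clopen $m(U)$ (where $n$ is fixed-point free with $m(U)\cap n(m(U))=\emptyset$) gives $n|_{m(U)}\in\mc{BI}(\Sy(M))$; hence
\[
m|_U=(n|_{m(U)})^{*}\,(nm|_U)\in\mc{BI}(\Sy(M)).
\]

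The local step for $\Al(M)$ will be strictly parallel, but using $3$-sections. In the non-fixed case $m(x)\neq x$, the orbit-${\geq}3$ hypothesis supplies $k\in M$ with $k(x)\notin\{x,m(x)\}$; after shrinking $U$ so that $U$, $m(U)$ and $k(U)$ are pairwise disjoint, the $3$-section with idempotents $\id_U,\id_{m(U)},\id_{k(U)}$ and connecting maps $m|_U$, $k|_U$, $(km^{*})|_{m(U)}$ yields a $3$-cycle in $\Al[S]\subseteq\Al(M)$ whose product with $\id_U$ is $m|_U$. The fixed case $m(x)=x$ will reduce to this via the same identity $m|_U=(n|_{m(U)})^{*}(nm|_U)$, applying the non-fixed $3$-section argument to each factor (where $n$-translates of orbit witnesses exist at every point of $m(U)$ by the orbit hypothesis). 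The only genuine obstacle I anticipate is bookkeeping: choosing a single clopen $U$ small enough that all the disjointness requirements on $U$ and its translates under $m$, $n$, $k$ and their compositions hold simultaneously. This is routine from the continuity of partial homeomorphisms and the zero-dimensionality of $X$, but will need care in the $m(x)=x$ subcase of the $\Al$-argument, where one must track both the $nm$-side and the $n|_{m(U)}$-side at once.
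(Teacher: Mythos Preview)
Your proof is correct and follows the same local-plus-compactness approach as the paper: at each point of the domain construct a small $2$- or $3$-section whose associated transposition or $3$-cycle restricts to $m$ there, then use compactness of the domain to write $m$ as a finite disjoint join of such restrictions. Your explicit handling of the fixed-point case $m(x)=x$ via the factorisation $m|_U=(n|_{m(U)})^{*}(nm|_U)$ is in fact more careful than the paper's write-up, which tacitly assumes one can make $d_x$, $bd_xb^{*}$ and $l_xd_xl_x^{*}$ pairwise disjoint.
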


\begin{proof}
	One inclusion follows from  $\mc{BI}(\Al(M))\leq \mc{BI}(\Sy(M))\leq \mc{BI}(\Full(M)) \le \mc{BI}(M)$.
	
		We only prove the other inclusion for the statement involving $\Al(M)$, the one with $\Sy(M)$ being analogous.
	Let $b=\bigvee_{i=1}^nm_ie_i$ with $m_i\in M, e_i\in E$. 
	As there are at least 3 points in every $M$-orbit, for each $x$ in the domain of $b$,  there is some $l_x\in M$  and $x\in d_x \le e_i$, with $d_x$ sufficiently small, such that $d_x$, $bd_xb^*$ and $l_xd_xl_x^*$ are pairwise disjoint. 
	Set $f_x = d^\perp_x (bd_xb^*)^\perp (l_xd_xl_x^*)^\perp$; we see that $h_x:= bd_x\vee l_x(bd_x)^* \vee (l_xd_x)^* \vee f_x^{\perp} \in \Al(M)$ and $h_xd_x=bd_x $. 
	The $d_x$ then form a clopen cover of the domain of $b$, which is compact, and therefore finitely many $d_x$ suffice; say $d_1, \dots, d_k$. 
	So $b=\bigvee_{i=1}^kh_id_i \in \mc{BI}(\Al(M))$, as required. 
\end{proof}

\begin{lem}\label{lem:A(G)_quotientspace}
	Let $G\leq \Homeo(X)$ act minimally. 
	Then $X$ does not admit any proper non-trivial Hausdorff quotient as an $\Al(G;X)$-space, and $\Al(G;X)$ is minimal and expansive on $X$.
\end{lem}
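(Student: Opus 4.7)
The plan is to derive all three conclusions from a single density statement: for every pair of distinct points $x, y \in X$ and every pair of disjoint non-empty clopen sets $U, V \subseteq X$, there exists $h \in \Al(G;X)$ with $h(x) \in U$ and $h(y) \in V$. (The degenerate case where some $G$-orbit has fewer than three points would be handled separately: under minimality it forces $|X| \leq 2$ with $\Al(G;X)$ trivial, so the statement is to be read under the implicit assumption $|X| \geq 3$, which together with minimality of $G$ forces $X$ to be perfect.)

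The key technical step is to build $3$-cycles in $\Al(G;X)$ with prescribed small support. Given distinct points $a, b \in X$ with $b = g_1(a)$ for some $g_1 \in G$ (available by minimality of $G$), and a finite set $F \subseteq X \setminus \{a, b\}$ that one wishes to leave fixed, choose $w \in X \setminus (F \cup \{a, b\})$ (available since $X$ is perfect) and $g_2 \in G$ with $g_2(b) = w$. By continuity, shrink to a clopen $e_1 \ni a$ small enough that the sets $e_1$, $e_2 := g_1(e_1)$ and $e_3 := g_2(e_2)$ are pairwise disjoint and all disjoint from $F$. Then $f_{21} := g_1|_{e_1}$, $f_{32} := g_2|_{e_2}$ and $f_{13} := (g_2g_1)^{-1}|_{e_3}$ all belong to $G\,\mc{E}(\PHomeo_c(X))$, and their closure under composition and $*$ is a $G$-multisection $S$ of degree $3$; the associated $3$-cycle
\[
h_{(123)} = f_{21} \vee f_{32} \vee f_{13} \vee \id_{X \setminus (e_1 \cup e_2 \cup e_3)}
\]
lies in $\Alt[S] \subseteq \Al(G;X)$, sends $a$ to $b$, and fixes every point of $F$. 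Applying this twice yields the density statement: first pick $g_1 \in G$ with $g_1(x) \in U \setminus \{y\}$ and construct a $3$-cycle $\sigma_1$ fixing $y$ and sending $x$ to $g_1(x)$; then pick $g_2 \in G$ with $g_2(y) \in V \setminus \{g_1(x)\}$ and construct a second $3$-cycle $\sigma_2$ fixing $g_1(x)$ and sending $y$ to $g_2(y)$; the composition $h = \sigma_2 \sigma_1$ is the required element.

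From the density statement, the three conclusions follow easily. For minimality, given $x \in X$ and non-empty clopen $U \subsetneq X$ with $x \notin U$, pick any $y \in X \setminus (U \cup \{x\})$ and clopen $V \ni y$ disjoint from $U$; density provides $h \in \Al(G;X)$ with $h(x) \in U$, so the $\Al(G;X)$-orbit of $x$ meets $U$. For the Hausdorff quotient statement, such a quotient corresponds to a closed $\Al(G;X)$-invariant equivalence relation $R \subseteq X \times X$; if $R \neq \Delta_X$, then $R$ contains some $(x, y)$ with $x \neq y$, and density implies the diagonal $\Al(G;X)$-orbit of $(x, y)$ meets every product $U \times V$ of disjoint non-empty clopens, hence is dense in $X \times X \setminus \Delta_X$, which in turn is dense in $X \times X$ by perfectness; thus $R = X \times X$ and the quotient is either a point or $X$ itself. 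For expansivity, take any clopen partition $\mc{P}$ of $X$ with at least two parts; the $\Al(G;X)$-equivariant continuous map
\[
\phi \colon X \to \mc{P}^{\Al(G;X)}, \quad \phi(x)(g) := [g(x)]_{\mc{P}}
\]
has compact Hausdorff image, providing a Hausdorff $\Al(G;X)$-quotient of $X$; by the quotient statement $\phi$ is either constant (impossible as $\mc{P}$ has $\geq 2$ parts) or injective, and injectivity of $\phi$ is precisely the expansivity condition. The main delicacy is the $3$-cycle construction, where we rely on perfectness of $X$ to make three clopen sets simultaneously disjoint while avoiding the prescribed fixed set; everything else reduces to routine arguments about closed invariant equivalence relations.
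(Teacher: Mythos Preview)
Your proof is correct and rests on the same core mechanism as the paper's: multisections give $\Al(G;X)$ enough transitivity to rule out non-trivial invariant equivalence relations, and expansivity then follows from the subshift map (the paper's Lemma~\ref{lem:subshift}).  The organization of the quotient argument differs slightly.  The paper invokes high transitivity of $\Al(G;X)$ on each $G$-orbit, hence primitivity, and then analyses each fibre $[x]$ as a block of imprimitivity, separately handling the possibility that $[x]$ meets several $G$-orbits.  You instead establish a topological $2$-transitivity statement---the diagonal $\Al(G;X)$-orbit of any off-diagonal pair is dense in $X\times X\setminus\Delta_X$---and conclude in one stroke that any closed invariant equivalence relation is $\Delta_X$ or $X\times X$.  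Your packaging is a bit more direct; the paper's is more in the spirit of permutation group theory.

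Two minor imprecisions, neither affecting the substance.  First, the sentence ``$|X|\ge 3$ together with minimality of $G$ forces $X$ to be perfect'' is false as stated: a minimal action on a finite discrete set of size $\ge 3$ is simply a transitive permutation action.  The finite case is immediate (then $\Al(G;X)$ contains all $3$-cycles, hence equals $\Alt(X)$), and in the paper the lemma is only ever applied when $X$ is the Cantor space, so this is harmless.  Second, when you ``choose $w\in X\setminus(F\cup\{a,b\})$ and $g_2\in G$ with $g_2(b)=w$'', you need $w$ to lie in the $G$-orbit of $b$; the point is that this orbit is dense, so it meets the non-empty open set $X\setminus(F\cup\{a,b\})$.
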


\begin{proof}
	Using multisections it is easy to see that $\Al(G;X)$ is highly transitive on every $G$-orbit in $X$.  In particular, $\Al(G;X)$ acts minimally on $X$ and primitively on $Gx$ for all $x \in X$.  Suppose now that $\phi:X \rightarrow Y$ is an $\Al(G;X)$-equivariant quotient map to some compact Hausdorff space $Y$, and write $[x]$ for the set $\{y \in X \mid \phi(x) = \phi(y)\}$.  In particular, each fibre $[x]$ is closed and forms a block of imprimitivity for the action of $\Al(G;X)$ on $X$.  If $[x]$ contains $Gx$ for some $x \in X$, then $[x]$ is dense and hence equal to $X$, so $Y$ is a singleton.
	
	From now on, let us assume instead that $[x] \cap Gx$ is a proper subset of $Gx$ for every $x \in X$.  Then since $\Al(G;X)$ acts primitively on $Gx$, we must have $[x] \cap Gx = \{x\}$ for every $x \in X$.  Given $x \in X$ and $y \in [x] \setminus \{x\}$, we see (using multisections) that $(x,y)$ and $(gx,y)$ are in the same $\Al(G;X)$-orbit for some $g \in G$ such that $x \neq gx$; but then we would have $gx \in [y] = [x]$, contradicting the fact that $[x] \cap Gx = \{x\}$.  Thus in fact $[x] = \{x\}$ for all $x \in X$, that is, $\phi$ is injective.  Thus there are no proper non-trivial Hausdorff quotients of $X$ as an $\Al(G;X)$-space.  Applying Lemma~\ref{lem:subshift} to the action of $\Al(G;X)$, it follows that $\Al(G;X)$ is expansive on $X$.
\end{proof}

\subsection{Compactly generated alternating full groups}

In \cite{Nekra}, Nekrashevych gives a sufficient condition for $\Al(G)$ to be finitely generated, where $G$ is an \'{e}tale groupoid.  
In this section we prove an analogous result to establish compact generation of $\Al(G;X)$, where $G$ is a compactly generated locally compact group with locally decomposable action on a compact zero-dimensional space $X$. 
Because it is more convenient for the purposes of compatibility with the group topology, we develop the argument in the setting of Boolean inverse monoids, rather than \'{e}tale groupoids.

The next theorem is the main goal of this section.

\begin{thm}\label{thm:expansive_generation}
	Let $M\leq \PHomeo_c(X)$ be a compactly generated topological Boolean inverse monoid such that every orbit of $M$ on $X$ has at least $5$ points. 
	Then the closure $\ol{\Al(M)}$ of $\Al(M)$ in $M^\times$ is compactly generated.
\end{thm}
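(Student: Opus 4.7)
The plan is to adapt Nekrashevych's finite-generation argument for $\Al(G)$ of an \'etale groupoid (\cite[Theorem~5.10]{Nekra}) to the topological Boolean inverse monoid setting, replacing a finite generating set by a compact one.

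For the setup, I use that every $M$-orbit has at least five points, so Lemma~\ref{lem:alternating_to_full} gives $M = \mc{BI}(M^\times)$, while $M^\times$ is open in $M$ by Lemma~\ref{lem:compactopen_top_0dim_units_clopen}(iii). Applying Proposition~\ref{prop:subshift_expansive}(i) with $G = M^\times$ produces a compact symmetric identity neighbourhood $S \subseteq M^\times$ (with $1 \in S$) and a clopen partition $P$ of $X$ such that $S \cup P$ generates $M$, the group $H := \grp{S}$ acts expansively on $X$ with $H$-translates of $P$ separating points, and $\mc{E}((SP)^\infty)$ contains a base $B$ of the topology of $X$ which is a directed union of partitions. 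Exploiting the five-point orbit hypothesis and $H$-expansivity, I further arrange, by enlarging $S$ by finitely many units and refining $P$ within $B$, that for every $p \in P$ there exist four pairwise disjoint ``witness'' clopens in $B$, reachable from $p$ by short $S$-words and disjoint from $p$.

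Next I construct a compact family of alternating generators. For each $s \in S$ and each admissible $5$-tuple of basic clopens in $B$ arranged around $s$ (namely, $s$ connects one of them to another, and three further witnesses complete the tuple), I form the $3$-cycles of the associated multisection of degree~$5$. Let $T \subseteq \Al(M)$ denote the resulting collection. The parameter space is the compact set $S$ times finite choice data; since multiplication, inversion and disjoint join are continuous in a topological Boolean inverse monoid (Definition~\ref{defn:Boolean_inverse:topological}), $T$ is a continuous image of a compact space and hence compact in $\Al(M)$.

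Finally I show $\ol{\grp{T}} = \ol{\Al(M)}$. The inclusion $\subseteq$ is immediate since $T \subseteq \Al(M)$ and the latter is contained in the closed subgroup $\ol{\Al(M)}$ of $M^\times$. For the reverse, any element of $\Al(M)$ is a product of $3$-cycles, and a single $3$-cycle with support $(A_0, A_1, A_2)$ decomposes --- using that $B$ is a base of partitions contained in $\mc{E}((SP)^\infty)$ --- into a finite disjoint join of $3$-cycles whose parts lie in $B$ and whose cyclic data is realised by products of elements of $S$. Using the construction of $T$ together with the standard facts that any two $3$-cycles are conjugate in $\Alt(5)$ and that $\Alt(5)$ is generated by $3$-cycles, each of these basic $3$-cycles can be written as a product of boundedly many elements of $T$. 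This yields $\Al(M) \subseteq \grp{T}$ and hence equality of closures. The main obstacle is the combinatorial bookkeeping of this last step: uniformly decomposing an arbitrary $3$-cycle into basic $3$-cycles whose underlying triples in $B$ match the configurations appearing in $T$, and then expressing each as a bounded-length word in $T$. This is where the five-point orbit hypothesis (to have room for the witness clopens that put the alternating-group relations into play), the expansivity (so that $B$ is a base made of partitions reachable via short $S$-words), and the continuity of the disjoint join (so that the compact parameterisation of $T$ survives the refinement) all enter simultaneously.
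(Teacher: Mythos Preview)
Your overall strategy matches the paper's: adapt Nekrashevych's argument by replacing finite data with compact data, build a compact family of alternating generators from degree-$5$ multisections attached to a compact symmetric generating set, and then show every $3$-cycle lies in the generated subgroup. The setup using Proposition~\ref{prop:subshift_expansive} and the compactness argument for $T$ via continuity of multiplication, inversion and disjoint join are both fine and essentially what the paper does (Lemma~\ref{lem:bounded_multisections}).

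The gap is in your final step. You decompose an arbitrary $3$-cycle into pieces whose cyclic data is ``realised by products of elements of $S$'', and then assert that each piece is a bounded-length word in $T$. But the products of elements of $S$ arising here have \emph{unbounded} length, whereas the multisections defining $T$ are built from words of bounded length (in the paper, length at most $3$). Bridging this is the core of the proof, and it is not a consequence of ``$3$-cycles are conjugate in $\Alt(5)$'' or ``$\Alt(5)$ is generated by $3$-cycles''. The paper handles it via two ingredients you omit. First, Lemma~\ref{lem:separating} carefully manufactures a generating set $A$ and partition $P$ so that every $a\in A$ has domain and range in \emph{different} parts of $P$, and every part of $P$ has enough $A$-elements leaving it; this separation is what makes the next step work. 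Second, Lemma~\ref{lem:alternating_fg_lengthreduction} performs an induction on word length: given $m=a_0a_1\cdots a_n$, one splits off a short prefix (using an auxiliary $a\in A$ chosen via the separation properties), obtains a $5$-section for the prefix directly from $T$, applies the inductive hypothesis to the shorter remainder, and then glues the two $5$-sections together with Lemma~\ref{lem:multisection_combine}. Only after this does Lemma~\ref{lem:multisection_cover} (which crucially needs degree $\ge 5$ because $\Alt(5)$ is perfect) let you pass from a cover of $5$-sections back to the original one. Your proposal gestures at the difficulty but does not supply this inductive mechanism, so as written it does not close.
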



For the proof we will use covers of multisections, in the following sense.

\begin{defn}
	Let $S$ be a multisection of degree $d$.  A \defbold{cover} of $S$ is a finite set $\{S_{k} \mid 1 \le k \le m\}$ of multisections of degree $d$, each of which consists of restrictions of elements of $S$,  such that for each $k$ and $e \in \mc{E}^*(S)$ there is exactly one $e_k \in \mc{E}^*(S_{k})$ with $e_k \le e$; and every element of $S$ is a join of elements of $\bigcup^m_{k=1}S_{k}$.
\end{defn}

The basic scheme follows Matui's ideas (\cite{MatuiZ}) for $M=\mc{BI}(\mathbb{Z})$, but there are some variations that need to be made for technical reasons. 
We would like to find some compact generating set $K$ for $\Al(M)$ such that every 3-section $F$ can be covered by finitely many 3-sections $F_1, \dots, F_n$ satisfying $\Alt[F_i]\leq \langle K \rangle$ and then show that $\Alt[F]\leq \langle \Alt[F_i]: 1\leq i\leq n\rangle$.
This last part is true if all $F_i$ are pairwise disjoint, and this happens for expansive $\mathbb{Z}$-actions. 
Unfortunately, for other Boolean inverse monoids, we cannot always guarantee that the $F_i$ will be pairwise disjoint, and if $F_1, F_2$ intersect then $\langle \Alt[F_1], \Alt[F_2]\rangle$ does not contain $\Alt[F_1\vee F_2]$:
We can view $\langle \Alt[F_1], \Alt[F_2]\rangle$ as $\langle (\pi, \pi, 1), (1, \pi, \pi): \pi \in \Alt(3)\rangle$ where the first and third coordinates correspond to $F_1\setminus F_2$ and $F_2\setminus F_1$, respectively, and the middle coordinate to $F_1\cap F_2$. 
Clearly, this subgroup of $\Alt(3)\times \Alt(3)\times \Alt(3)$ does not contain the non-trivial diagonal elements, which correspond to the elements of $\Alt[F_1\vee F_2]$.

However, this does work for 5-sections in place of 3-sections, as $\Alt(5)$ is perfect, and $\langle (\pi, \pi, 1), (1, \pi, \pi): \pi \in \Alt(5)\rangle$ contains $[(\pi, \pi, 1), (1, \sigma, \sigma)]=(1, [\pi,\sigma], 1)$. 
The next lemma is essentially \cite[Proposition~3.3]{Nekra}.

\begin{lem}\label{lem:multisection_cover}
	Let $S$ be a multisection of degree at least $5$ and let $\{S_k \mid 1 \le k \le m\}$ be a cover of $S$. 
	Then $\Alt[S] \le \grp{\Alt[S_k] \mid 1 \le k \le m}$.
\end{lem}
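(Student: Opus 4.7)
The plan is to decompose each idempotent $e_i \in \mc{E}^*(S)$ according to the cover $\{S_k\}$, obtain a "column" decomposition on which $\Alt[S]$ and each $\Alt[S_k]$ act equivariantly, and then isolate single columns via iterated commutators, using perfectness of $\Alt(d)$ for $d \ge 5$.

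First I would fix notation. Write $\mc{E}^*(S) = \{e_1, \dots, e_d\}$ and $\mc{E}^*(S_k) = \{e_{k,1}, \dots, e_{k,d}\}$, indexed so that $e_{k,i} \le e_i$, and write $f_{ij}, f^k_{ij}$ for the non-trivial transitions in $S$ and $S_k$ respectively.  The cover hypothesis forces $e_i = \bigvee_k e_{k,i}$ and $f_{ij} = \bigvee_k f^k_{ij}$, so in particular $f_{ij}(e_{k,i}) = e_{k,j}$ for every $k$.  For each non-empty $A \subseteq \{1, \dots, m\}$ and each $i$, define the atom
\[
p^i_A \;=\; \bigwedge_{k \in A} e_{k,i} \;\wedge\; \bigwedge_{k \notin A} (e_i \wedge e_{k,i}^\perp).
\]
These atoms partition $e_i$, and $f_{ij}$ carries $p^i_A$ to $p^j_A$.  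Consequently, for each $\pi \in \Alt(d)$, $h_\pi \in \Alt[S]$ sends $p^i_A$ to $p^{\pi(i)}_A$ for every $A$ and $i$, while $h_{k, \pi} \in \Alt[S_k]$ sends $p^i_A$ to $p^{\pi(i)}_A$ when $k \in A$ and fixes $p^i_A$ pointwise when $k \notin A$.

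Set $G = \grp{\Alt[S_k] \mid 1 \le k \le m}$.  For non-empty $A \subseteq \{1, \dots, m\}$ and $\pi \in \Alt(d)$ denote by $h^A_\pi$ the element supported on $\bigvee_i p^i_A$ that acts as $\pi$ there.  The $h^A_\pi$ (varying $A$) have pairwise disjoint supports, so they commute and $h_\pi = \prod_A h^A_\pi$; the task reduces to showing $h^A_\pi \in G$ for every non-empty $A$.  As a stepping stone I would prove by induction on $|T|$ that for every non-empty $T \subseteq \{1, \dots, m\}$ and every $\tau \in \Alt(d)$, $G$ contains an element $g_T(\tau)$ that acts as $\tau$ on every column $A \supseteq T$ and trivially on every column $A \not\supseteq T$.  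The base case $T = \{k\}$ is $g_T(\tau) = h_{k, \tau}$.  For the inductive step, pick any $k \in T$ and set $T' = T \setminus \{k\}$; a direct column-by-column computation yields
\[
[h_{k, \pi}, \, g_{T'}(\sigma)] \;=\; g_T([\pi, \sigma]),
\]
and since $\Alt(d)$ is perfect for $d \ge 5$, products of such commutators (which correspond column-wise to products of the actions in $\Alt(d)$) realize $g_T(\tau)$ for every $\tau \in \Alt(d)$.

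Finally I would extract $h^A_\pi$ from the $g_T$'s by reverse induction on $|A|$.  For the maximal $A = \{1, \dots, m\}$ there is no proper superset, so $h^A_\pi = g_A(\pi) \in G$.  For smaller $A$, assuming $h^B_\rho \in G$ for every $B \supsetneq A$ and every $\rho \in \Alt(d)$, I claim
\[
h^A_\pi \;=\; g_A(\pi) \cdot \prod_{B \supsetneq A} h^B_{\pi^{-1}}.
\]
The factors $h^B_{\pi^{-1}}$ pairwise commute (disjoint supports) and commute with $g_A(\pi)$ column-by-column (on column $B$, one acts as $\pi$ and the other as $\pi^{-1}$, which commute; on other columns at most one factor is non-trivial).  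Evaluating: on column $A$ only $g_A(\pi)$ contributes, giving $\pi$; on each $B \supsetneq A$ the $\pi$ from $g_A(\pi)$ cancels the $\pi^{-1}$ from $h^B_{\pi^{-1}}$; on every column $C$ with $A \not\subseteq C$ everything is trivial.  So the right-hand side equals $h^A_\pi$, placing it in $G$.  The only real subtlety is the bookkeeping of which factors commute, and that is handled uniformly by the column decomposition; once the two inductions are in place the result follows immediately.
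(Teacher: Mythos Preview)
Your proof is correct and follows essentially the same approach as the one the paper cites from \cite[Proposition~3.3]{Nekra}: the column decomposition by Boolean atoms, the commutator identity $[h_{k,\pi},g_{T'}(\sigma)]=g_T([\pi,\sigma])$, and the appeal to perfectness of $\Alt(d)$ for $d\ge5$ are exactly the mechanism the paper sketches just before the lemma (the calculation $[(\pi,\pi,1),(1,\sigma,\sigma)]=(1,[\pi,\sigma],1)$). Your reverse induction to isolate individual columns is a clean way to finish; nothing is missing.
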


The next result ensures that it will be no problem to restrict attention to $5$-sections.

\begin{prop}[{\cite[Proposition~3.8]{Nekra}}]\label{prop:n-orbits_implies_An=A}
	Let $M\leq \PHomeo_c(X)$ and suppose that every  $M$-orbit has at least $n$ points. 
	Then $\Al(M)=\Al_d(M)$ for $3\leq d\leq n$. 
\end{prop}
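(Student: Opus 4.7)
The inclusion $\Al_d(M) \subseteq \Al(M)$ is immediate from the descending chain $\Al_3(M) \ge \Al_4(M) \ge \cdots$ noted just after the definition of $\Al_d(M)$. For the reverse inclusion, recall that $\Al(M)$ is generated by $\bigcup_S \Alt[S]$ over all $3$-sections $S$ of $M$, and each $\Alt[S] \cong \Alt(3)$ is cyclic of order three; so it suffices to show that the generator $h_{(123)} \in \Alt[S]$ lies in $\Al_d(M)$ for every $3$-section $S$.

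Fix a $3$-section $S$ with pairwise disjoint idempotents $e_1,e_2,e_3$ and partial bijections $f_{ij}\colon e_j\to e_i$. The plan is to locally extend $S$ to a $d$-section and then glue using compactness of $e_1$. For each $x \in e_1$, the $M$-orbit of $x$ has at least $n \ge d$ points, so beyond the three distinct points $x, f_{21}(x), f_{31}(x)$ there are further orbit points $y_4,\ldots,y_d$, accessed by elements $m_4,\ldots,m_d \in M$ with $x \in \dom(m_i)$ and $m_i(x) = y_i$ pairwise distinct. By continuity of the $m_i$ and the Hausdorff property, I can shrink to a clopen neighbourhood $U \subseteq e_1$ of $x$ so that the $d$ sets
\[
U,\ f_{21}(U),\ f_{31}(U),\ m_4(U),\ \ldots,\ m_d(U)
\]
are pairwise disjoint. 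From these data (together with the composites $m_i m_j^*$ suitably restricted, which all lie in $M \cdot \mc{E}(\PHomeo_c(X))$) I assemble a $d$-section $T_U$ of $M$, so that $\Alt[T_U] \subseteq \Al_d(M)$. In particular, the image of the $3$-cycle $(1\,2\,3) \in \Alt(d)$ inside $\Alt[T_U]$ -- which acts as $h_{(123)}$ on $U \cup f_{21}(U) \cup f_{31}(U)$ and is the identity elsewhere -- lies in $\Al_d(M)$.

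By compactness of $e_1$, finitely many such neighbourhoods $U_1,\ldots,U_m$ cover $e_1$; I refine to a clopen partition $V_1,\ldots,V_m$ of $e_1$ with $V_k \subseteq U_k$, and restrict each $T_{U_k}$ to $V_k$ to obtain $d$-sections whose associated $(1\,2\,3)$-elements $h_k$ all lie in $\Al_d(M)$. For $k \neq k'$ the elements $h_k, h_{k'}$ have disjoint supports (since $V_k \cap V_{k'} = \emptyset$ and $f_{21}, f_{31}$ are injective), hence they commute, and the product $\prod_k h_k$ is supported on $e_1 \cup e_2 \cup e_3$, where it agrees with $h_{(123)}$. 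Thus $h_{(123)} = h_1 h_2 \cdots h_m \in \Al_d(M)$, completing the proof.

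The only real technical obstacle is the local extension step: one must verify that the candidate $T_U$ genuinely satisfies the conditions of a multisection of $M$ in the ambient Boolean inverse monoid (in particular, that the full table of composites of the $m_i m_j^*$, suitably restricted, forms a finite inverse subsemigroup with the required pairwise disjoint idempotents). This is achieved by choosing $U$ small enough that the images $m_i(U)$ are pairwise disjoint clopens on which $m_i m_j^*$ acts as a homeomorphism; everything else is routine.
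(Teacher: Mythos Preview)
Your proof is correct and follows essentially the same strategy as the paper: locally extend the multisection to one of higher degree, use compactness of a base idempotent to extract a finite cover, refine to a disjoint partition, and decompose the desired alternating element as a product of commuting pieces coming from the larger multisections. The only cosmetic difference is that the paper argues inductively, showing $\Al_d(M)\le \Al_{d+1}(M)$ for each $3\le d\le n-1$ (adding one extra idempotent at a time), whereas you jump straight from the $3$-section to a $d$-section; your direct version has the mild advantage that, since $\Alt(3)$ is cyclic, you only need to realise a single generator $h_{(123)}$, which makes the final ``product of disjointly supported pieces'' step entirely explicit.
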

\begin{proof}
	It suffices to show that  $\Al_d(M)\leq \Al_{d+1}(M)$ for $3\leq d\leq n-1$. 
	Let $S$ be a $d$-section and $x\in e\in \mc{E}^*(S)$. 
	Suppose that $f_1, \dots, f_{d}\in S$ are the elements with domain $e$. 
	By hypothesis, there is some $f_{d+1}\in M$ such that $f_1(x),\dots, f_d(x), f_{d+1}(x)$ are pairwise distinct.   
	Take $x \in e_x \in \mc{CO}(X)$ such that $e_x \le f^*_{d+1}f_{d+1}e$ and let $S_x$ be the subsemigroup generated by $\{f_ie_x: i=1,\dots, d+1\}$. 
	Then $S_x$ is a ($d+1$)-section containing the restriction of $S$ to  $e_x$. 
	Repeating this for every $x\in e$ we obtain a cover of $e$ by clopens $\{e_x\}_{x\in e}$.
	As $e$ is compact, finitely many of these $e_x$ suffice, say $e_1, \dots, e_k$, and we can assume that they are disjoint.
	This yields finitely many $(d+1)$-sections $S_1, \dots, S_k$,   all of whose non-0 idempotents are pairwise disjoint, each containing the restriction of $S$ to $\mc{E}^*(S_i)$ and such that for every $f\in S$ we have $f=\bigvee_{i=1}^k f_i$ where $f_i\in S_i$. 
	This implies that $\Alt[S]\leq \langle \Alt[S_i]: 1\leq i \leq k\rangle \leq \Alt_{d+1}(M)$ and therefore $\Al_d(M)\leq \Al_{d+1}(M)$. 
\end{proof}

So the new goal is to show that there is some set $K\subset \Al_5(M)$ with compact closure such that every 5-section $F$ can be covered by finitely many 5-sections $F_1, \dots, F_n$ with $\Alt[F_i]\leq \langle K\rangle$ for $1\leq i\leq n$.
Then $\Alt[F]\leq \langle K\rangle$, by Lemma \ref{lem:multisection_cover}.

The proof is essentially that of \cite[Theorem 5.10]{Nekra}, translated into the language of inverse monoids and making adjustments for the topology. 

\begin{lem}[{\cite[Proposition~3.5]{Nekra}}]\label{lem:multisection_combine}
	Let $S_1$, $S_2$ be  multisections of $M$ of degrees at least 3.
	Suppose that the intersection $e$ of the supports of $S_1$ and $S_2$ is equal to $f_1f_2$ for $f_i \in \mc{E}^*(S_i)$. 
	Let $S_3$ be the inverse subsemigroup of $M$ generated by $S_1f_1f_2 \cup S_2f_1f_2$. 
	Then $S_3$ is a multisection and $\Alt[S_3] \le \grp{\Alt[S_1] \cup \Alt[S_2]}$.
\end{lem}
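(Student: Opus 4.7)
The plan is first to describe $S_3$ concretely as a multisection, then to realise enough specific 3-cycles of $\Alt[S_3]$ as commutators of elements of $\Alt[S_1]$ and $\Alt[S_2]$, and finally to invoke a combinatorial argument to see that these generate $\Alt[S_3]$.

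For the structural step, write $\mc{E}^*(S_1) = \{e_1 = f_1, e_2, \dots, e_{d_1}\}$ and $\mc{E}^*(S_2) = \{h_1 = f_2, h_2, \dots, h_{d_2}\}$ with transition elements $s_{kl} \in S_1$ (satisfying $s_{kl}^* s_{kl} = e_k, s_{kl}s_{kl}^* = e_l$) and analogously $t_{kl} \in S_2$. The hypothesis $\mathrm{supp}(S_1)\wedge \mathrm{supp}(S_2) = e$ forces $e_i \perp \mathrm{supp}(S_2)$ for $i\ge 2$ and $h_j \perp \mathrm{supp}(S_1)$ for $j\ge 2$. Multiplying on the right by $e$ annihilates all elements of $S_1$ except the $s_{1i}$, so the nonzero idempotents of $S_1 e$ are $e_i' := s_{1i}e s_{1i}^* \le e_i$, and similarly those of $S_2 e$ are $h_j' := t_{1j} e t_{1j}^* \le h_j$, with $e_1' = h_1' = e$. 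Composing a transition of $S_1 e$ with one of $S_2 e$ through $e$ produces a unique element of $S_3$ between each pair of the $n := d_1 + d_2 - 1$ pairwise disjoint idempotents $\{e\}\cup\{e_i': i\ge 2\}\cup\{h_j' : j\ge 2\}$, so $S_3$ is a multisection of degree $n \ge 5$.

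For the second step, set $G := \grp{\Alt[S_1] \cup \Alt[S_2]}$. Given $i\in\{2,\dots,d_1\}$ and $j\in\{2,\dots,d_2\}$, pick $k\in\{2,\dots,d_1\}\setminus\{i\}$ and $l\in\{2,\dots,d_2\}\setminus\{j\}$ (available since $d_1,d_2\ge 3$). Let $\sigma\in\Alt[S_1]$ be the 3-cycle cycling $e_1\to e_i\to e_k\to e_1$ and $\tau\in\Alt[S_2]$ the 3-cycle cycling $h_1\to h_j\to h_l\to h_1$. The key computation, to be carried out by case analysis on which of $e, e_i\setminus e_i', e_k, h_j\setminus h_j', h_l, f_1\setminus e, f_2\setminus e$ a point lies in, shows that $[\sigma,\tau]$ is exactly the 3-cycle $(e, e_i', h_j')$ of $\Alt[S_3]$. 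The point is that $\sigma$ and $\tau$ share support only on $e$, so for a point $x$ not in $e$ the commutator either leaves it fixed outright or drags it through both supports and returns it; only when $x\in e$ (or in $e_i'$ or $h_j'$) do the actions of $\sigma$ and $\tau$ interact nontrivially, and they do so in exactly the pattern of the required 3-cycle.

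For the third step, identify $\mc{E}^*(S_3)$ with $Y = \{y_0\}\sqcup A\sqcup B$, where $y_0 = e$, $A = \{e_i' : i\ge 2\}$, $B = \{h_j' : j\ge 2\}$, with $|A|,|B|\ge 2$. Step~2 places all 3-cycles of the form $(y_0,a,b)$ (for $a\in A$, $b\in B$) into $G$. Multiplying two such 3-cycles with a common $b$ produces a 3-cycle of type $(a,a',b)$; conjugating $(y_0,a,b)$ by $(a,a',b)$ produces $(y_0,a',a)$; conjugating $(a,a',b)$ by $(a',a'',b)$ (and similar manipulations) produces $(a,a',a'')$; and analogously in $B$. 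In this way every 3-cycle of $\Alt(Y)$ appears in $G$, and since $|Y|=n\ge 5$ these generate $\Alt(Y) = \Alt[S_3]$.

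The main obstacle is the commutator computation in Step~2. It is a straightforward but bookkeeping-heavy verification that relies crucially on the disjointness statements $e_i\perp\mathrm{supp}(S_2)$ for $i\ge 2$ and $h_j\perp\mathrm{supp}(S_1)$ for $j\ge 2$, which is precisely what the hypothesis $\mathrm{supp}(S_1)\wedge\mathrm{supp}(S_2) = f_1 f_2$ delivers; once that computation is done, Steps~1 and~3 are routine.
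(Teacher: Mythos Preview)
The paper does not give its own proof of this lemma; it merely cites \cite[Proposition~3.5]{Nekra}. Your argument is correct and is essentially the standard one: the commutator computation in Step~2 is the crux and checks out (the disjointness hypotheses are exactly what is needed to make all the ``off-support'' cases collapse), and Step~1 is routine.

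One small imprecision in Step~3: the literal product of two 3-cycles $(y_0,a,b)$ and $(y_0,a',b)$ with a common $b$ is a double transposition, not a 3-cycle. What works instead is $(y_0,a,b)^{-1}(y_0,a',b) = (y_0,a',a)$, and symmetrically $(y_0,a,b)(y_0,a,b')^{-1} = (y_0,b',b)$; this already gives all 3-cycles through $y_0$, which generate $\Alt(Y)$. Your conjugation remarks that follow are also valid and lead to the same conclusion, so the gap is purely cosmetic.
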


\begin{lem}\label{lem:separating}
	Let $M$ be a compactly generated topological Boolean inverse monoid with $\mc{CO}(X) = \mc{E}(M)$.  Suppose every $M$-orbit on $X$ has at least $n$ points for $n \ge 2$.  Then there is a compact symmetric generating set $A$ of $M$ and a clopen partition  $P \subseteq \mc{E}(M)$  of $X$ satisfying:
	\begin{enumerate}
		\item every $M$-orbit  meets at  least  $n$ parts of $P$,
		\item every $a\in A$ has domain and range in different parts of $P$,
		\item for every part $e\in P$ there are $a_1, \dots, a_{n-1}\in A$ with domain $e$ and ranges in pairwise different parts of $P\setminus \{e\}$,
		\item the set $\mc{E}(A^\infty)$ contains a base of topology of $X$ that is a union of partitions.
	\end{enumerate}
\end{lem}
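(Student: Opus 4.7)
Since $n \ge 2$, Lemma~\ref{lem:alternating_to_full} gives $M = \mc{BI}(M^\times)$; as $M^\times$ is open in $M$ by Lemma~\ref{lem:compactopen_top_0dim_units_clopen}(iii), Proposition~\ref{prop:subshift_expansive}(i) applies with $G = M^\times$ and yields a compact symmetric identity neighbourhood $S \subseteq M^\times$ and a clopen partition $P_0$ such that $S \cup P_0$ generates $M$ and $\mc{E}((SP_0)^\infty)$ contains a base $\mc{B}$ of the topology of $X$ that is a directed union of clopen partitions. For each $x\in X$, the orbit hypothesis furnishes $m_{x,1},\dots,m_{x,n-1}\in M$ with $x,m_{x,1}(x),\dots,m_{x,n-1}(x)$ pairwise distinct; continuity and Hausdorffness then yield a clopen $U_x\in\mc{B}$ for which $U_x, m_{x,1}(U_x),\dots, m_{x,n-1}(U_x)$ are pairwise disjoint members of $\mc{B}$. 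Compactness gives a finite subcover $U_{x_1},\dots,U_{x_k}$.

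Next, choose $P\in\mc{B}$ to be a partition refining the clopen cover $\{U_{x_l}\}\cup\{m_{x_l,i}(U_{x_l})\}_{l,i}$ and fine enough that, writing $V_l$ for the union of parts of $P$ in $U_{x_l}\setminus\bigcup_{l'<l}U_{x_{l'}}$ and setting $b_{l,i} := m_{x_l,i}|_{V_l}$, each image $b_{l,i}(e)$ for $e$ a part of $P$ with $e\subseteq V_l$ lies inside a single part of $P$. Then define
\[
A := \{\,b_{l,i}|_e : e\in P,\ e\subseteq V_l,\ 1\le i\le n-1\,\}\ \cup\ \{\,qsp : s\in S,\ p,q\in P,\ p\ne q\,\}\ \cup\ (\text{inverses}),
\]
a finite, hence compact, symmetric subset of $M$. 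Condition (1) follows from the pairwise disjointness of $V_l,b_{l,1}(V_l),\dots,b_{l,n-1}(V_l)$ (each a union of parts of $P$); (2) holds by the construction of the two types of elements of $A$; (3) is witnessed by $a_i:=b_{l,i}|_e$, since $\dom(a_i)=e$, $\ran(a_i)\subseteq$ a single part by the refinement, and the ranges lie in pairwise distinct parts inherited from the disjointness of the $b_{l,i}(V_l)$'s; and (4) follows from $\mc{E}(A^\infty)\supseteq \mc{B}$, noting that every $e\in P$ belongs to $\mc{E}(A^\infty)$ via $e = (b_{l,1}|_e)^*(b_{l,1}|_e)$, and hence $P_0$ and the rest of $\mc{E}((SP_0)^\infty)$ are recovered. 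Generation of $M$ by $A$ is obtained from the decomposition $s=\bigvee_{p,q\in P}qsp$ for each $s\in S$: the summands with $p\ne q$ lie in $A$, while those with $p=q$ are either $0$ or idempotent elements of $\mc{E}(A^\infty)$ after suitable further refinement of $P$.

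\textbf{Main obstacle.} The critical step is the self-referential selection of $P$ forcing $b_{l,i}(e)\subseteq$ a single part of $P$ for every part $e\subseteq V_l$: the partition $P$ determines the parts $e$ while the elements $b_{l,i}$ determine the images. Overcoming this requires an iterative refinement within the directed base $\mc{B}$, successively subdividing any part whose $b_{l,i}$-image straddles the current partition; termination in finitely many steps uses uniform continuity of the finite collection $\{b_{l,i}\}$ together with the richness of $\mc{B}$ (which supplies arbitrarily fine partitions sharing a common refinement). A related subtlety is the recovery of the diagonal summands $\bigvee_p psp$ appearing in the decomposition of $s\in S$: these cannot lie in $A$ without violating (2), yet must contribute to generating $M$. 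Handling them requires refining $P$ so that for each non-identity $s\in S$ and each part $p\in P$, either $s(p)\cap p = 0$ (giving $psp = 0$) or $s$ acts trivially on $p$ (giving $psp = p\in\mc{E}(A^\infty)$); this second layer of refinement is the main technical content of the argument.
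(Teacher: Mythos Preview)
Your plan has a genuine gap in the treatment of the diagonal summands $psp$.  You propose to refine $P$ so that for every non-identity $s\in S$ and every part $p\in P$, either $s(p)\cap p=\emptyset$ or $s$ acts trivially on $p$.  This is impossible once $S$ is an identity neighbourhood in a non-discrete group of units.  By local decomposability, for any finite clopen partition $P$ the subgroup $\prod_{p\in P}\rist_{M^\times}(p)$ is open, and since $M^\times$ is non-discrete at least one factor $\rist_{M^\times}(p_0)$ is non-discrete; hence $S$ contains some $s\neq 1$ with support contained in $p_0$.  For such $s$ one has $s(p_0)=p_0$ while $s$ does not act as the identity on $p_0$, so $p_0sp_0$ is neither $0$ nor an idempotent.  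No further refinement of $P$ can cure this: the same argument applies to the refined partition.  Consequently your generating set $A$, which by design contains no element with domain and range in the same part, cannot recover these diagonal pieces, and so does not generate $M$; the claimed inclusion $\mc{E}(A^\infty)\supseteq\mc{B}$ fails for the same reason.

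The paper sidesteps this obstruction by a routing trick rather than by refinement.  One first fixes the coarser partition $Q$ (built from the orbit data) and the finite set $W$ of partial homeomorphisms witnessing condition~(3) at the level of $Q$.  Given $s\in S$ and $e\in P$ with $\dom(se)$ and $\ran(se)$ in the \emph{same} part $f_0\in Q$, one writes $se=s_{e,1}s_{e,0}$ with $s_{e,0}:=m_{f_0,1}e$ and $s_{e,1}:=se\,m_{f_0,1}^*$, where $m_{f_0,1}\in W$ sends $f_0$ into a different part of $Q$.  Each factor now has domain and range in distinct parts of $Q$ (hence of $P$), so both may be placed in $A$ without violating~(2), and $se$ is still recovered as their product.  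The compactness of $A$ follows because $s_{e,i}$ depends continuously on $s$ for each fixed $(e,i)$.  This factoring through a third part is the missing idea in your proposal.
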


\begin{proof}
	Fix $x=x_0\in X$ and choose $x_1,\dots,x_{n-1}$ other points in the $M$-orbit of $x_0$. 
	Pick any partition $P_x$ that separates $x_0,\dots,x_{n-1}$.
	Suppose that $m_{x,1}, \dots, m_{x, n-1}\in M$ take $x_0$ to $x_1, \dots, x_{n-1}$, respectively. 
	Restricting domains if necessary, we can assume that all $m_{x,i}$ have a common domain, in the part $e_x$ of $P_x$ containing $x_0=x$, and that the range of each $m_{x,i}$ is in the part of $P_x$ containing $x_i$. 
	Doing this for each $x\in X$ yields a clopen cover $X=\bigcup_{x\in X}e_x$ of $X$. 
	As $X$ is compact, it can be covered by finitely many of the $e_{x}$, say $e_1,\dots, e_k$ with corresponding partitions $P_1,\dots, P_k$ and elements $m_{1,1},\dots, m_{1,n-1}, \dots, m_{k,1},\dots, m_{k,n-1}$.
	Let $Q$ be the partition generated by $P_1, \dots,P_k$ and denote by $W$ the set of (non-0) restrictions of the $m_{i,j}$ to parts of $Q$.

	By Proposition~\ref{prop:subshift_expansive}, there is a generating set of $M$ of the form $S \cup Q'$ where $S$ is a compact identity neighbourhood in $M^\times$ and $Q' \subseteq \mc{E}(M)$ is a partition, such that the $\grp{S}$-translates of $Q'$ separate points in $X$.  We may assume that $Q'$ refines $Q$ and that $S = S^*$.
	Then the set $\{ses^*\mid s\in S, e\in Q'\}$ is finite because: $S$ is compact, for each $e\in \mc{E}(M)$  the map $m\mapsto mem^*$ is continuous with discrete image, and $Q'$ is finite.
	Let $P$ be the partition generated by $Q'$ and $\{ses^*\mid s\in S, e\in Q'\}$. 
	We  will replace each element of $S$ by at most  $2|P|$ elements of $M$ to produce the required generating set of $M$.
	
	Given $s\in S, e\in P$ there are unique $f_0, f_1\in Q$ such that  $e\leq f_0$ and $ses^*\leq f_1$. 
	If $f_0\neq f_1$ put $s_{e,0}:=s_{e,1}:=se$. 
	If $f_0=f_1$ then, by the construction of $Q$, there is some $m_{f_0, 1}\in W$ with domain $f_0$ and taking $f_0$ to a different part of $Q$. 
	Put $s_{e,0}:=m_{f_0,1}e$ and $s_{e,1}:=sem_{f_0, 1}^*$ so that $s_{e,1}s_{e,0}=sem_{f_0, 1}^*m_{f_0, 1}e=se$. 
	By the choice of $m_{f_0, 1}$, the domain and range of $s_{e,0}$ (respectively, $s_{e,1}$) lie in different parts of $Q$.
	Each $s_{e,i}$ depends continuously on $s$ for each $(e,i)\in P\times \{0,1\}$, so $S_1:=\{s_{e,i}\mid s\in S, i=0,1\}$ is compact.
	Also, $S_2:=\{we \mid e\in P, w\in W\} \setminus \{0\}$ is finite. 
	Thus $A:=S_1\cup S_2 \cup S^*_1 \cup S^*_2$ is a compact symmetric generating set of $M$ that satisfies the first three conditions.
	
	Finally, note that we have ensured $se \in A^\infty$ for all $s \in S$ and $e \in P$; it follows from Proposition~\ref{prop:subshift_expansive} that $\mc{E}(A^\infty)$ contains a base of topology that is a union of partitions.
\end{proof}

We say that a subset of a topological inverse monoid $M$ is \defbold{bounded} if it has compact closure.

\begin{lem}\label{lem:bounded_multisections}
	Let $\mc{S}$ be a set of multisections of a topological Boolean inverse monoid $M$ such that $\bigcup_{S \in \mc{S}}S$ is bounded in $M$. 
	Then $\bigcup_{S \in \mc{S}}\Sym[S]$ and $\bigcup_{S \in \mc{S}}\Alt[S]$ are bounded.
\end{lem}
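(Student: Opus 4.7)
The plan is to exploit two features of topological Boolean inverse monoids: idempotents form a closed discrete subspace, so any compact set meets $\mc{E}(M)$ in only finitely many points; and iterated disjoint joins are continuous on the closed set of pairwise disjoint tuples. The first feature bounds the degrees appearing in $\mc{S}$, and the second packages the $h_\pi$ as continuous functions of boundedly many arguments drawn from a compact set.

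Let $K := \overline{\bigcup_{S \in \mc{S}} S}$, which is compact by hypothesis. Since the topology of $M$ refines the Hausdorff compact-open topology on $\PHomeo_c(X)$, in which $\mc{E}(M)$ is discrete (Lemma \ref{lem:compactopen_top_0dim_units_clopen}) and closed (being the equaliser of $m \mapsto m$ and $m \mapsto m^2$), the intersection $E_0 := K \cap \mc{E}(M)$ is a closed discrete subset of a compact space, hence finite. Write $D := |E_0|$. Because $\mc{E}^*(S) \subseteq E_0$ for every $S \in \mc{S}$, the degrees satisfy $d_S \le D$; moreover the ``missing'' idempotents $c_S := 1 \setminus \bigvee \mc{E}^*(S)$ are determined by subsets of the finite set $E_0$, so they form a finite set $F \subseteq \mc{E}(M)$. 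Set $K_1 := K \cup F \cup \{0\}$, still compact.

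For $S \in \mc{S}$ and $\pi \in \Sym(d_S)$, the element $h_\pi$ is by definition the disjoint join of the elements $f_{i,\pi(i)} \in S \subseteq K_1$ (for $1 \le i \le d_S$) together with $c_S \in F \subseteq K_1$. Padding with copies of $0 \in K_1$, this realises $h_\pi$ as the image of a pairwise disjoint $(D+1)$-tuple in $K_1^{D+1}$ under the iterated binary join. Now disjointness $a \perp b$ is the closed condition $(a^* b, ab^*) = (0,0)$, using that $\{0\}$ is closed in $M$ (it is clopen, by Lemma \ref{lem:compactopen_top_0dim_units_clopen}), so the set $J^{(D+1)} \subseteq M^{D+1}$ of pairwise disjoint $(D+1)$-tuples is closed. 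Thus $J^{(D+1)} \cap K_1^{D+1}$ is compact; and the $(D+1)$-fold join is continuous on $J^{(D+1)}$, being an iterate of the continuous binary join of Definition \ref{defn:Boolean_inverse:topological} (at each stage, disjointness of a tuple with an element is equivalent to pairwise disjointness with its components, so the intermediate tuples remain in the domain of the next binary join). The image of this compact set under a continuous map is compact and contains $\bigcup_{S \in \mc{S}} \Sym[S]$, proving boundedness; the alternating case is immediate since $\Alt[S] \subseteq \Sym[S]$.

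The only step requiring any real thought is the finiteness of $E_0$, which reduces to checking that $\mc{E}(M)$ is closed in $M$; everything else is routine from continuity of the monoid operations and the definition of a topological Boolean inverse monoid.
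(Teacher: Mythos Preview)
Your proof is correct and reaches the same conclusion as the paper's, but by a genuinely different packaging of the elements $h_\pi$.

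The paper observes that each transposition in $\Sym[S]$ has the form $\tilde m := m \vee m^* \vee (m^*m)^\perp(mm^*)^\perp$, which is a \emph{single} continuous function of one variable $m$; hence the set $T' = \{\tilde m \mid m \in T\}$ is bounded, and since the degrees are bounded (same argument as yours on the finiteness of idempotents in $K$), every $h_\pi$ lies in a product of boundedly many copies of $T'$. So the paper routes everything through transpositions and the continuity of multiplication, never needing to invoke continuity of iterated disjoint joins.

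Your route instead expresses each $h_\pi$ directly as a disjoint $(D+1)$-fold join of elements from an enlarged compact set $K_1$, and then argues that the iterated join is continuous on the closed subset of pairwise disjoint tuples. This requires two extra checks that the paper's proof avoids: that $\mc{E}(M)$ is closed (so $E_0$ is finite), and that continuity of the binary disjoint join propagates to the $(D+1)$-ary version on the right domain. You handle both correctly. The payoff is a more uniform description of all $h_\pi$ at once (not just transpositions), at the cost of a slightly longer verification; the paper's transposition trick is shorter because it reduces to a one-variable continuous formula and the already-known continuity of multiplication.
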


\begin{proof}
	We only prove the statement for $\Sym[S]$, the other one being similar. 
	Let $T = \bigcup_{S \in \mc{S}}S$. 
	The fact that $T$ is bounded ensures that only finitely many domains and ranges occur, so the degree of multisections in $\mc{S}$ is bounded.
	
	Given $S \in \mc{S}$, every transposition in $\Sym[S]$ is of the form
	\[
	\tilde{m} := m \vee m^* \vee (m^*m)^\perp(mm^*)^\perp
	\]
	for some $m \in S$.  We see that $\tilde{m}$ depends continuously on $m$; thus the set $T' = \{\tilde{m} \mid m \in T\}$ is a bounded set that accounts for all the transpositions.  In turn, since $\sup\{\deg(S) \mid S \in \mc{S}\} < \infty$, there is a product of finitely many copies of $T'$ that contains every element of $\bigcup_{S \in \mc{S}}\Sym[S]$.
	We conclude that $\bigcup_{S \in \mc{S}}\Sym[S]$ is bounded.
\end{proof}

We now take a finite partition $P$ of $X$ and a compact symmetric generating set $A$ of $M$ satisfying the four conditions in Lemma~\ref{lem:separating} for $n=5$.

Define $T:=\{m\in \bigcup^3_{k=1}A^k \mid P \text{ separates } m^*m, mm^*\}$. 
The reason for this choice of generating set will be made apparent in the proof of Lemma \ref{lem:alternating_fg_lengthreduction}.
Note that $A\subseteq T$ and that $T$ is bounded. 
We now produce a set of 5-sections from $T$:
For every $m\in T$ and $e\in P$ such that $me\neq 0$, the domain $m^*me$ and range $mem^*$ of $me$ are in different parts of $P$. 
By part (3) of Lemma \ref{lem:separating}, there are $a_2, a_3, a_4\in A$ with domain $e$ and ranges in parts of $P$ different from $e$ and the one containing $me$. 
Denote by $F_{m,e}$ the 5-section generated by $\{me, a_im^*m: i=2,3,4\}$.
Let $\mc{F}:= \{F_{m,e}\mid m\in T, e\in P, me\neq 0\}$. 
Then 
$$\bigcup_{F\in \mc{F}}F\subset T\cup AT^*T\cup AT^*\cup TA^*$$
which is a bounded set, because $A$ and $T$ are bounded and multiplication and inversion are continuous maps.
By Lemma \ref{lem:bounded_multisections}, the set 
$$K:=\overline{\bigcup_{F\in \mc{F}}\Alt[F]}$$
is compact. 
It is now enough to show that $\Al(M) \le \langle K\rangle$. 

For this, we first prove

\begin{lem}[{\cite[Lemmas~5.12 and 5.13]{Nekra}}]\label{lem:alternating_fg_lengthreduction}
	Let $m \in M$ and suppose that $mx\neq x$ for some $x\in X$. 
	Then there is a 5-section $F\leq M$  with $\Alt[F]\leq \grp{K}$ and some idempotent $e\leq m^*m$ containing $x$ 
	such that $me\in F$ and such that
	\begin{enumerate}[(i)]
		\item if $e$ and $mem^*$  are in different parts of $P$, then all idempotents of $F$ are separated by $P$;
		\item if $e$ and $mem^*$ are in the same part of $P$, then all other idempotents of $F$ are separated by $P$. 
	\end{enumerate} 
\end{lem}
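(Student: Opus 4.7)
I will proceed by induction on the minimum length $k$ of a product in $A$ representing a restriction of $m$ to some clopen containing $x$. By property (4) of Lemma~\ref{lem:separating}, $\mc{E}(A^\infty)$ contains a base of topology of $X$ that is a union of partitions, so Lemma~\ref{lem:bim_generating_set} expresses $m$ as a finite disjoint join of elements of $A^\infty$. Thus there is a clopen $e_0 \le m^*m$ containing $x$ with $me_0 = a_1 a_2 \cdots a_k$ for some $a_i \in A$. The hypothesis $mx \neq x$ forces $k \ge 1$. I further shrink $e_0$ so that each intermediate clopen $a_{j+1} \cdots a_k(e_0)$ (for $0 \le j \le k$) lies inside a single part of $P$, and that $m$ has no fixed points in $e_0$.

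In the base case $k = 1$, $me_0 = a$ for some $a \in A$. By property (2) of Lemma~\ref{lem:separating}, the domain and range of $a$ lie in distinct parts of $P$, so $a \in T$; thus, with $P_0 \in P$ the part containing $x$, the 5-section $F_{a, P_0}$ belongs to $\mc{F}$. I take $F$ to be a 5-section containing $me_0$ built from $F_{a, P_0}$: either $F_{a, P_0}$ itself (when $e_0$ may be chosen to equal $P_0 \cap m^*m = P_0$), or a suitable sub-5-section with idempotents $\{e_0, ae_0a^*, a_2 e_0 a_2^*, a_3 e_0 a_3^*, a_4 e_0 a_4^*\}$ obtained by restricting those of $F_{a, P_0}$. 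In either case, the five idempotents of $F$ lie in pairwise distinct parts of $P$, and $\Alt[F] \le \langle K \rangle$: the first case is immediate, and the second follows by relating 3-cycles of $F$ to 3-cycles of $F_{a,P_0} \in \mc{F}$ via decomposition against the complementary restriction on $P_0 \setminus e_0$. Conclusion (i) holds.

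In the inductive step $k \ge 2$, I split $me_0 = m_1(m_2 e_0)$ with $m_1 = a_1 \cdots a_j$, $m_2 = a_{j+1} \cdots a_k$, and $1 \le j < k$. If $m_2$ fixes a neighbourhood of $x$ or $m_1$ fixes a neighbourhood of $m_2 x$, then $me$ agrees locally with $m_1 e$ or $m_2 e$ respectively, so the induction applied to the non-trivial factor yields the required $F$. Otherwise, apply the induction to $m_2$ at $x$ to obtain a 5-section $F_2$ with $\Alt[F_2] \le \langle K \rangle$ containing $m_2 e' \in F_2$ for some $e' \le e_0$ with $x \in e'$; setting $e'' := m_2 e' m_2^*$, apply the induction to $m_1$ at $m_2 x$ (possibly shrinking $e'$ and hence $e''$) to obtain $F_1$ with $\Alt[F_1] \le \langle K \rangle$ containing $m_1 e''$. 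By choosing the auxiliary generators of $F_1$ and $F_2$ with ranges in parts of $P$ distinct from all other idempotents involved (possible by property (3) of Lemma~\ref{lem:separating}), I arrange the supports of $F_1, F_2$ to intersect precisely in $e''$, a nonzero idempotent of each. Lemma~\ref{lem:multisection_combine} then yields a multisection $S_3$ of degree $9$ containing $me' = m_1 m_2 e'$, with $\Alt[S_3] \le \langle \Alt[F_1] \cup \Alt[F_2] \rangle \le \langle K \rangle$. I extract a 5-section $F \le S_3$ containing $me'$, whose idempotents are $e'$, $me'm^*$, and three further idempotents selected from the remaining seven of $S_3$: if $e'$ and $me'm^*$ lie in different parts of $P$, pick the three from three further pairwise distinct parts (conclusion (i)); if in the same part, pick the three from three distinct parts, all different from the shared part (conclusion (ii)).

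The main obstacle is the combinatorial bookkeeping of which parts of $P$ the various auxiliary idempotents occupy, ensuring that enough distinct parts are represented throughout the induction to permit the final extraction of a 5-section satisfying either conclusion (i) or (ii); property (3) of Lemma~\ref{lem:separating}, supplying four elements of $A$ with a specified domain and ranges in four pairwise distinct parts, is the crucial resource underpinning this flexibility. A second subtlety, already present in the base case, is that the 5-section $F$ may be a proper restriction of an element of $\mc{F}$ rather than literally in $\mc{F}$, and the containment $\Alt[F] \le \langle K \rangle$ in that case requires relating 3-cycles in the restricted 5-section to those in the parent via the complementary restriction.
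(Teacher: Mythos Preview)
Your inductive step has a genuine gap. You obtain the 5-sections $F_1$ and $F_2$ by invoking the induction hypothesis on $m_1$ and $m_2$; that hypothesis asserts only the \emph{existence} of such 5-sections, so you cannot then ``choose the auxiliary generators of $F_1$ and $F_2$'' to land in prescribed parts of $P$. Without that control, you cannot guarantee that the supports of $F_1$ and $F_2$ meet only in $e''$, which is exactly what Lemma~\ref{lem:multisection_combine} requires. Worse, two of the idempotents are forced on you regardless of auxiliaries: $e' \in \mc{E}^*(F_2)$ and $me'm^* \in \mc{E}^*(F_1)$ are determined by $m_1,m_2$, and nothing prevents $e'$ from meeting an idempotent of $F_1$ other than $e''$ (for example when $m_2$ falls into case~(ii) so that $e'$ and $e''$ lie in the same part of $P$). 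The subsequent shrinking of $e'$ does not help, because shrinking does not automatically yield a sub-5-section whose alternating group is still in $\langle K\rangle$.

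The paper circumvents this by never leaving case~(i) during the induction. It inserts an auxiliary $a \in A$ to split $m = (a_0 a_1 a^*)(a a_2 \cdots a_n)$: the left factor has length $\le 3$ with $P$-separated domain and range, so it lies in $T$ and one reads off a 5-section $G$ directly from $\mc{F}$ (with full control, all idempotents $P$-separated); the right factor $m'$ still has $P$-separated domain and range, so induction yields a 5-section $H$ also satisfying~(i). One then passes to degree-$3$ sub-sections $G_1 \le G$ and $H_1 \le H$ and applies Lemma~\ref{lem:multisection_combine} to obtain a $5$-section directly ($3+3-1=5$), with no extraction from a larger multisection. Case~(ii) is handled afterwards in one step, by composing with a single $a \in A$ to reduce to case~(i). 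Your base case, incidentally, does not need the restriction argument you worry about: since $me_0 = a \in A$ one has $a^*a = e_0$, so $F_{a,P_0} \in \mc{F}$ already contains $me_0$ and conclusion~(i) is immediate.
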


\begin{proof}
	Since, by Lemma \ref{lem:bim_generating_set}, every element of $M$ can be written as a disjoint join of elements of $A^{\infty} \setminus \{1\}$, it suffices to prove the lemma for $m \in A^{\infty} \setminus \{1\}$ and then apply Lemma \ref{lem:multisection_cover}.
	We prove each item in turn. 
	
	The statement (i) is proved by induction on the length of $m$ as a word over $A$.
	Note that item (2) of Lemma \ref{lem:separating} guarantees that the domain and range of $m$ are each contained in a single part of $P$, say $m^*m \le d_m \in P$ and $mm^* \le r_m \in P$, so either $d_m=r_m$ or $P$ separates $m^*m$ and $mm^*$.
	If the latter holds and $m$ has length at most 3, then the statement holds by the choice of $K$. 
	Suppose then that (i) is true for all elements of $A^{\infty}$ of length at most $n$, for some $n\geq3$, and suppose that $m=a_0a_1\cdots a_n$ for $a_i \in A$ and that $d_m \neq r_m$.
	By Lemma \ref{lem:separating}, there is some $a\in A$ with domain $d_m$ and range disjoint from $d_m \vee r_m$. 
	Then $a_0a_1a^*\in T$, so, by the choice of $K$, there is some 5-section $G$ containing $a_0a_1a^*$, with $P$-separated idempotents and $\Alt[G]\subset K$.
	Pick some sub-section $G_1$ of $G$ of degree 3, containing $a_0a_1a^*$.   
	As $m':= aa_2\cdots a_n\in A^{n}$ has $P$-separated domain and range, the induction hypothesis gives a 5-section $H$ and an idempotent $e$ with $x \in e$, such that (i) is satisfied with respect to $m'$. 
	Let $H_1$ be a sub-section of $H$ of degree 3, containing $m'e$ and having as third non-0 idempotent something disjoint from the idempotents of $G_1$. 
	Thus the idempotents of $G_1$ and $H_1$ only intersect in the intersection of the domain of $a_0a_1a^*$ with the range of $aa_2\cdots a_n$, so Lemma \ref{lem:multisection_combine} gives a 5-section $F$ whose idempotents are all $P$-separated, such that $m'' \in F$ where $m'' = a_0a_1a^*aa_2\cdots a_ne$ and $\Alt[F]\leq \grp{\Alt[G_1], \Alt[H_1]}\leq \grp{\Alt[G],\Alt[H]}\leq \grp{K}$.
	Note that by construction, we have ensured that $m'' = me'$ for an idempotent $e'$ such that $x \in e'$.

	For (ii), suppose that $d_m = r_m$.
	By Lemma \ref{lem:separating} there is some $a\in A$ with domain $d_m$ and range in a different part $r'$ of $P$.
	Then $ma^* \in A^{\infty}$ has domain and range in different parts of $P$, so the previous paragraph yields a 5-section $G$ containing a restriction $ma^*e$ of $ma^*$ with the point $ax$ in the domain of $ma^*e$, such that $\Alt[G]\leq \grp{K}$ and all idempotents of $G$ are $P$-separated. 
	By the choice of $K$, there is also a 5-section $H\ni a$ with $\Alt[H]\leq \grp{K}$ and $P$-separated idempotents. 
	Now, as in the proof of (i), we can choose sub-sections $G_1\leq G$ and $H_1\leq H$ of degree 3 such that $G_1, H_1$ only intersect in the intersection of the domains of $a^*$ and $ma^*e$. 
	Lemma \ref{lem:multisection_combine} then yields a 5-section $F$ that contains $ma^*ea$ and satisfies all the conditions of (ii).
\end{proof}

We now complete the proof of Theorem~\ref{thm:expansive_generation}. 
Let $N\subset M$ be a 5-section, generated by $m_1, m_2, m_3, m_4$ where all $m_i$ have common domain $e$. 
Focus first on the sub-3-section generated by $m_1, m_2$. 
Pick a point $x\in e$. 
By Lemma \ref{lem:alternating_fg_lengthreduction},  there are 5-sections $S_1, S_2\leq M$ and idempotents $e_1, e_2\leq e$ containing $x$ such that $m_ie_i\in S_i$  and $\Alt[S_i]\leq \grp{K}$ for $i=1,2$.
Moreover, by the separation conditions on the idempotents of $S_1$ and $S_2$ there exist $s_i\in S_i$ with domain $e_i\leq e$ ($i=1,2$) and ranges in different parts of $P$. 
In particular, the ranges of $m_1e_1, m_2e_2, s_1, s_2$ are all pairwise disjoint, so that their restrictions to $e_1e_2$ generate a 5-section $F_x$. 
By Lemma \ref{lem:multisection_combine}, $\Alt(F_x)\leq \langle \Alt(S_1), \Alt(S_2)\rangle \leq \grp{K}$. 
Thus, for every point $x\in e$ there is some $d_x\leq e$ containing $x$ and a 5-section $F_x$ such that $m_1d_x, m_2d_x\in F_x$ and $\Alt(F_x)\leq \grp{K}$. 
Since $e$ is compact, it can be covered by finitely many of the $d_x$; say, $d_1, \dots, d_n$ for some $n\in \Nb$, with corresponding 5-sections $F_i$,
so $m_i$ is the (not necessarily disjoint) join $\bigvee_{j=1}^nm_id_j$ for $i=1,2$ and we have $\Alt(F_1), \dots, \Alt(F_n) \leq \grp{K}$. 

Doing the same for the sub-3-section generated by $m_3, m_4$ we obtain 5-sections $G_1,\dots, G_l\leq M$ for $l\in \Nb$ and a cover $e=\bigvee_{j=1}^lf_j$ where $m_3f_j, m_4f_j\in G_j$ and $\Alt(G_j)\leq \grp{K}$. 

Now, for each pair $(i,j)\in \{1, \dots, n\}\times \{1,\dots, l\}$, the elements $\{m_1d_i, m_2d_i, m_3f_j, m_4f_j\}$ all have domain containing $d_if_j$ and, by Lemma \ref{lem:multisection_combine}, their restrictions to $d_if_j$ generate a 5-section $H_{i,j}$ satisfying $\Alt(H_{i,j})\leq \langle \Alt(F_i), \Alt(G_j)\rangle\leq \grp{K}$. 
Since the $H_{i,j}$ form a cover of $N$, Lemma \ref{lem:multisection_cover} then yields that $\Alt(F)\leq \grp{\Alt(H_{i,j}): (i,j)}\leq \grp{K}$, as required. \hfill \qedsymbol

\

Returning to the purely group-theoretic context, we now obtain the desired relationship between a locally decomposable, minimal and expansive action of a compactly generated \tdlc group $G$ and compact generation of the associated group $\ol{\Al(G)}$.

\begin{cor}\label{cor:group_expansive_iff_alternating_compactly_gen}
	Let $X$ be a compact zero-dimensional space and let $G\leq \Homeo(X)$ be a \tdlc group.
	Suppose that the action is locally decomposable and minimal.  Say $H \le G$ is piecewise dense if $\Full(H) = \Full(G)$.
	Then the following are equivalent:
	\begin{enumerate}[(i)]
		\item there is a compactly generated piecewise dense open subgroup $H$ of $G$ that acts expansively on $X$;
		\item there is a compactly generated piecewise dense subgroup $H$ of $\Full(G)$ that acts expansively on $X$;
		\item the group $\ol{\Al(G)}$ is compactly generated.
	\end{enumerate}
\end{cor}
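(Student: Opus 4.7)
The implication (i)$\Rightarrow$(ii) is immediate.  For (iii)$\Rightarrow$(i), I would set $H := \grp{\ol{\Al(G)} \cup U}$ where $U$ is any compact open subgroup of $G$; then $H$ is open in $G$ (it contains $U$) and compactly generated (take a compact generating set of $\ol{\Al(G)}$ together with $U$), while containing $\Al(G)$ gives piecewise density via Lemma \ref{lem:alternating_to_full} and expansivity via Lemma \ref{lem:A(G)_quotientspace}.

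For the main content (ii)$\Rightarrow$(iii), I first recall from Corollary \ref{cor:piecewise_full_extension} that $\Full(G)$ carries a \tdlc locally decomposable group topology, so I may pick a compact open subgroup $U \le \Full(G)$.  Setting $G' := \grp{H \cup U}$ produces an open (hence \tdlc and locally decomposable) subgroup of $\Full(G)$ that is compactly generated, piecewise dense, and expansive (all inherited from $H \le G'$ and $U \le G'$).  Then Corollary \ref{cor:piecewise_full_monoid_extension} endows $M := \mc{BI}(G')$ with a topological Boolean inverse monoid topology in which $G'$ is open, and compact generation plus expansivity of $G'$ upgrade via Proposition \ref{prop:subshift_expansive}(ii) to compact generation of $M$ as a topological BIM.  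Theorem \ref{thm:expansive_generation} then yields that $\ol{\Al(M)}$ is compactly generated inside $M^\times = \Full(G)$; the five-point-per-orbit hypothesis is automatic from minimality once $|X| \ge 5$, the smaller cases being trivial.

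The remaining step is to identify $\Al(M) = \Al(G)$.  Because $\Full(H) = \Full(G)$ we also have $\Full(G') = \Full(G)$, so $G$, $G'$, $\Full(G)$ and $M$ all share the same orbit partition of $X$, and in particular all have orbits of size at least $5$.  Proposition \ref{prop:n-orbits_implies_An=A} then lets me replace $\Al$ by $\Al_5$ throughout, after which Lemma \ref{lem:multisection_cover} allows any $5$-section of $M$ (respectively, of $\Full(G)$) to be covered by $5$-sections of $G'$ (respectively, of $G$), yielding $\Al(M) = \Al(G') = \Al(\Full(G)) = \Al(G)$.  I expect this last identification to be the main technicality: the same cover-and-combine scheme must be applied three times, and throughout one has to keep track of the distinction between multisections of a group and multisections of its Boolean completion.
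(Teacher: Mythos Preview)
Your (ii)$\Rightarrow$(iii) is correct and matches the paper's route; you are also right that the identification $\Al(M) = \Al(G') = \Al(G)$ deserves comment (the paper passes over it with just ``It follows that $\Al(H) = \Al(G)$''). Your covering scheme via degree~$5$ works, though a lighter argument suffices: any $3$-section of $\mc{BI}(G')$ can be covered by \emph{pairwise disjoint} $G'$-$3$-sections (refine one idempotent to a clopen partition fine enough that each restriction comes from a single element of $G'$), and for disjoint covers the degree-$5$ trick of Lemma~\ref{lem:multisection_cover} is unnecessary.

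The genuine gap is in (iii)$\Rightarrow$(i). By definition $\Al(G)$ is a subgroup of $\Full(G)$, and nothing in the hypotheses forces $\Al(G) \le G$; that is only established later, in Corollary~\ref{cor:A_is_open}, under the additional assumption that $G$ is piecewise full. So your $H = \grp{\ol{\Al(G)}, U}$ is an open subgroup of $\Full(G)$, but need not be contained in $G$: containing $U$ makes $H$ open in $\Full(G)$, not a subgroup of $G$. What you have actually proved is (iii)$\Rightarrow$(ii), and the implication back to (i) remains open. The paper's fix is to route through the Boolean inverse monoid: from $\ol{\Al(G)}$ compactly generated and expansive (Lemma~\ref{lem:A(G)_quotientspace}) together with $\mc{BI}(\ol{\Al(G)}) = \mc{BI}(G)$ (Lemma~\ref{lem:alternating_to_full}), Proposition~\ref{prop:subshift_expansive}(ii) gives that $\mc{BI}(G)$ itself is compactly generated. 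Then Proposition~\ref{prop:subshift_expansive}(i), applied with $G$ as the designated open subgroup of $\mc{BI}(G)^\times$, extracts a compact identity neighbourhood $S \subseteq G$ such that $H := \grp{S}$ is expansive and piecewise dense --- and now $H \le G$ by construction.
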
	
\begin{proof}
	We extend the topology of $G$ to $\Full(G;X)$ using Proposition~\ref{prop:full_contains_intersection_rists}.  The implication (i) $\Rightarrow$ (ii) is immediate.
	
	Suppose that (ii) holds. 
	Then there is a compactly generated piecewise dense subgroup $H$ of $\Full(G)$ that is expansive on $X$; by replacing $H$ with $\grp{H,U}$ for $U$ a compact identity neighbourhood in $G$, we may assume $H$ is open in $G$.
	We then see that $H$ is  locally decomposable and minimal on $X$, and $\Full(G) = \Full(H)$.
	It follows that $\Al(H) = \Al(G)$, while $\ol{\Al(H)}$ is compactly generated by Proposition~\ref{prop:subshift_expansive}(ii) and Theorem~\ref{thm:expansive_generation}, proving (iii).
	
	
	Finally, suppose that $\ol{\Al(G)}$ is compactly generated. 
	By Lemma~\ref{lem:A(G)_quotientspace}, the minimality of $G$ ensures that $\Al(G)$ is expansive, so a fortiori, $\ol{\Al(G)}$ is expansive. 
	Lemma \ref{lem:alternating_to_full} and Proposition~\ref{prop:subshift_expansive}(ii) ensure that $\mc{BI}(\ol{\Al(G)})=\mc{BI}(G)$ is compactly generated.  Proposition~\ref{prop:subshift_expansive}(i) then provides a compactly generated piecewise dense open subgroup $H$ of $G$ that acts expansively on $X$, so (iii) implies (i), completing the cycle of implications.
\end{proof}

We do not know in general what the relation is between compact generation of $\ol{\Al(G)}$ and of $\Full(G)$.

\section{Many piecewise full groups are simple-by-(discrete abelian)}\label{sec:A_is_open}

We are interested in the class $\ms{S}$ of \tdlc groups that are compactly generated, topologically simple and non-discrete.  Theorems \ref{thm:Nekrashevych_simple} and \ref{thm:expansive_generation} and Proposition \ref{prop:piecewise_full_extension:lcsc} together give conditions for $\overline{\Al(G)}$ to be in $\ms{S}$, starting from a locally decomposable group (or more generally, inverse monoid) $G$ acting on the Cantor space.  However, this naturally raises the questions of how far the group in $\ms{S}$ we have constructed is from being piecewise full (or in other words, how large the quotient $\Full(G)/\overline{\Al(G)}$ can be), and whether or not it contains the group $G$ we started with (or at least a large subgroup of it).  In this section we will in fact see that under the conditions of the previous theorems and assuming $G$ is not discrete, then actually $\Full(G)/\Al(G)$ is discrete and abelian; in other words, $\Al(G) = \Der(\Full(G))$ and $\Al(G)$ contains an open subgroup of $G$.  This also means that $\Al(G) = \ol{\Al(G)}$, which means we are obtaining an abstractly simple group in $\ms{S}$, and for practical examples the definition of $\Al(G)$ is more intuitive than that of $\ol{\Al(G)}$.
Before this, we recall some necessary preliminaries on \tdlc groups that are also required for later sections.

\subsection{Quasicentre and local decomposition lattice}\label{ssec:QZ_LD}

\begin{defn}
	Let $G$ be a topological group.  The \defbold{quasicentre} $\QZ(G)$ is the set of elements $g \in G$ such that $\CC_G(g)$ is open.  An action of $G$ on a locally compact zero-dimensional space $X$ is \defbold{non-discretely micro-supported} if for every $\alpha \in \mc{CO}(X) \setminus \{\emptyset\}$, the kernel of the action is not open in $\rist_G(\alpha)$.
\end{defn}

The next lemma makes some basic observations on the quasicentre, which will be used without further comment.
Recall that a \defbold{locally normal} subgroup of a topological group is one whose normaliser is open. 

\begin{lem}
	Let $G$ be a topological group and let $K$ be a discrete locally normal subgroup.  Then $K \le \QZ(G)$ and $\QZ(\N_G(K)/K) = \QZ(\N_G(K))/K$.  In particular, if $\QZ(G)$ is discrete, then it is the largest discrete normal subgroup of $G$, and $\QZ(G/\QZ(G)) = \triv$.
\end{lem}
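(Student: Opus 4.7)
The plan is to establish each of the three claims in turn by a short continuity-of-commutator argument that exploits the discreteness of $K$.

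First I would prove $K \le \QZ(G)$. Since $K$ is locally normal, the normaliser $N := \N_G(K)$ is open in $G$. Fix $k \in K$ and consider the continuous map $c_k : N \to K$, $n \mapsto [n,k] = nkn\inv k\inv$. Because $K$ is discrete, $\{1\}$ is open in $K$, so $\CC_N(k) = c_k^{-1}(\{1\})$ is an open neighbourhood of $1$ in $N$, and hence open in $G$. Thus $\CC_G(k) \supseteq \CC_N(k)$ is open, which gives $k \in \QZ(G)$.

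Next I would prove $\QZ(N/K) = \QZ(N)/K$, where $N = \N_G(K)$ and $\pi : N \to N/K$ is the quotient. The inclusion $\pi(\QZ(N)) \subseteq \QZ(N/K)$ is immediate since $\pi$ is an open map and $\pi(\CC_N(g)) \le \CC_{N/K}(gK)$. For the reverse inclusion, take $g \in N$ with $gK \in \QZ(N/K)$ and let $H = \pi^{-1}(\CC_{N/K}(gK))$, which is open in $N$. Every $h \in H$ satisfies $[h,g] \in K$. Using discreteness of $K$, pick an identity neighbourhood $V$ in $N$ with $V \cap K = \{1\}$, and by continuity of $h \mapsto [h,g]$ at $h = 1$ choose an identity neighbourhood $W$ in $N$ whose image under this map lies in $V$. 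Then $H \cap W$ is an open identity neighbourhood on which the commutator with $g$ lies in $V \cap K = \{1\}$, so $\CC_N(g)$ is open and $g \in \QZ(N)$.

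Finally, for the ``in particular'' clause I would first note that $\QZ(G)$ is always normal in $G$ (conjugation carries open centralisers to open centralisers), so under the hypothesis that $\QZ(G)$ is discrete it is itself a discrete normal subgroup. Conversely any discrete normal subgroup $L$ of $G$ is locally normal, and so by the first claim $L \le \QZ(G)$; this proves maximality. Applying the second claim with $K = \QZ(G)$ (so $\N_G(K) = G$) then gives $\QZ(G/\QZ(G)) = \QZ(G)/\QZ(G) = \triv$.

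The arguments are entirely routine applications of continuity of the commutator map; the one mildly delicate point will be in the reverse inclusion of the second claim, where one must pass correctly between the topology on $N/K$ and that on $N$, and this is handled exactly by the choice of $V$ meeting $K$ trivially.
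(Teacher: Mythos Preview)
Your proof is correct and uses essentially the same continuity-of-conjugation idea as the paper. The only organizational difference is that the paper first reduces to $G = \N_G(K)$, proves the quotient identity $\QZ(G/K) = \QZ(G)/K$ (phrasing the reverse inclusion as ``the $H$-conjugacy class of $g$ is discrete, so by continuity of conjugation $\CC_H(g)$ is open''), and then reads off $K \le \QZ(G)$ as the special case $g \in K$; you instead prove $K \le \QZ(G)$ separately first and then the quotient identity, but the underlying argument is the same.
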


\begin{proof}
	Since $\N_G(K)$ is open in $G$, we see that $\QZ(\N_G(K)) \le \QZ(G)$.  From now on we may assume without loss of generality that $G = \N_G(K)$.
	
	The image in $G/K$ of any open subgroup of $G$ is open, from which it is clear that $\QZ(G)K/K \le \QZ(G/K)$.  On the other hand, if $g \in G$ is such that $gK \in \QZ(G/K)$, then there is an open subgroup $H$ of $G$ such that $H \ge K$ and $hgh\inv \in gK$ for every $h \in H$.  In particular, the $H$-conjugacy class of $g$ is discrete; since $H$ acts continuously by conjugation, it follows that $\CC_H(g)$ is open and hence $g \in \QZ(H) \le \QZ(G)$.  In particular, we see that $K \le \QZ(G)$ and $\QZ(G)/K \ge \QZ(G/K)$.
	
	The final sentence of the lemma is now clear as a special case.
\end{proof}

\begin{lem}\label{lem:non-discrete_condition}
	Let $G$ be a non-trivial topological group acting continuously by homeomorphisms on a compact zero-dimensional space $X$.  Suppose that the action is faithful, locally decomposable and minimal.  Then the following are equivalent:
	\begin{enumerate}[(i)]
		\item The action is non-discretely micro-supported;
		\item $G$ has trivial quasicentre;
		\item $G$ is not discrete.
	\end{enumerate}
\end{lem}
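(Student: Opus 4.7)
The plan is to argue a cycle (i) $\Rightarrow$ (ii) $\Rightarrow$ (iii) $\Rightarrow$ (i); only the first two implications are short, the last contains the real work and uses all three structural hypotheses (faithful, locally decomposable, minimal). Throughout I will use that the action is faithful so that ``the kernel of the action'' in the definition of non-discretely micro-supported simply means $\triv$, and hence (i) translates to the assertion that $\rist_G(\alpha)$ is non-discrete for every nonempty clopen $\alpha$.

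For (ii) $\Rightarrow$ (iii): a discrete topological group has every singleton open and thus equals its own quasicentre; since $G$ is assumed non-trivial, $\QZ(G) = \triv$ precludes discreteness.

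For (i) $\Rightarrow$ (ii): I will assume $\QZ(G) \neq \triv$ and produce some nonempty clopen $\alpha$ with $\rist_G(\alpha)$ discrete. Pick $1 \neq g \in \QZ(G)$; by faithfulness, choose $x \in X$ with $gx \neq x$ and shrink to a clopen neighbourhood $\alpha$ of $x$ with $\alpha \cap g\alpha = \emptyset$. Any $h \in \rist_G(\alpha) \cap \CC_G(g)$ equals $ghg\inv \in \rist_G(g\alpha)$, forcing $h$ to have support in $\alpha \cap g\alpha = \emptyset$, i.e.\ $h = 1$. Since $\CC_G(g)$ is open in $G$, this yields $\{1\}$ open in $\rist_G(\alpha)$, violating (i).

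The main work is (iii) $\Rightarrow$ (i). Suppose for contradiction that some nonempty clopen $\alpha$ has discrete rigid stabiliser. By minimality, for each $x \in X$ there is $g \in G$ with $x \in g\inv \alpha$; compactness of $X$ then lets me cover $X$ with finitely many translates $g_1\alpha,\dots,g_n\alpha$, which can be refined to a clopen partition $\{\beta_1,\dots,\beta_n\}$ of $X$ with $\beta_i \subseteq g_i\alpha$. Since conjugation by $g_i$ is a self-homeomorphism of $G$, each $\rist_G(g_i\alpha) = g_i\rist_G(\alpha)g_i\inv$ is discrete, and the subgroup $\rist_G(\beta_i) \le \rist_G(g_i\alpha)$ inherits the subspace topology and is therefore discrete too. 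Now local decomposability applied to the partition $\{\beta_1,\dots,\beta_n\}$ says $\prod_{i=1}^n \rist_G(\beta_i)$ is open in $G$ and carries the product topology; a finite product of discrete groups with the product topology is discrete, so $G$ contains a discrete open subgroup and hence is itself discrete, contradicting (iii). I expect this transfer from discreteness of a single rigid stabiliser to discreteness of all parts of a full partition — where minimality, compactness and local decomposability must be combined — to be the only non-routine step.
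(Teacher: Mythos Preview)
Your proof is correct and follows essentially the same approach as the paper: the same cycle of implications, the same use of a clopen $\alpha$ with $\alpha\cap g\alpha=\emptyset$ to show $\rist_G(\alpha)\cap\CC_G(g)=\triv$ for (i)$\Leftrightarrow$(ii), and the same minimality-plus-compactness covering argument refined to a partition, combined with local decomposability, for (iii)$\Rightarrow$(i). The only cosmetic difference is that you argue two of the implications by contrapositive whereas the paper argues them directly.
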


\begin{proof}
	Suppose $G$ is non-discretely micro-supported, and let $g \in G \setminus \triv$.  Since $G$ acts faithfully there is some $\alpha \in \mc{CO}(X) \setminus \{\emptyset\}$ such that $\alpha \cap g\alpha = \emptyset$.  In particular, we see that $\CC_{\rist_G(\alpha)}(g)$ is trivial, since $g$ does not preserve the support of any non-trivial element of $\rist_G(\alpha)$.  Since $\rist_G(\alpha)$ is not discrete, it follows that $\CC_G(g)$ is not open, that is, $g \not\in \QZ(G)$.  This proves that $\QZ(G)=\triv$, showing that (i) implies (ii).
	
	It is clear that (ii) implies (iii).  Now suppose that $G$ is not discrete and let $\alpha \in \mc{CO}(X) \setminus \{\emptyset\}$.  Since $G$ acts minimally there exist $g_1,\dots,g_n \in G$ such that $X = \bigcup^n_{i=1}g_i\alpha$.  We can thus find a partition $\mc{P} = \{\beta_1,\dots,\beta_n\}$ of $X$ into clopen sets such that $\beta_i \subseteq g_i\alpha$.  Since the action is locally decomposable, the group
	\[
	H = \prod^n_{i=1}\rist_G(\beta_i)
	\]
	is open in $G$.  Since $G$ is not discrete, $H$ is not discrete, so for some $1 \le i \le n$ the factor $\rist_G(\beta_i)$ is non-discrete.  We then have $g\inv_i(\rist_G(\beta_i)g_i \le \rist_G(\alpha)$, so $\rist_G(\alpha)$ is non-discrete.  Since $\alpha \in \mc{CO}(X) \setminus \{\emptyset\}$ was arbitrary we deduce that the action is non-discretely micro-supported, showing that (iii) implies (i).
\end{proof}

Some useful tools for studying \tdlc groups become available under the assumption that the quasicentre is discrete, one of which is the decomposition lattice.

\begin{defn}\label{defn:ldlat}
	Let $G$ be a \tdlc group such that $\QZ(G)$ is discrete and let $U$ be a compact open subgroup of $G$.  A \defbold{locally direct factor} of $G$ is a closed subgroup $K$ such that $G$ has an open subgroup of the form $K \times L$.  Two locally direct factors $K_1$ and $K_2$ are equivalent if their intersection is open in both.
	The \defbold{(local) decomposition lattice} $\ldlat(G)$ of $G$ is then the set of equivalence classes of locally direct factors, with partial order induced by inclusion of subgroups.
	
	The group $G$ acts on $\ldlat(G)$ by conjugation. 	
	Say that $G$ is \defbold{faithful locally decomposable} if this action is faithful.
\end{defn}

As long as $\QZ(G)$ is discrete, then $\ldlat(G)$ is a Boolean algebra; see \cite[Theorem~4.5]{CRW-Part1}.  Note also that $\ldlat(G)$ is determined by the isomorphism type of any identity neighbourhood, so we have $\ldlat(G) \cong \ldlat(G/\QZ(G))$.  The Stone space of $\ldlat(G)$ is then a compact zero-dimensional space admitting a canonical action of $G$ by homeomorphisms.
 The kernel of this action always contains $\QZ(G)$, but can be larger in general.

The faithful locally decomposable terminology is justified by the following.

\begin{lem}[{See \cite[Theorem~5.18]{CRW-Part1}}]\label{lem:loc_decomp:ldlat}
	Let $G$ be a non-trivial \tdlc group with trivial quasicentre.
	\begin{enumerate}[(i)]
		\item Let $\mc{A}$ be a $G$-invariant subalgebra of $\ldlat(G)$ on which $G$ acts faithfully.  Then the action of $G$ on $X = \mf{S}(\mc{A})$ is locally decomposable.
		\item Let $X$ be a compact zero-dimensional space equipped with a faithful locally decomposable action of $G$.  Then $\mc{CO}(X)$ is $G$-equivariantly isomorphic to a unique subalgebra $\mc{A}$ of $\ldlat(G)$.
	\end{enumerate}
\end{lem}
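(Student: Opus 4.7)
The plan is to set up a Stone-duality-type correspondence between $G$-invariant subalgebras of $\ldlat(G)$ on which $G$ acts faithfully, and faithful locally decomposable $G$-actions on compact zero-dimensional spaces, with (i) providing the action from the subalgebra and (ii) recovering the subalgebra from the action.

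For (i), I start with $\mc{A} \le \ldlat(G)$ as in the statement and denote by $X_a$, $a \in \mc{A}$, the basic clopens of $X = \mf{S}(\mc{A})$. The key claim to establish is that for each $a \in \mc{A}$, a suitable representative $K_a$ of $a$ in $\ldlat(G)$ acts trivially on $X_{a^{\perp}}$, and hence lies inside $\rist_G(X_a)$. This is analysed via the Stone action: using the complement $K_{a^{\perp}}$ and the fact that in an open subgroup of the form $K_a \times K_{a^{\perp}}$ the two factors commute, one checks that $K_a$ preserves every ultrafilter containing $a^{\perp}$, forcing trivial action on $X_{a^{\perp}}$. Given this, any finite clopen partition of $X$ may be refined to one of the form $\{X_{a_1},\dots,X_{a_n}\}$ with $a_1 \vee \cdots \vee a_n = 1$ in $\mc{A}$, and iterated application of the Boolean/direct-product structure of $\ldlat(G)$ (from \cite{CRW-Part1}) produces an open subgroup $U = K_{a_1} \times \cdots \times K_{a_n}$ of $G$ carrying the product topology. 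Since each $K_{a_i} \le \rist_G(X_{a_i})$, the subgroup $\prod_i \rist_G(X_{a_i})$ is also open and carries the product topology, establishing local decomposability.

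For (ii), I would define $\phi \colon \mc{CO}(X) \to \ldlat(G)$ by $\phi(Y) = [\rist_G(Y)]$. Local decomposability makes $\rist_G(Y) \times \rist_G(Y^{\perp})$ open in $G$, so $\rist_G(Y)$ is indeed a locally direct factor and $\phi$ is well-defined. The conjugation identity $g\rist_G(Y)g\inv = \rist_G(gY)$ gives $G$-equivariance, and the direct-product decomposition $\rist_G(Y \sqcup Y') \supseteq \rist_G(Y) \times \rist_G(Y')$ for disjoint clopens, together with equality up to quasicentre, lets me verify that $\phi$ is a Boolean algebra homomorphism (passing to complements via the identification $a^{\perp} = [K^{\perp}]$ in $\ldlat(G)$). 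Uniqueness of the target subalgebra then follows from Stone duality applied to $\phi$.

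The main obstacle is injectivity of $\phi$ in (ii). Suppose $\phi(Y) = \phi(Z)$ with $Y \neq Z$, and pick a non-empty clopen $W \subseteq Y \smallsetminus Z$ (say). Combining $\rist_G(W) \cap \rist_G(Z) = \triv$ with the fact that $\rist_G(W)$ is a local direct factor of $\rist_G(Y)$ (via local decomposability), the equality $[\rist_G(Y)] = [\rist_G(Z)]$ forces $[\rist_G(W)] = 0$ in $\ldlat(G)$, so that $\rist_G(W)$ is discrete and consequently contained in $\QZ(G)$. To reach a contradiction I would use that, by the same argument applied to every non-empty clopen $W' \subseteq W$, the rigid stabiliser $\rist_G(W')$ is also discrete and central, so that $\rist_G((W')^{\perp})$ contains an open subgroup of $G$ fixing $W'$ pointwise; choosing $W'$ small enough around a point moved by a prescribed non-trivial $g \in G$ (supplied by faithfulness), the commutator of $g$ with an element of $\rist_G(W')$ produces a non-trivial element of $\QZ(G)$ that cannot in fact be central in any identity neighbourhood, contradicting discreteness of $\QZ(G)$. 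The delicate step is propagating faithfulness through local decomposability to arbitrarily small clopens; this is where the standing hypothesis that $\QZ(G)$ is discrete is used essentially.
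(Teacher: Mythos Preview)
The paper does not supply its own proof here, deferring entirely to \cite[Theorem~5.18]{CRW-Part1}. Your outline for (i) is sound: once one checks that a representative $K_a$ centralises every representative of $b \le a^\perp$ up to an open subgroup, $K_a$ fixes every ultrafilter through $a^\perp$, and iterating the direct-factor structure of $\ldlat(G)$ yields the required open product of rigid stabilisers.

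For (ii) you correctly isolate injectivity of $\phi(Y) = [\rist_G(Y)]$ as the crux and correctly reduce its failure to the existence of a nonempty clopen $W$ with $\rist_G(W) \le \QZ(G) = \triv$. But your contradiction step does not work: you propose to commute some $g$ with a non-trivial element of $\rist_G(W')$, yet you have just shown $\rist_G(W') = \triv$, and faithfulness on $X$ does not by itself supply a $g$ moving a point of $W$. Indeed, if $G$ has a faithful locally decomposable action on some $X_1$ and one sets $X = X_1 \sqcup \{*\}$ with $*$ fixed, the resulting action on $X$ is still faithful and locally decomposable, yet $\phi(\{*\}) = 0 = \phi(\emptyset)$ and no $G$-equivariant embedding $\mc{CO}(X) \hookrightarrow \ldlat(G)$ exists. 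What repairs the argument---and what holds in every application the paper actually makes of this lemma---is that the action be micro-supported; then $\rist_G(W) = \triv$ forces $W = \emptyset$ immediately. This extra hypothesis is available in the paper's uses via minimality and Lemma~\ref{lem:non-discrete_condition}, and is presumably built into the precise statement of \cite[Theorem~5.18]{CRW-Part1}. Your sketch becomes correct once this is inserted, but the commutator argument as written should be discarded.
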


\subsection{Robustly monolithic groups and compressible actions}\label{sec:robust}

The \defbold{monolith} $\Mon(G)$ of a group $G$ is the intersection of all non-trivial normal subgroups of $G$. 
If $G$ is a topological group, then $\TMon(G)$ is the intersection of all non-trivial closed normal subgroups of $G$. 
Note that if $\Mon(G)$ is non-trivial, then $\TMon(G)$ is the closure of $\Mon(G)$.

\begin{defn}\label{defn:compressible}
	Let $G$ be a group acting on a locally compact space $X$.  Say a subset $Y$ of $X$ is \defbold{$G$-compressible} if every nonempty open subset of $X$ contains a $G$-translate of $Y$ as a non-dense subset.  We say the action of $G$ is \defbold{compressible} if a nonempty compressible open set exists for the action.
	The action of $G$ on $X$ is \defbold{fully compressible} if $X$ is compact and every non-dense open subset of $X$ is $G$-compressible.
\end{defn}
Of course, both properties are inherited by overgroups. 
\begin{rmk}
	Suppose that $X$ is locally compact and zero-dimensional. 
	If $G$ acts compressibly on $X$ then we may assume without loss of generality that the compressible subset $Y$ is clopen. 
	If $G$ is a topological group acting continuously on $X$ (in particular, the topology of $G$ refines the compact-open topology) and $H\leq G$ is dense then the action of $G$ is compressible if and only if the action of $H$ is compressible. 
\end{rmk}

\begin{defn}\label{defn:expansive_group}
	A \tdlc group $G$ is \defbold{expansive} (acting on itself) if and only if it has an identity neighbourhood $U$ such that $\bigcap_{g \in G}gUg\inv=\triv$.
	It is  \defbold{regionally expansive} if it has a compactly generated subgroup $H$ that is expansive. 
	
	Denote by $\ms{R}$ the class of \defbold{robustly monolithic groups}: those \tdlc groups $G$ such that $\TMon(G)$ is non-discrete, topologically simple and regionally expansive.  
	Note that $\ms{R}$ contains the class $\ms{S}$ of \tdlc groups $G$ such that $G$ is non-discrete, topologically simple and compactly generated.  
		
		Given a class $\mc{C}$ of \tdlc groups, we write $\mc{C}_{\ldlat}$ for the class of groups in $\mc{C}$ that are faithful locally decomposable.
\end{defn}

\begin{lem}[{See \cite[Section~5.1]{CRW-Part1}}]\label{lem:centraliser_lattice}
	Let $G$ be a \tdlc group.
	\begin{enumerate}[(i)]
		\item Suppose that $G$ has no non-trivial abelian locally normal subgroup.  Then the following is a Boolean algebra:
		\[
		\mathrm{LC}(G):= \{ \CC_G(K) \mid K \le G \text{ locally normal}\};
		\]
		write $\Omega_G$ for the Stone space of $\mathrm{LC}(G)$, equipped with the action induced by conjugation on $\mathrm{LC}(G)$.  The set of elements $K$ of $\mathrm{LC}(G)$ such that $K \times \CC_G(K)$ is open in $G$ form a subalgebra $\mathrm{LD}(G)$, which is $G$-equivariantly isomorphic to $\ldlat(G)$.  If $G$ acts faithfully on $\Omega_G$, then the action is micro-supported.
		\item If $G$ has no non-trivial abelian locally normal subgroup and $G$ is an open subgroup of a \tdlc group $H$ such that $\QZ(H) =\triv$, then $H$ has no non-trivial abelian locally normal subgroup, and $\Omega_G \cong \Omega_H$ as topological spaces equipped with a $G$-action.
		\item If $X$ is a compact zero-dimensional $G$-space such that the action is faithful and micro-supported, then $G$ has no non-trivial abelian locally normal subgroup and $X$ is isomorphic as a $G$-space to a quotient of $\Omega_G$.
	\end{enumerate}
\end{lem}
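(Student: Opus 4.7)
The plan is to follow the construction from \cite[Section~5.1]{CRW-Part1}, indicating the additional bookkeeping needed for the transfer and quotient statements of (ii) and (iii). For (i), I would first observe that under the no-abelian-locally-normal hypothesis every locally normal $K$ satisfies $\CC_G(\CC_G(\CC_G(K)))=\CC_G(K)$, so that $K\mapsto\CC_G(\CC_G(K))$ is a closure operator whose closed elements are exactly the members of $\mathrm{LC}(G)$. Meets arise from intersections via $\CC_G(K_1)\cap\CC_G(K_2)=\CC_G(\langle K_1,K_2\rangle)$, complements are given by $\CC_G(K)^\perp:=\CC_G(\CC_G(K))$, and the remaining Boolean axioms reduce to elementary centraliser identities. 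The sub-algebra $\mathrm{LD}(G)$ is cut out by the openness condition on $K\times\CC_G(K)$, and the $G$-equivariance of the identification $\mathrm{LD}(G)\cong\ldlat(G)$ is automatic since both carry the conjugation action. For the micro-support statement, when $G$ acts faithfully on $\Omega_G$ I would check that each non-empty basic clopen $U_{\CC_G(K)}=\{x\in\Omega_G\mid\CC_G(K)\in x\}$ has non-trivial rigid stabiliser by exhibiting a non-trivial element of $\CC_G(\CC_G(K))$ supported there.

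For (ii), I would argue directly: if $A\le H$ were abelian and locally normal, then $A\cap G$ would be abelian and locally normal in $G$ (since $G\cap\N_H(A)$ is open in $G$), forcing $A\cap G=\triv$; but then $A$ is discrete, and being normalised by an open subgroup of $H$ it lies in $\QZ(H)=\triv$. The assignment $K\mapsto\CC_H(K)$ on locally normal subgroups of $G$ then induces a $G$-equivariant Boolean-algebra isomorphism $\mathrm{LC}(G)\cong\mathrm{LC}(H)$, with inverse $L\mapsto\CC_G(L\cap G)$, and Stone duality yields the homeomorphism $\Omega_G\cong\Omega_H$ as $G$-spaces.

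For (iii), the rigid stabiliser $\rist_G(\alpha)$ is locally normal for every clopen $\alpha\subseteq X$, so I would define $\iota\colon\mc{CO}(X)\to\mathrm{LC}(G)$ by $\iota(\alpha):=\CC_G(\rist_G(X\setminus\alpha))$ and verify it is an injective $G$-equivariant Boolean-algebra homomorphism using micro-support. Dualising through Stone duality then produces the desired $G$-equivariant continuous surjection $\Omega_G\to X$, realising $X$ as a quotient. The main obstacle is the preliminary claim that $G$ has no non-trivial abelian locally normal subgroup: here the plan is to exploit that a micro-supported action furnishes commuting non-trivial rigid stabilisers with disjoint supports; for any candidate abelian locally normal $A$, any $a\in A$ moving a point produces disjoint clopens $\alpha$ and $a\alpha$ with non-trivial commuting rigid stabilisers, and combining these with the fact that $A$ centralises itself inside its open normaliser yields a contradiction by the standard argument of CRW-Part1. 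I expect this last step to be the main technical point; everything else reduces to centraliser bookkeeping together with Stone duality between Boolean algebras and compact zero-dimensional $G$-spaces.
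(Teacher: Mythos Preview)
The paper does not give a proof of this lemma; it simply records the relevant facts from \cite[Section~5.1]{CRW-Part1}. So there is no ``paper's argument'' to compare against beyond that citation, and your sketch is an attempt to reconstruct the CRW argument rather than the present paper's. For parts (i) and (ii) your outline is essentially the standard one and is fine at the level of a sketch; the only place to be a bit more careful in (ii) is checking that $K\mapsto \CC_H(K)$ and $L\mapsto \CC_G(L\cap G)$ really are mutually inverse Boolean homomorphisms, which uses that every locally normal subgroup of $H$ meets $G$ in a locally normal subgroup of $G$ of the same centraliser class.

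For (iii) there is a genuine gap. You write that the ``no non-trivial abelian locally normal subgroup'' claim follows ``by the standard argument of CRW-Part1'', but you do not give the argument, and as written the claim is simply false without further hypotheses: take $G$ to be Thompson's group $V$ with the discrete topology, acting on the Cantor space in the usual way. This action is faithful and micro-supported, yet any element of finite order generates a non-trivial finite abelian subgroup, which is locally normal since every subgroup of a discrete group has open normaliser. The commuting-rigid-stabiliser idea you sketch shows that $\CC_{\rist_G(\alpha)}(a)=\{1\}$ when $\alpha\cap a\alpha=\emptyset$, but this does not by itself contradict local normality of $\langle a\rangle$. The result you want from \cite{CRW-Part1} carries an extra ingredient (in that paper's conventions locally normal subgroups are compact and the ambient group satisfies $\QZ(G)=\{1\}$, which is how the argument rules out the discrete case); you should look up the precise statement there and import whatever hypothesis is actually needed, since the paper only uses (iii) in contexts where $\QZ(G)=\{1\}$ is already known.
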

	
The argument of the next lemma is taken from a preprint version of \cite{CapraceLeBoudec}, although it is not stated exactly in this form in the final article.

\begin{lem}\label{lem:centraliser_lattice_minimal}
	Let $G$ be a \tdlc group with no non-trivial abelian locally normal subgroup.  Suppose $G$ admits a faithful minimal micro-supported action on a compact zero-dimensional space $X$.  Then the action of $G$ on $\Omega_G$ is faithful, minimal and micro-supported.  
 If moreover the action of $G$ on $X$ is fully compressible, then so is the action on $\Omega_G$.
\end{lem}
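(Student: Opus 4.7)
The plan is to leverage the $G$-equivariant continuous surjection $\pi \colon \Omega_G \to X$ provided by Lemma~\ref{lem:centraliser_lattice}(iii), and to run the whole argument on a single recurring mechanism: if a subgroup $R \le G$ fixes a subset $S \subseteq \Omega_G$ pointwise and $\pi(S) = X$, then $G$-equivariance and surjectivity of $\pi$ force $R$ to act trivially on $X$, so $R = \triv$ by faithfulness on $X$. Faithfulness on $\Omega_G$ is then immediate, since any $g$ acting trivially on $\Omega_G$ acts trivially on the quotient $X$; micro-supportedness on $\Omega_G$ follows at once from Lemma~\ref{lem:centraliser_lattice}(i), using the hypothesis that $G$ has no non-trivial abelian locally normal subgroup.

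For minimality I would argue by contradiction: given a nonempty proper closed $G$-invariant subset $Y \subsetneq \Omega_G$, I choose a nonempty clopen $\mathcal{U} \subseteq \Omega_G \setminus Y$ and set $R = \rist_G(\mathcal{U})$ (for the action on $\Omega_G$), which is non-trivial by micro-supportedness. Since $Y \subseteq \mathcal{U}^{\perp}$, the group $R$ fixes $Y$ pointwise; minimality on $X$ gives $\pi(Y) = X$, and the recurring mechanism forces $R = \triv$, a contradiction.

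For full compressibility, the key step to isolate is the following denseness claim: for every proper nonempty clopen $\mathcal{U} \subsetneq \Omega_G$ there is a nonempty clopen $U_0 \subseteq X$ with $\pi^{-1}(U_0) \subseteq \mathcal{U}$. This is equivalent to $\pi(\mathcal{U}^{\perp}) \neq X$, and follows from another application of the recurring mechanism to $R = \rist_G(\mathcal{U})$ and $S = \mathcal{U}^{\perp}$. With this in hand, after reducing to clopens $V, W \subseteq \Omega_G$ with $V \neq \Omega_G$ and $W \neq \emptyset$ by zero-dimensionality, I would pick nonempty clopens $U_1, U_2 \subseteq X$ with $\pi^{-1}(U_1) \subseteq V^{\perp}$ and $\pi^{-1}(U_2) \subseteq W$, invoke full compressibility on $X$ to produce $g \in G$ with $gU_1^{\perp} \subsetneq U_2$, and chase the inclusions $gV \subseteq \pi^{-1}(gU_1^{\perp}) \subseteq \pi^{-1}(U_2) \subseteq W$, with a short diagram argument ruling out $gV = W$.

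The main difficulty I anticipate is conceptual rather than computational: identifying the correct formulation of the denseness claim and recognising that it holds automatically under faithful minimality, without needing full compressibility, is what makes the transfer of full compressibility smooth rather than requiring a separate technical argument.
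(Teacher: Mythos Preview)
Your proposal is correct and follows essentially the same approach as the paper: both arguments use the $G$-equivariant quotient map $\pi\colon \Omega_G \to X$ from Lemma~\ref{lem:centraliser_lattice}(iii) together with micro-supportedness on $\Omega_G$ to show that every nonempty clopen $\mathcal{U} \subseteq \Omega_G$ satisfies $\pi(\mathcal{U}^\perp) \neq X$ (equivalently, contains a full fibre of $\pi$), and then push minimality and full compressibility up from $X$. Your ``recurring mechanism'' is simply the contrapositive of the paper's fibre-containment observation, so the two proofs differ only in packaging.
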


\begin{proof}
	By Lemma~\ref{lem:centraliser_lattice}(iii) we have a $G$-equivariant quotient map $\pi: \Omega_G \rightarrow X$.  Clearly $G$ acts faithfully on $\Omega_G$, and by Lemma~\ref{lem:centraliser_lattice}(i), this action is micro-supported.
	
	Let $O$ be a nonempty open subset of $\Omega_G$.  The subgroup $\rist_G(O)$ is non-trivial; take $g \in \rist_G(O) \setminus \{1\}$.  Since $g$ acts faithfully on $X$, there is some $x \in X$ such that $gx \neq x$, and hence $g\pi\inv(x)$ is disjoint from $\pi\inv(x)$.  Since $g$ fixes $\Omega_G \setminus O$ pointwise, we must have $\pi\inv(x) \subseteq O$.
	
	Now suppose $O$ is $G$-invariant.  Then $\pi(\Omega_G \setminus O)$ is a proper closed $G$-invariant subset of $X$; since $G$ acts minimally on $X$, we deduce that $O = \Omega_G$, showing that $\Omega_G$ is a minimal $G$-space.
	
	For the rest of the proof we suppose that the action of $G$ on $X$ is fully compressible.  Given proper non-empty clopen subsets $O_1,O_2 \subseteq \Omega_G$, then $\pi(O_1)$ and $\pi(\Omega_G \setminus O_2)$ are proper non-empty closed subsets of $X$, so $\pi(O_1)$ is contained in some non-dense open subset $Y$ of $X$.  There is then $g \in G$ such that $gY$ is contained in the complement of $\pi(\Omega_G \setminus O_2)$, and hence $gO_1$ is disjoint from $\Omega_G \setminus O_2$, in other words, $gO_1 \subseteq O_2$, showing that the action of $G$ on $\Omega_G$ is fully compressible.
\end{proof}

\begin{lem}\label{lem:simple_dynamics}
	Let $G \in \ms{R}$.
	\begin{enumerate}[(i)]
		\item \textup{(See \cite[Proposition~5.1.2]{CRW-DenseLC})} $G$ is regionally expansive, $\QZ(G)=\triv$ and $G$ has no non-trivial abelian locally normal subgroup.
		\item \textup{(See \cite[Theorem~7.3.3]{CRW-DenseLC})} The action of $\TMon(G)$ on $\Omega_G$, and hence on every quotient $G$-space of $\Omega_G$, is minimal and compressible.  Consequently the action of $G$ on any non-trivial quotient $G$-space of $\Omega_G$ is faithful.
	\end{enumerate}
\end{lem}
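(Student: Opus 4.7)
The plan is to combine the cited structural results \cite[Proposition~5.1.2, Theorem~7.3.3]{CRW-DenseLC} with the setup of Section~\ref{sec:robust}; below I sketch the main ideas behind each assertion.

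For (i), regional expansivity of $G$ is immediate: any compactly generated expansive subgroup of $\TMon(G)$ remains a compactly generated expansive subgroup of $G$, with the same identity neighbourhood witnessing expansivity. For $\QZ(G) = \triv$, first note that $\QZ(G) \cap \TMon(G)$ is normal in the topologically simple group $\TMon(G)$, hence either trivial or dense; density is ruled out by regional expansivity, since quasicentral elements have open centralisers and a dense supply of such elements is incompatible with an identity neighbourhood $U$ in a compactly generated $K \le \TMon(G)$ satisfying $\bigcap_{k\in K}kUk^{-1} = \triv$. One then promotes this to $\QZ(G) = \triv$ via $\CC_G(\TMon(G)) = \triv$: a non-trivial centralising subgroup would intersect $\TMon(G)$ in a closed normal subgroup, which must be either trivial or dense by simplicity, and a dense central subgroup in a non-discrete, topologically simple \tdlc group leads to an immediate contradiction. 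Absence of non-trivial abelian locally normal subgroups is argued similarly, after intersecting with $\TMon(G)$ and leveraging topological simplicity together with non-discreteness.

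For (ii), by Lemma~\ref{lem:centraliser_lattice} the $G$-action on $\Omega_G$ is the Stone dual of the conjugation action on $\lcent(G)$. Minimality of $\TMon(G)$ on $\Omega_G$ reduces to showing that every non-zero $C \in \lcent(G)$ admits a $\TMon(G)$-translate below any prescribed non-zero $C' \in \lcent(G)$; this follows from topological simplicity, since the $G$-normal closure of any non-trivial locally normal subgroup of $G$ is dense in $\TMon(G)$. Compressibility on $\Omega_G$ is the technical core: using regional expansivity, one produces inside a compact open subgroup a family of locally normal subgroups corresponding to arbitrarily small clopens of $\Omega_G$, each of which can then be pushed inside any prescribed non-empty open set of $\Omega_G$ by an element of $\TMon(G)$. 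Both minimality and compressibility transfer to any quotient $G$-space of $\Omega_G$, since these properties are preserved under $G$-equivariant continuous surjections of compact spaces.

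For the faithfulness consequence, let $Y$ be a non-trivial quotient $G$-space of $\Omega_G$ and $N$ the kernel of the action $G \to \Homeo(Y)$. Then $N$ is closed and normal in $G$, so $N \cap \TMon(G)$ is closed and normal in $\TMon(G)$; if non-trivial, topological simplicity would force it dense in $\TMon(G)$, and by continuity $\TMon(G)$ would act trivially on $Y$, contradicting its minimality on the non-trivial $Y$. So $N \cap \TMon(G) = \triv$; normality of both $N$ and $\TMon(G)$ in $G$ then gives $[N,\TMon(G)] \le N \cap \TMon(G) = \triv$, so $N \le \CC_G(\TMon(G)) = \triv$. The main obstacle throughout is the compressibility part of (ii), which is the technical heart of \cite[Theorem~7.3.3]{CRW-DenseLC} and requires a delicate interplay between the expansive compactly generated subgroup and the structure of the centraliser lattice.
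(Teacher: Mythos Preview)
The paper does not prove this lemma; it simply records it as a citation of \cite[Proposition~5.1.2 and Theorem~7.3.3]{CRW-DenseLC}, so there is no argument to compare against. Your sketch is a reasonable gloss on why these results hold, and the faithfulness consequence at the end of (ii) is correctly derived: once minimality of $\TMon(G)$ on $Y$ and $\CC_G(\TMon(G))=\triv$ are in hand, the kernel argument via $[N,\TMon(G)]\le N\cap\TMon(G)$ is exactly right.

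That said, two points deserve comment. In (i), your reduction from $\QZ(G)\cap\TMon(G)=\triv$ to $\QZ(G)=\triv$ works because $[\QZ(G),\TMon(G)]\le\QZ(G)\cap\TMon(G)$ (both subgroups being normal), which you might make explicit; and your argument for $\CC_G(\TMon(G))=\triv$ is slightly roundabout, since any non-trivial closed normal subgroup of $G$ must \emph{contain} $\TMon(G)$ by definition of the topological monolith, forcing $\TMon(G)$ abelian and yielding the contradiction directly. In (ii), the sentence ``this follows from topological simplicity, since the $G$-normal closure of any non-trivial locally normal subgroup of $G$ is dense in $\TMon(G)$'' does not by itself establish minimality of the $\TMon(G)$-action on $\Omega_G$: density of normal closures does not obviously translate into the statement that the only $\TMon(G)$-invariant ideals of $\lcent(G)$ are trivial. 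You acknowledge that compressibility is the technical heart and defer to the reference, which is appropriate here; just be aware that both minimality and compressibility on $\Omega_G$ require genuine work beyond what is sketched, and the paper treats them as black boxes for that reason.
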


\begin{thm}[{\cite[Theorem 1.13]{GR_compressible}}]\label{thm:loc_decomp_compressible}
	Let $G$ be a \tdlc group acting faithfully, minimally and locally decomposably on a compact zero-dimensional space $X$, and let $A$ be a normal subgroup of $G$ (not assumed to be closed).  Suppose that the action of $A$ on $X$ is compressible.  Then $A$ is open in $G$ and the action of $A$ on $X$ is minimal and locally decomposable.
\end{thm}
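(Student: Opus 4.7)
The plan is to establish the three conclusions -- minimality of the $A$-action, openness of $A$ in $G$, and local decomposability of the $A$-action -- in sequence, reducing the last two to a single key lemma.

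For minimality, I would fix a non-empty clopen $Y\subseteq X$ witnessing compressibility of $A$. For $y\in Y$, if $\overline{Ay}\neq X$ then by compressibility the open complement contains some $aY$, contradicting $ay\in Ay\subseteq\overline{Ay}$; hence $\overline{Ay}=X$ for every $y\in Y$. For arbitrary $x\in X$, minimality of $G$ gives $g\in G$ with $gx\in Y$, and normality of $A$ gives $\overline{Ax}=g^{-1}\overline{A(gx)}=X$.

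The main obstacle is the following key lemma: \emph{for every sufficiently small non-empty clopen $U\subseteq X$, $\rist_G(U)\leq A$}. If $G$ is discrete the whole theorem is trivial, so I assume $G$ is non-discrete; by Lemma~\ref{lem:non-discrete_condition}, $\QZ(G)=\triv$ and the action of $G$ is non-discretely micro-supported, so rigid stabilisers of small clopens are non-trivial. The strategy is commutator engineering in $A$: given $g\in\rist_G(V)$ and $a\in A$ with $V$, $aV$, $a^{-1}V$ pairwise disjoint (available from minimality of $A$ once $V$ is small enough), the commutator $[a,g]\in A$ is supported on $V\cup aV$ and restricts to $g^{-1}$ on $V$ and to $aga^{-1}$ on $aV$. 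Combining this with a second commutator built from $b\in A$ with $V,bV,b^{-1}V$ pairwise disjoint and $aV\cap bV=\emptyset$, one forms a product in $A$ whose restriction to $V$ collapses to the identity, leaving independent copies of $g$-translates on $aV$ and $bV$. Iterating this ``copy and cancel'' technique, and using compressibility to produce arbitrarily many disjoint $A$-translates of any small clopen, one aims to reconstruct all of $\rist_G(V)$ inside $A$ for $V$ small. I expect the principal technical difficulty to lie in managing the cumulative support of these commutators and returning it to a single small clopen without leaving $A$; this delicate bookkeeping is the substance of the cited result in \cite{GR_compressible}.

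Given the key lemma, both openness of $A$ and local decomposability of its action follow readily. For openness, choose a clopen partition $\mc{P}''$ fine enough that $\rist_G(U'')\leq A$ for every $U''\in\mc{P}''$; then $\prod_{U''\in\mc{P}''}\rist_G(U'')$ is an open subgroup of $G$ (by local decomposability of $G$) and is contained in $A$, so $A$ is open in $G$. For local decomposability, given any clopen partition $\mc{P}$, refine to such a $\mc{P}''$; the product $\prod_{U''\in\mc{P}''}\rist_A(U'')=\prod_{U''\in\mc{P}''}\rist_G(U'')$ is open in $A$ with product topology inherited from $G$, and the coarser product $\prod_{U\in\mc{P}}\rist_A(U)$ contains it and is therefore also open. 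The product topology on the coarser product matches the subspace topology from $A$, because the continuous injection $\prod_{U\in\mc{P}}\rist_A(U)\hookrightarrow\prod_{U\in\mc{P}}\rist_G(U)$ composes with the decomposition homeomorphism for $G$ to give the required homeomorphism onto the image in $A$.
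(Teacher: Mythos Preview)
This theorem is not proved in the present paper: it is quoted without proof from the companion article \cite{GR_compressible}, so there is no in-paper argument to compare against. What follows is an assessment of your outline on its own terms.

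Your argument for minimality of the $A$-action is correct. Your deduction of local decomposability from openness is also correct, though it can be simplified: once $A$ is open in $G$, for any clopen partition $\mc{P}$ the group $\prod_{U\in\mc{P}}\rist_A(U)=\prod_{U\in\mc{P}}(A\cap\rist_G(U))$ is an open subset of the open subgroup $\prod_{U\in\mc{P}}\rist_G(U)$ of $G$ (a product of open sets in the product topology) and inherits the product topology there; no auxiliary refinement $\mc{P}''$ is needed.

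The substantive gap is your key lemma. The double-commutator identity underlying your sketch, namely that for $h,k\in\rist_G(V)$ and $a\in A$ with $aV\cap V=\emptyset$ one has $[h,[a,k]]=[h,k^{-1}]\in A$, yields only $\Der(\rist_G(V))\le A$, not $\rist_G(V)\le A$. Your ``copy and cancel'' step produces elements of $A$ of the shape $(aga^{-1})|_{aV}\vee(bg^{-1}b^{-1})|_{bV}$; these are products of $G$-conjugates of $g^{\pm 1}$ with exponent sum zero, hence always lie in $\Der(G)$, so the mechanism can never exhibit an element of $\rist_G(V)\setminus\Der(G)$ as a member of $A$. In particular, specialising to $G$ piecewise full with $G/\Der(G)$ non-trivial and $A=\Der(G)=\Al(G)$ (open and fully compressible by the results of Section~\ref{sec:A_is_open}), your lemma would force $\rist_G(U)\subseteq\Der(G)$ for all small $U$, an unsupported statement about the abelianisation of $G$ that is not a consequence of the commutator manipulations you describe. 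The passage from ``$\Der(\rist_G(V))\le A$ for suitable $V$'' to ``$A$ is open'' is therefore exactly the content being imported from \cite{GR_compressible}, and your sketch does not supply an independent mechanism for it; what you label ``delicate bookkeeping'' is in fact the missing idea.
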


\begin{prop}[{\cite[Corollary 1.10]{GR_compressible}}]\label{prop:Matui}
	Let $X$ be a infinite zero-dimensional Hausdorff space and let $G$ be the piecewise full group of some group of homeomorphisms of $X$.  Suppose that the action of $G$ is compressible and let $H$ be the group generated by rigid stabilisers of $G$-compressible clopen sets.   Then $\Mon(G)=\Der(H)$ is non-abelian and simple.
	If $X$ is compact and $G$ acts minimally, then $G = H$ and the action of $\Der(G)$ is fully compressible.
\end{prop}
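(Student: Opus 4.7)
The plan is to combine the classical commutator trick with the compressibility hypothesis. First observe that the family $\mathcal{U}$ of $G$-compressible clopens is $G$-invariant: for $U \in \mathcal{U}$, $k \in G$, and non-empty open $V$, applying the defining property of $U$ to $k^{-1}V$ yields a non-dense translate of $kU$ in $V$. Consequently $H$ is normal in $G$, and so is $\Der(H)$. The preliminary observation I will need is: for any $g \in G \setminus \{1\}$, there exists $U \in \mathcal{U}$ with $U \cap gU = \emptyset$. This follows because $X$ is Hausdorff, so $g$ moves some non-empty clopen $W$ off itself; applying compressibility of a fixed $U_0 \in \mathcal{U}$ to $W$ produces $kU_0 \subseteq W$ for some $k \in G$, so $U := kU_0$ works.

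The central computation is the standard commutator identity: for $U \in \mathcal{U}$ with $U \cap gU = \emptyset$ and $h_1, h_2 \in \rist_G(U)$, the element $[g, h_1]$ acts as $h_1^{-1}$ on $U$, as $g h_1 g^{-1}$ on $gU$, and trivially elsewhere, so
\begin{equation*}
  [[g, h_1], h_2] = [h_1^{-1}, h_2] \in \rist_G(U).
\end{equation*}
From this it follows that whenever $N$ is a normal subgroup of $G$ containing a non-trivial $g$, one has $\Der(\rist_G(U)) \leq N$ for suitable $U$. Normality of $N$ together with $G$-invariance of $\mathcal{U}$ then yields $\Der(\rist_G(V)) \leq N$ for every $V \in \mathcal{U}$; summing over the generators of $H$ gives $\Der(H) \leq N$, whence $\Der(H) \leq \Mon(G)$. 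Non-triviality (in fact non-abelianness) of $\Der(H)$ is established using compressibility and piecewise fullness to exhibit non-commuting commutators supported on two disjoint elements of $\mathcal{U}$, which is possible because $X$ is infinite. Since $\Der(H)$ is then a non-trivial normal subgroup of $G$, also $\Der(H) \supseteq \Mon(G)$, giving the desired equality.

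For abstract simplicity of $\Der(H)$, I rerun the same commutator computation with $N$ a non-trivial normal subgroup of $\Der(H)$, but now taking $h_1, h_2 \in \Der(\rist_G(U)) \leq \Der(H)$ so that all intermediate elements remain inside $\Der(H)$. This shows $\Der(\Der(\rist_G(U))) \leq N$. Separately, $\rist_G(U)$ acts as a piecewise full compressible group on $U$ (any compressible clopen in $X$ can be translated inside $U$ to yield one there), so the equality $\Mon(\rist_G(U)) = \Der(\rist_G(U))$ applies inductively; iterating the commutator trick by one further step shows $\Der(\rist_G(V)) \leq N$ for all $V \in \mathcal{U}$, hence $\Der(H) \leq N$.

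Finally, assume $X$ is compact and $G$ acts minimally. Minimality combined with compressibility forces every proper non-empty clopen $W$ to be compressible: given any non-empty clopen $V$, minimality produces $g \in G$ and $x \in W$ with $gx \in V$, and piecewise fullness together with compressibility applied to a small clopen neighbourhood of $x$ inside $V$ yields a proper compression of $W$ into $V$. Thus $\rist_G(W) \leq H$ for every proper clopen $W$, and piecewise fullness lets one write any element of $G$ as a finite join of restrictions supported on such proper clopens, so $G = H$. For full compressibility of $\Der(G) = \Der(H)$, given non-empty clopen $V$ and proper non-empty $W$, one chooses $k \in G$ with $kW \subsetneq V$ by compressibility, then adjusts $k$ on $X \setminus W$ by piecewise fullness to obtain an element of $\Der(G)$ realising the same compression. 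The main obstacle throughout is the simplicity clause: the commutator trick naturally produces elements in the normal closure of $g$ in $G$, and genuine care is required to keep all intermediate elements within $\Der(H)$ when $N$ is only assumed normal in $\Der(H)$; this is what forces the inductive application of the first half of the proposition to the smaller groups $\rist_G(U)$.
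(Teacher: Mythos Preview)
This proposition is not proved in the present paper; it is quoted from the companion article \cite{GR_compressible}, so there is no in-paper proof to compare against.

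Your overall strategy---the double commutator trick---is the standard one and almost certainly underlies the cited proof. However, there is a genuine gap at the line ``summing over the generators of $H$ gives $\Der(H) \le N$''. What the commutator trick actually delivers is $K \le N$, where $K := \grp{\Der(\rist_G(V)) : V \in \mc{U}}$; combined with non-triviality and normality of $K$ this gives $K = \Mon(G)$. But $K = \Der(H)$ does not follow by ``summing over generators'': while $H = \grp{\rist_G(V) : V \in \mc{U}}$, a commutator $[a,b]$ with $a \in \rist_G(V_1)$ and $b \in \rist_G(V_2)$ for overlapping $V_1,V_2$ has no obvious expression as a product of elements of the various $\Der(\rist_G(V))$. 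Closing this gap requires a \emph{fragmentation} step: one must use piecewise fullness to decompose elements of $H$ into products of pieces each supported inside a single compressible clopen, so that every commutator in $H$ lands in $K$. The same gap recurs in your simplicity argument, which additionally invokes the proposition ``inductively'' for $\rist_G(U)$ acting on $U$; but there is no well-founded parameter, so this is circular as written. A cleaner route is to establish first (again by fragmentation inside $U$) that each $\Der(\rist_G(U))$ is perfect, so that the double commutator trick with $h_1,h_2 \in \Der(\rist_G(U))$ already yields $\Der(\rist_G(U)) \le N$ directly.

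Your final paragraph is also too compressed to stand as a proof. Showing that \emph{every} proper clopen $W$ is compressible from the existence of one compressible $U_0$ plus minimality needs more than moving a single point: one covers $W$ by finitely many $G$-translates of $U_0$ (compactness plus minimality), then uses piecewise fullness to piece together a homeomorphism sending these into disjoint small translates of $U_0$ inside $V$, and one must also handle the complement. Likewise, the claim that one can ``adjust $k$ on $X \setminus W$ by piecewise fullness to obtain an element of $\Der(G)$'' realising the same compression is not obvious and needs justification.
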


	\begin{cor}\label{cor:A_is_open}
		Let $X$ be an infinite compact zero-dimensional Hausdorff space and let $G$ be a \tdlc group acting faithfully, minimally and locally decomposably on $X$, such that $G = \Full(G;X)$.
		\begin{enumerate}[(i)]
			\item If the $G$-action is compressible, then $\Mon(G) = \Der(G) = \Al(G;X)$; moreover, $\Al(G;X)$ is open in $G$ and has fully compressible action on $X$.
			\item If $G$ is non-discrete and $\ol{\Al(G;X)}$ is compactly generated, then $\ol{\Al(G;X)}$ acts compressibly on $X$ (and hence so does $G$); consequently, $\Al(G;X) = \TMon(G)$ is open and belongs to $\ms{S}$. 
			Moreover, every faithful micro-supported action of $\Al(G;X)$ (hence also of $G$) is fully compressible.
		\end{enumerate}
	\end{cor}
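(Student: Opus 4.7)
The plan is to settle part (i) by combining existing tools, and reduce part (ii) to the hypothesis of (i) by showing that compact generation of $\ol{A}$ (with $A := \Al(G;X)$) forces $G$ to act compressibly. First I observe that $X$ is perfect: its set of isolated points is open and $G$-invariant, so by minimality equals $\emptyset$ or $X$, and the latter is excluded by $|X|=\infty$. With $X$ perfect and $G = \Full(G;X)$ faithful and minimal, Theorem~\ref{thm:Nekrashevych_simple} identifies $A$ with the abstract monolith $\Mon(G)$. For part (i), Proposition~\ref{prop:Matui} applied in the compact minimal compressible piecewise full setting gives $\Mon(G) = \Der(G)$ with fully compressible action on $X$, so $A$ is a normal subgroup of $G$ acting compressibly, and Theorem~\ref{thm:loc_decomp_compressible} then delivers openness.

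For part (ii), the strategy is to place $G$ in $\ms{R}$ with $\TMon(G) = \ol{A}$ and invoke Lemma~\ref{lem:simple_dynamics}(ii), so that $\ol{A}$ acts compressibly on $\Omega_G$ and hence on $X$, realised as a $G$-quotient of $\Omega_G$ via Lemma~\ref{lem:centraliser_lattice}(iii) (the $G$-action on $X$ is micro-supported by Lemma~\ref{lem:microsupported}, since the minimal action on the perfect $X$ is automatically non-degenerate). Once $G$ is known to act compressibly, part (i) applies and gives $\Al(G;X) = \Der(G) = \Mon(G)$ open and fully compressible on $X$; openness forces equality with $\ol{A} = \TMon(G) \in \ms{S}$. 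The central task is therefore to prove $\ol{A} \in \ms{S}$. Compact generation is the given hypothesis. Non-discreteness follows from Lemma~\ref{lem:non-discrete_condition}, which yields $\QZ(G) = \triv$, combined with the fact that a discrete normal subgroup of a topological group lies in its quasicentre and that $A$ is non-trivial. Topological simplicity: any proper closed normal $N \lhd \ol{A}$ meets the simple dense subgroup $A$ trivially and so centralises $A$; the faithful micro-supported $A$-action on $X$ (non-trivial $3$-cycles populating each $\rist_A(U)$, since $X$ is perfect with infinite orbits) forces $\CC_G(A) = \triv$ by the standard disjoint-support argument. Expansivity as a \tdlc group follows because Lemma~\ref{lem:A(G)_quotientspace} gives $\ol{A}$ an expansive action on $X$, so the setwise stabiliser in $\ol{A}$ of a separating clopen partition is an open identity neighbourhood whose $\ol{A}$-conjugates intersect trivially by faithfulness. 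Finally $\TMon(G) = \ol{A}$ because $\ol{A}$ is a non-trivial closed normal subgroup containing $A = \Mon(G)$, which is contained in every non-trivial closed normal subgroup of $G$.

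For the moreover clause, I apply Lemma~\ref{lem:centraliser_lattice_minimal} to $G$: the $G$-action on $X$ is faithful, minimal, micro-supported and (inheriting from $A$) fully compressible, and by Lemma~\ref{lem:centraliser_lattice}(iii) $G$ has no non-trivial abelian locally normal subgroup, so the $G$-action on $\Omega_G$ is fully compressible; any faithful micro-supported $G$-action then realises its space as a $G$-quotient of $\Omega_G$ by Lemma~\ref{lem:centraliser_lattice}(iii), and so inherits full compressibility. The same argument with $A = \Al(G;X)$ in place of $G$ handles the faithful micro-supported actions of $A$.

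The main obstacle is securing $\ol{A} \in \ms{S}$, particularly non-discreteness and topological simplicity, because these require carefully weaving together the monolith property of $A$, the triviality of $\QZ(G)$, and the faithful micro-supported action of $A$; once this is achieved the $\ms{R}$-machinery delivers compressibility essentially formally, and the remaining conclusions are largely bookkeeping on top of part (i).
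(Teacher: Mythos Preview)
Your proposal is correct and follows essentially the same route as the paper: identify $\Al(G;X)=\Mon(G)$ via Theorem~\ref{thm:Nekrashevych_simple}, use Proposition~\ref{prop:Matui} plus Theorem~\ref{thm:loc_decomp_compressible} for (i), and for (ii) verify $G\in\ms{R}$ so that Lemma~\ref{lem:simple_dynamics}(ii) yields compressibility and reduces to (i), with the ``moreover'' clause handled via Lemmas~\ref{lem:centraliser_lattice_minimal} and~\ref{lem:centraliser_lattice}(iii). The only notable difference is that the paper obtains topological simplicity of $\ol{A}$ more directly from Theorem~\ref{thm:Nekrashevych_simple} (any non-trivial $N\trianglelefteq\ol{A}$ is normalised by $A$, hence contains $A$), whereas your centraliser argument, while correct, is a slightly longer detour.
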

	
	\begin{proof}
		We see from Theorem~\ref{thm:Nekrashevych_simple} that $\Al(G;X) = \Mon(G)$, so also $\ol{\Al(G;X)} = \TMon(G)$, and $\Mon(G)$ is simple, so $\TMon(G)$ is topologically simple.
		
		(i)
		We are in the situation of Proposition~\ref{prop:Matui}, with $X$ compact and $G$ acting minimally, so $\Mon(G) = \Der(G)$ and the action of $\Der(G)$ is fully compressible; moreover $\Der(G)$ is non-abelian and simple.  By Theorem~\ref{thm:loc_decomp_compressible}, $\Der(G)$ is open in $G$.
		
		(ii)
		In this case we know by Lemma~\ref{lem:non-discrete_condition} that $G$ has trivial quasicentre; in particular, $\TMon(G)$, being a non-trivial normal subgroup of $G$, cannot be discrete.  We deduce that in fact $G \in \ms{R}$, so $\TMon(G)$ acts compressibly on $X$ by Lemma~\ref{lem:simple_dynamics}(ii).  In particular, by part (i), $A:= \Al(G;X)$ is open, so we can identify $\Omega_G$ with $\Omega_A$ as $A$-spaces, and we have $A \in \ms{S}$, with fully compressible action on $X$.
	  Lemma~\ref{lem:centraliser_lattice_minimal} ensures that in fact $A$ has fully compressible action on $\Omega_A$, and hence by Lemma~\ref{lem:centraliser_lattice}(iii), every faithful micro-supported action of $A$ is fully compressible; similar conclusions also apply to $G$.
	\end{proof}

\section{Examples}\label{sec:examples}

Corollaries~\ref{cor:group_expansive_iff_alternating_compactly_gen} and~\ref{cor:A_is_open} imply that when a \tdlc group $G$ acting by homeomorphisms on a compact zero-dimensional space $X$ satisfies certain additional conditions, then $\Al(G) \in \ms{S}$.
We pause the development of the general theory to discuss a special case.  The constructions we will use in this section are familiar from the prior literature, but serve as a motivating special case for the more general perspective of this article.

	Most of the examples arise as commensurators or germs of automorphisms of certain profinite groups, so we start by recalling some general theory on commensurators.

\subsection{Commensurators or germs of automorphisms}\label{ssec:germs_def}
An important aspect of the theory of \tdlc groups is to relate the local properties of a \tdlc group $G$, that is, properties that can be detected in any neighbourhood of the identity, with global properties of $G$.  The most convenient setting to make this discussion precise is when $\QZ(G)$ is assumed to be discrete; in that case, we can appeal to the notion of the group of germs of automorphisms of $G$, as studied by Barnea--Ershov--Weigel \cite{BEW} and Caprace--De Medts \cite{CapDeM}.

\begin{defn}\label{def:group_of_germs}
	A \defbold{local homomorphism} between two \tdlc groups $G$ and $H$ is a continuous homomorphism $\phi: U \rightarrow H$, where $U$ is an open subgroup of $G$. 
	It is a \defbold{local isomorphism} if $\phi$ restricts to an isomorphism from $U$ to an open subgroup of $H$.  We say $G$ and $H$ are \defbold{locally isomorphic} if a local isomorphism exists.  
	Two local homomorphisms $\phi_1,\phi_2$ are \defbold{equivalent} if both are defined and agree on some group $W$ that is in the domain of both $\phi_1$ and $\phi_2$. 
	The equivalence class $[\phi]$ of a local homomorphism is then called the \defbold{germ} of $\phi$. 
\end{defn}

\begin{thm}[{see \cite{BEW} and \cite{CapDeM}}]\label{thm:AbstractCommensurator}
	Let $G$ be a \tdlc group with discrete quasicentre.  Define the group $\ms{L}(G)$ of \defbold{germs of automorphisms} of $G$ to be the set of germs of local isomorphisms from $G$ to itself.
	\begin{enumerate}[(i)]
		\item The quasicentre $\QZ(G)$ is the unique largest discrete normal subgroup of $G$ and $G/\QZ(G)$ is locally isomorphic to $G$, with trivial quasicentre.
		\item $\ms{L}(G)$ has a group structure induced by composition of local isomorphisms.
		\item Let $\ad: G \rightarrow \ms{L}(G)$ be defined by $\ad(g) = [y \mapsto gyg\inv]$.  Then $\mathrm{ad}$ is a group homomorphism with kernel $\QZ(G)$, and there is a unique group topology on $\ms{L}(G)$ such that $\ad$ is continuous and open.  With respect to this topology, $\QZ(\ms{L}(G)) = \triv$.
		\item Define the topology of $\ms{L}(G)$ as in (iii) and let $\phi_1: U \rightarrow G$ and $\phi_2: U \rightarrow H$ be continuous and open homomorphisms with discrete kernel, where $U$ and $H$ are \tdlc groups.  Then there is a unique continuous and open homomorphism $\theta: H \rightarrow \ms{L}(G)$, with kernel $\QZ(H)$, such that the following diagram commutes:
		\[
		\xymatrixcolsep{3pc}\xymatrix{
			U \ar^{\phi_1}[r] \ar^{\phi_2}[d] & G \ar^{\ad}[d] \\
			H \ar^{\theta}[r] & \ms{L}(G). }
		\]
	\end{enumerate}
\end{thm}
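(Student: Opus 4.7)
The plan is to handle the four parts in turn; (i) and (ii) are largely formal, while (iii) and (iv) contain the substantive content. Part (i) combines the preceding lemma on $\QZ(G)$ with the observation that the quotient map $G \to G/\QZ(G)$, restricted to any compact open subgroup $W \le G$ satisfying $W \cap \QZ(G) = \triv$, is a local isomorphism; such $W$ exists because $\QZ(G)$ is closed and discrete, so it admits an open identity neighbourhood meeting it only in $\{1\}$. Part (ii) is a direct verification that composition of local isomorphisms descends to a well-defined associative operation on germs, with identity $[\id_G]$ and inverses given by inverse maps; the key check is that if $[\phi_1] = [\phi'_1]$ and $[\phi_2] = [\phi'_2]$ then the compositions agree on a sufficiently small open neighbourhood of $1$ in $G$.

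For part (iii), that $\ad$ is a homomorphism with kernel $\QZ(G)$ is immediate from the definitions. To topologise $\ms{L}(G)$, fix $W \le G$ compact open with $W \cap \QZ(G) = \triv$, so that $\ad|_W$ is an injective group homomorphism, and declare $\{\ad(V) \mid V \le W \text{ open}\}$ to be a neighbourhood base at $1$. Proposition~\ref{prop:bourbaki_gentop} applies once we verify the conjugation condition (3): any $\gamma \in \ms{L}(G)$ represented by a local isomorphism $\phi$ satisfies $\gamma \ad(V) \gamma^{-1} = \ad(\phi(V))$ for $V$ sufficiently small that $\phi(V) \le W$. Continuity and openness of $\ad$ follow by construction, and uniqueness of the topology is forced by the requirement that $\ad(W)$ be open carrying its intrinsic topology. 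For $\QZ(\ms{L}(G)) = \triv$, one observes $\QZ(\ms{L}(G)) \cap \ad(W) \le \QZ(\ad(W)) \cong \QZ(W) = \QZ(G) \cap W = \triv$, so $\QZ(\ms{L}(G))$ is at least discrete; to promote this to triviality, one represents $\gamma \in \QZ(\ms{L}(G))$ by $\phi: V_0 \to V_0'$ commuting with inner conjugation on $V_0$, and uses the pointwise constraint $\phi(vyv^{-1}) = v\phi(y)v^{-1}$ together with $\QZ(W) = \triv$ to force $\phi$ to agree with the identity on an open subgroup.

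For (iv), shrink $U$ to a compact open subgroup on which both $\phi_1$ and $\phi_2$ are local isomorphisms onto open subgroups of $G$ and $H$ (possible since each $\ker \phi_i$ is discrete). Define $\theta: H \to \ms{L}(G)$ by sending $h$ to the germ of $\phi_1 \circ (\phi_2|_U)^{-1} \circ c_h \circ \phi_2 \circ (\phi_1|_U)^{-1}$, interpreted on a sufficiently small open subgroup of $G$ for the composition to make sense; the commutativity of the diagram at the germ level holds by construction. That $\theta$ is a continuous open homomorphism with $\ker \theta = \QZ(H)$ follows from calculations parallel to the analogous properties of $\ad: H \to \ms{L}(H)$, and uniqueness of $\theta$ is forced by the diagram together with openness of $\phi_2(U)$ in $H$. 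The hardest step is expected to be the vanishing of $\QZ(\ms{L}(G))$: extracting from $\phi(vyv^{-1}) = v\phi(y)v^{-1}$ the conclusion that $\phi$ is locally the identity requires a careful analysis of the cocycle $\beta(y) := y^{-1}\phi(y)$, using that $\beta(y)$ lies in the centraliser of $y$ and that $W$ has trivial quasicentre.
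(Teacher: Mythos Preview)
The paper does not give its own proof of this theorem; it is stated with attribution to \cite{BEW} and \cite{CapDeM} and then used as a black box. Your proposal is therefore not competing against any argument in the paper, but it is worth assessing on its own merits, and it is essentially the standard argument from those references.

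Parts (i), (ii) and (iv) are handled correctly. One small point in (iv): your uniqueness argument (``forced by the diagram together with openness of $\phi_2(U)$'') is incomplete as stated, since $\phi_2(U)$ need not generate $H$. The clean way to finish is to observe that if $\theta,\theta'$ both make the diagram commute, then for any $h \in H$ the element $\theta(h)\theta'(h)^{-1}$ centralises the open subgroup $\theta(\phi_2(U))$ of $\ms{L}(G)$, hence lies in $\QZ(\ms{L}(G)) = \triv$.

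For part (iii), your cocycle approach to $\QZ(\ms{L}(G))=\triv$ is more intricate than necessary and the claim ``$\beta(y)$ lies in the centraliser of $y$'' is not by itself enough to conclude. A more direct route: for $\gamma=[\phi]\in\QZ(\ms{L}(G))$ centralised by $\ad(V_0)$, the identity $\gamma\,\ad(v)\,\gamma^{-1}=\ad(\phi(v))$ forces $\ad(\phi(v))=\ad(v)$, i.e.\ $\phi(v)v^{-1}\in\ker\ad=\QZ(G)$ for all $v\in V_0$. The continuous map $v\mapsto\phi(v)v^{-1}$ into the discrete group $\QZ(G)$ sends $1\mapsto 1$, hence is identically $1$ near the identity, so $\gamma=[\id]$. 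This replaces the cocycle analysis entirely.
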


From now on, if $G$ is a \tdlc group with discrete quasicentre, we regard $\ms{L}(G)$ as a topological group equipped with the topology given in Theorem~\ref{thm:AbstractCommensurator}(iii), and via the map $\ad$, we regard $G/\QZ(G)$ (or $G$ itself, if $\QZ(G)=\triv$) as an open subgroup of $\ms{L}(G)$.  Note that given the universal property of the group of germs, we have $\ms{L}(\ms{L}(G)) = \ms{L}(G)$ whenever $G$ is a \tdlc group with discrete quasicentre.

The property of being faithful locally decomposable passes to the group of germs.

\begin{lem}\label{lem:faithful_germ}
	$G$ be a \tdlc group such that $\QZ(G)$ is discrete and is the kernel of the action on $\ldlat(G)$.  Then $\ms{L}(G)$ is faithful locally decomposable.  Hence for every \tdlc group $H$ that is locally isomorphic to $G$, then $H/\QZ(H)$ is faithful locally decomposable.
\end{lem}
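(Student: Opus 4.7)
The plan is to argue via the group of germs $\ms{L}(G)$, using two ingredients: that $\ldlat$ is a local invariant carried $\mathrm{ad}$-equivariantly along $G/\QZ(G)\hookrightarrow\ms{L}(G)$, and that a closed normal subgroup of a topological group meeting some open subgroup trivially is automatically discrete. Combined with $\QZ(\ms{L}(G))=\triv$ from Theorem~\ref{thm:AbstractCommensurator}(iii), this forces the kernel of the $\ms{L}(G)$-action on $\ldlat(\ms{L}(G))$ to be trivial. The second statement then follows immediately by transferring everything through the canonical isomorphism $\ms{L}(H)\cong\ms{L}(G)$ supplied by the universal property of germs.

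In detail, for the first statement let $N\trianglelefteq \ms{L}(G)$ be the kernel of the conjugation action of $\ms{L}(G)$ on $\ldlat(\ms{L}(G))$; this is closed and normal. Since $\ldlat$ is determined by any identity neighbourhood together with its conjugation action, the open embedding $\mathrm{ad}\colon G/\QZ(G)\hookrightarrow\ms{L}(G)$ identifies $\ldlat(G)\cong\ldlat(G/\QZ(G))\cong\ldlat(\ms{L}(G))$ so that the action of $\ms{L}(G)$ restricts on $\mathrm{ad}(G)$ to the action of $G/\QZ(G)$ on $\ldlat(G)$. Hence $N\cap\mathrm{ad}(G)$ is the kernel of $G/\QZ(G)\to\Aut(\ldlat(G))$, which is trivial by hypothesis. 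Since $\mathrm{ad}(G)$ is open in $\ms{L}(G)$, $N$ is discrete, and then $N\leq\QZ(\ms{L}(G))=\triv$. For the second statement, take a local isomorphism between open subgroups $U\leq G$ and $V\leq H$ and apply Theorem~\ref{thm:AbstractCommensurator}(iv) to obtain a continuous open homomorphism $\theta\colon H\to\ms{L}(G)$ with kernel $\QZ(H)$ and open image. Because $\theta$ restricted to $V$ has discrete kernel (being conjugate to $\mathrm{ad}\colon U\to\ms{L}(G)$ with kernel $U\cap\QZ(G)$), the subgroup $\QZ(H)\cap V$ is discrete and hence so is $\QZ(H)$. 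Thus $H/\QZ(H)$ embeds as an open subgroup of the faithful locally decomposable group $\ms{L}(G)$, and its conjugation action on $\ldlat(H/\QZ(H))\cong\ldlat(\ms{L}(G))$ inherits faithfulness.

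The most delicate point, and the one worth checking carefully, is the $\mathrm{ad}$-equivariant identification of $\ldlat(G)$ with $\ldlat(\ms{L}(G))$ that underlies the reduction of faithfulness from the $\ms{L}(G)$-action to the $G$-action. Once that is in place, everything else is a short and formal consequence of the universal property of $\ms{L}(G)$ together with the triviality of $\QZ(\ms{L}(G))$.
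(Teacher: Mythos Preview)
Your proof is correct and follows essentially the same approach as the paper's: both identify $\ldlat(G)\cong\ldlat(\ms{L}(G))$ via the open embedding $\ad$, use the hypothesis to see that the kernel of the $\ms{L}(G)$-action meets $\ad(G)$ trivially, conclude it is a discrete normal subgroup, and invoke $\QZ(\ms{L}(G))=\triv$ to kill it; for the second statement both embed $H/\QZ(H)$ openly into $\ms{L}(G)$ via Theorem~\ref{thm:AbstractCommensurator}(iv) and transport faithfulness back along the identification of decomposition lattices. Your write-up is a bit more explicit than the paper's (for instance, you spell out why $\QZ(H)$ is discrete), but the logic is the same.
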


\begin{proof}
	Via the map $\ad$, we have an action of $G$ on $\ldlat(\ms{L}(G))$ with kernel $\QZ(G)$.  In particular, $\ad(G)$ acts faithfully on $\ldlat(\ms{L}(G))$, and hence the kernel of the action of $\ms{L}(G)$ on $\ldlat(\ms{L}(G))$ is discrete.  Since $\QZ(\ms{L}(G)) = \triv$, in fact $\ms{L}(G)$ has no non-trivial discrete normal subgroups, so $\ms{L}(G)$ acts faithfully on $\ldlat(\ms{L}(G))$.
	
	Now given a \tdlc group $H$ that is locally isomorphic to $G$, by Theorem~\ref{thm:AbstractCommensurator} we have a natural open embedding of $H/\QZ(H)$ into $\ms{L}(G)$.  We thus obtain a faithful locally decomposable action of $H$ on $\mf{S}(\ldlat(\ms{L}(G)))$, so $H$ is faithful locally decomposable by Lemma~\ref{lem:loc_decomp:ldlat}.
\end{proof}

\begin{prop}\label{prop:germs_is_relative_comm}
	Let $U$ be a faithful locally decomposable profinite group, so that $U$ acts faithfully on the Boolean algebra $\ldlat(U)$ and on its Stone space $X$.
	Then $\Comm_{\Homeo(X)}(U)\cong \ms{L}(U)$. 	
	In particular, $\ms{L}(U)$ has a faithful piecewise full action on $X$.
\end{prop}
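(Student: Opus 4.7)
My plan is to exhibit an explicit topological isomorphism $\Phi:C\to\ms{L}(U)$, where $C:=\Comm_{\Homeo(X)}(U)$, by sending $g$ to the germ of the conjugation map $\phi_g:v\mapsto gvg^{-1}$. First, since $U$ is a faithful locally decomposable profinite group acting continuously on $X$, Corollary~\ref{cor:piecewise_full_extension} applies to give that $C$ is a piecewise full \tdlc group containing $U$ as a compact open subgroup. For any $g\in C$, the commensuration condition and compactness of $U$ ensure that $V:=U\cap g^{-1}Ug$ is an open subgroup of $U$, and conjugation by $g$ restricts to a continuous open isomorphism $\phi_g:V\to gVg^{-1}\le U$; hence $\Phi$ is a well-defined group homomorphism, whose restriction to $U$ coincides with the canonical map $\ad:U\to\ms{L}(U)$ of Theorem~\ref{thm:AbstractCommensurator}.

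My next step will be to verify that the $U$-action on $X$ is non-discretely micro-supported. For any non-zero $\alpha\in\ldlat(U)$, the local decomposability of the action yields that $\rist_U(\alpha)$ is itself a locally direct factor representing $\alpha$, so it is not equivalent to the trivial subgroup; being a closed subgroup of the profinite group $U$ that is not finite, it must be infinite. I will use this for injectivity of $\Phi$: if $g\in\ker\Phi$ centralises some open subgroup $V\le U$ and $gx\neq x$ for some $x\in X$, I can choose a clopen $\alpha\ni x$ with $\alpha\cap g\alpha=\emptyset$ and pick $v\in\rist_V(\alpha)=V\cap\rist_U(\alpha)$ with $vy\neq y$ for some $y\in\alpha$ (such $v$ exists because $V\cap\rist_U(\alpha)$ has finite index in the infinite profinite group $\rist_U(\alpha)$). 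Then $gvg^{-1}(gy)=g(vy)\neq gy$ while $v(gy)=gy$, since $gy\in\alpha^{\perp}$, contradicting $g\in \CC_{\Homeo(X)}(v)$. As a byproduct, this shows $\QZ(U)=\triv$, so by Theorem~\ref{thm:AbstractCommensurator} the map $\ad$ is a continuous open embedding of $U$ into $\ms{L}(U)$.

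For surjectivity, given $[\phi]\in\ms{L}(U)$ represented by a continuous open isomorphism $\phi:V_1\to V_2$ between open subgroups of $U$, I will exploit that the restriction maps $K\mapsto K\cap V_i$ yield canonical Boolean algebra isomorphisms $\ldlat(U)\to\ldlat(V_i)$ (since $\ldlat$ is determined by any identity neighbourhood). The isomorphism $\phi$ induces a Boolean algebra isomorphism $\hat\phi:\ldlat(V_1)\to\ldlat(V_2)$, and after the identifications with $\ldlat(U)$ this becomes an automorphism of $\ldlat(U)$ which, via Stone duality, corresponds to a homeomorphism $g:X\to X$. The equivariance property $\hat\phi(vKv^{-1})=\phi(v)\hat\phi(K)\phi(v)^{-1}$ for $v\in V_1$, $K\in\ldlat(V_1)$ transports to the identity $gvg^{-1}=\phi(v)$ in $\Homeo(X)$ (by faithfulness of $U$ on $\ldlat(U)$ and hence of both $V_1$ and $V_2$); in particular $g\in C$ and $\Phi(g)=[\phi]$.

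Finally, $\Phi$ is a bijective group homomorphism whose restriction to the open subgroup $U$ of $C$ is the topological open embedding $\ad$; hence $\Phi$ itself is automatically a topological isomorphism. The ``in particular'' conclusion follows immediately, since the faithful piecewise full action of $C\le\Homeo(X)$ on $X$ transports via $\Phi^{-1}$ to $\ms{L}(U)$. I expect the main obstacle to be the surjectivity step: carefully establishing that the restriction maps $\ldlat(U)\to\ldlat(V_i)$ are canonical enough that $\hat\phi$ yields a single, well-defined automorphism of $\ldlat(U)$, and then verifying that the homeomorphism $g$ built from it conjugates $v$ to $\phi(v)$ on the nose rather than merely up to some ambiguity.
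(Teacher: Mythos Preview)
Your proof is correct and follows essentially the same route as the paper's: build the map $C\to\ms{L}(U)$ via conjugation, use non-discrete micro-supportedness for injectivity, and for surjectivity push a local automorphism $\phi$ through the identification $\ldlat(V_i)=\ldlat(U)$ to obtain an automorphism of $\ldlat(U)$, hence a homeomorphism $g$ of $X$ via Stone duality, and then check $gvg^{-1}=\phi(v)$ using faithfulness. The only cosmetic difference is that the paper obtains the homomorphism $C\to\ms{L}(U)$ by directly invoking the universal property in Theorem~\ref{thm:AbstractCommensurator}(iv), whereas you construct it by hand; your worry about the ``canonical enough'' identifications is exactly what the paper handles by observing $\ldlat(A)=\ldlat(B)=\ldlat(U)$ for open $A,B\le U$.
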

\begin{proof}
	By Theorem \ref{thm:AbstractCommensurator} (iv), there is a continuous homomorphism $\Comm_{\Homeo(X)}(U)\rightarrow \ms{L}(U)$ and this is an embedding because $\Comm_{\Homeo(X)}(U)$ is non-discretely micro-supported (see the proof of (i)$\Rightarrow$(ii)  in Lemma \ref{lem:non-discrete_condition}).
	
	Let $\alpha:A\rightarrow B$ be a local automorphism of $U$. 
	Then $\ldlat(A)=\ldlat(B)=\ldlat(U)$ as $A, B$ are open in $U$, and $\alpha$ induces a map $\tilde{\alpha}:\ldlat(U)\rightarrow \ldlat(U), [K\cap A]\mapsto [\alpha(K\cap A)]$ which is easily checked to be a well-defined automorphism of Boolean algebras. 
	Now, $U$ acts on $\ldlat(U)$ faithfully by conjugation, and for every $a\in A$, conjugation by $\alpha(a)$ is the same automorphism of $\ldlat(U)$ as $\tilde{\alpha}\circ a\circ\tilde{\alpha}^{-1}$.
	So $\alpha$ is induced by conjugation by $\tilde{\alpha}$. 
	Identifying automorphisms of $\ldlat(U)$ with homeomorphisms of $X$ gives the isomorphism in the statement. 
	
	By Corollary \ref{cor:piecewise_full_extension}, the group of germs can be seen as a piecewise full subgroup of $\Homeo(X)$. 
\end{proof}

\subsection{Profinite branch groups and their groups of germs}

\begin{defn}\label{def:profinite_branch_group}
	Let $U$ be a profinite group.
	We say $U$ is a \defbold{branch group} if there is an infinite locally finite rooted tree $T$ and a faithul continuous action of $U$ on $T$, such that $U$ acts transitively on each level of $T$ and locally decomposably on $\partial T$. 
	A profinite group is \defbold{just infinite} if every non-trivial closed normal subgroup is open.
\end{defn}

Branch groups are important, among other things, because they are one of three types into which profinite just infinite groups can be separated, as shown by Wilson in \cite{WilsonNewHorizons}.
This is proved by associating a structure lattice to the profinite group and examining the possible cases.
In the case of just infinite  profinite branch groups, Wilson's structure lattice corresponds to the decomposition lattice.

Profinite branch groups turn out to be precisely the second-countable profinite groups that are locally decomposable and act faithfully and transitively on the Stone space of their local decomposition lattice:

\begin{lem}\label{lem:branch_ldlat}
	Let $U$ be  a  profinite group.  Then the following are equivalent:
	\begin{enumerate}[(i)]
		\item $U$ is a  branch group;
		\item $U$ is second-countable with $\QZ(U) =\triv$, and $U$ acts faithfully and transitively on the Stone space $\mf{S}(\ldlat(U))$ of $\ldlat(U)$.
	\end{enumerate}
	Moreover, if $U$ is a branch group, with branch action on the tree $T$, then $\partial T$ is $U$-equivariantly isomorphic to $\mf{S}(\ldlat(U))$, and there is no proper quotient of $\mf{S}(\ldlat(U))$ on which $U$ acts faithfully.
\end{lem}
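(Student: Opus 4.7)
For (ii) $\Rightarrow$ (i), I would set $X = \mf{S}(\ldlat(U))$ and use second-countability of $U$ to fix a descending chain of open normal subgroups $U = U_0 \geq U_1 \geq \dots$ with $\bigcap_n U_n = \triv$. For each $n$, the set $\mc{P}_n$ of $U_n$-orbits on $X$ is a finite clopen partition refining $\mc{P}_{n-1}$; transitivity of $U$ on $X$ together with normality of $U_n$ yields $U$-transitivity on $\mc{P}_n$. A short compactness argument (if $y \in U_n x$ for every $n$, then some subsequence of witnesses $u_n \in U_n$ converges to an element of $\bigcap_m U_m = \triv$, so $y = x$) ensures that the $\mc{P}_n$ separate points of $X$. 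The rooted tree $T$ whose level-$n$ vertex set is $\mc{P}_n$, with parenthood by reverse inclusion, then satisfies $\partial T \cong X$ as $U$-spaces; the action on $T$ is faithful (inherited from $X$), level-transitive, and locally decomposable on $\partial T$ by Lemma~\ref{lem:loc_decomp:ldlat}(i) applied to $\mc{A} = \ldlat(U)$.

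For (i) $\Rightarrow$ (ii), second-countability of $U$ is inherited from its closed embedding into the second-countable group $\Aut(T)$. The branch action on $\partial T$ is faithful, locally decomposable, and minimal (level-transitivity plus compactness of $U$ gives transitivity on $\partial T$), so Lemma~\ref{lem:non-discrete_condition} yields $\QZ(U) = \triv$. Applying Lemma~\ref{lem:loc_decomp:ldlat}(ii) to the action on $\partial T$ gives a unique $U$-invariant embedding $\mc{CO}(\partial T) \hookrightarrow \ldlat(U)$, dually a $U$-equivariant surjection $q \colon \mf{S}(\ldlat(U)) \to \partial T$; the action on $\mf{S}(\ldlat(U))$ is already faithful since it is faithful on the quotient $\partial T$. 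Once $q$ is shown to be a bijection (moreover (a)), transitivity on $\mf{S}(\ldlat(U))$ transfers from $\partial T$, completing (ii).

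Showing $q$ bijective is the step I expect to be the main obstacle. My approach is to run the (ii) $\Rightarrow$ (i) construction using the specific chain $U_n = $ pointwise stabiliser in $U$ of the $n$-th level of $T$; these are open and normal in $U$, with trivial intersection by faithfulness on $T$. Applied to $\partial T$ this recovers $T$ itself, whereas applied to $\mf{S}(\ldlat(U))$ it produces a branch action on another tree with boundary $\mf{S}(\ldlat(U))$. Matching the two reduces to showing that every locally direct factor of $U$ is commensurate with $\prod_{v \in S} \rist_U(\partial T_v)$ for some level-$n$ subset $S$: local decomposability places any $V = K \times L$ inside $\prod_{v} \rist_U(\partial T_v)$ for some $n$, and branch rigidity (together with $\QZ(U) = \triv$ ruling out diagonal locally direct factors across distinct cylinder rigid stabilisers) forces $K$ to align with a subset of the factors up to finite index.

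For moreover (b), any proper $U$-invariant subalgebra $\mc{A} \subsetneq \ldlat(U)$ on which $U$ acted faithfully would, via Lemma~\ref{lem:loc_decomp:ldlat} and the (ii) $\Rightarrow$ (i) construction, yield a branch action of $U$ on a strictly smaller tree with boundary $\mf{S}(\mc{A})$; but part (a) applied to this action would force $\mf{S}(\mc{A}) \cong \mf{S}(\ldlat(U))$, a contradiction.
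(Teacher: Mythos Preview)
Your overall architecture matches the paper's: build the tree from a filtration of $U$ for (ii) $\Rightarrow$ (i); for (i) $\Rightarrow$ (ii), produce the quotient $q\colon \mf{S}(\ldlat(U)) \to \partial T$ via Lemma~\ref{lem:loc_decomp:ldlat} and then argue that every class in $\ldlat(U)$ is represented by a product of tree rigid stabilisers. Your (ii) $\Rightarrow$ (i) using $U_n$-orbit partitions is a clean variant of the paper's use of finite $U$-invariant subalgebras, and your compactness argument that the $\mc{P}_n$ separate points is fine.

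The genuine gap is the step you flag as the main obstacle. First, a slip: local decomposability does \emph{not} place $K \times L$ inside $\prod_{|v|=n}\rist_U(\partial T_v)$; rather, since both are open in the profinite group $U$, one chooses $n$ large enough that $\prod_{|v|=n}\rist_U(\partial T_v) \le K \times L$. More seriously, the assertion that ``$\QZ(U)=\triv$ rules out diagonal locally direct factors across distinct cylinder rigid stabilisers'' is not a proof. Given $\rist_U(v) \le K \times L$, you need that the projection of $\rist_U(v)$ to $K$ (or to $L$) is again supported on $\partial T_v$, and this does not follow from $\QZ(U)=\triv$ alone by any short argument. The paper handles exactly this point by invoking \cite[Theorem~1.2]{GarridoWilson}: for the locally normal subgroup $H=K$ there exist $n$ and $Y \subseteq L_n$ with $H \cap \Der(R_{L_n}) = \Der(R_Y)$ and $H \cap R_{L_n \setminus Y} = \triv$; one then shows $M \cap R_Y$ is abelian (where $[M]$ is the complement of $[H]$), hence in $\QZ(U)=\triv$, forcing $[H]=[R_Y]$. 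Without that structural input (or a replacement of comparable strength), your sketch does not close.

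Your argument for ``moreover (b)'' is a legitimate alternative to the paper's (which reuses the Garrido--Wilson step): from a faithful $U$-invariant subalgebra $\mc{A}$ you build a branch tree $T'$ with $\partial T' \cong \mf{S}(\mc{A})$, and then moreover (a) gives $\mf{S}(\mc{A}) \cong \mf{S}(\ldlat(U))$ as $U$-spaces; the \emph{uniqueness} clause of Lemma~\ref{lem:loc_decomp:ldlat}(ii) then forces $\mc{A}=\ldlat(U)$. You should make that last appeal to uniqueness explicit, since ``$\mf{S}(\mc{A}) \cong \mf{S}(\ldlat(U))$ with $\mc{A} \subsetneq \ldlat(U)$'' is not by itself a contradiction.
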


\begin{proof}
	Suppose that $U$ is a branch group, with branch action on the infinite locally finite rooted tree $T$. 
	Since $U$ is compact, it is homeomorphic to its image in $\Aut(T)$, which is second-countable, so $U$ is second-countable. 
	Given $v \in VT$, write $\rist_U(v)$ for the subgroup of $U$ fixing  all ends $\xi \in \partial T$ not represented by rays from the root passing through $v$; note that $\rist_U(v)$ is the rigid stabiliser of a clopen subset of $\partial T$.
	Since $U$ acts transitively on each level of the tree, we see that $U$ acts minimally on $\partial T$, ensuring that $\partial T$ is a perfect space; since $U$ compact, the action is in fact transitive.  Since the action is locally decomposable, the rigid stabilisers of nonempty clopen sets are all infinite. 
	Given $u \in U \setminus \triv$, then $u$ moves a vertex $v \in VT$, and then the corresponding rigid stabiliser is an infinite subgroup that has trivial intersection with $\CC_U(u)$. 
	Thus $\QZ(U) = \triv$. 
	
	By Lemma~\ref{lem:loc_decomp:ldlat} there is then a $U$-equivariant quotient map $\pi: \mf{S}(\ldlat(U)) \rightarrow \partial T$.
	We now claim that $\pi$ is a homeomorphism; in other words, $\{[\rist_U(v)] : v\in VT\}$ generates $\ldlat(U)$ as a Boolean algebra. For this, it suffices to show that every element of $\ldlat(U)$ can be represented by a product of rigid stabilisers $\rist_U(v)$ for $v \in VT$. 
	Let $\alpha \in \ldlat(U)$; then $\alpha$ has a representative $H \le U$, which is a closed subgroup such that $|U:\N_U(H)|<\infty$.
	Writing $L_n$ for the set of vertices of $T$ at level $n$, and $R_L:=\prod_{v \in L}\rist_U(v)$ for $L \subseteq L_n$, then by \cite[Theorem 1.2]{GarridoWilson}, there exists $n$ and an $\N_U(H)$-invariant subset $Y$ of $L_n$ such that
	\[
	H \cap \Der(R_{L_n}) = \Der(R_{Y}) \text{ and } H \cap R_{L_n \setminus Y}=\triv.
	\]
	In $\ldlat(U)$, the element $[R_{L_n \setminus Y}]$ is the complement of $[R_{Y}]$, so the condition that $H \cap R_{L_n \setminus Y}=\triv$ implies that $[H] \le [R_{Y}]$. 
	Also, $D(R_{Y})\leq H$.
	Now, as $[H]\in\ldlat(U)$, there is a locally normal subgroup $M\leq U$ such that $H\times M$ is open in $U$. 
	Then $[M\cap R_{Y}]\in \ldlat(U)$ and $M\cap D(R_{Y})=\{1\}$ implies that $M\cap R_{Y}$ is abelian. 
	But then the centraliser of $M\cap R_{Y}$ contains $\grp{M\cap R_{Y}, H, R_{L_n\setminus Y}}$, which is open in $U$. 
	So $M\cap R_{Y}=\{1\}$ and therefore  $[H]=[R_Y]$.
	
	Conversely, suppose (ii) holds and let $\mc{B}$ be a subalgebra of $\ldlat(U)$ on which $U$ acts faithfully; by Stone duality, such subalgebras correspond to quotients of $\mf{S}(\ldlat(U))$ on which $U$ acts faithfully.  Since $U$ is second-countable there is a descending chain $(U_i)$ of open subgroups of $U$ with trivial intersection.  We then choose an ascending sequence $\mc{A}_i$ of finite $U$-invariant subalgebras of $\mc{B}$ such that the kernel of the action of $U$ on $\mc{A}_i$ is contained in $U_i$; we can do this since $U$ is compact, ensuring that $\mc{B}$ is a directed union of finite $U$-invariant subalgebras, and $U$ acts faithfully on $\mc{B}$.  Now let $\mc{A} = \bigcup^\infty_{i=1}\mc{A}_i$ and let $\mc{P}_i$ be the set of atoms of $\mc{A}_i$; also set $\mc{P}_0 = \{\infty\}$.  Then regarding $\mc{P} = \bigcup^\infty_{i=1}\mc{P}_i$ as a subset of $\mc{B}$ with the induced partial order, we see that the Hasse diagram of $\mc{P}$ forms a locally finite rooted tree $T$ (with root vertex $\infty$) on which the action of $U$ is a branch action.  The same argument as in the previous paragraph then shows that $\partial T$ is isomorphic to $\ldlat(U)$ as a $U$-space; in particular, we deduce that every element of $\ldlat(U)$ is a join of elements of $\mc{P}$, so that $\mc{B} = \ldlat(U)$.  Thus (ii) implies (i) and we have no proper quotient of $\mf{S}(\ldlat(U))$ on which $U$ acts faithfully.
\end{proof}

\begin{rmk}
	The above lemma together with Proposition \ref{prop:germs_is_relative_comm} recover a special case of \cite[Theorem 1.2]{Roever}:  $\Comm_{\Homeo(\partial T)}(U)\cong \ms{L}(U)$ for any profinite branch group $U$.
	This group of germs is moreover piecewise full and hence expansive (by Lemma~\ref{lem:A(G)_quotientspace}), but it is not necessarily compactly generated. 	
\end{rmk}

\begin{ex}\label{eg:iterated_wreath_same_group}
	Let $U$ be a branch group acting on the $n$-regular infinite rooted tree $T$, for some $n>1$. 
	For a vertex $v\in T$, and $g\in \Aut(T)$, denote by $\delta_v(g)$ the automorphism of $T$ which acts like $g$ on the subtree rooted at $v$, and fixes every other vertex. 
	Assume that $U$ contains a finite index normal subgroup $K$ such that $ \prod_{|v|=1}\delta_v(K)\leq \prod_{|v|=1}\rist_U(v)\leq K$ with finite index, so $U$ is commensurable with its $n$th direct power. 
	This is what is known as a \defbold{regular branch group} in the literature. 
	For example, one could take $U$ to be an iterated wreath product of copies of the same transitive permutation group of degree $n$, in which case $U=K$; but there are many other more general examples.
	In fact, most of the well-known examples of branch groups are regular branch, up to replacing the tree $T$ by that obtained when ignoring some of the levels; for instance, the $n^d$ regular tree from the $n$-ary tree. 
	This last case occurs for the (closure of the) Grigorchuk groups $G_{\omega}$ of \cite{Grig_omegagroups} for periodic $\omega$.
	
	Denote by $V_n$ the Higman--Thompson group of almost automorphisms of the $n$-regular rooted tree $T$ (see \cite{Roever} for definitions and notation used here).
	Given  $\alpha\in V_{n}$, suppose it has a representative $a$ which takes the complete antichain $D$ of vertices of $T$ bijectively to the complete antichain $R$. 
	Then $K_D:=\prod_{v \in D}\delta_v(K)$ and $K_R:=\prod_{v\in R}\delta_v(K)$ are both open subgroups of $U$
	and $a K_D a^{-1} = \prod_{v\in D}\delta_{a(v)}(K)=K_R$, so $a$ commensurates $U$ and conjugation by $a$ gives a local automorphism of $U$. 
	As $\prod_{|v|=1}\delta_v(K)$ has finite index in $K$, any expansion of $a$ produces an equivalent local automorphism of $U$. 
	This means that $V_{n}$ commensurates  $U$ and so $G:=\grp{V_{n},U}$ can be given a non-discrete \tdlc topology if $U$ is profinite and $K$ is closed. 
	Now, $V_{n}$ acts expansively on $\partial T$ and therefore so does $G$; moreover, $V_{n}$ is finitely generated, so $G$ is compactly generated. 
	Corollaries~\ref{cor:group_expansive_iff_alternating_compactly_gen} and~\ref{cor:A_is_open} imply that $\Al(G)$ is a group in $\ms{S}$ that is open in $\ms{L}(U)$; in particular, $\Al(G)$ contains a finite index subgroup of $U$.
	It is much harder to establish when $\ms{L}(U)$ or $\Al(\ms{L}(U))$ are compactly generated. 
	This is unknown in the case that $U$ is the closure in $\Aut(T)$ of the first Grigorchuk group -- Theorem 4.16 of \cite{BEW} confirms that the compactly generated subgroup $\grp{U, V_{2}}$ is open in $\ms{L}(U)$ and topologically simple, but does not indicate whether $\ms{L}(U)$ or its monolith are compactly generated.

	If $U = W \wr F$, where $F \le \Sym(k)$ is transitive and $W$ is an infinitely iterated wreath product of copies of some transitive $D \le \Sym(d)$, with $1 < d,k < \infty$, we see that $U$ is a profinite branch group and $\ms{L}(U)$ contains a copy of the Higman--Thompson group $V_{d,k}$ for the tree where the root has $k$ children and all other vertices have $d$ children.  Such a group $U$ occurs as the stabiliser of a vertex in a Burger-Mozes universal group $U(F)$ (see \cite{BM00} or \cite{lc:book_trees}) in the following special case: $F$ is $2$-transitive, $k = d+1$, and $D$ is a point stabiliser of $F$.
		
	Very explicit results are obtained for the group of germs of such a group $U$ in \cite[Theorems 6.14 and 6.15]{CapDeM}: the group $\Al(\ms{L}(U))$ is compactly generated if and only if $\ms{L}(U)$ is compactly generated if and only if $\N_{\Sym(d)}(D) = D$.
	In that case, $\ms{L}(U)= \grp{V_{d,k},U}$ and $\Al(\ms{L}(U))$ has index at most 2.

\end{ex}

Only countably many examples of the above kind are known; namely, only countably many local isomorphism classes are known of groups in $\ms{S}$ that are locally isomorphic to a profinite branch group. 

\subsection{Examples from other groups acting on trees}

If $U\leq \Aut (T)$ is not a branch group, the decomposition lattice may be larger than that generated by $T$, even if the tree $T$ is locally finite.
Nevertheless, the action on $\partial T$ can still be enough to obtain examples of groups in $\ms{S}$ by using the results in the previous sections.

\begin{ex}\label{eg:waltraud_full_burgermozes}
	Consider a Burger-Mozes universal group $U(F)$ acting on the $d$-regular unrooted tree $T$, where $d\geq 3$ and $F\leq \Sym(d)$, and let $U$ be the stabiliser of some vertex $v$ (they are all conjugate as $U(F)$ acts transitively on vertices, for every $F$).
	Suppose that $F$ is intransitive, so that $U$ is not a branch group, and that $F$ has non-trivial point stabilisers, so that $U$ is not discrete.
	Instead of dealing with the general case, where notation could obscure the main points, we will focus on the illustrative special case where the orbits of $F$ on $\{1,\dots, d\}$ are $D_1=\{1,2\}$ and $D_2=\{3,\dots, d\}$.
	Each vertex $w$ of the tree rooted at $v$ can be identified with the sequence of labels in $\{1,\dots,d\}$ on the edges of the path from $v$ to $w$; the finite sequences that occur are exactly those where each digit is different from the next digit.
	Let $F_1$ be the pointwise stabiliser of $D_1$ in $F$ and suppose $F_1$ is non-trivial.
	Given a sequence $w$ that alternates between the elements of $D_1$, let 
		\[
		H(w):= \left( \prod_{l\in D_2}\rist_U(wl) \right) \rtimes F_1
		\] 
		where $g \in F_1$ permutes the factors, sending $\rist_U(wl)$ to $\rist_U(wg(l))$.
		Let $w_{1,n}$ be the alternating sequence $1212 \dots$ of length $n$ and let $w_{2,n}$ be the alternating sequence $2121 \dots$ of length $n$.
	Now set
	\begin{align*}
		K_1 & :=\prod_{d\in D_1}\prod_{n\geq 0} H(w_{d,2n}), &
		K_2 & :=\prod_{d\in D_1}\prod_{n\geq 0} H(w_{d,2n+1}), & 
		K_3 & :=\prod_{d\in D_2}\rist_U(d).
	\end{align*}
	Then $K_1 \times K_2 \times K_3$ is the pointwise stabiliser in $U$ of the ball of radius 1 around $v$, so the commensurability classes $[K_1], [K_2], [K_3]$ are in $\ldlat(U)$.
However, neither $K_1$ nor $K_2$  are commensurable to $\prod_{y\in Y}\rist_U(y)$ where $Y$ is a finite set of vertices. 
In particular, the Boolean algebra generated by $\{[\rist_U(y)] : y\in T\}$ is a $U$-invariant proper subalgebra of $\ldlat(U)$.
Denoting by $X$ the Stone space of $\ldlat(U)$, we have in particular that $\Comm_{\Homeo(\partial T)}(U)$ embeds properly into $\ms{L}(U)\cong \Comm_{\Homeo(X)}(U)$.

The universal group $U(F)$ commensurates $U$ and is compactly generated since it acts transitively on $T$ (see \cite[Lemma 2.4]{CapDeM} for a proof). 
Vertex transitivity also implies that the action of $U(F)$ on $\partial T$ is minimal (because there are no proper invariant subtrees or fixed ends) and expansive.
Thus $\Al(U(F);\partial T) \in \ms{S}$ by Theorem \ref{thm:Nekrashevych_simple} and Corollaries \ref{cor:group_expansive_iff_alternating_compactly_gen} and \ref{cor:A_is_open}.

The groups $\Full(U(F);\partial T)$ for $F$ not necessarily transitive have been previously considered in \cite{Lederle} where it is shown that they are always compactly generated and that the derived subgroup is the monolith of $\Full(U(F);\partial T)$ (and therefore coincides with $\Al(U(F);\partial T)$), which is open and of finite index.

It is not clear when $\Full(U(F);\partial T)=\Comm_{\Homeo(\partial T)}(U(F))$ for intransitive $F$.
\end{ex}

\begin{ex}\label{eg:leboudec_G(FF')}
Let $F\leq F'\leq \Sym(d)$ with $d\geq 3$ and assume that $F$ has some non-trivial point stabiliser. 
In \cite{LB16}, the group $G(F,F')$  is defined as those automorphisms of the $d$-regular unrooted tree $T$ whose local action is in $F$ for all but finitely many vertices, for which the local action is in $F'$. 
Therefore $U(F)\leq G(F,F')\leq U(F')$ and, according to \cite[Lemma 3.2]{LB16}, $G(F,F')$ commensurates $U(F)$, so the topology of $G(F,F')$ is defined, using Proposition \ref{prop:bourbaki_gentop} so as to make $U(F)$ an open subgroup.
The action of $G(F,F')$ on $T$ is continuous but not proper as soon as $F\neq F'$ (see \cite[Section 1.6]{LB16}), so these groups are locally isomorphic to $U(F)$ but of a different nature.
In particular, $G(F,F')\leq \Comm_{\Homeo(\partial T)}(U)$ where $U$ is the stabiliser of a vertex in $U(F)$. 

Since $U(F)$ already acts vertex-transitively on $T$, the action of $G(F,F')$ on  $\Homeo(\partial T)$ is expansive. Moreover, according to  \cite[Corollary 3.8]{LB16}, $G(F,F')$ is always compactly generated. 
Therefore $\Al(G(F,F')) \in \ms{S}$. 
Note that, in the case that $F$ is 2-transitive and its point stabilisers are self-normalising as subgroups of $\Sym(d-1)$ then $\ms{L}(U)\cong \Comm_{\Homeo(\partial T)}(U) =\grp{U, V_{d-1,d}}= \Full(U(F))$ (as noted in Example \ref{eg:iterated_wreath_same_group}) and so $\Al(G(F,F'))= \Al(U(F))$. 

In general, it is not clear when  $\Al(G(F,F'))= \Al(U(F))$.
\end{ex}

\section{Alternatable and splittable actions}\label{sec:alternatable}

In this section we consider some variations on having a minimal piecewise full action, and relate them to normal subgroups and automorphisms of piecewise full groups.

\begin{defn}\label{defn:alternatable}
	Recall that an action of a group $G$ on a compact zero-dimensional space $X$ is \defbold{alternatable} if $\Al(G;X) \le G$.  
	We say the action is \defbold{splittable} if every element $g \in G$ can be written as a product of elements $g = g_1g_2$ of $G$ such that for $i=1,2$, there is some nonempty clopen subset $Y_i$ of $X$ that is fixed pointwise by $g_i$.
	The action is \defbold{locally minimal} if there is a base of topology $\mc{B}$ for $X$ such that $\rist_{G}(Y)$ acts minimally on $Y$ for all $Y \in \mc{B}$.
\end{defn}

There are several known spatial reconstruction theorems for piecewise full groups and their relatives; see \cite{MatteBon} for strong results of this type.  The following will suffice for our purposes.

\begin{thm}[{\cite[Theorem~3.10]{Nekra}, based on \cite{Rubin}}]\label{thm:reconstruction}
	For $i=1,2$, let $X_i$ be a locally compact Hausdorff space and let $G_i \le \Homeo(X_i)$.  Suppose that $G_i$ is locally minimal on $X_i$.  Then for every group isomorphism $\phi: G_1 \rightarrow G_2$, there is a homeomorphism $F: X_1 \rightarrow X_2$ such that $\phi(g) = F \circ g \circ F\inv$ for all $g \in G_1$.
\end{thm}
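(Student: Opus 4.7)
The plan is to implement the standard Rubin-style reconstruction, in the form used by Nekrashevych. The first step is to verify that local minimality upgrades to Rubin's ``locally dense'' hypothesis: for every nonempty open $U \subseteq X_i$ and every $x \in U$, there exists $g \in \rist_{G_i}(U)$ with $g(x) \neq x$. This is immediate from the definition, by shrinking $U$ to a basic open $V \in \mc{B}$ containing $x$ and applying minimality of the action of $\rist_{G_i}(V)$ on $V$ (which must have more than one point, as otherwise minimality would force the rigid stabiliser action to be trivial on a singleton, contradicting non-triviality of the set-up).

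Second, I would recover the Boolean algebra of regular open subsets of $X_i$ purely group-theoretically. The idea is that, for $g, h \in G_i$, disjointness of (regularised) supports can be detected via commutation: if $\supp(g)$ and $\supp(h)$ lie in disjoint open sets then $[g,h] = 1$, while local density ensures a partial converse once one probes $\supp(g)$ with conjugates of small-support elements produced by Step~1. Iterating, one defines a group-theoretic version of the relation ``$U \subseteq \mathrm{int}(\mathrm{Fix}(h))$'' and builds a $G_i$-equivariant complete Boolean algebra which Stone-dually recovers the regular open sets $\mathrm{RO}(X_i)$, together with the assignment $U \mapsto \rist_{G_i}(U)$. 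Since $\phi$ is a group isomorphism, it induces a $\phi$-equivariant Boolean algebra isomorphism $\Phi: \mathrm{RO}(X_1) \to \mathrm{RO}(X_2)$.

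Third, I would define $F$ pointwise and check the conclusion. Each $x \in X_1$ determines the filter $\mc{U}_x$ of regular open neighbourhoods of $x$ in $\mathrm{RO}(X_1)$; because regular open sets form a base and $X_2$ is Hausdorff, local density at $X_2$ (from Step~1) ensures that $\Phi(\mc{U}_x)$ converges to a unique point $F(x) \in X_2$. Applying the same construction to $\phi\inv$ yields an inverse, so $F$ is a bijection that carries a base of topology to a base of topology, hence a homeomorphism. The conjugacy $\phi(g) = F \circ g \circ F\inv$ then follows because $\phi(g)\Phi(U) = \Phi(gU)$ for every $U \in \mathrm{RO}(X_1)$, which at the level of points reads $F(gx) = \phi(g)F(x)$. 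The main obstacle throughout is Step~2: supports of homeomorphisms are intrinsically only defined modulo nowhere dense sets, and disjointness of supports is detected only imperfectly by commutators. The locally minimal hypothesis is precisely what provides enough elements in every rigid stabiliser to distinguish points group-theoretically, which is the heart of Rubin's argument; without it, the passage from filters of subgroups back to points of $X_i$ cannot be carried out.
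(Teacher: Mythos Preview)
The paper does not prove this theorem; it is quoted from \cite[Theorem~3.10]{Nekra} (which in turn rests on Rubin's reconstruction theorem \cite{Rubin}) and used as a black box. So there is no ``paper's own proof'' to compare against.

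Your outline is a reasonable sketch of the Rubin argument, but two points deserve care. First, what you call ``locally dense'' --- the existence of some $g \in \rist_{G_i}(U)$ moving $x$ --- is only the \emph{locally moving} condition; Rubin's hypothesis is stronger, requiring that the closure of the $\rist_{G_i}(U)$-orbit of $x$ be a neighbourhood of $x$. Local minimality does give this stronger conclusion (take $V \in \mc{B}$ with $x \in V \subseteq U$, so the $\rist_{G_i}(V)$-orbit of $x$ is dense in $V$), but your write-up understates what is being verified. Second, your parenthetical about isolated points is not quite an argument: a singleton $\{x\}$ in the base satisfies the minimality condition vacuously, so ``locally minimal'' as literally stated in the paper does not by itself exclude isolated points. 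In practice the intended application is to perfect spaces, and Nekrashevych's formulation has this built in; if you want a self-contained proof you should either add the hypothesis that each $X_i$ is perfect or observe that the existence of a non-trivial locally minimal action forces this.
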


In particular, if a group $G$ admits a locally minimal action on a locally compact Hausdorff space $X$, the action is essentially unique, and we can write $\Al(G)$ and $\Full(G)$ for $\Al(G;X)$ and $\Full(G;X)$ without having to specify the space.

The next lemma relates some of the properties given in Definition~\ref{defn:alternatable} to one another.

\begin{lem}\label{lem:support_split}
	Let $X$ be a nonempty perfect compact zero-dimensional space and let $G$ be a group of homeomorphisms of $X$.  Suppose the action of $G$ is minimal and alternatable.
	\begin{enumerate}[(i)]
		\item The action of $G$ is splittable.
		\item Let $Y \in \mc{CO}(X) \setminus \{\emptyset\}$.  Then $\rist_G(Y)$ is non-trivial and for every non-trivial normal subgroup $H$ of $\rist_G(Y)$, the action of $H$ on $Y$ is minimal and alternatable.  In particular, the action of $G$ is locally minimal.
	\end{enumerate}
\end{lem}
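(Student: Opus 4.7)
For part (i), I would split into two cases on whether $g \in G$ fixes some non-empty clopen set pointwise. If it does, we trivially have $g = g \cdot 1$ with both factors fixing a non-empty clopen. Otherwise, $g$ has no open fixed-point set, so we can find a non-empty clopen $V_0$ with $V_0 \cap gV_0 = \emptyset$. The plan is to construct a $3$-cycle $h \in \Al(G;X) \le G$ that agrees with $g$ on $V_0$ and has support not equal to all of $X$, then write $g = (gh^{-1})\cdot h$. By construction, $h$ fixes the complement of its support pointwise, and since $h|_{V_0} = g|_{V_0}$ we get $gh^{-1}$ equal to the identity on $gV_0$.

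The $3$-cycle $h$ is produced via a degree-$3$ multisection: use minimality of $G$ to find $f \in G$ sending some point of $gV_0$ outside $V_0 \cup gV_0$, shrink $V_0$ by clopen refinement so that the three clopens $e_1 = V_0$, $e_2 = gV_0$, $e_3 = fgV_0$ are pairwise disjoint and their union is a proper subset of $X$. Take the multisection generated by $g|_{e_1}: e_1 \to e_2$ and $f|_{e_2}: e_2 \to e_3$ (so that $f_{13} = (fg)|_{e_1}$ is automatic), and let $h$ be the cyclic element $(e_1\,e_2\,e_3) \in \Alt[S] \subseteq \Al(G;X) \le G$.

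For part (ii), I would first show $\rist_G(Y) \neq \triv$ by the same multisection-and-$3$-cycle trick, now with all three pieces $e_1, e_2, e_3$ chosen inside $Y$: this is possible since $Y$ is itself a non-empty perfect compact zero-dimensional space. Next, to show $\rist_G(Y)$ acts minimally on $Y$, given $y \in Y$ and a non-empty clopen $Z \subseteq Y$, construct a $3$-cycle in $\Al(G;X) \le G$ supported in $Y$ that sends $y$ into $Z$: use minimality of $G$ to find $\alpha \in G$ with $\alpha(y) \in Z$, then find $\beta \in G$ mapping a clopen neighbourhood of $\alpha(y)$ into a third clopen of $Y$ disjoint from the first two, and take the cyclic element of the resulting multisection. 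The action of $\rist_G(Y)$ on $Y$ is faithful because $G$ acts faithfully on $X$, and it is alternatable on $Y$ because every $3$-cycle of $\Al(\rist_G(Y);Y)$ is also a $3$-cycle of $\Al(G;X) \le G$ with support in $Y$.

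Now given a non-trivial normal subgroup $H \trianglelefteq \rist_G(Y)$, apply Nekrashevych's Theorem~\ref{thm:Nekrashevych_simple} to the faithful minimal action of $\rist_G(Y)$ on the perfect compact zero-dimensional space $Y$: since $H$ is normalised by $\Al(\rist_G(Y);Y) \le \rist_G(Y)$, we get $\Al(\rist_G(Y);Y) \le H$. Minimality of $H$ on $Y$ then follows from Lemma~\ref{lem:A(G)_quotientspace} applied to $\rist_G(Y)$, and alternatability of $H$ is immediate from the containment. Local minimality of $G$ is then witnessed by the base $\mc{CO}(X) \setminus \{\emptyset\}$. The main obstacle throughout is the careful bookkeeping in building the multisections from minimality: one must repeatedly shrink the candidate clopens by clopen refinement to ensure that the required elements of $G$ actually map the chosen pieces onto one another, but this is exactly what minimality plus zero-dimensionality deliver.
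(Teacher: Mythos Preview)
Your proposal is correct and follows essentially the same approach as the paper's proof: build an element of $\Al(G;X) \le G$ that agrees with $g$ on a small clopen to effect the splitting in (i), and for (ii) show $\rist_G(Y)$ is minimal and alternatable on $Y$ via multisections, then invoke Theorem~\ref{thm:Nekrashevych_simple} to force any non-trivial normal subgroup to contain $\Al(\rist_G(Y);Y)$. The only cosmetic difference is that the paper realises the splitting element in (i) as a double transposition in a $4$-section (using $\{Z,gZ,hZ,ghZ\}$) rather than a $3$-cycle, and in (ii) the paper phrases the minimality argument by first observing that $\rist_A(Y)$-orbits on $Y$ coincide with $Gx \cap Y$; both variants are equivalent in substance.
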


\begin{proof}
	Let $A = \Al(G;X)$.
	
	(i)
	Let $g \in G \setminus \triv$ and let $Y$ be a nonempty compact open subset of $X$ that is not fixed pointwise by $g$; say $gx \neq x$ for $x \in Y$.  We see that there is a nonempty compact open neighbourhood $Z$ of $x$ contained in $Y$, and an element $h \in G$, such that
	\[
	\{Z,gZ,hZ,ghZ\}
	\]
	is pairwise disjoint and has union $Y'$ contained in $Y \cup gY$; by choosing $Z$ small enough we may additionally ensure $gy \neq y$ for some $y \in Y \setminus \ol{Y'}$.  Define $g_1$ as follows:
	\[
	g_1x = 
	\begin{cases}
		gx &\mbox{if} \;  x \in Z \cup hZ \\ 
		g\inv x &\mbox{if} \;  x \in gZ \cup ghZ\\
		x &\text{otherwise}
	\end{cases}.
	\]
	Thus $g_1 \in A$, so $g_1 \in G$ and hence $g_2:= g\inv_1g \in G$.  Observe that $g_2$ fixes $Z$ pointwise, unlike $g$; meanwhile, $g_1$ fixes pointwise a neighbourhood of $y$, whereas $g$ does not.  The statement 
	\[
	\forall x \in X: gx = x \Rightarrow g_ix=g_ix
	\]
	is clear for $i=1$, from, which it follows also for $i=2$.
	
	(ii)
	Fix $Y \in \mc{CO}(X) \setminus \{\emptyset\}$ and let $R = \Al(\rist_G(Y);Y)$.  Note that $R$ is a subgroup of $A$ that is supported on $Y$, so in fact
	\[
	R \le \rist_A(Y) \le \rist_G(Y).
	\]
	In particular, the action of $H$ on $Y$ is alternatable whenever $R \le H \le \rist_G(Y)$.
	
	Let $g \in G$ be such that $x$ and $gx$ are distinct points in $Y$.  Then by minimality we can find another distinct point $hx$ for $h \in G$; there is then a clopen neighbourhood $Z$ of $x$ such that $Z$, $gZ$ and $hZ$ are disjoint clopen subsets of $Y$.  In particular, there is $k \in \rist_A(Y)$ such that $kx = gx$.  Thus the $\rist_A(Y)$-orbit of $x$ is $Gx \cap Y$.  In particular, $\rist_A(Y)$ acts minimally on $Y$.  By repeating the argument we see that $\Al(\rist_A(Y);Y)$ itself acts minimally on $Y$, so $R$ acts minimally on $Y$.
	
	We now see by Theorem~\ref{thm:Nekrashevych_simple} that $R$ is simple and is the unique smallest non-trivial normal subgroup of $\rist_G(Y)$.  Thus given a non-trivial normal subgroup $H$ of $\rist_G(Y)$, then $R \le H$, and hence the action of $H$ is minimal and alternatable.
\end{proof}

We obtain the following relationships between a minimal piecewise full group $G$ and its automorphisms.

\begin{prop}\label{prop:piecewise_rigidity}
	Let $X$ be a nonempty perfect compact zero-dimensional space and let $G \le \Homeo(X)$ be minimal and piecewise full.
	\begin{enumerate}[(i)]
		\item Let $\Al(G) \le H \le \Homeo(X)$.  Then there is an isomorphism $\theta: \N_{\Homeo(X)}(H) \rightarrow \Aut(H)$ such that for $h \in H$, $\theta(h)$ is conjugation by $h$.
		\item Every automorphism of $\Al(G)$ extends to $G$ and $\Al(G)$ is characteristic in $G$.  
		In particular, $\N_{\Homeo(X)}(G) = \N_{\Homeo(X)}(\Al(G))$.
		\item We have $G = SS$, where $S$ is the set of homeomorphisms of $X$ that normalise $G$ and are supported on a proper compact subset of $X$.  In particular, $G$ is the largest splittable subgroup of $\N_{\Homeo(X)}(G)$.
		\item Given $H \le \N_{\Homeo(X)}(G)$ such that every $H$-orbit on $X$ has at least $3$ points, we have $H \le G$ if and only if $\Al(H) \le \Al(G)$.
	\end{enumerate}
\end{prop}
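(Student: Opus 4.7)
The overall plan is to address the four parts in order, drawing on the Rubin-style reconstruction theorem (Theorem~\ref{thm:reconstruction}) together with the identity $G = \Full(\Al(G))$, which follows from Lemma~\ref{lem:alternating_to_full} because the action is minimal on a perfect space, so all $G$-orbits are infinite.

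For (i), Lemma~\ref{lem:support_split}(ii) shows $\Al(G)$ is locally minimal on $X$, hence so is any overgroup $H$.  Theorem~\ref{thm:reconstruction} then gives, for each $\phi \in \Aut(H)$, a unique $F \in \Homeo(X)$ with $\phi(h) = FhF^{-1}$, and $\phi(H) = H$ forces $F \in \N_{\Homeo(X)}(H)$.  Thus the map $\theta \colon F \mapsto c_F$ (conjugation by $F$) is surjective; injectivity reduces to showing that the centraliser of $\Al(G)$ in $\Homeo(X)$ is trivial.  For the latter: if $F$ centralises $\Al(G)$ then for every clopen $W$ we have $\rist_{\Al(G)}(FW) = F\rist_{\Al(G)}(W)F^{-1} = \rist_{\Al(G)}(W)$, and micro-supportedness of $\Al(G)$ (from Lemma~\ref{lem:support_split}(ii) together with Theorem~\ref{thm:Nekrashevych_simple}, which produce a non-trivial rigid stabiliser in every nonempty clopen) allows us to recover $W$ as the support of $\rist_{\Al(G)}(W)$, so $FW = W$ for every clopen $W$, forcing $F = \id_X$.

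For (ii), apply (i) with $H = \Al(G)$ to get $\phi = c_F$ for some $F \in \N_{\Homeo(X)}(\Al(G))$; a direct argument transporting clopen partitions and their labels under $F$ shows that $F$ also normalises $\Full(\Al(G)) = G$, so $c_F$ extends $\phi$ to $G$.  Any automorphism of $G$ is itself conjugation by some $F \in \N_{\Homeo(X)}(G)$ (by (i) with $H=G$); since multisections of $G$ are carried to multisections of $G$ by conjugation by $F$, we get $F\Al(G)F^{-1} = \Al(G)$, proving $\Al(G)$ is characteristic.  The equality $\N_{\Homeo(X)}(G) = \N_{\Homeo(X)}(\Al(G))$ is then a combination of the two directions just established.

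For (iii), the inclusion $G \subseteq SS$ is a direct consequence of splittability (Lemma~\ref{lem:support_split}(i)):  write $g = g_1 g_2$ with each $g_i \in G \le \N_{\Homeo(X)}(G)$ fixing a nonempty clopen, so $g_i \in S$.  For the reverse, since $1 \in S$, one must show $S \subseteq G$, equivalently that any $f \in \N_{\Homeo(X)}(G)$ with a nonempty clopen $V$ in its fixed set lies in $\Full(G) = G$.  The intended strategy is piecewise reconstruction: since every orbit of $G$ meets $V$ (by minimality) and $f$ fixes $V$ pointwise, $f$ preserves every orbit setwise, so $f(x) \in Gx$ for all $x \in X$; one then aims to show that for each $x \in X \setminus V$ there is $g_x \in G$ agreeing with $f$ on a clopen neighbourhood of $x$, whereupon compactness of $X$ yields a finite clopen partition realising $f$ piecewise in $G$.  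This local-agreement step is the main obstacle, and is expected to combine the high transitivity of $\Al(G)$ on orbits (to produce $g_x$ with $g_x(x) = f(x)$ and further control) with a rigidity argument in the spirit of (i) (upgrading pointwise agreement at $x$ to agreement on a clopen neighbourhood, via centralisers of rigid stabilisers).  Once $G = SS$ is in hand, the ``in particular'' statement is immediate:  any splittable $H \le \N_{\Homeo(X)}(G)$ satisfies $H \subseteq SS = G$.

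For (iv), the forward direction is immediate from the inclusion of multisections.  Conversely, since every $H$-orbit has at least $3$ points, Lemma~\ref{lem:alternating_to_full} gives $\Full(H) = \Full(\Al(H))$, and applied to $G$ it gives $G = \Full(\Al(G))$.  Hence $\Al(H) \le \Al(G)$ yields $\Full(H) = \Full(\Al(H)) \le \Full(\Al(G)) = G$, and therefore $H \le \Full(H) \le G$.
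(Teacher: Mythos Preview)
Your arguments for parts (i), (ii) and (iv) are essentially the same as the paper's, with only cosmetic differences (e.g.\ your injectivity argument in (i) recovers $W$ from $\rist_{\Al(G)}(W)$, whereas the paper observes directly that a non-trivial $f$ moves some clopen $Y$ off itself and hence fails to normalise $\rist_H(Y)$; both are fine).

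The genuine gap is in part (iii). You correctly identify that the task reduces to showing $S \subseteq G$, i.e.\ that any $f \in \N_{\Homeo(X)}(G)$ fixing a nonempty clopen $V$ pointwise locally agrees with elements of $G$. But you explicitly flag the local-agreement step as ``the main obstacle'' and only sketch an expected route via high transitivity of $\Al(G)$ and a vague rigidity argument. This is not a proof, and the rigidity route you gesture at does not obviously work: knowing $f(x) \in Gx$ gives you a $g_x$ with $g_x(x) = f(x)$, but there is no general mechanism in your outline for upgrading agreement at a point to agreement on a neighbourhood.

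The paper resolves this with a one-line commutator trick that you missed. Given $x \in X$, use minimality to find $g \in G$ and a clopen neighbourhood $Z$ of $x$ with $gZ \subseteq V$. Then for $z \in Z$ we have $gz \in V$, so $f^{-1}(gz) = gz$, hence $g^{-1} f^{-1} g(z) = z$, and therefore the commutator $h := f g^{-1} f^{-1} g$ satisfies $h(z) = f(z)$ for all $z \in Z$. Since $f$ normalises $G$ and $g \in G$, we have $h \in G$. Thus $f$ agrees with an element of $G$ near every point, and piecewise fullness of $G$ gives $f \in G$. This replaces your entire ``expected'' strategy and is the missing idea.
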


\begin{proof}
	For (i), we note that $\Al(G)$ is locally minimal by Lemma~\ref{lem:support_split}(ii), so $H$ is also locally minimal.  Theorem~\ref{thm:reconstruction} then yields a surjection $\theta: \N_{\Homeo(X)}(H) \rightarrow \Aut(H)$ with kernel $\CC_{\Homeo(X)}(H)$ such that for $h \in H$, $\theta(h)$ is conjugation by $h$.  Given $f \in  \N_{\Homeo(X)}(H) \setminus \triv$, there is $Y \in \mc{CO}(X) \setminus \{\emptyset\}$ such that $Y$ and $fY$ are disjoint, and then $f$ does not normalise $\rist_H(Y)$.  Thus $\CC_{\Homeo(X)}(H)$ is trivial, so $\theta$ is an isomorphism.
	
	It is now clear that $\Al(G)$ is characteristic in $G$.  On the other hand, given $f \in \N_{\Homeo(X)}(\Al(G))$, then $f$ normalises $\Full(\Al(G);X)$, and $\Full(\Al(G);X) = G$ by Lemma~\ref{lem:alternating_to_full}; thus (via part (i)) every automorphism of $\Al(G)$ extends to $G$.  This proves (ii).
	
	For (iii), let $f \in S$, let $x \in X$ and let $Y$ be a nonempty compact open set fixed pointwise by $f$.  Since $G$ is minimal there is $g \in G$ and a neighbourhood $Z$ of $x$ such that $gZ \subseteq Y$.  We then see that the commutator $h = fg\inv f\inv g$ agrees with $f$ on $Z$; since $f \in \N_{\Homeo(X)}(G)$ we have $h \in G$.  Thus $f$ acts as an element of $G$ in a neighbourhood of $x$; since $x \in X$ was arbitrary and $G$ is piecewise full, we deduce that $f \in G$.  Thus $S \subseteq G$.  On the other hand, by Lemma~\ref{lem:support_split}(i), $G$ is splittable, so $G = SS$.
	
	For (iv), given $H \le \N_{\Homeo(X)}(G)$, if $H \le G$ then certainly $\Al(H) \le \Al(G)$.  On the other hand, if $\Al(H) \le \Al(G)$ then $\Full(H) \le \Full(G) = G$, since $\Full(H) = \Full(\Al(H))$ by Lemma~\ref{lem:alternating_to_full}.
\end{proof}

\begin{ex}
	Let $G$ be a countably infinite residually finite group.  Suppose $G$ has an automorphism $h$ that does not centralise any finite index subgroup of $G$.  Let $N$ be an $h$-invariant closed normal subgroup of the profinite completion $\widehat{G}$ of $G$, such that the natural homomorphism $\pi$ from $G$ to $X := \widehat{G}/N$ is injective.  (For a specific example, one can take $G = \Zb$, $h: x \mapsto -x$ and $X = \Zb_2$.)  Then $G$ has a natural translation action on $X$ via $\pi$, so we can regard $G$ as a subgroup of $\Homeo(X)$ that acts minimally on $X$, and $h$ extends to a homeomorphism (indeed, a topological group automorphism) $\hat{h}$ of $X$.  As subgroups of $\Homeo(X)$, we see that $\hat{h} \in \N_{\Homeo(X)}(G)$, and consequently $\hat{h} \in \N_{\Homeo(X)}(F)$ where $F = \Full(G;X)$.  However, we claim that $\hat{h}$ is not an element of $F$.  Supposing for a contradiction that $\hat{h} \in F$, then there is a coset $\pi(k)W$ of an open subgroup $W$ of $X$ (with $k \in G$) such that $\hat{h}$ acts on $\pi(k)W$ as a constant element $g$ of $G$, that is, $\hat{h}(\pi(k)w) = \pi(g)\pi(k)w$ for all $w \in W$.  In other words
	\[
	\forall w \in W: \; \hat{h}(w) = \pi(h(k)^{-1}gk)w.
	\]
	Taking $w=1$ we see that $\pi(\hat{h}(k)^{-1}gk) = 1$, so in fact $\hat{h}(w) = w$ for all $w \in W$.  But then also $h(w') = w'$ for all $w' \in \pi^{-1}(W)$, which contradicts our assumption about $h$.  In particular, we conclude that $\N_{\Homeo(X)}(F) > F$.
\end{ex}

The above example has the limitation that the action of $G$ extends to a continuous action of a compact group; any such action on a nonempty perfect compact space $X$ will be equicontinuous, which is far from being expansive.  In the case where $G$ is a compactly generated \tdlc group acting minimally and expansively on $X$, the authors are not aware of an example where $\N_{\Homeo(X)}(\Al(G;X)) > \Full(G;X)$; see Remark~\ref{rmk:simple} below.

\section{Alternatable almost simple t.d.l.c. groups}\label{sec: alternatable almost simple}

In this section we introduce a new class $\ms{A}$ of \tdlc groups inspired by the results of the previous sections, for which we can prove a number of structural properties.

\begin{defn}
	Let $G$ be a non-discrete locally compact group.  An \defbold{$\ms{A}$-action} of $G$ is a faithful minimal locally decomposable action of $G$ on a compact zero-dimensional space $X$, such that $\Al(G;X)$ is a compactly generated open subgroup of $G$.
	
	Say that $G \in \ms{A}$ if $G$ admits an $\ms{A}$-action.
\end{defn}

By Theorem~\ref{thm:reconstruction}, $\ms{A}$-actions are essentially unique when they exist, so we can speak of \emph{the} $\ms{A}$-action of $G$ and write $\mc{BI}(G)$, $\Full(G)$, $\Al(G)$ and so on, where it is implied that we are taking the $\ms{A}$-action of $G$.

Note that in the definition of $\ms{A}$ we do not assume $G$ is compactly generated, only that $\Al(G;X)$ is compactly generated.  We do not know in general if $\Al(G;X)$ being compactly generated implies that $\Full(G;X)$ is compactly generated.

\subsection{Properties of groups in $\ms{A}$}

The following gives an equivalent condition for an $\ms{A}$-action.
In particular, it shows that the requirement that $\Al(G;X)$ be \emph{open} in $G$ is not critical for specifying the class $\ms{A}$, as it follows from other properties.
However, it is a natural and convenient assumption to make from the outset in the context of local isomorphism classes of \tdlc groups, which we will discuss in the section after this one.

\begin{prop}\label{prop:AS_characterisation}
	Let $G$ be a non-discrete \tdlc group acting by homeomorphisms on the nonempty perfect compact zero-dimensional space $X$.  Suppose that the action is faithful and locally decomposable.  Then the following are equivalent:
	\begin{enumerate}[(i)]
		\item The action of $G$ is an $\ms{A}$-action;
		\item There is a compactly generated closed subgroup $H$ of $G$ with minimal expansive action on $X$, such that $\Al(H;X) \le G \le \Full(H;X)$.
	\end{enumerate}
\end{prop}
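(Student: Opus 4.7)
My plan is to prove the two implications separately, leaning on the identity $\mc{BI}(\Al(M;X)) = \mc{BI}(M)$ from Lemma~\ref{lem:alternating_to_full} so that $\Al(\cdot;X)$ and $\Full(\cdot;X)$ are unchanged under passage between a group, its alternating full group, and its piecewise full group. For (i) $\Rightarrow$ (ii), the natural witness is $H := \Al(G;X)$: by definition of an $\ms{A}$-action, $H$ is compactly generated and open (hence closed) in $G$; Lemma~\ref{lem:A(G)_quotientspace} applied to the minimal $G$-action gives that $H$ acts minimally and expansively on $X$; and the identity above yields $\Al(H;X) = H \le G$ and $\Full(H;X) = \Full(G;X) \supseteq G$.

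For the harder direction (ii) $\Rightarrow$ (i), I will set $F := \Full(H;X)$ and first establish the algebraic identifications $\Full(G;X) = F$ and $\Al(G;X) = \Al(H;X) = \Al(F;X)$, all of which follow from the chain $H \le G \le F$ combined with Lemma~\ref{lem:alternating_to_full}. Since $H$ acts minimally on $X$, so do $G$ and $F$. By Corollary~\ref{cor:piecewise_full_extension} (equivalently Proposition~\ref{prop:full_contains_intersection_rists}), the topology of $G$ extends uniquely to a \tdlc, locally decomposable topology on $F$ in which $G$ is open. I will then invoke Corollary~\ref{cor:group_expansive_iff_alternating_compactly_gen} applied to $G$: condition~(ii) there is met, since $H$ is a compactly generated, piecewise dense subgroup of $\Full(G;X)$ acting expansively on $X$. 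Hence $\ol{\Al(G;X)}$ is compactly generated.

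To close the argument I will apply Corollary~\ref{cor:A_is_open}(ii) to $F$ rather than directly to $G$: the group $F$ is non-discrete (it contains the non-discrete open subgroup $G$), satisfies $F = \Full(F;X)$, acts faithfully, minimally and locally decomposably on $X$, and has $\ol{\Al(F;X)} = \ol{\Al(G;X)}$ compactly generated. The corollary then supplies that $\Al(F;X) \in \ms{S}$ and is open in $F$. Since $\Al(G;X) = \Al(F;X) \le G$ and $G$ is open in $F$, the subgroup $\Al(G;X)$ is open in $G$ and compactly generated, so the $G$-action on $X$ is an $\ms{A}$-action. The main subtlety I expect is the bookkeeping across the three groups $H \le G \le F$, in particular extending the topology of $G$ to $F$ so that the results of Sections~\ref{sec:compact_gen} and~\ref{sec:A_is_open} can be applied at the correct level; once that is set up, the algebraic collapse provided by Lemma~\ref{lem:alternating_to_full} makes the two directions dual to each other.
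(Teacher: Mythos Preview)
Your proposal is correct and follows essentially the same route as the paper: for (i)$\Rightarrow$(ii) take $H=\Al(G;X)$ and invoke Lemmas~\ref{lem:A(G)_quotientspace} and~\ref{lem:alternating_to_full}; for (ii)$\Rightarrow$(i) extend the topology to $F=\Full(H;X)$, use Corollary~\ref{cor:group_expansive_iff_alternating_compactly_gen} to get $\ol{\Al(\cdot;X)}$ compactly generated, and then Corollary~\ref{cor:A_is_open}(ii) to conclude openness.

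One small point of bookkeeping: Lemma~\ref{lem:alternating_to_full} literally gives $\mc{BI}(\Al(M))=\mc{BI}(M)$ and hence $\Full(\Al(M))=\Full(M)$, but it does \emph{not} by itself say that $\Al(\cdot;X)$ depends only on $\mc{BI}(\cdot)$. Your identification $\Al(H;X)=\Al(G;X)=\Al(F;X)$ needs the extra (easy) observation that any $3$-section of $F=\Full(H;X)$ admits a disjoint cover by $3$-sections of $H$, whence $\Al(F;X)\le\Al(H;X)$. The paper sidesteps this by arguing at the end via simplicity: once Corollary~\ref{cor:A_is_open} gives that $\Al(H;X)=\Der(L)$ is simple and open, one uses Theorem~\ref{thm:Nekrashevych_simple} to identify it with the monolith and hence with $\Al(G;X)$. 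Both routes are valid; yours is arguably cleaner, but you should justify the $\Al$-collapse by more than a bare citation of Lemma~\ref{lem:alternating_to_full}.
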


\begin{proof}
	If (i) holds, then we can set $H = \Al(G;X)$; then (ii) holds by Lemmas~\ref{lem:A(G)_quotientspace} and~\ref{lem:alternating_to_full}.
	
	Suppose (ii) holds and let $L = \Full(H;X)$. 
	Corollary~\ref{cor:group_expansive_iff_alternating_compactly_gen} implies that $\ol{\Al(H;X)}$ is compactly generated. Corollary~\ref{cor:A_is_open} then implies that $\Al(H;X)$ is simple and open in $L$, hence also in $G$, and also that $\Al(H;X)$ has fully compressible (hence minimal) action on $X$, so the $G$-action is also fully compressible.  Thus $\Al(G;X) = \Mon(G)$, and in particular $\Al(G;X) \le \Al(H;X)$; since $\Al(H;X)$ is simple, in fact $\Al(H;X) = \Al(G;X)$ and we deduce (i).
\end{proof}

Let us apply the results of the previous sections to obtain some properties of groups in $\ms{A}$.
Recall from Section~\ref{sec:robust} the classes $\ms{R}_{\ldlat}$ and $\ms{S}_{\ldlat}$.

\begin{thm}\label{thm:A_properties}
	Let $G \in \ms{A}$, with $\ms{A}$-action on a compact zero-dimensional space $X$.  Then the following holds.
	\begin{enumerate}[(i)]
		\item We have $\QZ(G)=\triv$.  Consequently the action of $G$ on $X$ is non-discretely micro-supported.
		\item We have $\Der(G) = \Al(G) = \ol{\Mon}(G)$.
		\item We have $G \in \ms{R}_{\ldlat}$ and $\Der(G) \in \ms{S}_{\ldlat}$.
		\item Every faithful micro-supported action of $\Der(G)$ on a compact zero-dimensional space, in particular the action of $\Der(G)$ on $X$, is fully compressible.
		\item For every $H$ such that $\Der(G) \le H \le G$, the action of $H$ on $X$ is an $\ms{A}$-action, with $\Full(H) = \Full(G)$ and $\Al(H) = \Al(G)$.  In particular, $H \in \ms{A}$.
		\item The space $X$ is homeomorphic to the Cantor space, while $\Full(G)$, and hence also $G$, is second-countable.
		\item There is a $G$-equivariant homeomorphism from $X$ to $\mf{S}(\mc{A})$, where $\mc{A}$ is the unique non-trivial sublattice of $\ldlat(G)$ such that $G$ acts locally minimally on $\mf{S}(\mc{A})$.
	\end{enumerate}
\end{thm}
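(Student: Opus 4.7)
The plan is to process the seven items in order, using $F := \Full(G;X)$ as a stepping stone. By Proposition~\ref{prop:full_contains_intersection_rists}, $F$ is a \tdlc group containing $G$ as an open subgroup, and by definition $F = \Full(F;X)$. First I note that $X$ must be perfect, for otherwise minimality plus compactness would force $X$ to be finite and $G$ discrete, contradicting the hypothesis. Part (i) is then immediate from Lemma~\ref{lem:non-discrete_condition} applied to the faithful, minimal, locally decomposable action of the non-discrete group $G$.

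The engine for (ii)--(iv) is Corollary~\ref{cor:A_is_open} applied to $F$. Its hypotheses are satisfied because $F$ has faithful, minimal, locally decomposable action on $X$ with $F = \Full(F;X)$, and $\Al(F;X) = \Al(G;X)$, being open in $G$ (so closed in $F$) and compactly generated, equals its closure. Thus I obtain $\Al(G;X) = \Der(F) = \Mon(F) = \TMon(F) \in \ms{S}$, with fully compressible action on $X$ and on every faithful micro-supported compact zero-dimensional $\Al(G;X)$-space. The main technical obstacle lies in transferring these conclusions back to $G$ itself, since $G$ need not be piecewise full and so Corollary~\ref{cor:A_is_open} cannot be applied directly. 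The bridge goes as follows: $\Der(G) \le \Der(F) = \Al(G;X)$ is automatic, while $\Al(G;X)$ is infinite simple (hence non-abelian, hence perfect), giving $\Al(G;X) = \Der(\Al(G;X)) \le \Der(G)$; triviality of $\CC_G(\Al(G;X))$ follows from the standard micro-support argument (any non-identity $g \in G$ moves some clopen $U$ off itself, and since $\rist_{\Al(G;X)}(U) \neq \triv$ by (i), conjugation by $g$ sends this group to the disjoint-support $\rist_{\Al(G;X)}(gU)$, contradicting centralisation); then any non-trivial $N \normal G$ meets $\Al(G;X)$ normally and non-trivially, forcing $\Al(G;X) \le N$ by simplicity, which yields $\Mon(G) = \Al(G;X) = \TMon(G)$ and completes (ii). Part (iv) is then the final clause of Corollary~\ref{cor:A_is_open}(ii) applied to $F$. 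For (iii), $G$ is faithful locally decomposable by hypothesis; $\TMon(G) = \Al(G;X)$ is topologically simple, non-discrete, compactly generated, and expansive on itself (the open subgroup $\prod_{Y \in P} \rist_{\Al(G;X)}(Y)$ for a finite partition $P$ whose $\Al(G;X)$-translates separate points has trivial core: its conjugates force an element to preserve every basic clopen setwise, hence fix every point by faithfulness), with trivial centraliser in $G$; so $G \in \ms{R}_{\ldlat}$. Moreover $\Der(G) \in \ms{S}_{\ldlat}$ by Theorem~\ref{thm:loc_decomp_compressible} applied to $\Der(G) \normal G$, whose action is compressible by (iv).

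The remaining items are routine. For (v), $H$ is open in $G$ because $\Der(G)$ is; alternating multisections of $H$ all lie in $\Der(G) \le H$, giving $\Al(H;X) = \Al(G;X)$, and Lemma~\ref{lem:alternating_to_full} yields $\Full(H;X) = F$; local decomposability of $H$ transfers from $\Der(G)$ via Lemma~\ref{lem:locally_locally_decomposable}, with $\Der(G)$ as the open neighbourhood witness. For (vi), $X$ is Cantor once second-countability is verified: $\Der(G) \in \ms{S}$ is second-countable by Kakutani--Kodaira (any compact normal subgroup of a non-discrete compactly generated topologically simple \tdlc group is trivial), so the action of a countable dense subgroup on any finite expansive partition of $X$ generates a countable subbase of $\mc{CO}(X)$; second-countability of $F$ (and hence of $G$) then follows from $\Al(G;X) \in \ms{S}$ having countable index in $F$, a non-trivial consequence of the compressibility machinery in Section~\ref{sec:A_is_open} together with \cite{GR_compressible}. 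Finally (vii) combines Lemma~\ref{lem:support_split}(ii) (minimal alternatable actions are locally minimal), Lemma~\ref{lem:loc_decomp:ldlat}(ii) (identifying $\mc{CO}(X)$ with a unique $G$-invariant subalgebra $\mc{A}$ of $\ldlat(G)$), and Theorem~\ref{thm:reconstruction} (spatial uniqueness of locally minimal actions on compact zero-dimensional spaces) to establish both existence and uniqueness of $\mc{A}$.
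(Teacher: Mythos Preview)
Your approach is essentially the paper's, and in one respect more careful: Corollary~\ref{cor:A_is_open} is stated only for piecewise full groups, so one must pass through $F = \Full(G;X)$ and then transfer the conclusions back to $G$, exactly as you do (the paper's ``Part (ii) follows from Corollary~\ref{cor:A_is_open}'' leaves this step implicit). Your transfer argument---perfection of the simple group $\Al(G;X)$ giving $\Al(G;X) \le \Der(G)$, and trivial centraliser forcing every non-trivial $N \normal G$ to meet $\Al(G;X)$ non-trivially---is correct.

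There is one genuine weak spot, in (vi). You deduce second-countability of $F$ from the claim that $\Al(G;X)$ has \emph{countable index} in $F$, pointing vaguely to ``the compressibility machinery in Section~\ref{sec:A_is_open} together with \cite{GR_compressible}''. But neither source establishes countable index independently of second-countability; the countable-index assertion in Theorem~\ref{thm:intro_A_is_open} is itself a consequence of $F$ being second-countable (open subgroups of second-countable groups have countable index), so this route is circular. The paper's argument avoids the issue: once $\Der(G)$ and $X$ are second-countable, Proposition~\ref{prop:piecewise_full_extension:lcsc}(ii) gives directly that $\mc{BI}(\Der(G))$ is second-countable, and $\Full(G) = \Full(\Der(G))$ sits inside it as an open subspace. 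Replace your countable-index detour with this one-line appeal. A smaller elision occurs in (vii): to invoke Theorem~\ref{thm:reconstruction} for a rival sublattice $\mc{B}$, you need the $G$-action on $\mf{S}(\mc{B})$ to be faithful; the paper secures this via Lemma~\ref{lem:simple_dynamics}(ii), since the kernel is a closed normal subgroup of $G$ not containing $\TMon(G)$ (which acts minimally on every quotient of $\Omega_G$), hence trivial.
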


\begin{proof}
	All actions are on $X$ unless otherwise specified.
	As $G$ is not discrete, part (i) follows immediately from Lemma~\ref{lem:non-discrete_condition}.
	Part (ii) follows from Corollary~\ref{cor:A_is_open} which also shows together with Theorem \ref{thm:Nekrashevych_simple} that $G \in \ms{R}$ and $\Der(G) \in \ms{S}$, with $\Der(G)$ open in $G$. 
	From now on we can also read $\Der(G)$ as $\Al(G)$ or $\ol{\Mon}(G)$ as appropriate for the result being referenced.
		
	By Lemma~\ref{lem:loc_decomp:ldlat} and Stone duality, we see that $X$ is $G$-equivariantly homeomorphic to $\mf{S}(\mc{A})$, for some sublattice $\mc{A}$ of $\ldlat(G)$; from now on we can identify $X$ with $\mf{S}(\mc{A})$. 
	In particular, note that $G$ acts faithfully on $\ldlat(G)$; this completes the proof of (iii). 
	Part (iv) follows from Corollary~\ref{cor:A_is_open}.
	Now consider $H$ such that $\Der(G) \le H \le G$.  We have $\Full(H) = \Full(G)$ by Lemma~\ref{lem:alternating_to_full}, and hence $\Al(H;X) = \Al(G;X) = \Der(G)$; moreover the action of $H$ is faithful and locally decomposable, since $H$ is open in $G$. Thus the action of $H$ on $X$ is an $\ms{A}$-action, completing the proof of (v).
	
	Since we have established that $\Der(G)$ is compactly generated, the fact it acts expansively ensures that $X$ is second-countable, and then since the action is faithful and minimal, we see that $X$ is perfect; thus $X$ is homeomorphic to the Cantor space.  Lemma~\ref{lem:alternating_to_full} now shows that $\Full(\Der(G)) = \Full(G)$.  We then see by Proposition~\ref{prop:piecewise_full_extension:lcsc} that $\mc{BI}(G)$ is second-countable, so $\Full(G)$ and $G$ are second-countable, proving (vi).
		
	For part (vii), it suffices to prove that $\mc{A}$ satisfies the given characterisation.  The action of $G$ on $\mf{S}(\mc{A})$ is locally minimal by Lemma~\ref{lem:support_split}(ii).  Conversely, let $\mc{B}$ be a non-trivial $G$-invariant sublattice of $\ldlat(G)$ such that $G$ acts locally minimally on $\mf{S}(\mc{B})$.  Then by Lemma~\ref{lem:simple_dynamics} the action of $\ol{\Mon}(G)$ on $\mf{S}(\mc{B})$ is minimal, hence non-trivial, so the action of $G$ on $\mf{S}(\mc{B})$ is faithful.  Applying Theorem~\ref{thm:reconstruction} we see that $\mf{S}(\mc{B})$ is $G$-equivariantly homeomorphic to $X$; considering rigid stabilisers it follows that $\mc{B} = \mc{A}$.  Thus (vii) holds.
\end{proof}

\begin{rmk}
	We see here a distinction in the dynamics of the discrete case versus the non-discrete case.  Suppose that $(X,\bZ)$ is a minimal subshift, and let $G = \Full(G;X)$ with the discrete topology.  Then $G$ is finitely generated and the action of $G$ on $X$ is faithful, locally decomposable, minimal and expansive.  The only part of the definition of an $\ms{A}$-action that fails is that the action fails to be \emph{non-discretely} micro-supported.  However, it was shown by Juschenko--Monod \cite{JuschenkoMonod} that $G$ is amenable, so in particular $X$ admits a $G$-invariant probability measure.  In particular, since $(X,G)$ is minimal, this action is certainly not compressible.  This dichotomy in the dynamics between discrete and non-discrete compactly generated \tdlc groups is explored in more detail in a recent article \cite{CapraceLeBoudec} of Caprace--Le Boudec, in which it is used to show, for $G$ in various classes of finitely generated groups, that $G$ cannot have an infinite commensurated subgroup with trivial core.
\end{rmk}

\subsection{Free action on the Furstenberg boundary}

In a recent preprint \cite{CLBMB}, Caprace, Le Boudec and Matte Bon show that a locally compact group $G$ acts freely on its Furstenberg boundary $\partial_F G$ if and only if it satisfies the following strong nonamenability property: given a closed relatively amenable subgroup $H$ of $G$, then there is a net $(g_i)$ in $G$ such that $g_iHg\inv_i \rightarrow \triv$ in the Chabauty space $\mathrm{Sub}(G)$.  They show that Neretin's groups $\mc{N}_{d,k}$ have this property, giving the first examples of $G \in \ms{S}$ such that $G$ acts freely on $\partial_F G$, and remark that it is likely that their result also applies to many other simple groups.  We note here that their result applies to all of the class $\ms{A}$.

\begin{thm}\label{thm:CLBMB}
	Let $G \in \ms{A}$.  Then $G$ acts freely on its Furstenberg boundary.  Equivalently, every $G$-conjugacy class of closed relatively amenable subgroups of $G$ contains the trivial group in its $\mathrm{Sub}(G)$-closure.
\end{thm}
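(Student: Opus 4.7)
The plan is to invoke \cite[Theorem~1.3]{CLBMB} directly, noting that its hypotheses are verified by the structural results already established for groups in $\ms{A}$. The CLBMB theorem asserts that a locally compact group $H$ acts freely on its Furstenberg boundary if and only if $H$ admits a faithful continuous action on some compact Hausdorff space $Y$ that is \emph{extremely proximal}, meaning that for every proper closed $C \subsetneq Y$ and every non-empty open $U \subseteq Y$, there exists $h \in H$ with $hC \subseteq U$; and the equivalence with the $\mathrm{Sub}(G)$-statement in the conclusion of Theorem~\ref{thm:CLBMB} is also part of the CLBMB framework. Note that extreme proximality is precisely the condition called \defbold{fully compressible} in Definition~\ref{defn:compressible}.

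Given $G \in \ms{A}$, I would first apply Theorem~\ref{thm:A_properties}(iv) to conclude that $\Der(G)$ acts faithfully and fully compressibly on $X$. Since fully compressible actions are inherited by overgroups (each witnessing element $g \in \Der(G)$ remains available in $G$), the action of $G$ itself on $X$ is faithful, continuous, minimal, and fully compressible; $X$ is the Cantor space, and in particular compact metrizable, by Theorem~\ref{thm:A_properties}(vi). This is exactly the input required by \cite[Theorem~1.3]{CLBMB}, and the conclusion follows.

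The main (minor) obstacle is ensuring that the formulation of extreme proximality used in \cite{CLBMB} matches the notion of fully compressible in Definition~\ref{defn:compressible}, and that their theorem applies with $G$ as the acting group rather than requiring the acting group to be the full simple monolith. For the first point, the definitions agree (after passing to the complement, any proper non-dense open set is contained in the complement of a proper closed set and vice versa); for the second, since $G$ contains the fully compressible subgroup $\Der(G) \unlhd G$, the $G$-action itself is extremely proximal on $X$, so $G$ satisfies the hypothesis of \cite[Theorem~1.3]{CLBMB} in its own right. The equivalence of the two formulations in the statement of Theorem~\ref{thm:CLBMB} is then exactly the content of the characterisation in \cite{CLBMB}, applied to $G$.
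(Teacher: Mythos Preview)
Your proposal has a genuine gap: you have misidentified the hypothesis of \cite[Theorem~1.3]{CLBMB}. That theorem does not assert that a faithful extremely proximal action suffices for free action on the Furstenberg boundary (and the ``if and only if'' you state is certainly not correct). The hypothesis actually used, as the paper's own proof makes explicit, is that the action is \emph{piecewise minimal-strongly-proximal}: for every nonempty clopen $Y \subseteq X$, the rigid stabiliser $\rist_G(Y)$ must act minimally and strongly proximally on $Y$. Global extreme proximality of $G$ on $X$ does not by itself give this local information about rigid stabilisers.

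The paper's proof therefore contains an essential step you have omitted. After noting (via Theorem~\ref{thm:A_properties}(iv)) that $G$ acts fully compressibly on $X$, it fixes $Y \in \mc{CO}(X)\setminus\{\emptyset\}$, uses Lemma~\ref{lem:support_split}(ii) to see that $\rist_G(Y)$ is minimal and alternatable on $Y$, and then argues directly (using the alternatable structure and the global full compressibility) that $\rist_G(Y)$ is in fact fully compressible on $Y$. Since fully compressible implies strongly proximal, this verifies the piecewise condition needed to invoke \cite[Theorem~1.3]{CLBMB}. Your argument would become correct if you inserted this verification; as written, it appeals to the wrong hypothesis.
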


\begin{proof}
	By hypothesis, the $\ms{A}$-action of $G$ on the Cantor space $X$ is faithful, minimal and alternatable. 
	Part (iv) of Theorem \ref{thm:A_properties} ensures that the action of $G$ on $X$ is fully compressible. 
	Let $Y \in \mc{CO}(X) \setminus \{\emptyset\}$ and let $H = \rist_G(Y)$.  The action of $H$ on $Y$ is minimal and alternatable by Lemma~\ref{lem:support_split}(ii).
	Let $Z,W \in \mc{CO}(Y) \setminus \{\emptyset\}$.  Then there is $g \in G$ such that $gZ \subseteq W$ and $Z \cup gZ$ is properly contained in $Y$.  There are then $g_1,g_2 \in Z$ such that $g_1Z$ and $g_2Z$ are contained in $Y$ and disjoint from $Z \cup gZ$; it is then easy to obtain $h \in \Al(G;X) \le G$ supported on $Y$ such that $hZ = gZ$.  Thus $Z$ is compressible for the action of $H$; given the freedom of choice of $Z$, we conclude that the action of $H$ on $Y$ is fully compressible.
	
	Fully compressible actions on compact spaces are also strongly proximal.
	Thus the $\ms{A}$-action of $G$ is piecewise minimal-strongly-proximal in the sense of  \cite{CLBMB}, so by \cite[Theorem~1.3]{CLBMB}, every non-trivial conjugacy class of closed relatively amenable subgroups of $G$ accumulates at the trivial group, and hence $G$ acts freely on $\partial_F G$.  
\end{proof}

\subsection{Actions on hyperbolic spaces}\label{sec:hyperbolic}

Another property of piecewise full groups that has been studied in the recent literature is whether or not they admit ``large'' actions on hyperbolic spaces.  To be precise, we use the following definitions from \cite[Section~2]{BFG}.

\begin{defn}
	An isometry $g$ of a hyperbolic space $X$ is \defbold{loxodromic} if given $x \in X$, the map $n \mapsto g^nx$ is a quasi-isometric embedding from $\Zb$ to $X$.
	
	An action of a group $G$ on a hyperbolic space $X$ by isometries is of \defbold{general type} if $G$ does not fix any point in the Gromov boundary $\partial X$, and has loxodromic elements with distinct limit points.
	
	We say a group $G$ has \defbold{property (NGT)} if it has no general type actions on hyperbolic spaces.  We say $G$ has \defbold{property (NL)} if, for all actions of $G$ by isometries of hyperbolic spaces, no element of $G$ acts as a loxodromic isometry.
\end{defn}

In \cite{BFG}, Balasubramanya, Fournier-Facio and Genevois show that many examples of groups acting by homeomorphisms have property (NL).  In \cite{GR_compressible}, the methods of \cite{BFG} are used to show the following:

\begin{lem}[{See \cite[Corollary~3.5]{GR_compressible}}]\label{lem:NGT}
	Let $X$ be a perfect zero-dimensional Hausdorff space, let $G$ be a compressible piecewise full group of homeomorphisms of $X$, and let $\Der(G) \le G_0 \le G$.  Then $G_0$ has property (NGT).
\end{lem}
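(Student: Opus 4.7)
The plan is to argue by contradiction. Suppose $G_0$ admits a general type action on a hyperbolic space $Y$. The crucial constraint from hyperbolic geometry is that if two isometries $a,b$ commute and $a$ is loxodromic with limit set $L(a)=\{\xi^+,\xi^-\}\subset\partial Y$, then $b$ preserves $L(a)$ setwise. The goal is to exploit piecewise fullness and compressibility of $G$ to manufacture, for a fixed loxodromic $g\in G_0$, so many commuting conjugates $kgk^{-1}$ that the two-point set $L(g)$ must be preserved by all of $G_0$; this contradicts the general type hypothesis, which supplies a second loxodromic element $g'\in G_0$ with $L(g')\cap L(g)=\emptyset$ (a loxodromic isometry fixes exactly two boundary points, so it cannot additionally preserve a disjoint two-point set).

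The main mechanism is the following commuting-conjugate construction. If $\gamma\in G$ has support contained in a proper clopen $V\subsetneq X$, then compressibility of $G$ produces $k\in G$ with $kV\subseteq X\smallsetminus V$; then $\gamma$ and $k\gamma k^{-1}$ have disjoint supports and hence commute, with $k\gamma k^{-1}\ne 1$. By Proposition~\ref{prop:Matui} the monolith $\Mon(G)=\Der(H)$ is non-abelian simple and contained in $\Der(G)\le G_0$, and by Lemma~\ref{lem:support_split}(i) (applicable because compressibility forces minimality and alternatability for a piecewise full group) every $g\in G$ decomposes as a product $g=g_1g_2$ with each $g_i\in G$ fixing a nonempty clopen pointwise, so each factor has proper support amenable to the previous construction.

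For a chosen loxodromic $g\in G_0$ write $g=g_1g_2$, refining the split (if necessary) so that the fixed clopens $Y_i$ of $g_i$ cover $X$, which forces $\mathrm{supp}(g_1)\cap\mathrm{supp}(g_2)=\emptyset$ and hence $[g_1,g_2]=1$. A standard translation-length analysis on the common invariant quasi-axis of commuting isometries then shows at least one factor $g_i$ is loxodromic in $Y$ with $L(g_i)=L(g)$. Apply the commuting-conjugate construction to this $g_i$: for suitable $k\in G$, the conjugate $kg_ik^{-1}$ is loxodromic and commutes with $g_i$, so $k\cdot L(g)=L(kg_ik^{-1})=L(g_i)=L(g)$, i.e.\ $k$ preserves $L(g)$. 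Quantifying over the abundant supply of such $k$ from compressibility, the setwise $L(g)$-stabiliser $K\le G$ meets $\Mon(G)$ non-trivially; by simplicity, $\Mon(G)\le K$, and since $K\cap G_0$ is a non-trivial normal subgroup of $G_0$ containing $\Mon(G)$, a final normal-closure argument inside $G_0$ forces $G_0\le K$, yielding the desired contradiction.

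The main obstacle is the step asserting that in the decomposition $g=g_1g_2$ of a loxodromic element into commuting factors, at least one factor is loxodromic with the full limit set $L(g)$. In principle two commuting elliptic or parabolic isometries may multiply to a loxodromic one, so a careful argument with common invariant quasi-axes is needed here. A cleaner workaround that bypasses splittability is to pick a clopen $W$ with $g(W)\cap W=\emptyset$ (available because $g$ moves some point and $X$ is Hausdorff), use compressibility to produce conjugates of $g$ whose restriction to $W$ still exhibits the loxodromic dynamics of $g$, and extract the $L(g)$-preservation directly via the conjugation action on $\partial Y$; simplicity of $\Mon(G)$ then propagates this preservation to $G_0$ exactly as above.
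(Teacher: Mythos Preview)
The paper does not prove this lemma; it is cited from the companion article \cite{GR_compressible}, so there is no in-paper proof to compare against.  Your proposal, however, has two substantive gaps that prevent it from going through.

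First, the decomposition step fails.  You assert that the split $g=g_1g_2$ from Lemma~\ref{lem:support_split}(i) can be ``refined'' so that the fixed clopens $Y_1,Y_2$ cover $X$, forcing $\mathrm{supp}(g_1)\cap\mathrm{supp}(g_2)=\emptyset$ and hence $[g_1,g_2]=1$.  But this refinement is in general impossible: if $g$ admits such a commuting disjoint-support factorisation then $\mathrm{supp}(g)$ decomposes into two non-trivial $g$-invariant clopen pieces.  A piecewise full compressible group can easily contain elements with no non-trivial invariant clopen (for instance any minimal homeomorphism of the Cantor set inside its own piecewise full group), and such elements cannot be split as you require.  Lemma~\ref{lem:support_split}(i) gives only that each $g_i$ fixes \emph{some} nonempty clopen, with no covering or commutation claim; moreover that lemma assumes minimality, which Lemma~\ref{lem:NGT} does not.

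Second, the simplicity step is invalid.  You argue that the setwise stabiliser $K=\mathrm{Stab}_{G}(L(g))$ meets $\Mon(G)$ non-trivially, and then invoke simplicity of $\Mon(G)$ to conclude $\Mon(G)\le K$.  But simplicity only controls \emph{normal} subgroups of $\Mon(G)$, and $K\cap\Mon(G)$ has no reason to be normal in $\Mon(G)$ (nor is $K$ normal in $G_0$, so the subsequent ``normal-closure argument inside $G_0$'' also fails).  The elements $k$ you produce from compressibility are particular elements satisfying $kV\cap V=\emptyset$; they do not form a normal subgroup.

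Your closing ``workaround'' does not repair either issue: it is too vague to constitute an argument, and any route via commuting conjugates must still confront the problem that a general loxodromic $g\in G_0$ need not itself have small support or admit a commuting non-trivial conjugate.  The approach in \cite{GR_compressible} (following \cite{BFG}) proceeds differently, working with the generators of $\Der(G)$ supplied by rigid stabilisers of compressible clopens and using the specific criteria from \cite{BFG} for ruling out general-type actions, rather than attempting to split an arbitrary loxodromic element.
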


We now obtain the stronger property (NL) for simple groups in $\ms{A}$.

\begin{thm}\label{thm:NL}
	Let $G \in \ms{A}$.  Then $G$ has property (NGT), while $\Der(G)$ has property (NL) and is one-ended.
\end{thm}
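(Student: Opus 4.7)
The plan is to establish (NGT) for both $G$ and $\Der(G)$ using Lemma~\ref{lem:NGT}, then upgrade (NGT) to (NL) for $\Der(G)$ by exploiting its perfectness, and finally derive one-endedness from (NL) together with the standard ends theory of compactly generated locally compact groups.

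For (NGT), let $X$ be the compact zero-dimensional space on which $G$ has its $\ms{A}$-action, so by Theorem~\ref{thm:A_properties}(vi) $X$ is the Cantor space, hence perfect. Set $F := \Full(G;X)$, which is piecewise full on $X$. By Theorem~\ref{thm:A_properties}(ii),(iv), we have $\Der(F) = \Al(F;X) = \Der(G)$, and this subgroup acts fully compressibly on $X$, so \emph{a fortiori} $F$ acts compressibly on $X$. Since $\Der(F) \le G \le F$ and $\Der(F) \le \Der(G) \le F$, Lemma~\ref{lem:NGT} yields (NGT) for both $G$ and $\Der(G)$.

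For (NL), suppose for contradiction that some $h \in \Der(G)$ acts loxodromically on a hyperbolic space $Y$. Theorem~\ref{thm:A_properties}(iii) together with Theorem~\ref{thm:Nekrashevych_simple} shows that $\Der(G)$ is abstractly simple; as it is non-discrete it is infinite, and it is non-compact (a non-trivial profinite group is never topologically simple). An infinite abstractly simple group has no proper finite-index subgroup (the core of such a subgroup would be a proper non-trivial normal subgroup) and, being non-abelian simple, is perfect. By Gromov's classification the action of $\Der(G)$ on $Y$ is one of elliptic, parabolic, lineal, focal, or of general type; the first two contain no loxodromic elements and general type is excluded by (NGT). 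In the lineal case, the signed translation length along the common axis gives a homomorphism $\tau: H \to \Rb$ from the orientation-preserving subgroup $H$ of index at most two; by no-proper-finite-index-subgroups, $H = \Der(G)$, and perfectness forces $\tau \equiv 0$, contradicting $\tau(h) \ne 0$. In the focal case, the unique $\Der(G)$-fixed boundary point $\xi$ yields a Busemann character $c: \Der(G) \to \Rb$, a genuine continuous homomorphism satisfying $c(h) \ne 0$ for every loxodromic $h$, via the techniques of \cite{BFG}; perfectness again forces $c \equiv 0$, a contradiction.

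Finally, $\Der(G)$ is a non-compact compactly generated locally compact group, so by Freudenthal--Hopf it has $1$, $2$, or infinitely many ends. With two ends it would admit a proper cocompact action on $\Rb$ with a non-trivial translating element, yielding a loxodromic isometry of the ($0$-)hyperbolic space $\Rb$ and contradicting (NL). With infinitely many ends, the splitting theorem for compactly generated locally compact groups provides a non-trivial action on a simplicial tree with compact edge stabilisers and loxodromic elements, again contradicting (NL). Hence $\Der(G)$ is one-ended. The principal obstacle is the focal subcase of (NL): one must produce a genuine Busemann homomorphism to $\Rb$ (rather than a mere quasi-homomorphism) in the setting of general Gromov-hyperbolic spaces, which is exactly where the machinery of \cite{BFG} is essential; the lineal case and the passage from (NL) to one-endedness are comparatively formal.
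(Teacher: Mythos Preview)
Your (NGT) argument matches the paper's, and your derivation of one-endedness from (NL) via the ends/splitting theory of compactly generated locally compact groups is a valid route (the paper simply defers to \cite[Corollary~3.4]{GR_compressible}).

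The gap is in the passage from (NGT) to (NL). You assert that in the lineal and focal cases the Busemann character at the fixed boundary point is a \emph{genuine} homomorphism to $\Rb$, which perfectness then kills. This holds for proper hyperbolic spaces (horofunctions at a fixed boundary point differ by constants, so the Busemann cocycle is honest), but (NL) quantifies over \emph{all} hyperbolic spaces. In the non-proper setting the Busemann construction yields only a homogeneous \emph{quasi}-morphism, and perfectness alone does not force homogeneous quasi-morphisms to vanish. You flag the focal case as the ``principal obstacle'' and defer to \cite{BFG}, but that paper does not manufacture a genuine homomorphism in this generality; its relevant result \cite[Corollary~2.18]{BFG} instead takes as hypothesis a condition strong enough to bound all such quasi-morphisms.

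The paper supplies exactly that hypothesis by a different mechanism: it invokes \cite[Theorem~5.1]{GalGis} to show that in $\Der(F)$ every non-trivial element is a product of at most $9$ elements of $C^{\pm 1}$ for any non-trivial conjugacy class $C$. This uniform simplicity is precisely the input to \cite[Corollary~2.18]{BFG}, which then yields (NL) for $\Der(G)=\Der(F)$. So the missing ingredient in your argument is not a Busemann homomorphism but a bounded-normal-generation statement for $\Der(G)$, which Gal--Gismatullin provides and mere perfectness does not.
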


\begin{proof}
	We suppose $G$ has $\ms{A}$-action on $X$; in particular, by Theorem~\ref{thm:A_properties} this action is fully compressible, and we have $\Der(G) = \Al(G) = \Der(F)$ and $\Al(F) \le G \le F$, where $F$ is the piecewise full group of the action.  By Lemma~\ref{lem:NGT} we deduce that $G$ has property (NGT).
	
	We can now appeal to a theorem of Gal and Gismatullin: since $F$ is minimal, fully compressible and piecewise full, \cite[Theorem~5.1]{GalGis} shows that given any non-trivial $f \in \Der(F)$ and any non-trivial conjugacy class $C$ in $\Der(F)$, then $f$ can be written as a product of at most $9$ elements of $C^{\pm 1}$.  It then follows by \cite[Corollary~2.18]{BFG} that $\Der(G) = \Der(F)$ has property (NL).  Since $G \in \ms{A}$ we also know that $\Der(G)$ is compactly generated.  The proof that $\Der(G)$ is one-ended is given in \cite[Corollary~3.4]{GR_compressible}.
\end{proof}

\section{The role of $\ms{A}$ in the local structure theory of t.d.l.c. groups}\label{sec:A and S}

In this section, we consider the role the class $\ms{A}$ plays within the class of all \tdlc groups.  More specifically, we are interested in which local isomorphism classes of \tdlc groups are represented by members of $\ms{A}$. 
Recall the definitions and results of Section \ref{ssec:germs_def} on germs of automorphisms of \tdlc groups. 

There are two natural questions to ask about $\ms{A}$ from the perspective of local isomorphism classes.

\begin{enumerate}[(1)]
	\item Given a \tdlc group $G$, under what circumstances is $G$ locally isomorphic to a group in $\ms{A}$?
	\item Given $G \in \ms{A}$, what can we say about the structure of the class of (compactly generated) \tdlc groups locally isomorphic to $G$?
\end{enumerate}

For (1), we see from Theorem~\ref{thm:A_properties}(i) that a necessary condition on $G$ is that $\QZ(G)$ is discrete.  Thus both questions can be rephrased in terms of groups of germs as follows:

\begin{enumerate}[(1)]
	\item Given a \tdlc group $G$, under what circumstances does $\ms{L}(G)$ have an open subgroup in $\ms{A}$?
	\item Given $G \in \ms{A}$, what can we say about the structure of the class of (compactly generated) open subgroups of $\ms{L}(G)$?
\end{enumerate}

Question (1) is also motivated by the analogous question for the class $\ms{S}$ of non-discrete compactly generated topologically simple \tdlc groups. 
Since every $G \in \ms{S}$ satisfies $\QZ(G)=\triv$ (recall Lemma~\ref{lem:simple_dynamics}), the group of germs framework applies without any difficulties. 
By \cite{SmithDuke}, it is known that $\ms{S}$ and $\ms{S}_{\ldlat}$ (those groups in $\ms{S}$ that are faithful locally decomposable) have $2^{\aleph_0}$ isomorphism classes of groups. 
However, only countably many \emph{local} isomorphism classes in $\ms{S}$ are known. 
Even for known examples of $G \in \ms{S}_{\ldlat}$, the structure of $\ms{L}(G)$ can be mysterious. 
The main cases where we have a good description of $\ms{L}(G)$ for $G \in \ms{S}_{\ldlat}$ are similar to those described in Example \ref{eg:iterated_wreath_same_group}, where $G$ is the monolith of $\ms{L}(U)$ for $U$ a profinite branch group.

 \subsection{A characterisation of the local isomorphism types in $\ms{A}$}

As an answer to Questions (1) and (2), we find the following, which will be proved over the course of this subsection.
\begin{thm}\label{thm:locally_in_A}
	Let $G$ be a \tdlc group with trivial quasicentre and let $L = \ms{L}(G)$.  Let $X = \mf{S}(\ldlat(L))$.  Then the following are equivalent:
	\begin{enumerate}[(i)]
		\item $L$ has an open subgroup in $\ms{A}$;
		\item $L$ has an open subgroup in $\ms{S}_{\ldlat}$;
		\item $L$ has an open subgroup in $\ms{R}_{\ldlat}$;
		\item $L \in \ms{R}_{\ldlat}$;
		\item We can write $L$ as a directed union of compactly generated open subgroups $F_i$, each with $\ms{A}$-action on a quotient space $X_i$ of $X$, such that $X = \varprojlim X_i$;
		\item $L$ acts faithfully on $X$, and there are compactly generated open subgroups $A$ and $B$ of $L$ such that $A$ is an expansive \tdlc group, and $B$ admits a faithful minimal micro-supported action on a compact zero-dimensional space.
	\end{enumerate}
\end{thm}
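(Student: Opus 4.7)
Plan. The natural strategy is a cycle (i) $\Rightarrow$ (ii) $\Rightarrow$ (iii) $\Rightarrow$ (iv) $\Rightarrow$ (v) $\Rightarrow$ (i) supplemented by (i) $\Leftrightarrow$ (vi). The easy links are (i) $\Rightarrow$ (ii), (ii) $\Rightarrow$ (iii), and (v) $\Rightarrow$ (i): Theorem~\ref{thm:A_properties} gives that any open $H \in \ms{A}$ of $L$ contains $\Der(H) = \Al(H)$ as an open subgroup lying in $\ms{S}_{\ldlat}$; the inclusion $\ms{S}_{\ldlat} \subseteq \ms{R}_{\ldlat}$ is tautological; and (v) exhibits an open subgroup in $\ms{A}$ directly. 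For (iii) $\Rightarrow$ (iv), assume $H \le L$ open lies in $\ms{R}_{\ldlat}$. Lemma~\ref{lem:simple_dynamics}(i) gives that $H$ has no non-trivial abelian locally normal subgroup, and Lemma~\ref{lem:centraliser_lattice}(ii) combined with $\QZ(L) = \triv$ transfers this property to $L$ and identifies $\Omega_H$ with $\Omega_L$ as $H$-spaces. Since $\TMon(H)$ is characteristic in $H$ as its unique minimal non-trivial closed normal subgroup, a density-and-topological-simplicity argument shows it is normalised by every $g \in L$ (using that $g\TMon(H)g^{-1} \cap H$ is open in both $\TMon(H)$ and $g\TMon(H)g^{-1}$, and then invoking topological simplicity of $\TMon(H)$), whence $\TMon(H) = \TMon(L)$, so $L$ inherits regional expansiveness, non-discrete topologically simple monolith, and faithful local decomposability.

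The main implication is (iv) $\Rightarrow$ (v). Assume $L \in \ms{R}_{\ldlat}$. Lemma~\ref{lem:loc_decomp:ldlat} provides the faithful locally decomposable $L$-action on $X$, while Lemma~\ref{lem:simple_dynamics}(ii) gives that $\TMon(L)$ acts minimally and compressibly on $X$. Regional expansiveness supplies a compactly generated subgroup $M_0 \le \TMon(L)$; by Corollary~\ref{cor:subshift} together with the Stone duality between $\ldlat(L)$ and clopen partitions of $X$, $M_0$ acts expansively on some clopen quotient $X_0$ of $X$. Build an ascending directed family $(H_i)$ of compactly generated open subgroups of $L$ with $M_0 \le H_1$ and $L = \bigcup H_i$, and finite $H_i$-invariant sublattices $\mc{A}_i \le \ldlat(L)$ exhausting $\ldlat(L)$, such that the $H_i$-translates of the atoms of $\mc{A}_i$ separate the points of $X_i := \mf{S}(\mc{A}_i)$; then $H_i$ acts faithfully, locally decomposably and expansively on $X_i$, and $X = \varprojlim X_i$. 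Set $F_i := \Full(H_i; X_i)$, topologised via Proposition~\ref{prop:full_contains_intersection_rists}. Corollaries~\ref{cor:group_expansive_iff_alternating_compactly_gen} and~\ref{cor:A_is_open} (together with Proposition~\ref{prop:AS_characterisation}) yield $F_i \in \ms{A}$ with $\ms{A}$-action on $X_i$, while Lemma~\ref{lem:non-discrete_condition} gives $\QZ(F_i) = \triv$. The universal property in Theorem~\ref{thm:AbstractCommensurator}(iv) applied to the open embeddings $H_i \hookrightarrow L$ and $H_i \hookrightarrow F_i$ then produces an open embedding $F_i \hookrightarrow \ms{L}(H_i) = L$, identifying $F_i$ with an open subgroup of $L$ and proving (v).

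For (i) $\Leftrightarrow$ (vi): given open $H \in \ms{A}$ in $L$, take $A := B := \Al(H)$, which is compactly generated, open, expansive as a \tdlc group (a consequence of expansivity of its $\ms{A}$-action, via the identity neighbourhood stabilising the expansive partition) and carries a faithful minimal micro-supported action on the $\ms{A}$-space of $H$ (a quotient of $X$); the faithfulness of $L$ on $X$ follows from $L \in \ms{R}_{\ldlat}$ via (i) $\Rightarrow$ (iv). Conversely, assume (vi). The expansive compactly generated open $A$ certifies that $L$ is regionally expansive; Lemma~\ref{lem:centraliser_lattice}(iii) applied to $B$ rules out non-trivial abelian locally normal subgroups in $B$ and hence in $L$; and the faithful $L$-action on $X$ combined with the results of Section~\ref{sec:A_is_open} applied to a suitably enlarged piecewise full overgroup of $\grp{A,B}$ yields the non-discrete, topologically simple, regionally expansive monolith $\TMon(L)$ with trivial centraliser, so $L \in \ms{R}_{\ldlat}$.

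The main obstacle I anticipate is the construction in (iv) $\Rightarrow$ (v): specifically, translating regional expansivity of $\TMon(L)$ as a \tdlc group into genuine dynamical expansivity on clopen quotients of $X$, and orchestrating the filtrations $(H_i, \mc{A}_i)$ so that each $H_i$-action on $X_i$ is expansive while simultaneously $X = \varprojlim X_i$. A subtler version of this difficulty appears in (vi) $\Rightarrow$ (iv), where the two a priori unrelated open subgroups $A$ and $B$ must be combined into a coherent construction of $\TMon(L)$; this will likely require an explicit alternating-full-group construction rather than a purely abstract invocation of the $\ms{R}$-theory.
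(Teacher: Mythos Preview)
Your overall strategy is close to the paper's, which proves (iii) $\Leftrightarrow$ (iv) separately (via Lemma~\ref{lem:R_is_regional}) and then runs the cycle (v) $\Rightarrow$ (i) $\Rightarrow$ (ii) $\Rightarrow$ (iii) $\Rightarrow$ (vi) $\Rightarrow$ (v). The easy links are the same as yours. There are, however, two genuine gaps and one missing ingredient.

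\textbf{(iii) $\Rightarrow$ (iv).} Your argument that $g$ normalises $\TMon(H)$ does not work as stated: knowing that $g\TMon(H)g^{-1} \cap H$ is open in $\TMon(H)$ and in $g\TMon(H)g^{-1}$ gives you nothing from topological simplicity, since open subgroups of topologically simple groups need be neither normal nor equal to the whole group. The paper's Lemma~\ref{lem:R_is_regional} instead argues as follows: $\TMon(H)$ is open (via Lemma~\ref{lem:simple_dynamics}(ii) and Theorem~\ref{thm:loc_decomp_compressible}); then for any non-trivial closed $N \trianglelefteq L$ and $h \in N$, pick a compact open $U \le \TMon(H)$ with $hUh^{-1} \le \TMon(H)$, so that $[h,U] \subseteq N \cap \TMon(H)$; trivial quasicentre forces this intersection to be non-trivial, hence $N \ge \TMon(H)$. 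This shows $\TMon(L)$ is the normal closure of $\TMon(H)$ and is topologically simple.

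\textbf{(vi) $\Rightarrow$ (iv).} This step is too vague to stand. You have not identified the space on which your ``piecewise full overgroup of $\grp{A,B}$'' acts, nor explained why $L$ has any non-trivial monolith at all from the hypotheses of (vi). In fact the work hidden here is exactly the construction you outline for (iv) $\Rightarrow$ (v). The paper therefore routes (vi) $\Rightarrow$ (v) directly: $B$ acts minimally on $\Omega_L$ (identify $\Omega_B \cong \Omega_L$ via Lemmas~\ref{lem:centraliser_lattice} and~\ref{lem:centraliser_lattice_minimal}), so $H_0 := \grp{A,B}$ acts minimally on $\Omega_L$ and hence on $X$; then run your (iv) $\Rightarrow$ (v) construction over the directed family of compactly generated open subgroups $H_i \ge H_0$. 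You should reorganise your cycle to do the same.

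\textbf{(iv) $\Rightarrow$ (v), the missing ingredient.} You correctly identify the obstacle: regional expansivity of $L$ means some compactly generated open subgroup is expansive \emph{as a \tdlc group}, not that it acts expansively on $X$. The bridge is Lemma~\ref{lem:expansive_quotient}: if $H$ is compactly generated, expansive, has no non-trivial discrete normal subgroup (which holds here since $\QZ(L)=\triv$), and acts continuously and faithfully on a compact zero-dimensional space $Y$, then $Y$ is an inverse limit of $H$-spaces on which $H$ acts faithfully and expansively. Apply this with $Y = X$ for each $H_i$ to produce the quotient spaces $X_i$; Corollary~\ref{cor:subshift} alone does not give faithfulness on the quotient. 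With this lemma in hand, your construction of the $F_i$ via Proposition~\ref{prop:AS_characterisation} and the embedding $F_i \hookrightarrow L$ via Theorem~\ref{thm:AbstractCommensurator}(iv) is correct and matches the paper.
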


Recall from Definition \ref{defn:expansive_group} that a \tdlc group $G$ is \defbold{expansive} if and only if it has an identity neighbourhood $U$ such that $\bigcap_{g \in G}gUg\inv=\triv$.
Given Van Dantzig's theorem, it is easy to see that every \tdlc group is an inverse limit of expansive \tdlc groups; conversely, expansive \tdlc groups are those that cannot be approximated by proper quotients with compact kernel.

When $G$ is compactly generated, we can say something more about approximation of $G$ by its quotients and about translating expansivity of $G$ to expansivity of its actions.

\begin{lem}[{see \cite[Theorem~3.3]{RW_chief} and \cite[Corollary~4.1]{CapraceMonod}}]\label{lem:filtering_normal}
	Let $G$ be a compactly generated \tdlc group and let $\mc{F}$ be a filtering family of closed normal subgroups of $G$ with trivial intersection.  Let $U$ be a compact open subgroup of $G$, and let $R = \bigcap_{g \in G}gUg\inv$.  Then there is $N \in \mc{F}$ such that $N \cap R$ is open in $N$; hence also for any closed normal subgroup $M \le N$, then $M \cap R$ is compact, normal in $G$ and open in $M$. 
\end{lem}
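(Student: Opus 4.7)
The plan is to reduce to the faithful action of $G$ on its Cayley--Abels graph and exploit the compactness of $R$ together with the filtering property of $\mc{F}$. Since $G$ is compactly generated and $U$ is compact open, there is a locally finite connected graph $\Gamma$ carrying a continuous vertex-transitive $G$-action with $U$ the stabiliser of a base vertex $v$; the kernel of this action is precisely $R = \bigcap_{g \in G}gUg^{-1}$, which is compact. For each $n \ge 0$ the pointwise stabiliser $U_n$ of the ball $B_n(v)$ is an open, finite-index subgroup of $U$, and $R = \bigcap_n U_n$.

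First I would reduce to the case $R = \triv$. A compactness argument shows $\bigcap_{N \in \mc{F}} NR = R$: if $g \in \bigcap_N NR$, then $\{gR \cap N\}_{N \in \mc{F}}$ is a filtering family of closed subsets of the compact set $gR$, so has non-empty intersection contained in $gR \cap \bigcap_N N = gR \cap \triv$, forcing $g \in R$. Consequently $\{NR/R\}_{N \in \mc{F}}$ is a filtering family of closed normal subgroups of $G/R$ with trivial intersection. Since $R \le U$, one checks that $NR \cap U = (N \cap U)R$, whence $(NR/R) \cap (U/R) \cong (N \cap U)/(N \cap R)$; so $N \cap R$ is open in $N$ precisely when the image $NR/R$ is discrete in $G/R$. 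Replacing $G$ by $G/R$, I may assume $G$ acts faithfully on $\Gamma$, and the goal becomes to produce some $N \in \mc{F}$ that is discrete, equivalently, with $N \cap U_n = \triv$ for some $n$.

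The main step, which is the chief obstacle, is producing a single $N \in \mc{F}$ that acts freely on $B_n(v)$ for some $n$. The easy observations are insufficient: filtering together with $\bigcap_N N = \triv$ and compactness of $U$ gives pointwise that each non-identity $u \in U$ is eventually escaped by some cofinal subfamily, but this falls short of the uniform statement needed, namely that the entire punctured neighbourhood $U \smallsetminus U_n$ of the identity is avoided by one single $N$. The promotion from pointwise to uniform is where the cited inputs from \cite{CapraceMonod} (Corollary~4.1) and \cite{RW_chief} (Theorem~3.3) enter. Compact generation of $G$, together with local finiteness of $\Gamma$, ensures the quotients $G/N$ (all compactly generated) are approximated by their Cayley--Abels structure in a controlled fashion, from which one extracts $N \in \mc{F}$ with $N \cap U_n$ trivial for some $n$.

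The final clause of the lemma, concerning a closed normal subgroup $M \le N$ of $G$, is then immediate. The intersection $M \cap R$ equals $M \cap (N \cap R)$ and is therefore open in $M$ as the intersection of $M$ with a subgroup that is open in $N$; it is compact as a closed subset of the compact group $R$; and it is normal in $G$ as the intersection of the two $G$-normal subgroups $M$ and $R$.
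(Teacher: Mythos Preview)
The paper does not supply a proof of this lemma: it is stated with the attribution ``see \cite[Theorem~3.3]{RW_chief} and \cite[Corollary~4.1]{CapraceMonod}'' and then the text moves directly to the next lemma. So there is no in-paper argument to compare against.

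As for your proposal on its own merits: the reduction to the case $R=\triv$ is carried out correctly. Your compactness argument that $\bigcap_{N\in\mc{F}} NR = R$ is valid (using that $R$ is compact so each $NR$ is closed), and the identification $(NR/R)\cap(U/R)\cong (N\cap U)/(N\cap R)$ via $NR\cap U=(N\cap U)R$ is fine since $R\le U$. The deduction of the final clause about $M\le N$ is also correct.

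However, your ``main step'' is not a proof: you explicitly write that ``the promotion from pointwise to uniform is where the cited inputs from \cite{CapraceMonod} and \cite{RW_chief} enter,'' and the subsequent sentence about Cayley--Abels structure is a gesture, not an argument. But this step \emph{is} the entire content of the cited results; the statement ``for a filtering family of closed normal subgroups with trivial intersection in a compactly generated \tdlc group, some member meets a fixed compact open subgroup trivially'' is precisely what \cite[Corollary~4.1]{CapraceMonod} provides. Your reductions do not bring you to an easier statement than the one being cited --- they bring you to an equivalent one. So what you have written is, in effect, the same as what the paper does: cite the result. If you intend to give a genuine self-contained proof, you would need to actually run the Cayley--Abels argument (e.g.\ show that for large enough $N$ the quotient graph $N\backslash\Gamma$ has the same balls of some fixed radius as $\Gamma$, and deduce that $N$ acts freely on vertices), rather than asserting that it can be done.
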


\begin{lem}\label{lem:expansive_quotient}
	Let $G$ be a compactly generated expansive \tdlc group with no non-trivial discrete normal subgroup.  Suppose $G$ acts continuously and faithfully on a compact zero-dimensional space $X$.  Then $X$ is an inverse limit of $G$-spaces $Y$ such that $G$ acts faithfully and expansively on $Y$.
\end{lem}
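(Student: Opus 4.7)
The plan is to realise $X$ as an inverse limit of subshifts in the natural way provided by Lemma~\ref{lem:subshift}(ii), and then use the expansiveness of $G$ (together with the absence of discrete normal subgroups) to cut down to a cofinal subsystem on which $G$ acts faithfully. Let $\mathbf{P}$ denote the directed set of clopen partitions of $X$, ordered by refinement. By Lemma~\ref{lem:subshift}(ii), $X = \varprojlim_{\mc{P} \in \mathbf{P}} X_{\mc{P}}$ where $X_{\mc{P}} = \phi_{\mc{P}}(X)$ is a $G$-subshift; in particular the action of $G$ on each $X_{\mc{P}}$ is already expansive. So it suffices to exhibit a cofinal $\mathbf{P}_0 \subseteq \mathbf{P}$ such that the $G$-action on $X_{\mc{P}}$ is faithful for every $\mc{P} \in \mathbf{P}_0$.

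For each $\mc{P}$, let $K_{\mc{P}}$ be the kernel of the action of $G$ on $X_{\mc{P}}$, a closed normal subgroup of $G$. If $\mc{P}'$ refines $\mc{P}$, the induced $G$-equivariant quotient map $X_{\mc{P}'} \to X_{\mc{P}}$ forces $K_{\mc{P}'} \subseteq K_{\mc{P}}$, so $\mc{F} := \{K_{\mc{P}} \mid \mc{P} \in \mathbf{P}\}$ is a filtering family; its intersection is the kernel of the $G$-action on $X$, which is trivial by hypothesis. Since $G$ is expansive, there is an identity neighbourhood $U_0$ with $\bigcap_{g \in G} g U_0 g^{-1} = \triv$, and by Van Dantzig's theorem I can choose a compact open subgroup $U \subseteq U_0$; then $R := \bigcap_{g \in G} g U g^{-1} = \triv$. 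Applying Lemma~\ref{lem:filtering_normal} to the compactly generated group $G$, the filtering family $\mc{F}$ and the compact open subgroup $U$, I obtain some $\mc{P}_0 \in \mathbf{P}$ such that $K_{\mc{P}_0} \cap R$ is open in $K_{\mc{P}_0}$. As $R = \triv$, this forces $K_{\mc{P}_0}$ to be discrete; being a closed normal subgroup, it must be trivial by the assumption on $G$. Hence $G$ acts faithfully on $X_{\mc{P}_0}$.

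Now any $\mc{P} \in \mathbf{P}$ refining $\mc{P}_0$ has $K_{\mc{P}} \subseteq K_{\mc{P}_0} = \triv$, so the action of $G$ on $X_{\mc{P}}$ is faithful and expansive. Since the refinements of $\mc{P}_0$ are cofinal in $\mathbf{P}$, this gives $X = \varprojlim_{\mc{P} \geq \mc{P}_0} X_{\mc{P}}$ with each factor a faithful, expansive $G$-space, as required. The substantive input is Lemma~\ref{lem:filtering_normal}, which is what rules out the pathological case in which every $K_{\mc{P}}$ fails to be discrete; the other minor point is the use of Van Dantzig to upgrade the expansiveness witness to a compact open subgroup with trivial core.
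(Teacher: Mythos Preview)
Your proof is correct and follows essentially the same route as the paper's: write $X$ as the inverse limit of subshift quotients via Lemma~\ref{lem:subshift}, apply Lemma~\ref{lem:filtering_normal} to the filtering family of kernels (using that the expansive witness $U$ has trivial core $R$), deduce some $K_{\mc{P}_0}$ is discrete hence trivial, and restrict to the cofinal system of refinements of $\mc{P}_0$. The only cosmetic difference is your explicit invocation of Van Dantzig to pass to a compact open $U$, which the paper leaves implicit.
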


\begin{proof}
	Since $G$ is expansive, there is a compact open subgroup $U$ of $G$ such that $\bigcap_{g \in G}gUg\inv = \triv$.  Let $I$ be the set of clopen partitions of $X$ ordered by refinement (that is, ${\mc{P}} \le {\mc{P}}'$ if ${\mc{P}}'$ is a refinement of ${\mc{P}}$).  By Lemma~\ref{lem:subshift}, we can write $X$ as an inverse limit of $G$-spaces, $X = \varprojlim (X_{\mc{P}};\phi_{\mc{P}})_{\mc{P} \in I}$, where $X_{\mc{P}}$ is a quotient space of $X$ on which $G$ acts as a subshift, hence expansively.  Let $K_{\mc{P}}$ be the kernel of the action of $G$ on $X_{\mc{P}}$; note that if $\mc{P}' \ge \mc{P}$ then $K_{\mc{P}'} \le K_{\mc{P}}$.  Since $X = \varprojlim (X_{\mc{P}};\phi_{\mc{P}})_{\mc{P} \in I}$, we see that $\bigcap_{{\mc{P}} \in I}K_{\mc{P}} = \triv$.  Thus by Lemma~\ref{lem:filtering_normal}, there is some $\mc{P} \in I$ such that $K_{\mc{P}} \cap \triv$ is open in $K_{\mc{P}}$, in other words, $K_{\mc{P}}$ is discrete.  Since $G$ has no non-trivial discrete normal subgroup, we have $K_{\mc{P}} = \triv$, in other words, $G$ acts faithfully on $X_{\mc{P}}$.  We now obtain the conclusion by writing $X = \varprojlim (X_{\mc{P}};\phi_{\mc{P}})_{\mc{P} \in I'}$, where $I' = \{\mc{P}' \in I \mid \mc{P}' \ge \mc{P}\}$.
\end{proof}

The equivalence of (iii) and (iv) in Theorem~\ref{thm:locally_in_A} is given by the following. 
\begin{lem}\label{lem:R_is_regional}
	Let $G$ be a \tdlc group with trivial quasicentre.  Then $G \in \ms{R}_{\ldlat}$ if and only if some compactly generated open subgroup of $G$ belongs to $\ms{R}_{\ldlat}$.
\end{lem}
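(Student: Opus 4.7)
The plan is to prove each direction separately, with the $(\Leftarrow)$ direction being the more delicate.

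For $(\Rightarrow)$, given $G \in \ms{R}_{\ldlat}$ with monolith $M := \TMon(G)$, first observe that $M$ is open in $G$ via Theorem~\ref{thm:loc_decomp_compressible}: $M$ acts minimally and compressibly on $\Omega_G$ by Lemma~\ref{lem:simple_dynamics}(ii), hence on the quotient $X := \mf{S}(\ldlat(G))$, and $G$ acts faithfully locally decomposably on $X$. Moreover $C_G(M) = \triv$: otherwise $C_G(M)$ is a non-trivial closed normal subgroup of $G$, hence contains $\TMon(G) = M$, forcing $M$ abelian, contradicting that $M$ is non-discrete and regionally expansive. If $M$ is itself compactly generated, take $H := M$; otherwise take $H := \grp{M_0 \cup U}$ for $M_0 \leq M$ compactly generated expansive and $U \leq M$ compact open. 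Then $H$ is a compactly generated open subgroup of $G$, inheriting faithful local decomposability. The bulk of this direction is identifying $\TMon(H)$: using that every non-trivial closed normal subgroup $N$ of $H$ is non-discrete (via $\QZ(H) \le \QZ(G) = \triv$) and intersects $M$ in a closed normal subgroup of $M$, topological simplicity of $M$ combined with $C_H(M) \le C_G(M) = \triv$ forces a well-defined minimal non-trivial closed normal subgroup that inherits topological simplicity, non-discreteness, and regional expansiveness.

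For $(\Leftarrow)$, let $H \leq G$ be compactly generated open with $H \in \ms{R}_{\ldlat}$ and set $M := \TMon(H)$. The kernel of $G \curvearrowright \ldlat(G) = \ldlat(H)$ meets $H$ trivially, so is discrete and contained in $\QZ(G) = \triv$; hence $G$ is faithful locally decomposable. Any non-trivial closed normal subgroup $N$ of $G$ is non-discrete (else $\le \QZ(G)$), so $N \cap H$ is open in $N$, closed normal in $H$, and contains $M$; hence $\TMon(G) \supseteq M \ne \triv$. By Lemma~\ref{lem:simple_dynamics}(ii) together with Lemma~\ref{lem:centraliser_lattice}(ii), $M$ acts minimally and compressibly on $\Omega_H \cong \Omega_G$, and hence on $X$. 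Theorem~\ref{thm:loc_decomp_compressible} applied inside $H$ puts $M$ open in $H$ (and hence in $G$), and applied inside $G$ puts $\TMon(G)$ open in $G$. Finally, $C_G(M)$ is locally normal in $G$ (since $N_G(M) \supseteq H$ is open) with $C_G(M) \cap H = C_H(M) = \triv$, so $C_G(M)$ is discrete, hence contained in $\QZ(G) = \triv$.

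The critical remaining step, which I expect to be the main obstacle, is showing $M \triangleleft G$; this forces $\TMon(G) = M$ (topologically simple, non-discrete, regionally expansive), completing the proof. The plan is to analyze the normal core $\bigcap_{g \in G} gMg^{-1}$: this is a closed normal subgroup of $G$ contained in $M$, and if non-trivial it contains $\TMon(G) \supseteq M$, hence equals $M$, forcing $gMg^{-1} = M$ for every $g \in G$. To establish non-triviality of the normal core, the approach is to pass to the piecewise full group $F := \Full(G;X)$ equipped with the unique \tdlc topology extending that of $G$ via Theorem~\ref{thm:intro_localdec}, and to apply Corollary~\ref{cor:A_is_open}(i), which identifies $\Der(F) = \Mon(F)$ as an abstractly simple open subgroup of $F$. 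Analyzing $\Der(F) \cap G$ as an open normal subgroup of $G$ containing $\TMon(G)$, and exploiting the interplay between $M$, $\Der(F)$, and $C_G(M) = \triv$, should force $N_G(M) = G$. Alternatively, structural results from \cite{CRW-DenseLC} on the uniqueness of monoliths up to local isomorphism in the class $\ms{R}_{\ldlat}$ may apply directly.
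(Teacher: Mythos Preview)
Your $(\Leftarrow)$ direction has a genuine gap, and the target you are aiming at is the wrong one. You set $M = \TMon(H)$ and then spend the ``critical remaining step'' trying to prove $M \triangleleft G$, so that $\TMon(G) = M$ inherits topological simplicity from $M$. You do not actually prove this, and it need not hold: nothing prevents $\TMon(G)$ from being strictly larger than $M$. The suggested route via $\Full(G;X)$ and Corollary~\ref{cor:A_is_open} does not obviously help, since $G$ need not act minimally on $X = \mf{S}(\ldlat(G))$, and even if $\Der(\Full(G;X))$ is simple, there is no clear mechanism forcing it to equal $M$.

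The paper avoids this entirely. Having established that $M$ is open in $G$ (your step via Theorem~\ref{thm:loc_decomp_compressible} is fine), it proves the following: \emph{every non-trivial closed subgroup $K \le G$ normalised by $M$ contains $M$}. The argument is a short commutator trick: for $k \in K$ choose a compact open $U \le M$ with $kUk^{-1} \le M$ (possible since $M$ is open); then $[k,u] \in K \cap M$ for all $u \in U$, and if $K \cap M = \triv$ then $k$ centralises the open subgroup $U$, forcing $k \in \QZ(G) = \triv$. Hence $K \cap M$ is a non-trivial closed normal subgroup of the topologically simple group $M$, so $K \supseteq M$. Applied to non-trivial closed normal subgroups of $G$, this shows $\TMon(G)$ is the normal closure of $M$; applied to non-trivial closed normal subgroups $K$ of $\TMon(G)$ (each of which is normalised by $M \le \TMon(G)$), it shows $K \supseteq M$, and then $\bigcap_{g \in G} gKg^{-1} \supseteq M$ is a non-trivial closed normal subgroup of $G$, hence equals $\TMon(G)$, so $K = \TMon(G)$. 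Thus $\TMon(G)$ is topologically simple without ever asking whether $M$ is normal in $G$.

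For $(\Rightarrow)$, the paper simply invokes \cite[Theorem~5.2.2]{CRW-DenseLC} to produce a compactly generated open subgroup in $\ms{R}$, after reducing to the faithful locally decomposable case via Lemma~\ref{lem:faithful_germ}. Your direct construction of $H \le M$ is reasonable in spirit, but the sketch for identifying $\TMon(H)$ is incomplete: a closed normal subgroup $N$ of $H$ need not be normal in $M$, so you cannot immediately invoke topological simplicity of $M$. The same commutator trick as above (run inside $M$, with $H$ open in $M$) would close this gap.
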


\begin{proof}
	Given Lemma~\ref{lem:faithful_germ}, there is no loss of generality in assuming that $G$ is faithful locally decomposable; thus open subgroups of $G$ belong to $\ms{R}_{\ldlat}$ if and only if they belong to $\ms{R}$.
	
	If $G \in \ms{R}$, then by \cite[Theorem~5.2.2]{CRW-DenseLC}, there is a compactly generated open subgroup $O$ of $G$ belonging to $\ms{R}$.
	
	Conversely, let $O$ be an open subgroup of $G$ such that $O \in \ms{R}_{\ldlat}$.  From Lemma~\ref{lem:simple_dynamics}(ii) and Theorem~\ref{thm:loc_decomp_compressible}, we see that $\TMon(O)$ is open in $O$.  Let $H$ be a non-trivial closed subgroup of $G$ that is normalised by $\TMon(O)$.  Given $h \in H$ there is a compact open subgroup $U$ of $\TMon(O)$ such that $hUh\inv \le \TMon(O)$, and hence $[h,u] \in H \cap \TMon(O)$ for all $u \in U$.  Since the quasicentre of $G$ is trivial, we deduce that $H \cap \TMon(O) \neq \triv$; since $\TMon(O)$ is topologically simple, we must have $H \ge \TMon(O)$.  Thus $\TMon(G)$ is the normal closure of $\TMon(O)$ in $G$ and $\TMon(G)$ is topologically simple.  Since $\TMon(O)$ is non-discrete and regionally expansive, so is $\TMon(G)$.  Thus $G \in \ms{R}$.
\end{proof}

\begin{proof}[Proof of Theorem \ref{thm:locally_in_A}]
Since (iii) and (iv) are equivalent, it suffices to  prove
\[
\text{ (v) } \Rightarrow \text{ (i) } \Rightarrow \text{ (ii) } \Rightarrow \text{ (iii) } \Rightarrow \text{ (vi) } \Rightarrow \text{ (v) }.
\]

The first implication is clear, the second follows from Theorem~\ref{thm:A_properties}, and the third holds since $\ms{S}_{\ldlat} \subseteq \ms{R}_{\ldlat}$.

Suppose (iii) holds and let $H$ be an open subgroup belonging to $\ms{R}_{\ldlat}$; by Lemma~\ref{lem:R_is_regional} we can take $H$ to be compactly generated.  Then $L$ acts faithfully on $X$ by Lemma~\ref{lem:faithful_germ}; moreover, by Lemma~\ref{lem:simple_dynamics} we know that $H$ is an expansive \tdlc group and acts minimally on $X$.  We see via Lemma~\ref{lem:loc_decomp:ldlat} that the action of $H$ on $X$ is also non-discretely micro-supported.  Thus (iii) implies (vi).

For the remainder of the proof, it suffices to assume (vi) and prove (v).  Thus we have compactly generated open subgroups $A$ and $B$ of $L$ such that $A$ is expansive and $B$ admits a faithful minimal micro-supported action on a compact zero-dimensional space $Y$.

By Lemmas~\ref{lem:centraliser_lattice} and \ref{lem:centraliser_lattice_minimal}, we may assume $Y$ is the canonical micro-supported $B$-space $\Omega_B$ and we can identify $\Omega_B$ with $\Omega_L$ as $B$-spaces.  In particular, $B$ acts minimally on $\Omega_L$, so $H_0 := \grp{A,B}$ acts minimally on $\Omega_L$.  The action of $L$ on $X$ is faithful by hypothesis, and occurs as a quotient space of $\Omega_L$.

Let $\{H_i \mid i \in I\}$ be the set of compactly generated open subgroups of $L$ containing $H_0$, let $J$ be the set of finite subsets of $\ldlat(L)$ and consider the action of $H_i$ on $X$ for some $i \in I$.  Then $H_i$ is expansive because it contains $A$ as an open subgroup, and acts minimally on $X$ because it contains $B$. 
Using Lemma~\ref{lem:expansive_quotient} we obtain an inverse system $\mc{Y}_i$ of $H_i$-spaces $Y$ with inverse limit $X$, such that $H_i$ acts faithfully and expansively on each $Y \in \mc{Y}_i$.  Such a quotient $H_i$-space $Y$ arises as $Y = \mf{S}(\mc{A}_Y)$ where $\mc{A}_Y$ is an $H_i$-invariant subalgebra of $\ldlat(L)$.  Since $H_i$ acts expansively on $Y$, the subalgebra $\mc{A}_Y$ must be generated by finitely many $H_i$-orbits; conversely, since $\mc{Y}_i$ has inverse limit $X$, we see that $\ldlat(L)$ is a directed union of $\{\mc{A}_Y \mid Y \in \mc{Y}_i\}$, so given $j \in J$ there is $Y_{ij} \in \mc{Y}_i$ such that $j \subseteq \mc{A}_{Y_{ij}}$.  Given $i \in I$ and $j \in J$, we now set $F_{ij} = \grp{H_i,\Al(H_i,Y_{ij})}$.

It follows by Proposition~\ref{prop:AS_characterisation} that $F_{ij}$ has $\ms{A}$-action on $Y_{ij}$; note that $\ol{\Al(H_i,Y_{ij})}$ is compactly generated by Corollary~\ref{cor:group_expansive_iff_alternating_compactly_gen}, so $F_{ij}$ is a compactly generated open subgroup of $L$.  We now equip $I \times J$ with the ordering that $(i,j) \le (i',j')$ if $H_i \le H_{i'}$ and $j \subseteq j'$, and write $L = \bigcup_{(i,j) \in I \times J}F_{ij}$.  To see this is a directed union, note that given $i_1,\dots,i_n \in I$ and $j_1,\dots,j_n \in J$, we can write $H_{i'} = \grp{F_{i_aj_b} \mid 1 \le a,b\le n}$ and $j' = \bigcup^n_{b=1}j_b$, and then $F_{i'j'} \ge F_{i_aj_b}$ for all $1 \le a,b \le n$.  Similarly, one sees that $X$ is an inverse limit of the associated spaces $Y_{ij}$.  Thus (v) holds and the proof is complete.
\end{proof}

At this point we can make some general observations about local isomorphism classes of groups in $\ms{R}_{\ldlat}$ and $\ms{S}_{\ldlat}$, which are not directly questions about piecewise full groups or the class $\ms{A}$.

\begin{rmk}\label{rmk:simple} \
	\begin{enumerate}
		\item To the authors' knowledge, this is the first proof that every group in $\ms{R}_{\ldlat}$ is locally isomorphic to a group in $\ms{S}_{\ldlat}$.  Beyond the locally decomposable case, it is still an open question to determine whether or not there is a group $G \in \ms{R}$ such that $G$ is not locally isomorphic to any group in $\ms{S}$.  Theorem~\ref{thm:locally_in_A} and Lemma~\ref{lem:simple_dynamics} show that in that case $\ldlat(G)$ is trivial, which rules out many of the known constructions of groups in $\ms{R}$ such that the monolith is not in $\ms{S}$.
		\item Let $\mc{C}$ be a local isomorphism class and let $\mc{C}_0$ consist of those groups in $\mc{C}$ with trivial quasicentre.  We have shown that $\mc{C}$ contains groups in $\ms{S}_{\ldlat}$ if and only if it satisfies the following three criteria:
		\begin{enumerate}[(a)]
			\item Some (equivalently every) $U \in \mc{C}_0$ acts faithfully on its decomposition lattice.  In particular, every group in $\mc{C}$ has discrete quasicentre.
			\item Some $A \in \mc{C}_0$ is compactly generated and expansive.  (Equivalently: the group of germs associated to $\mc{C}$ is regionally expansive.)
			\item Some $B \in \mc{C}_0$ is compactly generated and has a faithful minimal micro-supported action on a compact zero-dimensional space.
		\end{enumerate}
		
		Theorem~\ref{thm:jibranch:germ} below describes a special case of these criteria: if some $U \in \mc{C}$ is a profinite branch group, then $U = B$ already satisfies (a) and (c), so all that is left to check is (b).  Example \ref{eg:iterated_wreath_same_group} provides instances of branch groups $U$ such that $\ms{L}(U)$ is regionally expansive and illustrating that $\Der(\ms{L}(U))$ can be either a finite or an infinite directed union.  On the other hand, Example~\ref{eg:branch_infinite_local_prime_content} below gives uncountably many local isomorphism classes satisfying (a) and (c), but not (b).  Since groups in $\ms{R}$ are regionally expansive, some version of criterion (b) is an unavoidable obstacle for any construction of groups in $\ms{R}$ of a given isomorphism type, and likewise for $\ms{S}$.  However, the precise conditions under which a profinite group $U$ embeds as an open subgroup of a compactly generated expansive group are mysterious at present.		
		\item A special case of the results we have so far is that given $G \in \ms{A} \cap \ms{S}$ and $G \le H \le \Aut(G)$ such that $H$ is compactly generated (where $\Aut(G)$ is given the topology such that $G$ is embedded as an open subgroup), then $\Full(H;X) \in \ms{A}$, where $X$ is the $\ms{A}$-space of $G$, and hence $\Al(H;X) \in \ms{A} \cap \ms{S}$.  However, the authors are not aware of any example where $\Full(H;X) > \Full(G;X)$, or equivalently, where $\Al(H;X) > G$.  Since $\Al(H;X) \ge \Der(H)$ in this instance, it would be enough to find an example where $\Aut(G)/G$ is not abelian, and then we could choose $H/G$ to be any finitely generated nonabelian subgroup of $\Aut(G)/G$.
	\end{enumerate}
\end{rmk}

\subsection{When is a profinite branch group locally in $\ms{S}$?}

As we saw in Section \ref{sec:examples}, profinite branch groups form the basis of several known examples of groups in $\ms{S}$, provided that there is some compactly generated expansive group commensurating the branch group.

The worst possible failure of this happens when all compactly generated subgroups of the commensurator of the branch group are \defbold{SIN} groups (small invariant neighbourhoods), i.e., have a base of identity neighbourhoods consisting of compact open normal subgroups. 
Notice that in 	Lemma \ref{lem:filtering_normal}, if the filtering family $\mc{F}$ consists of open subgroups, then $G$ is a SIN group.

For profinite branch groups, being locally isomorphic to a group in $\ms{S}$ can be read off from the commensurator:

\begin{thm}\label{thm:jibranch:germ}
	Let $U$ be a  profinite branch group and let $L = \ms{L}(U)$.  Then the following are equivalent:
	\begin{enumerate}[(i)]
		\item $L$ is regionally expansive.
		\item $\Der(L)$ is open in $L$ and is a directed union of open subgroups in $\ms{A} \cap \ms{S}_{\ldlat}$, each with $\ms{A}$-action on $\mf{S}(\ldlat(L))$.  In particular, $L \in \ms{R}_{\ldlat}$.
	\end{enumerate}
	Moreover, if $U$ is just infinite and 
	$L$ is not regionally expansive, then every compactly generated subgroup of $L$ is a SIN group.
\end{thm}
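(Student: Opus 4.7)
The plan is to deduce the equivalence (i)$\Leftrightarrow$(ii) from Theorem~\ref{thm:locally_in_A} and Theorem~\ref{thm:A_properties}, exploiting the rigidity of profinite branch groups via Lemma~\ref{lem:branch_ldlat}, and then to handle the moreover statement with a short direct argument using just-infiniteness.

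I first identify $X := \mf{S}(\ldlat(L))$ with $\mf{S}(\ldlat(U))$ via the local isomorphism. Lemma~\ref{lem:branch_ldlat} tells us that $U$ has trivial quasicentre and acts faithfully, transitively and micro-supportedly on $X$, and crucially that \emph{no proper quotient of $X$ admits a faithful $U$-action}. This last property is the key to upgrading $\ms{A}$-actions on quotients of $X$ back to $X$ itself. The implication (ii)$\Rightarrow$(i) is immediate: any open subgroup of $L$ lying in $\ms{A} \cap \ms{S}_{\ldlat}$ is in particular in $\ms{R}_{\ldlat}$, hence regionally expansive by Lemma~\ref{lem:simple_dynamics}(i), and therefore so is $L$.

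For (i)$\Rightarrow$(ii), I invoke Theorem~\ref{thm:locally_in_A}: the hypothesis supplies a compactly generated expansive open subgroup $A$ of $L$, while $B := U$ provides a compact open subgroup with faithful minimal micro-supported action on $X$. Part~(v) of that theorem then realises $L$ as a directed union of compactly generated open subgroups $F_i \supseteq U$, each admitting an $\ms{A}$-action on some quotient $X_i$ of $X$. Faithfulness of the $U$-action on $X_i$ combined with the extremality noted above forces $X_i = X$, so in fact each $F_i \in \ms{A}$ has $\ms{A}$-action on $X$. Theorem~\ref{thm:A_properties} and Corollary~\ref{cor:A_is_open} then give that $\Der(F_i) = \Al(F_i)$ is open in $F_i$, lies in $\ms{A} \cap \ms{S}_{\ldlat}$, and has $\ms{A}$-action on $X$. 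Since any commutator in $L$ lies in some common $F_i$ by directedness, $\Der(L) = \bigcup_i \Der(F_i)$, the desired directed union, and openness of $\Der(L)$ in $L$ follows from its being a union of open subgroups. Finally, $L \in \ms{R}_{\ldlat}$ follows from Lemma~\ref{lem:R_is_regional}.

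For the moreover statement, assume $U$ is just infinite and $L$ is not regionally expansive, and let $H$ be a compactly generated closed subgroup of $L$. Set $H' := \grp{H, U}$, which is compactly generated, open in $L$, and not expansive. Since SIN-ness passes to closed subgroups by intersecting a basis of compact open normal subgroups with $H$, it suffices to show that $H'$ is SIN. Given a compact open subgroup $V$ of $H'$, I may replace $V$ by $V \cap U$ so that $V \le U$. The core $R_V := \bigcap_{h \in H'} h V h\inv$ is then a closed subgroup of $V \le U$ and is normal in $H'$; since $U \le H'$, $R_V$ is in particular a closed normal subgroup of $U$. Non-expansivity of $H'$ makes $R_V$ nontrivial, and just-infiniteness of $U$ then forces $R_V$ to be open in $U$, hence in $V$, providing the required compact open normal subgroup of $H'$ inside $V$. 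The main obstacle in the proof is the extremality step identifying each $X_i$ with $X$ in the (i)$\Rightarrow$(ii) direction, where the branch hypothesis is essential; the SIN conclusion is then a very direct consequence of just-infiniteness.
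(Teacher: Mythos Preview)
Your proof of (i)$\Leftrightarrow$(ii) is correct and follows essentially the same route as the paper: both hinge on the extremality of the branch action (Lemma~\ref{lem:branch_ldlat}) to force the relevant quotient spaces to equal $X$, and then invoke the machinery of \S\ref{sec:A_is_open}/Theorem~\ref{thm:A_properties}. You quote Theorem~\ref{thm:locally_in_A}(v) as a black box, whereas the paper reruns the relevant portion of that argument by hand (first using Lemma~\ref{lem:expansive_quotient} to get each $H_i \supseteq U$ acting faithfully and expansively on $X$, then taking $K_i = \Full(H_i;X)$); the content is identical. One small point: the statement of Theorem~\ref{thm:locally_in_A}(v) does not literally guarantee $U \le F_i$, but this is harmless since $U$ lies in some $F_{i_0}$ by compactness and one passes to a cofinal subsystem.

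For the moreover clause your argument is genuinely different and more elementary than the paper's. The paper appeals to an external dichotomy \cite[Lemma~4.13]{Reid-JI} on the discrete residual $R$ of a compactly generated open $H \supseteq U$: either $R=\triv$, whence $H$ is SIN via Lemma~\ref{lem:filtering_normal}, or $R$ is open and a finite product of topologically simple groups, whence $H$ is expansive. You bypass this entirely by observing that non-expansivity of $H' = \grp{H,U}$ makes every core $R_V = \bigcap_{h \in H'} hVh\inv$ non-trivial, and just-infiniteness of $U$ then forces $R_V$ open; this gives the base of compact open normal subgroups directly. Your route is self-contained and arguably cleaner. (Minor quibble: SIN-ness passes to \emph{all} subgroups, not just closed ones, since intersecting a base of open normal subgroups of $H'$ with $H$ already gives open normal subgroups of $H$; you need not assume $H$ closed.)
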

\begin{proof}
	If (ii) holds then $L$ is regionally expansive, since all groups in $\ms{R}$ are regionally expansive. 
	So suppose that (i) holds.  Since $L$ is regionally expansive, there is a compactly generated open subgroup $H$ of $L$ that is expansive; note moreover that every compactly generated open subgroup containing $H$ is expansive.
	We may assume that $H$ contains $U$.  We see via Lemma~\ref{lem:faithful_germ} that the faithful transitive action of $U$ on $\mf{S}(\ldlat(U))$ naturally extends to a faithful transitive action of $H$ on $X := \mf{S}(\ldlat(L))$, which is isomorphic to $\mf{S}(\ldlat(U))$ as a $U$-space.
	By Lemma~\ref{lem:expansive_quotient}, there is a quotient $H$-space $Y$ of $X$ on which $H$ acts faithfully and expansively. 
	However, by Lemma~\ref{lem:branch_ldlat}, in fact we must have $Y = X$.  We now extend to $K = \Full(H;X)$ and observe that $K$ naturally contains $H$ as an open subgroup.
	
	Now let $\{H_i \mid i \in I\}$ be the set of compactly generated open subgroups of $L$ containing $H$.  Then we express $L$ as a directed union $L = \bigcup_{i \in I} H_i$; for each $H_i$ we construct $K_i = \Full(H_i;X) \le L$ as before.  For each $i \in I$, we see by Corollary~\ref{cor:group_expansive_iff_alternating_compactly_gen} that $\ol{\Al(K_i;X)}$ is compactly generated, and hence by Corollary~\ref{cor:A_is_open}, in fact $\Al(K_i;X)$ is open, simple and equal to $\Der(K_i)$.  Since the action of $K_i$ on $X$ is faithful and locally decomposable, we now have $\Der(K_i) \in \ms{S}_{\ldlat}$.  
	It is now clear that (ii) holds.
	
	Finally, suppose that $U$ is just infinite and
	let $H$ be a compactly generated open subgroup of $L$ containing $U$, and let $R$ be the intersection of open normal subgroups of $H$; note that $R \cap U$ is normal in $U$ and hence either open or trivial.  Given \cite[Lemma~4.13]{Reid-JI}, there are two possibilities: either $R=\triv$, in which case $H$ is a SIN group by Lemma~\ref{lem:filtering_normal}, or else $R$ is open and is a direct product of finitely many topologically simple groups, from which it is easy to see that $H$ is expansive.  If all compactly generated open subgroups of $L$ are SIN groups, then in fact all compactly generated subgroups of $L$ are SIN groups; if some compactly generated open subgroup of $L$ is expansive, then $L$ is regionally expansive.
\end{proof}

Example \ref{eg:iterated_wreath_same_group} provides instances of branch groups $U$ such that $\ms{L}(U)$ is regionally expansive and illustrating that $\Der(L)$ can be either a finite or an infinite directed union.

\begin{ex}\label{eg:branch_infinite_local_prime_content}
	An example of a family of  profinite branch groups $U$ such that $\ms{L}(U)$ is not regionally expansive are those with infinite local prime content, meaning the $p$-Sylow subgroup of $U$ is infinite for infinitely many $p$.  Such groups cannot be embedded in any regionally expansive \tdlc group: see \cite[Proposition~4.6]{CRW-Part1}.  For example, we could take $U$ to be the iterated wreath product
	\[
	U = \varprojlim ( \Alt(k_n) \wr \dots \wr \Alt(k_2) \wr \Alt(k_1) ),
	\]
	where $k_{i+1} > k_i\geq 5$ for all $i$, which gives $2^{\aleph_0}$ different local isomorphism classes of such groups.
\end{ex}

\subsection{Further consequences of Theorem \ref{thm:locally_in_A}}

\begin{cor}\label{cor:direct_powers}
	Let $G \in \ms{R}$, and write $G^n$ for the direct product of $1 \le n < \infty$ copies of $G$.
	\begin{enumerate}[(i)]
		\item If $G$ is faithful locally decomposable, then $\ms{L}(G^n)$ has an open subgroup in $\ms{A} \cap \ms{S}_{\ldlat}$.  Indeed, if $G$ is compactly generated then there is $H \in \ms{A}$ compactly generated such that $G \wr \Sym(n)$ occurs as an open subgroup of $H$; in that case, $\Der(H)$ is open and belongs to $\ms{A} \cap \ms{S}_{\ldlat}$.
		\item If $G$ is not faithful locally decomposable and $n>1$, then no open subgroup of $\ms{L}(G^n)$ belongs to $\ms{R}$.
	\end{enumerate}
\end{cor}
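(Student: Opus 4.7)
For part (i), the plan is to reduce to the case of $G$ compactly generated using Lemma~\ref{lem:R_is_regional}, then produce the desired $H \in \ms{A}$ by combining a wreath product with an alternating full group inside $\ms{L}(G^n)$. Assuming $G$ is compactly generated in $\ms{R}_{\ldlat}$, Theorem~\ref{thm:locally_in_A}(v) provides a compactly generated open subgroup $F \in \ms{A}$ of $\ms{L}(G)$ with $F \supseteq G$. Via the universal property of the group of germs (Theorem~\ref{thm:AbstractCommensurator}(iv)), we embed $\Sym(n) \hookrightarrow \Aut(F^n) \hookrightarrow \ms{L}(F^n) = \ms{L}(G^n)$, obtaining $F \wr \Sym(n)$ as a compactly generated open subgroup of $\ms{L}(G^n)$. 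Setting $Y := X \times \{1, \dots, n\}$ where $X$ is the $\ms{A}$-space of $F$, the natural action of $F \wr \Sym(n)$ on $Y$ is readily checked to be faithful, minimal (combining minimality of $F$ on $X$ with $\Sym(n)$'s transitivity on copies), locally decomposable, and expansive (all inherited from the corresponding properties of $F \in \ms{A}$). Set $H' := \Full(F \wr \Sym(n); Y)$, equipped with the topology extending that of $F \wr \Sym(n)$. By Corollary~\ref{cor:group_expansive_iff_alternating_compactly_gen}, $\overline{\Al(H'; Y)}$ is compactly generated, and by Corollary~\ref{cor:A_is_open}, $\Al(H'; Y) = \Der(H')$ is compactly generated and open in $H'$. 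Taking $H := \grp{F \wr \Sym(n), \Al(H'; Y)}$, this is compactly generated; Proposition~\ref{prop:AS_characterisation}(ii) (using $F \wr \Sym(n)$ as the witness expansive subgroup) certifies $H \in \ms{A}$. Theorem~\ref{thm:A_properties} then yields $\Der(H) \in \ms{A} \cap \ms{S}_{\ldlat}$, with $G \wr \Sym(n)$ open in $H$ as required.

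For part (ii), the pivotal observation is that $G \in \ms{R}$ not being faithful locally decomposable forces $\ldlat(G)$ to be the two-element Boolean algebra $\{0,1\}$. Indeed, the $G$-action on $\ldlat(G)$ then has a non-trivial closed normal kernel, which must contain $\TMon(G)$ (since $\QZ(G) = \triv$ by Lemma~\ref{lem:simple_dynamics}(i) rules out discrete normal subgroups); hence $\TMon(G)$ acts trivially on $\mf{S}(\ldlat(G))$. But Lemma~\ref{lem:simple_dynamics}(ii) simultaneously gives that $\TMon(G)$ acts minimally on $\mf{S}(\ldlat(G))$ as a quotient $G$-space of $\Omega_G$, so $\mf{S}(\ldlat(G))$ is a single point. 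Consequently $\ldlat(G^n)$ is generated by the atoms $\{[G_j] : 1 \le j \le n\}$, with Stone space a discrete $n$-point set. Now suppose for contradiction $O \le \ms{L}(G^n)$ is open with $O \in \ms{R}$. Since $\ldlat$ is a local invariant, $\ldlat(O) \cong \ldlat(G^n)$ as $O$-Boolean algebras; and $\mf{S}(\ldlat(O))$ is an $O$-equivariant quotient of $\Omega_O$, so by Lemma~\ref{lem:simple_dynamics}(ii) applied to $O$, $M := \TMon(O)$ acts minimally on this $n$-point discrete space. This gives a continuous homomorphism $\sigma : M \to \Sym(n)$ with open kernel. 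For $n > 1$, topological simplicity of $M$ forces either $\ker\sigma = M$ (yielding trivial action on $n > 1$ points, contradicting minimality) or $\ker\sigma = \triv$ (so $M$ embeds in the finite group $\Sym(n)$, contradicting $M$ non-discrete), a contradiction in either case.

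The principal obstacle lies in part (i): carefully verifying that the wreathed action on $Y$ genuinely inherits all four $\ms{A}$-conditions from $F$ on $X$, and that the construction $H = \grp{F \wr \Sym(n), \Al(H';Y)}$ fits the hypotheses of Proposition~\ref{prop:AS_characterisation} — in particular, that both generating pieces of $H$ are compactly generated and that their span remains closed. Part (ii), by contrast, is a comparatively direct contradiction once one extracts the structural fact that $\ldlat(G)$ collapses to $\{0,1\}$ whenever $G \in \ms{R}$ fails to be faithful locally decomposable.
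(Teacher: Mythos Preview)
Your proof is correct in both parts; the route differs from the paper's in detail but not in substance.

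For part (i), the paper is terser: it verifies condition (vi) of Theorem~\ref{thm:locally_in_A} for $\ms{L}(G^n)$ directly, using $G_0 \wr \Sym(n)$ (with $G_0 \le G$ a compactly generated open subgroup in $\ms{R}_{\ldlat}$, furnished by Lemma~\ref{lem:R_is_regional}) simultaneously as the expansive witness $A$ and the minimal-micro-supported witness $B$; then (v) produces the desired $H \in \ms{A}$.  Your detour through an auxiliary $F \in \ms{A}$ containing $G$ is unnecessary but harmless, and your explicit construction of $H$ via $\Full(\,\cdot\,;Y)$ and Proposition~\ref{prop:AS_characterisation} amounts to re-running part of the proof of Theorem~\ref{thm:locally_in_A} by hand.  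Your flagged worry about $H$ being closed is not a real obstacle: $H$ contains the open subgroup $\Al(H';Y)$, hence $H$ is itself open, hence closed.

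For part (ii), your preliminary observation that $\ldlat(G) = \{0,\infty\}$ is correct and sharper than what the paper states; it drops straight out of Lemma~\ref{lem:simple_dynamics}(ii) exactly as you argue.  The paper instead shows that the kernel of $G^n$ on $\ldlat(G^n)$ contains $K^n \ge \TMon(G)^n$ and is therefore non-discrete, so no $H$ locally isomorphic to $G^n$ can act faithfully on the non-trivial lattice $\ldlat(H)$; this contradicts Lemma~\ref{lem:simple_dynamics}(ii) for $H \in \ms{R}$.  Your endgame via the homomorphism $\TMon(O) \to \Sym(n)$ is a neat alternative.  Note, however, that both arguments silently invoke the product decomposition $\ldlat(G^n) \cong \ldlat(G)^n$ from \cite{CRW-Part1}: you need it for ``Stone space a discrete $n$-point set'', and the paper needs it for ``$K^n$ is the kernel''.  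Neither proof establishes this in-line, so your gap here is no worse than the paper's own.
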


\begin{proof}
	(i)
	By Lemma~\ref{lem:R_is_regional}, any sufficiently large compactly generated open subgroup $G_0$ of $G$ belongs to $\ms{R}$.  We then have a minimal compressible action of $G_0$ on a compact zero-dimensional space $X$, and hence a minimal compressible action of the compactly generated expansive group $G_0 \wr \Sym(n)$ on the disjoint union of $n$ copies of $X$.  Using the implication (vi) $\Rightarrow$ (v) in Theorem~\ref{thm:locally_in_A}, we obtain $H \in \ms{A}$ compactly generated that contains $G_0 \wr \Sym(n)$ as an open subgroup; the remaining claims about $H$ follow by Theorem~\ref{thm:A_properties}.
	
	(ii)
	Since $G$ does not act faithfully on $\ldlat(G)$, the kernel $K$ of the action of $G$ on $\ldlat(G)$ contains $\TMon(G)$.  In turn, we see that $K^n$ is the kernel of the action of $G^n$ on $\ldlat(G^n)$. In particular, $K^n$ is not discrete, so given a \tdlc group $H$ locally isomorphic to $G^n$, then $H$ does not act faithfully on $\ldlat(H)$.  On the other hand, $\ldlat(H) = \ldlat(G^n)$ is non-trivial thanks to the given direct decomposition of $G^n$.  Lemmas~\ref{lem:centraliser_lattice} and~\ref{lem:simple_dynamics} now ensure that $H \not\in \ms{R}$.
\end{proof}

Here it should be noted that many examples of groups in $\ms{R}_{\ldlat}$ (such as those that are locally isomorphic to a regular profinite branch group, or those studied in \cite{SmithDuke}) are actually locally isomorphic to one of their proper direct powers, so given $G \in \ms{R}_{\ldlat}$ it often happens that the set $\{\ms{L}(G^n) \mid n \ge 1\}$ only contains finitely many isomorphism types of groups.

We also note the ubiquity of one-ended groups in the class $\ms{S}_{\ldlat}$, which is interesting given that many examples of groups in $\ms{S}_{\ldlat}$ are obtained as groups acting on trees.  The next corollary is an immediate consequence of Theorems~\ref{thm:NL} and \ref{thm:locally_in_A}. 

\begin{cor}\label{cor:one-ended}
	Every $G \in \ms{S}_{\ldlat}$ occurs as an open subgroup of some $H \in \ms{S}_{\ldlat}$, such that $H$ has property (NL); in particular, $H$ is one-ended.
\end{cor}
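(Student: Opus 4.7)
The plan is to locate $G$ inside some group in $\ms{A}$ via the group of germs $L = \ms{L}(G)$, and then take the topologically simple derived subgroup, which Theorem~\ref{thm:NL} guarantees has property~(NL).

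Since $G \in \ms{S}_{\ldlat} \subseteq \ms{R}_{\ldlat}$, it has trivial quasicentre by Lemma~\ref{lem:simple_dynamics}, so it embeds as an open subgroup of $L$. Because $G$ itself witnesses condition~(ii) of Theorem~\ref{thm:locally_in_A}, condition~(v) applies, writing $L$ as a directed union of compactly generated open subgroups $F_i \in \ms{A}$. Since $G$ is compactly generated, it sits inside some $F_i$, which I rename $K$. I would then set $H := \Der(K)$; by Theorem~\ref{thm:A_properties}(ii,iii) this is an open, topologically simple subgroup of $K$ belonging to $\ms{S}_{\ldlat}$, and by Theorem~\ref{thm:NL} it has property~(NL) and is one-ended.

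The only additional check is that $G \le H$. Because $\Der(K)$ is normal in $K \supseteq G$, the intersection $G \cap \Der(K)$ is closed and normal in $G$; because $\Der(K)$ is open in $L$, this intersection is also open in $G$, hence non-trivial. Topological simplicity of $G$ then forces $G \cap \Der(K) = G$, so $G \le H$, and $G$ is open in $H$ since it was already open in $L$. I do not anticipate any substantial obstacle: the argument is essentially bookkeeping around the two theorems cited, with the mild verification that normality of $\Der(K)$ in $K$ (rather than in all of $L$) suffices to engulf $G$.
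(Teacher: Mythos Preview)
The proposal is correct and follows exactly the route the paper intends: invoke Theorem~\ref{thm:locally_in_A}(v) to place $G$ inside some compactly generated $K \in \ms{A}$, then pass to $H = \Der(K) \in \ms{S}_{\ldlat}$ and apply Theorem~\ref{thm:NL}. Your check that $G \le \Der(K)$ via topological simplicity of $G$ and normality of $\Der(K)$ in $K$ is the one detail the paper leaves implicit, and you handle it correctly.
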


Corollary~\ref{cor:one-ended} does not generalise to groups $G \in \ms{S}$.
For example, the $p$-adic Lie group $G = \mathrm{PSL}_2(\Qb_p)$ belongs to $\ms{S}$ and admits a general type proper cocompact action on the regular tree of degree $p+1$, so certainly it is infinitely-ended. 
However, $G$ is rigid in the sense of \cite[Section~1.2]{BEW}: by \cite[Corollary~0.3]{Pink}, we have $\ms{L}(G) = \Aut(G)$.
On the other hand $G$ has the Howe--Moore property, so every proper open subgroup of $G$ is compact (see for instance \cite[Proposition 3.2]{ccltv_HoweMooreprops}).
Thus $G$ is actually the only group in $\ms{S}$ in its local isomorphism class.

In the context of Theorem~\ref{thm:locally_in_A}, we obtain a restriction on how $\ms{L}(G)$ acts on its decomposition lattice.

\begin{cor}\label{cor:germ_ldlat_minimality}
	Let $G \in \ms{R}_{\ldlat}$ and let $L = \ms{L}(G)$.  Then there is no proper non-trivial $\Der(L)$-invariant subalgebra of $\ldlat(L)$, or in other words, the Stone space $\mf{S}(\ldlat(L))$ does not admit any proper non-trivial Hausdorff quotient as a $\Der(L)$-space.
\end{cor}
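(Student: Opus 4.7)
The plan is to use Theorem~\ref{thm:locally_in_A}(v) to reduce the problem to checking the statement on each piece of an approximating directed system, where we can apply Lemma~\ref{lem:A(G)_quotientspace} directly. Concretely, since $G \in \ms{R}_{\ldlat}$, we have $L \in \ms{R}_{\ldlat}$ as well (by Lemma~\ref{lem:R_is_regional} applied inside $L$, or simply because $G$ is open in $L$ by definition of germs), so by Theorem~\ref{thm:locally_in_A} we may write $L$ as a directed union $L = \bigcup_i F_i$ of compactly generated open subgroups $F_i \in \ms{A}$ each carrying an $\ms{A}$-action on a quotient space $X_i$ of $X := \mf{S}(\ldlat(L))$, such that $X = \varprojlim X_i$. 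Via Stone duality, $X_i = \mf{S}(\mc{A}_i)$ for an $F_i$-invariant subalgebra $\mc{A}_i \subseteq \ldlat(L)$, and $\ldlat(L) = \bigcup_i \mc{A}_i$ is a directed union.

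Now let $\mc{A} \subseteq \ldlat(L)$ be a $\Der(L)$-invariant subalgebra and put $\mc{B}_i := \mc{A} \cap \mc{A}_i$. Since $\Der(F_i) \le F_i$ preserves $\mc{A}_i$ and $\Der(F_i) \le \Der(L)$ preserves $\mc{A}$, the subalgebra $\mc{B}_i$ is $\Der(F_i)$-invariant. By Stone duality, this corresponds to a $\Der(F_i)$-equivariant Hausdorff (indeed, zero-dimensional) quotient of $X_i$. By Theorem~\ref{thm:A_properties}(ii), $\Der(F_i) = \Al(F_i; X_i)$, and since the $\ms{A}$-action of $F_i$ on $X_i$ is minimal, Lemma~\ref{lem:A(G)_quotientspace} applies to tell us that $X_i$ admits no proper non-trivial Hausdorff quotient as an $\Al(F_i;X_i)$-space. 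Dually, $\mc{B}_i \in \{\{0,1\}, \mc{A}_i\}$.

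It remains to upgrade this dichotomy on each level to a dichotomy for $\mc{A}$. Suppose $\mc{A} \neq \{0,1\}$ and pick $a \in \mc{A} \setminus \{0,1\}$; then $a \in \mc{A}_i$ for some $i$, forcing $\mc{B}_i = \mc{A}_i$. For an arbitrary index $j$, choose $k$ with $\mc{A}_k \supseteq \mc{A}_i \cup \mc{A}_j$ (possible because the $\mc{A}_i$ are directed); then $\mc{B}_k \supseteq \mc{A}_i \supsetneq \{0,1\}$, so $\mc{B}_k = \mc{A}_k$. In particular $\mc{A}_j \subseteq \mc{A}_k \subseteq \mc{A}$, giving $\mc{B}_j = \mc{A}_j$. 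Taking the union over $j$ yields $\mc{A} = \ldlat(L)$, proving the required dichotomy, and hence the corollary.

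The argument is essentially a diagram-chase once the key ingredients are in place; the only mild subtlety is verifying that $\Der(L)$-invariance descends to $\Der(F_i)$-invariance of $\mc{B}_i$, which is immediate from $\Der(F_i) \le \Der(L)$. The substantive content is entirely loaded into Theorem~\ref{thm:locally_in_A}(v), which supplies the approximation by groups in $\ms{A}$, and Lemma~\ref{lem:A(G)_quotientspace}, which rules out non-trivial quotients for the minimal alternating action on each $X_i$.
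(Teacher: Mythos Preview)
Your proof is correct and follows essentially the same approach as the paper: both invoke Theorem~\ref{thm:locally_in_A}(v) to pass to a compactly generated open subgroup $F \in \ms{A}$ whose associated subalgebra contains the elements in question, and then apply Lemma~\ref{lem:A(G)_quotientspace} (together with $\Der(F)=\Al(F;X_i)$) to rule out proper non-trivial $\Der(F)$-invariant subalgebras there. The paper phrases the reduction slightly more directly by fixing a pair $\alpha,\beta \in \ldlat(L)\setminus\{0,\infty\}$ and finding a single $F$ whose algebra contains both, rather than running the dichotomy over the entire directed system, but the content is identical.
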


\begin{proof}
	Let $X = \mf{S}(\ldlat(L))$ and let $\alpha,\beta \in \ldlat(L) \setminus \{0,\infty\}$.  By Theorem~\ref{thm:locally_in_A}, there is a compactly generated open subgroup $F$ of $L$ with $\ms{A}$-action on a quotient space $Y = \mf{S}(\mc{A})$ of $X$, such that $\{\alpha,\beta\} \subseteq \mc{A}$.  We then have $\Der(F) = \Al(F;Y)$, and by Lemma~\ref{lem:A(G)_quotientspace}, $\Der(F)$ does not preserve any proper non-trivial subalgebra of $\mc{A}$; thus any $\Der(F)$-invariant subalgebra of $\ldlat(L)$ that contains $\alpha$ must also contain $\beta$.  Since the pair $(\alpha,\beta)$ was arbitrary, we deduce that there is no proper non-trivial $\Der(L)$-invariant subalgebra of $\ldlat(L)$.
\end{proof}

We say that a \tdlc group $G$ is \defbold{locally finitely decomposable} if every compact open subgroup can be expressed as a finite direct product of directly indecomposable groups.  By \cite[Corollary~4.12]{CRW-Part1}, if $G$ is first-countable and $\QZ(G)=\triv$, then $G$ is locally finitely decomposable, or equivalently, every compact open subgroup has only finitely many direct factors, if and only if $\ldlat(G)$ is countable.

Suppose $G \in \ms{S}$ is such that $G$ is not locally finitely decomposable.   
Then by Corollary~\ref{cor:germ_ldlat_minimality}
we see that $\ol{\Mon}(\ms{L}(G))$ has an uncountable orbit on $\ldlat(G)$, so $\ol{\Mon}(\ms{L}(G))$ cannot be second-countable.  We deduce the following.

\begin{cor}\label{cor:Polish:finitely_decomposable}
	Let $G \in \ms{S}$ and suppose that $\ol{\Mon}(\ms{L}(G))$ is second-countable.  Then $G$ is locally finitely decomposable.
\end{cor}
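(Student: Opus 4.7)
The plan is to prove the contrapositive: if $\ldlat(G)$ is uncountable, then $\ol{\Mon}(\ms{L}(G))$ is not second-countable. Since by \cite[Corollary~4.12]{CRW-Part1} countability of $\ldlat(G)$ is equivalent to $G$ being locally finitely decomposable, this will suffice. For the setup, assuming $\ldlat(G)$ is uncountable (hence non-trivial), topological simplicity of $G \in \ms{S}$ forces the action on $\ldlat(G)$ to be faithful, so $G \in \ms{S}_{\ldlat}$; by Lemma~\ref{lem:R_is_regional}, $\ms{L}(G) \in \ms{R}_{\ldlat}$. Moreover, $G$ is an open (hence closed) normal subgroup of $\ms{L}(G)$, and a short commutation argument using $\QZ(\ms{L}(G)) = \triv$ (any non-trivial closed normal $N$ with $N \cap G = \triv$ would centralise $G$ and hence lie in $\QZ(\ms{L}(G))$) shows that every non-trivial closed normal subgroup of $\ms{L}(G)$ contains $G$; thus $\TMon(\ms{L}(G)) = G$, and $\ol{\Mon}(\ms{L}(G)) \in \{\triv, G\}$.

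The key technical step will be to show that $G$ admits no proper non-trivial invariant subalgebra of $\ldlat(G)$. I plan to deduce this from Corollary~\ref{cor:germ_ldlat_minimality} via the containment $\Der(F) \subseteq G$ for every compactly generated open $F \in \ms{A}$ of $\ms{L}(G)$. For this, note that $\Der(F)$ is open in $F$ (by the definition of an $\ms{A}$-action and Theorem~\ref{thm:A_properties}(ii)), and hence open in $\ms{L}(G)$; so $\Der(F) \cap G$ contains a non-trivial compact open subgroup of the non-discrete group $G$, yielding a non-trivial closed normal subgroup of the topologically simple $\Der(F) \in \ms{S}_{\ldlat}$, which must therefore equal $\Der(F)$. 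Now, given a hypothetical proper non-trivial $G$-invariant subalgebra $\mc{B} \leq \ldlat(G)$, I choose $\alpha \in \mc{B} \setminus \{0, \infty\}$ and $\beta \in \ldlat(G) \setminus \mc{B}$ with $\beta \neq 0, \infty$, and apply Theorem~\ref{thm:locally_in_A}(v) to pick a compactly generated $F \in \ms{A}$ with $\ms{A}$-action on $Y = \mf{S}(\mc{A})$ such that $\alpha, \beta \in \mc{A}$. The subalgebra $\mc{B} \cap \mc{A}$ of $\mc{A}$ is then $\Der(F)$-invariant, proper (missing $\beta$) and non-trivial (containing $\alpha$), contradicting Lemma~\ref{lem:A(G)_quotientspace} applied to $\Der(F) = \Al(F;Y)$.

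Once this is in hand the conclusion is immediate: for any $\alpha \in \ldlat(G) \setminus \{0, \infty\}$, the subalgebra generated by the $G$-orbit $G \cdot \alpha$ is a non-trivial $G$-invariant subalgebra of $\ldlat(G)$ and so equals all of $\ldlat(G)$; since a countable set generates only a countable Boolean subalgebra, the uncountability of $\ldlat(G)$ forces $G \cdot \alpha$ itself to be uncountable. The stabiliser of $\alpha$ in $G$ is open (as $G$ acts continuously on the discrete set $\ldlat(G)$), so $G$ has an open subgroup of uncountable index and hence is not second-countable. Assuming $\ol{\Mon}(\ms{L}(G))$ is non-trivial, the setup paragraph yields $\ol{\Mon}(\ms{L}(G)) = G$, so it is not second-countable either, completing the proof. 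The main obstacle is the edge case $\ol{\Mon}(\ms{L}(G)) = \triv$, where the contrapositive yields no contradiction; I expect this to be ruled out in the present setting (for instance by showing $G$ is abstractly simple, forcing $\Mon(\ms{L}(G)) \supseteq G$), but the precise verification calls for a closer look at how abstract and topological normality interact in $\ms{L}(G)$.
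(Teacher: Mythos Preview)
Your overall strategy---show that a certain group has no proper non-trivial invariant subalgebra of $\ldlat(G)$, deduce an uncountable orbit, conclude non-second-countability---is exactly the paper's approach via Corollary~\ref{cor:germ_ldlat_minimality}.  However, there is a genuine gap: the claim that ``$G$ is an open (hence closed) normal subgroup of $\ms{L}(G)$'' is unjustified and in general false.  The embedding $\ad: G \hookrightarrow \ms{L}(G)$ realises $G$ as an open subgroup, but nothing forces it to be normal; indeed for typical $G \in \ms{S}_{\ldlat}$ (e.g.\ Neretin's group), the group of germs is much larger and $G$ is far from normal.  This breaks two steps at once: the identification $\TMon(\ms{L}(G)) = G$, and the argument that $\Der(F) \cap G$ is normal in $\Der(F)$ (which you need to deduce $\Der(F) \subseteq G$).

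The fix is to work with $M := \ol{\Mon}(\ms{L}(G))$ directly, never passing through $G$.  You have already established $L := \ms{L}(G) \in \ms{R}_{\ldlat}$, so $M$ is non-discrete and closed normal in $L$.  Now for each compactly generated open $F \in \ms{A}$ in $L$, the intersection $\Der(F) \cap M$ is open in $M$ (since $\Der(F)$ is open in $L$), hence non-trivial, and it is normal in $\Der(F)$ because $M$ is normal in $L$; simplicity of $\Der(F)$ then gives $\Der(F) \subseteq M$.  Your second-paragraph argument now runs verbatim with $M$ in place of $G$: any $M$-invariant subalgebra of $\ldlat(L)$ is $\Der(F)$-invariant, so by Lemma~\ref{lem:A(G)_quotientspace} there is no proper non-trivial one, whence $M$ has an uncountable orbit on $\ldlat(L)$ and is not second-countable.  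The ``edge case'' $M = \triv$ that worried you evaporates, since $L \in \ms{R}$ already guarantees $M$ is non-discrete.
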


This corollary is not too surprising in light of Smith's construction of uncountably many different groups in $\ms{S}$, all of which are locally isomorphic to one another; but it does suggest more generically that given $G \in \ms{S}_{\ldlat}$, we should expect the local isomorphism class of $G$ within $\ms{S}_{\ldlat}$ to be ``large'' as soon as $G$ is locally infinitely decomposable.  
In particular we obtain the following necessary condition for a group in $\ms{S}$ to be rigid in the sense of \cite{BEW}:

\begin{cor}
	Let $G \in \ms{S}$.  Suppose that every isomorphism between open subgroups of $G$ extends uniquely to an automorphism of $G$, and let $U$ be a compact open subgroup of $G$.
	Then $U$ has only finitely many direct factors.
\end{cor}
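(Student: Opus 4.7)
The plan is to reduce this to Corollary~\ref{cor:Polish:finitely_decomposable}, which says that for $G \in \ms{S}$ with $\ol{\Mon}(\ms{L}(G))$ second-countable, $G$ must be locally finitely decomposable (so every compact open subgroup has only finitely many direct factors). So it suffices to verify the second-countability condition under the rigidity hypothesis.

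Set $L = \ms{L}(G)$. First I would identify $L$ with $\Aut(G)$ as a topological group. Under the rigidity hypothesis, every germ in $L$ has a unique global representative in $\Aut(G)$, so the natural map $\Aut(G) \to L$ is a bijection of groups. Equipping $\Aut(G)$ with the topology in which $\mathrm{Inn}(G) \cong G$ is an open subgroup with its given topology, the universal property of Theorem~\ref{thm:AbstractCommensurator}(iv) (applied with $\phi_1 = \ad: G \to L$ and $\phi_2 = \mathrm{inn}: G \to \Aut(G)$) produces a continuous open homomorphism $\Aut(G) \to L$ with trivial kernel, which is bijective by rigidity and hence an isomorphism of topological groups. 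Consequently $G$ is identified with an open \emph{normal} subgroup of $L$.

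Next I would establish that $\ol{\Mon}(L) \le G$. Since $G$ is a non-trivial normal subgroup of $L$, the abstract monolith $\Mon(L)$ (being contained in every non-trivial normal subgroup) lies in $G$; and since $G$ is closed in $L$ (as open subgroups are closed), we obtain $\ol{\Mon}(L) \le G$. Combined with the fact that $\CC_L(G) = Z(G) = \triv$ (because $L = \Aut(G)$ acts faithfully and $G$ is topologically simple and non-abelian), this pins down the closed normal subgroup structure of $L$ above $G$ tightly.

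Finally, to apply Corollary~\ref{cor:Polish:finitely_decomposable}, I need $\ol{\Mon}(L)$ to be second-countable. This is the main obstacle: the convention in the structure theory for $\ms{S}$ (cf.\ \cite{CRW-Part2}) is that groups in $\ms{S}$ are second-countable, in which case $G$ is second-countable and so is its closed subgroup $\ol{\Mon}(L)$, completing the proof via Corollary~\ref{cor:Polish:finitely_decomposable}. If second-countability of $G$ is not built into the definition, one would need an extra argument; here the expected route is to leverage the embedding $L \hookrightarrow \Aut(G)$ to force metrisability of a compact open subgroup from the rigidity hypothesis (for instance, by noting that in the non-metrisable case, uncountably many open normal subgroups of a compact open $U$ would produce too many distinct local automorphisms, all required to extend uniquely to $G$). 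The clean formal statement of the corollary, however, is most naturally read under the second-countability convention, and then the proof reduces to the two identifications above plus an invocation of Corollary~\ref{cor:Polish:finitely_decomposable}.
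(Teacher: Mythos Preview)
Your proposal is correct and follows essentially the same approach as the paper: show that the rigidity hypothesis forces $G$ to be normal in $\ms{L}(G)$, deduce $\ol{\Mon}(\ms{L}(G)) \le G$, and invoke Corollary~\ref{cor:Polish:finitely_decomposable}. The paper's proof is much terser (it simply asserts that the hypothesis implies $G$ is normal in $\ms{L}(G)$), whereas you spell out the identification $\ms{L}(G) \cong \Aut(G)$ via Theorem~\ref{thm:AbstractCommensurator}(iv); and you are right to flag the second-countability of $G$ as an implicit assumption, which the paper's own proof also relies on without comment.
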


\begin{proof}
	The hypothesis implies that $G$ is normal in $\ms{L}(G)$, and in particular $G \ge \ol{\Mon}(\ms{L}(G))$.
	Thus $\ol{\Mon}(\ms{L}(G))$ is second-countable, and hence $G$ is locally finitely decomposable by Corollary~\ref{cor:Polish:finitely_decomposable}.
\end{proof}

\section{Open questions}\label{sec:questions}

The following is a list of questions that the authors were unable to answer, or are beyond the scope of this paper and are either a natural follow-up or tangent. 
A couple  are well known (and hard), others seem more tractable and yet others are small research programmes in themselves. 
They are more or less ordered by how they would naturally appear in the main text (and this gives a clue as to where the relevant definitions are).
We hope that at least some of them will become fruitful lines of inquiry for other researchers.
 
\subsection{Boolean inverse monoids and compact generation}

Taking inspiration from the countable setting, there may be non-discrete analogues of graph groupoids, or other sources of interesting \'{e}tale groupoids that can be modified to be non-discrete, that can be used to construct new examples of piecewise full topological groups.

 \begin{prob}
 	Find  non-discrete locally decomposable inverse monoids $M\leq\PHomeo_c(X)$ that give new examples of (Polish and/or locally compact) piecewise full groups, such that $M$ is not essentially a locally decomposable group with some extra partial homeomorphisms. 
 \end{prob}

Nekrashevych's finite generation theorem for $\Al(G)$ was originally proved via \'{e}tale groupoids; for our adaptation to compact generation, we took a detour into the theory of topological inverse monoids.  However, since our applications are to piecewise full \tdlc groups, it would be enlightening to have a more direct argument.

\begin{prob}
Find a short direct proof (without going via groupoids or inverse monoids) of Corollary~\ref{cor:group_expansive_iff_alternating_compactly_gen}.
\end{prob}

If $G$ is a discrete piecewise full group and $\mc{G}$ is the associated \'{e}tale groupoid, then the homology groups $H_i(\mc{G},\Zb)$ (especially for $i=0,1$), together with the index map $I: G \rightarrow H_1(\mc{G},\Zb)$, all in the sense of Matui \cite{MatuiHomology} (following Crainic--Moerdijk \cite{CrainicMoerdijk}) form a key part of the toolkit for studying $G$.  Although it is beyond the scope of this article, it would be useful to adapt this homological theory to the setting of non-discrete topological groups, particularly in the context of trying to understand the abelian group $\Full(G)/\Al(G)$ for $G \in \ms{A}$.  (For example, note that $\Full(G)/\Al(G)$ will detect the modular function of $\Full(G)$, which is trivial in the discrete case but could be non-trivial for $G \in \ms{A}$.)

\begin{prob}
Develop appropriate analogues of $H_0(\mc{G},\Zb)$, $H_1(\mc{G},\Zb)$ and $I$, for non-discrete piecewise full groups (or for Boolean topological inverse monoids), in a way that takes account of the group topology (or the monoid topology).
\end{prob}

Corollary~\ref{cor:group_expansive_iff_alternating_compactly_gen} provides a useful condition for the alternating group of a piecewise full group to be compactly generated.  However the relationship between compact generation of the alternating group and the piecewise full group remains mysterious. 
We do not know the answer to the following questions in the discrete or the non-discrete case (and possibly the answers are different).

\begin{que}
Let $X$ be the Cantor space and let $G \le \Homeo(X)$ be minimal and equipped with a locally compact Polish group topology, such that the action of $G$ is locally decomposable.  If one of $G$, $\Al(G)$ and $\Full(G)$ is compactly generated, are the other two compactly generated?
\end{que}

In the discrete case, some progress has been made in determining whether $\Full(G)/\Al(G)$ is finitely generated, mostly using the homological methods mentioned above.
See \cite{Nekra} (in particular Section 7) and references therein. 

\subsection{Simplicity of the derived group}

There are several results in the literature showing that if a piecewise full group $G$ satisfies some additional conditions, then the derived subgroup $\Der(G)$ is simple (and therefore coincides with $\Al(G)$).  However, it appears the following question is still open in general.

\begin{que}
	Let $G$ be a minimal piecewise full group of homeomorphisms of the Cantor space.  Is $\Der(G)$ simple?
\end{que}

\subsection{Piecewise full groups based on prescribed local actions on trees}

\begin{que}[See Example \ref{eg:waltraud_full_burgermozes}]
	For which intransitive finite permutation groups $F$ do we have $\Full(U(F))=\Comm_{\Homeo(\partial T)}(U(F))$?
\end{que}

\begin{que}[See Example \ref{eg:leboudec_G(FF')}]
	For which pairs $F\le F'$ of finite permutation groups do we have $\Al(G(F,F'))= \Al(U(F))$ ? This is equivalent, by Lemma \ref{lem:alternating_to_full} to $\Full(G(F,F'))= \Full(U(F))$ and to  $\mc{BI}(G(F,F'))=\mc{BI}(U(F))$.
\end{que}

\subsection{Branch groups and compactly generated expansive \tdlc groups}

Let us now focus on \tdlc groups containing a profinite branch group $U$ as an open subgroup.  If $U$ is commensurable with its $n$-th direct power $U^n$ then, by \cite{Roever}, the $n$-ary Higman--Thompson group $V_n$ embeds in the commensurator $\Comm(U)\cong \Comm_{\Homeo(\partial T)}(U)$. 
\begin{que}\label{que:branch_product}
If $U$ is a profinite branch group commensurable with its $n$-th direct power for $n > 1$, must we have $\rist_U(m) \cong K^n$ for some level $m$ and some $K\leq U$ of finite index?
\end{que}
A positive answer to Question~\ref{que:branch_product} does not imply that $U$ is regular branch.
For example, the group $U=G_3$ constructed in \cite{KS_nonregularbranchgroup} acts on the ternary rooted tree and  satisfies that $\rist_{G_3}(m)=H_m^{3^m}$ for every $m>0$ where $G_3/H_m\cong \bZ/2^{m+1} \bZ$ and $G_3/\bigcap_{m>0}H_m\cong \bZ^3$.

If the answer to Question~\ref{que:branch_product} is positive, then $V_n \le \AAut(T)$, and the embedding $V_n\hookrightarrow \Comm(U)$ is simply conjugation inside $\Homeo(\partial T)$, where $T$ is the tree on which $U$ acts as a branch group.  Moreover, in this case the group $G = \grp{U,V_n}$ is expansive, more specifically, $\bigcap_{g \in G}gUg\inv = \triv$.
On the other hand, if the answer to Question~\ref{que:branch_product} is negative, it is not immediate that $\grp{U,V_n}$ is expansive. 
\begin{que}
Does there exist a profinite branch group $U$ such that $V_n\leq \Comm(U)$ for some $n>1$ and a non-trivial normal subgroup of $U$ that is normalised by $V_n$?
\end{que}

A full classification of $\ms{A}$ (or equivalently $\ms{S}_{\ldlat}$) up to local isomorphism seems beyond the current state of research. 
However, the question of which profinite branch groups $U$ are locally isomorphic to groups in $\ms{A}$ may be more tractable. 
We know it is equivalent to asking if $U$ is locally isomorphic to a compactly generated expansive \tdlc group; in other words, if $U$ has an open subgroup $V$ in common with a compactly generated group $G$, such that $\bigcap_{g \in G}gVg^{-1} = \triv$.
 The latter is the question of ``local obstacles to regional expansivity'', which is probably  interesting for many classes of profinite groups, for example when  $U$  is a free pro-$p$ group. 
  Put another way: what kind of profinite groups occur as vertex stabilisers of faithful vertex-transitive actions on locally finite graphs?

  As far as the authors are aware, even the following basic question is open:
	
	\begin{que}
		Are there uncountably many different local isomorphism classes of compactly generated expansive \tdlc groups?
	\end{que}
  
More specifically, the following would be good to know:

\begin{que}\label{que:branch_expansive}
	Are there uncountably many profinite branch groups $U$ (up to finite index) occurring as open subgroups of compactly generated expansive \tdlc groups?
\end{que}

For Question~\ref{que:branch_expansive}, if there are uncountably many commensurability classes, it would answer in the affirmative the open question of whether there are uncountably many local isomorphism classes of groups in $\ms{S}$.  On the other hand if there are only countably many, there is some prospect of classifying them using some natural parameters.

\subsection{Automorphisms of simple groups in $\ms{A}$}

Proposition~\ref{prop:piecewise_rigidity} puts some restrictions on the automorphism group of a piecewise full group, but we do not know the answer to the following.

\begin{que}
	Let $G$ be a simple group in $\ms{A}$.
	\begin{enumerate}[(i)]
		\item Can every automorphism of $G$ be realised as an element of $\Full(G)$?
		\item Is $\Aut(G)/\mathrm{Inn}(G)$ finitely generated?
	\end{enumerate}
\end{que}
 
\subsection{Groups containing a group in $\ms{S}_{\mc{LD}}$ as an open subgroup}

There remain two natural questions concerning the cardinality of $\ldlat(G)$ for $G \in \ms{S}_{\ldlat}$ that are not addressed by Corollary~\ref{cor:germ_ldlat_minimality}.

\begin{que}
	Does there exist $G \in \ms{S}$ (or more specifically, $G \in \ms{S}_{\ldlat}$) such that $\ldlat(G)$ is countable, but $\ol{\Mon}(\ms{L}(G))$ is not second-countable?
\end{que}

\begin{que}
	Does there exist $G \in \ms{S}$ such that $\ldlat(G)$ is uncountable, but such that there are only countably many isomorphism types of groups in $\ms{S}$ locally isomorphic to $G$?
\end{que}

Note that for the latter question: if $\ldlat(G)$ is uncountable, we know that the number of open subgroups of $\ms{L}(G)$ belonging to $\ms{S}$ is uncountable, but in some cases perhaps they fall into countably many conjugacy classes, hence only countably many isomorphism types.

	The following question is inspired by Corollary~\ref{cor:one-ended}.
	
	\begin{que}
		Let $G \in \ms{S}_{\ldlat}$.  Is $G$ (or some finite direct power of $G$) locally isomorphic to some $H \in \ms{S}_{\ldlat}$, such that $H$ has infinitely many ends?
	\end{que}
	
	Here we note that for any $G \in \ms{S}$, one can use Smith's construction \cite{SmithDuke} to obtain a group $H = U(G,G) \in \ms{S}_{\ldlat}$ with infinitely many ends, such that $H$ has an open subgroup of the form $K \rtimes G$ for $K$ compact.  So the open question here is specifically about embedding $G$ as an \emph{open} subgroup in an infinitely-ended $H \in \ms{S}_{\ldlat}$.
	
	It is suspected that there are some regular branch groups that are not locally isomorphic to any infinitely-ended group in $\ms{S}$, but there is something to prove here.

\bibliographystyle{abbrv}
\bibliography{biblio}
		
	\end{document}